\def\sgn{{\operatorname{sgn}}}
\def\ad{{\operatorname{ad}}}
\def\adL{{\operatorname{ad}}\,L}
\def\Null{{\operatorname{Null}}}
\newcommand{\Det}[1]{ |{#1}|}
\newtheorem{theorem}{Theorem}[section]
\newtheorem{definition}[theorem]{Definition}
\newtheorem{example}[theorem]{Example}
\newcommand{\Real}{\mathbb R}
\newcommand{\Net}{\mathbb N}
\newcommand{\one}{\mathbb{1}}
\newcommand{\zero}{\mathbb{0}}
\newcommand{\trop}[1]{\mathcal{#1}}
\newcommand{\tG}{\trop{G}}
\newcommand{\tM}{\trop{M}}
\newcommand{\tT}{\trop{T}}
\newcommand{\Hom}{Hom}
\newcommand{\sig}{\sigma}
    \newenvironment{proof}{
    \smallskip
    \noindent\emph{Proof.}}{\hfill\(\Box\)
    \bigskip
    } \fi
\newcommand{\ifdef}[3]{\ifthenelse{\equal{#1}{true}}{#2}{#3}}
\definecolor{lgray}{gray}{0.90}
\def\vep{\varepsilon}
\def\({\left(}
\def\){\right)}
\def\Z{{\mathbb Z}}
\def\Q{{\mathbb Q}}
\def\End{{\operatorname{End}}}
\def\pipe{{\underset{{\ \, }}{\mid}}}
\def\vsemifield0{$\nu$-semifield$^\dagger$}
\def\vsemiring0{$\nu$-semiring$^\dagger$}
\def\lmodg{\mathrel  \pipeGS \joinrel \joinrel \joinrel =}
\def\CFunFF1{\operatorname{CFun} (F,F)}
\def\semiring0{semiring$^{\dagger}$}
\def\system0{system$^{\dagger}$}
\def\systems0{systems$^{\dagger}$}
\def\Systems0{Systems$^{\dagger}$}
\def\Semiring0{Semiring$^{\dagger}$}
\def\Semirings0{Semirings$^{\dagger}$}
\def\semidomain0{semidomain$^{\dagger}$}
\def\semifield0{semifield$^{\dagger}$}
\def\semifields0{semifields$^{\dagger}$}
\def\vsemifields0{$\nu$-semifields$^{\dagger}$}
\def\domain0{domain$^{\dagger}$}
\def\predomain0{pre-domain$^{\dagger}$}
\def\predomains0{pre-domains$^{\dagger}$}
\def\domains0{domains$^{\dagger}$}
\def\vdomains0{$\nu$-domains$^{\dagger}$}
\def\tTz{\tT_{\zero}}
\def\Fun{\operatorname{Fun}}
\def\domains0{domains$^\dagger$}
\newcommand{\etype}[1]{\renewcommand{\labelenumi}{(#1{enumi})}}
\def\eroman{\etype{\roman}}
\def\pipe{{\underset{{\tG}}{\mid}}}
\def\pipe{{\underset{{\tG}}{\mid}}}
\def\lmodg{\mathrel  \pipe \joinrel \joinrel =}
\def\sig{\sigma}
\def\invr{{^{-1}}}
\def\tra{\operatorname{tr}}
\def\a{\alpha}
\newtheorem{thm}[theorem]{Theorem}
\newtheorem*{thm*}{Theorem}
\newtheorem{cor}[theorem]{Corollary}
\def\Hom{\operatorname{Hom}}
\newtheorem{lem}[theorem]{Lemma}
\newtheorem{rem}[theorem]{Remark}
\newtheorem{prop*}{Proposition}
\newtheorem{prop}[theorem]{Proposition}
\newtheorem{defn}[theorem]{Definition}
\newtheorem*{examp*}{Example}
\newtheorem*{examples*}{Examples}
\newtheorem*{remark*}{Remark}
\newtheorem{Note}[theorem]{Note}
\newtheorem{MNote}[theorem]{MAJOR NOTE}
\newtheorem*{defn*}{Definition}
\def\R{\Real}
\def\la{\lambda}
\def\tT{\mathcal T}
\def\tTz{\tT_\zero}
\numberwithin{equation}{section}
\def\M0{M_{\zero}}
\def\SR{R}
\def\tGz{\mathcal G_\zero}
\def\PS{P}
\def\Cong{\Phi}
\def\rone{{\one_\SR}}
\def\semirings0{semirings$^\dagger$}
\newcommand{\nPS}[1]{\PS_{(!#1)}}
\newcommand{\nPSo}[1]{\nPS{\one}}
\newcommand{\absl}[1]{|{#1}|}
\newcommand{\adj}[1]{\operatorname{adj}({#1})}
\newtheorem*{nothma}{\textbf{Theorem B}}
\newtheorem*{nothmaa}{\textbf{Theorem D}}
\newtheorem*{nothmb}{\textbf{Theorem E}}
\newtheorem*{nothmc}{\textbf{Theorem F}}
\newtheorem*{nothme}{\textbf{Theorem G}}
\newtheorem*{nothmf}{\textbf{Theorem C}}
\newtheorem*{nothmg}{\textbf{Proposition A}}
\newtheorem*{nothmh}{\textbf{Theorem I}}
\newtheorem*{nothmi}{\textbf{Theorem H}}
\newtheorem*{nothmj}{\textbf{Theorem J}}
\newtheorem*{nothmk}{\textbf{Theorem K}}
\newtheorem*{nothmm}{\textbf{Proposition L}}
\newtheorem*{nothmn}{\textbf{Theorem M}}
\begin{document}


\title[Algebras with a negation map]
{Algebras with a negation map}\footnote{This paper originally was
 entitled ``Symmetries in tropical algebra'' in
arXiv:1602.00353 [math.RA]}


\author[Louis Rowen]{Louis Halle Rowen}
\address{Department of Mathematics, Bar-Ilan University, Ramat-Gan 52900,
Israel} \email{rowen@math.biu.ac.il}

\subjclass[2010]{Primary    16Y60, 12K10, 08A72, 20N20, 08A05;
Secondary 06F05, 14T05, 13A18.}

\date{\today}


\keywords{tropical algebra, tropical geometry, tropicalization,
Puiseux series,  valuation, tangible, metatangible, negation map,
triple, system, symmetrization, congruence,
  hyperfield, fuzzy ring, exploded algebra, ELT
algebra, polynomial, tensor product, linear algebra, matrix, Lie
algebra, superalgebra, Grassmann algebra, exterior algebra,
supertropical algebra, semigroup, monoid, module, semiring,
semifield, surpassing relation.}

\thanks{The author would like to thank the following researchers for helpful conversations:
Oliver~Lorscheid and Zur Izhakian for discussions on
tropicalization, Marianne Akian and Stephane Gaubert, together with
Adi Niv, for discussions on symmetrization and for interesting
examples, Max Knebusch for insights on ub semirings, Matt Baker for
explaining hyperfields, and especially the role of the ``sign
hyperfield,'' and Sergio Lopez for helpful comments on the first
arXiv version. Special thanks are due to Guy Shachar for a careful
reading of the first arXiv version which led to several corrections,
 to J.~Jun for finding several inaccuracies in various versions,
pointing out the  connection to fuzzy rings, Henry's paper, and
other interesting papers with which he has been involved, to
Marianne Akian for pointing out how various theorems should be
clarified, and especially to the referee, who provided a great many
improvements.}

\thanks{The author's research was supported by Israel Science Foundation grants No.
1207/12 and 1994/20. The author would also like to thank the
University of Virginia for its support during the initial
preparation of this work in 2015.}


\begin{abstract}
{In tropical mathematics, as well as other mathematical theories
involving semirings,  one often is challenged by the lack of
negation when trying to formulate the tropical versions of classical
algebraic concepts for which the negative is a crucial ingredient.
Following   ideas originating in work of Dress, Gaubert, and the
Max-Plus group and pursued further by Akian, Gaubert, and Guterman,
we study algebraic structures with negation maps, called
\textbf{systems}, showing how these unify the more viable
(super)tropical versions, as well as symmetrization, hypergroup
theory and fuzzy rings, thereby helping to explain similarities in
these theories. Special attention is paid to \textbf{metatangible}
 systems, whose algebraic theory includes all the main
examples, and is rich enough to facilitate computations and provide
a host of structural results. The systems studied in this paper are
``ground'' systems, insofar as they are the underlying structure
which can be studied via other ``systemic modules''.

By formalizing the structure, we can introduce morphisms. Morphisms
enable
 us to describe the tropicalization functor, as well as
  providing a link between classical algebraic
results and their tropical and hyperfield analogs.}

%
\end{abstract}

\maketitle

\tableofcontents




\section{Introduction}\label{conc0}

 This paper, a trimmed and rearranged version of  \cite{Row16v},
 which was
 in turn based on  \cite{Row16},
    was born from the desire to understand a mysterious parallel
 between structural
 results in what we will call the ``classical algebraic theory''
 and theorems formulated directly in varied aspects of tropical algebra,
 despite the former dealing with fields and the latter with the max-plus algebra
 and related semifields.  It is designed to lay the foundation for a unified algebraic
 theory,   which also
 encompasses diverse recent research in  hyperfields,  fuzzy rings, and especially
 tropical mathematics.

 Our objective in this project is three-fold: laying out the underlying foundations, providing applications
 to the literature, and then developing the structure with an emphasis on representation theory and geometry.
 This paper focuses on the first two parts; the third
  has involved several additional papers
 in the last two years, including \cite{Row17,AGR0,AGR,GaR,JMR,JMR1}. We need considerable
 background
to set up these foundations, so this paper has two introductions.
The first introduction (\S\ref{conc}) gives an idea of our goals,
but without full definitions, and the second elaborates on them more
fully in \S\ref{det}, together with an outline of the organization
in \S\ref{over1} and a list of main results in \S\ref{over27}.

The main ideas behind the theory are explained in a sequence of 17
notes interspersed throughout the paper, each labeled as a ``MAJOR
NOTE.''
%
%

\subsection{Short overview}\label{conc}$ $ \numberwithin{equation}{section}

  Tropicalization  originally was viewed as a process of
  taking logarithms and passing in the limiting case
  to the max-plus algebra, which is a semiring. Thus,
tropical algebra customarily has relied on the theory of semirings
 which goes back to Costa~\cite{Cos}
and Eilhauer \cite{Ei}, and for which we use Golan~\cite{golan92} as
our standard reference.

As the field of  Puiseux series  came into play, the underlying
semiring was viewed as the target of the ``Puiseux valuation,''
which differs somewhat from the max-plus algebra. Towards this end,
in \cite{zur05TropicalAlgebra, IKR, INR, IR1} a ``supertropical''
theory was initiated over a semiring  by means of a ``ghost map''
(where the negation map actually is the identity map), with various
applications to affine varieties, matrices, linear algebra, and
quadratic forms. Classical algebraic results were transferred to the
tropical theory by means of a somewhat mysterious ``surpassing''
relation on semirings, which satisfies many properties of equality,
and replaces equality in many generalizations of classical theorems,
especially for polynomials and matrix theory. Thus we are motivated
to ask how exactly this surpassing relation fits into the algebraic
theory.

Viro \cite{Vi} cast the supertropical theory and other
 mathematical ideas in terms of
 hyperfields, where the sum of two elements is a set, rather than a
 single element.

In all of these instances, there is a set $\tT$ of main interest
which we call the set of \textbf{tangible
 elements},
(e.g., the max-plus semialgebra but without $a+a = a$, the
symmetrization, or a
 hyperfield), together with a special operation resembling
negation (the switch map or hypernegation in the latter two cases).
Its intrinsic algebraic structure is not sufficient for a full
investigation, necessitating many  algebraic results to be
formulated and proved on an ad hoc basis. The situation is
 clarified significantly by tying $\tT$ to
 a $\tT$-module
$(\mathcal A,+)$  with a richer structure,  often via an embedding,
 studied in accordance with well-known techniques from
universal algebra. Due to the fact that $\tT$ is almost never closed
under addition,  much of its theory depends on understanding
$(\mathcal A,+)$, even though our ultimate interest lies in $\tT$.
In many instances, $\tT$ is a multiplicative monoid, although we
keep open the option that $\tT$ has different structure, possibly
nonassociative, for example with Lie multiplication.


To give an indication of where we are heading, let us start by
listing some classical examples.

\begin{example}$  $
 \begin{enumerate}
 \item $\mathcal A$ is an integral domain and $\tT = \mathcal A \setminus
 \{0\}$.
  \item  $\mathcal A$ is a  graded algebra,
 and $\tT$ is  the monoid   of homogeneous elements.
  \item $\mathcal A$ is a vector space with base  $\tT.$
   More specifically, $\mathcal A$ could be an algebra with a multiplicative
   base
  $\tT$.   For example, $\mathcal A$ could be the group algebra of
  a group $\tT$.
  \item $\mathcal A$ is a Hopf algebra and $\tT$ is a special subset
  (such as the group-like elements or primitive elements). For example, $\mathcal A$ could be  the enveloping
  algebra  of
  a Lie algebra $\tT$.
 \item $\mathcal A$ is the set of class functions from a finite
 group to a field $F$; $\tT$ is the irreducible characters.
  \item  $\tT$ is  the set of reduced words from an algebra $\mathcal
  A$ with a reduction procedure.
  \end{enumerate}
\end{example}
On the other hand, our main motivation is from tropical mathematics,
whose parent structure in tropical algebra is the well-known
\textbf{max-plus
 algebra}, described thoroughly in \cite{ABG}, and which can be
 viewed as a semiring, whose zero element (written $\zero$ here) is $-\infty$.
 But its algebraic structure is  inadequate.

The most basic trouble with the max-plus algebra is its lack of
negation, especially for studying the determinant of matrices. Other
shortcomings of the max-plus algebra  include failure of
 unique factorization in polynomials, conflicting definitions in
 linear algebra, and lack of a multiplicative determinant for matrices.
 Various researchers, going back to Kuntzman~\cite{Ku} in 1972,
tackled this last issue, with decisive results
 \cite[p.352, end of proof of (a)]{RS}. As noted earlier,
Gaubert \cite{Ga} offered a remedy in his dissertation, which was
continued together with the M.~Plus group and Akian and Guterman,
 using a ``symmetry''
 (\cite{Pl}, \cite{Ga}, \cite[\S 3.4]{BCOQ}, \cite{GaP},
\cite{AGG1}, \cite{Hen},  \cite{AGG2}, and \cite[Appendix~A]{Ju}),
 leading to a general ``transfer principle'' to generate semiring
identities. Here we offer a general ``negation map.''

Other predecessors of this study include the following, where $\tT$
is as in this paper:
 Dress \cite{Dr} introduced fuzzy rings to study
valuated matroids (where $\tT$ is a multiplicative subgroup).
Gaubert \cite{Ga} doubled $\tT$, using  ``symmetry,'' to provide a
viable negation.
 Lorscheid \cite{Lor} introduced the rather general framework
of~``blueprints'' in terms of semigroup rings.

Our goal
 here is to
 simplify and
 unify these approaches with  an axiomatic theory which
  is robust enough to provide features lacking in the max-plus algebra.

 \begin{MNote} The negation map $(-)$ is the key to our approach,
 pervading all of the concepts. When  $\tT \subset \mathcal A,$ for every element $a \in \tT$ there
 is a unique element~$(-)a$ for which $a + (-)a$ is a special element of~$\mathcal A$,
 called a ``quasi-zero'' since it replaces the classical element $\zero$. When $\one \in \tT$, then $(-)b   =
 ((-)\one)b$ for all $a \in \mathcal A$.

 The other main notion is a ``surpassing relation,''
 to replace equality in most theorems.
Altogether, our structure of choice, a \textbf{system}
(Definition~\ref{system0}),  is a quadruple $(\mathcal A, \tT, (-),
\preceq),$ where $ \mathcal A$ is a $ \tT$-module, $(-)$ is a
negation map, and $\preceq$ is a
 surpassing relation.\end{MNote}

Although systems were motivated by tropical algebra, our purpose
here is to show that they  are justified in having a theory of their
own.
 Systems include the classical case, as well as two key
applications: the ``standard'' supertropical
 semiring  given in Definition~\ref{super1},    and the symmetrized semiring
(Remark~\ref{sym07} and
 Example~\ref{surpsym}).   Other applications include
 hypergroups (\S\ref{hyperr}, Theorem~\ref{hypersys}), fuzzy rings (\S\ref{fuzz1}), matrices (\S\ref{linalg1}, Theorem~\ref{prodform3}),
tensor semialgebras (\S\ref{tenpro}), polynomials (\S\ref{poly}),
Grassmann semialgebras (Example~\ref{varex1}),
 the ``layered'' semiring of \cite{IKR0} (\S\ref{ELTalg0}), bilinear and quadratic forms  (Example~\ref{varex000}),
 and
 Poisson semialgebras
(\S\ref{Exsmore}). Nonassociative examples include Lie semialgebras
 given in \S\ref{Lie1}. Even when multiplication is associative,
  one might need to relax distributivity, to accommodate application to
hyperfields; distributivity of $\mathcal A$ over elements of $\tT$
is enough to run the theory.

Once we have an organized structure, we talk of the appropriate
category and its morphisms in~\ref{surp3}. Some of the traditional
maps, such as tropicalization, fit into this context, and the Lie
context in Lemma~\ref{Lies} also fits in. But this set-up still
requires  more hypotheses to obtain strong results.
  \begin{MNote}  In
all of our applications, we have ``unique quasi-negation'' as well
as the property, called being \textbf{metatangible}, which says that
for $a,a' \in \tT$, $a'+a \in \tT$ iff $a' \ne (-)a.$  Most of  this
paper explores the ramifications of metatangibility, which
encompasses many areas related to tropical mathematics.
 \end{MNote}

\subsection{Basic definitions and ongoing notation}\label{height}$ $

As customary, $\Net$ denotes the positive natural numbers,  $\Net_0$
denotes $\Net\cup \{0\}$, $\Q$ denotes the rational numbers, and
$\Real$ denotes the real numbers, all ordered with respect to
addition.

Recall that a \textbf{monoid} is a semigroup with a two-sided
identity element, denoted as $\zero$ for addition, and as~$\one$ for
multiplication. We customarily write $ab$ for $a \cdot b$. The
\textbf{power set} of a set $S$ is denoted $\mathcal P(S)$.



\begin{defn}\label{bipot1}
A \textbf{\semiring0} is a semiring $(\mathcal A, +, \cdot, \rone )$
without $\zero ,$ i.e., an additive Abelian semigroup $(\mathcal A,
+)$ and multiplicative monoid $(\mathcal A, \cdot, \rone )$
satisfying the usual distributive laws. \footnote{The reason that we
do not always require~$\mathcal A$
 to have the element $\zero $ is that $\zero $ just
distracts from our true goal,  negation maps and quasi-zeros, not
negatives.}

 A \semiring0 $(\mathcal A,+ \, ,\cdot \;,\one)$ is a
 \textbf{\semifield0} if $(\mathcal A,\cdot)$ is an Abelian group.
\end{defn}

\begin{defn}\label{chark}  
A semigroup $(\mathcal A,+)$  has \textbf{characteristic~$k>0$} if
$( k+1)a  =a  $ for all $a \in \mathcal A,$ with $k \ge 1$ minimal.
$\mathcal A$ has \textbf{characteristic
 $0$} if $\mathcal A$ does not have  characteristic
 ~$k$ for any
 $k\ge 1.$
\end{defn}

When $\one \in \tT\subseteq \mathcal A$ it is enough to check that
$( k+1)\one  =\one.  $  For example,   the max-plus algebra has
``characteristic 1.''
 This leads to the notion of
``$\mathbf F_1$ geometry.''

\begin{defn}
A \textbf{pre-order} is a transitive and reflexive relation. A
\textbf{partial order} (PO) is an antisymmetric pre-order. An
\textbf{order} is a total PO.

 A \textbf{(pre-)ordered monoid}
is a monoid~$\tM $ with a (pre-)order satisfying
\begin{equation}\label{ogr1} a \le b \quad \text{implies}\quad ca
\le cb,\ ac \le bc, \quad \forall  a,b,c \in \tM.\end{equation}
\end{defn}

\subsubsection{Bimagmas}$ $

 We want to maintain the ability to handle the Lie case. Even
when lacking associativity and/or distributivity, our algebraic
structures will have addition and multiplication of various sorts,
so we should weaken the notion of semiring. A \textbf{magma},
sometimes called groupoid, is a set with an operation having no
prescribed identities. Towards this end (and not knowing of
pre-existing terminology) we call  a semigroup $(\mathcal A,+)$ a
\textbf{bimagma} if it has multiplication as well as addition. A
\textbf{homomorphism} of bimagmas is a map preserving
 addition and multiplication. A bimagma is
\textbf{unital} if $(\mathcal A, \cdot)$ possesses a multiplicative
unit~$\one_\mathcal A $. Although the algebraic theory can be
carried out without commutativity, many applications are
commutative, so we often work with a commutative bimagma, denoted
\textbf{cbimagma}.

\begin{Note}  The
reader who feels uncomfortable with cbimagmas can read instead
``commutative semiring,'' which provides the main tropical setting.
\end{Note}

 An \textbf{ideal} of a cbimagma
$\mathcal A$ is an additive sub-semigroup $I$ such
 that $aI \subseteq I$ for all $a \in \mathcal A.$
 $\{ \zero  \}$ is an ideal when  $ \zero  \in \mathcal A$, but  it will be
replaced  by other ideals arising naturally in the structure.

\begin{defn}\label{barn}
 For any unital bimagma $\mathcal A$, we want the image of $\Net$ inside $\mathcal A$. Towards this end, we define  $\mathbf 1   = \one_{\mathcal A},$ and inductively $\mathbf {n+
1} =  \mathbf n + \one_{\mathcal A}$, and $\mathbf N(\mathcal A)$ to
be $ \{ \mathbf n : n \in \Net\}\subseteq \mathcal A.$  When
$\mathcal A$ is understood we write $\mathbf N$ for $\mathbf
N(\mathcal A)$.
\end{defn}

When $\mathcal A$ is a bimagma we still require the following
special case of distributivity: $$\mathbf m a = {m}a, \quad \forall
m \in \Net,\ a \in \mathcal A.$$  One tricky point  is that we may
not be able to identify $n$ with
 $\mathbf n$, even for semirings of characteristic~0, for example with
truncated numbers (Example \ref{nontang}(vii) below). Nevertheless, we have:

\begin{rem}\label{hom1}  There is a homomorphism $\Net \to \mathcal A$ given by $n \mapsto
\mathbf n$. 
 \end{rem}

\subsubsection{Connection to universal algebra and model theory}\label{univalg}$ $

 In many of our examples, $\tT$ is a multiplicative monoid, even a
group, and this structure is to be viewed intrinsically, as is the
negation map. Other properties (such as nonassociative Lie
multiplication, or nondistributive operations for hyperfields) also
can come into play. Furthermore, one might want to consider the more
esoteric bimagmas. Because of the varied algebraic structures
involved, the appropriate setting for the investigation would be
 universal algebra, for which
\cite{Jac1980} serves as our reference (also cf.~\cite{Coh,Berg}), but to
avoid digressions we leave that aspect of the
 theory to the reader's discretion, and make do with the terminology ``algebraic structure''
 instead of the more technical ``$\Omega$-algebra'' of
 \cite[Definition~2.1]{Jac1980}. Roughly speaking, we deal with
 given algebraic structures, each equipped with given $n$-ary operations for $n=0,1,2,$
 which satisfy given \textbf{identities}, which
are simple universal sentences (i.e., of the form $f(x_1, \dots,
x_m) = g(x_1, \dots, x_n)).$ A \textbf{homomorphism}
  by definition  preserves all of the operations
in the structure.

%

We recall from \cite [p.~61]{Jac1980} that a \textbf{congruence} of
an algebraic structure $\mathcal A$ is an equivalence relation
$\Cong$ on~$\mathcal A$
 preserving the operations of $\mathcal A$, or in other
words which is a subalgebra of $\mathcal A \times \mathcal A$.
Ideals do not work well in the absence of negatives, but fortunately
congruences serve as a replacement (although technically more
complicated). We also denote   $\Cong$ as~$\equiv$, or,
equivalently, as $\{ (\mathbf a,\mathbf a'): \mathbf a \equiv
\mathbf a' \} \subseteq \mathcal A\times \mathcal A.$  \cite
[p.~62]{Jac1980} ~provides the Noether correspondence
 between homomorphic images and congruences:
any
  homomorphism $\psi: \mathcal A \to \mathcal B$ gives
rise to a congruence $\Cong_\psi$ on $\mathcal A$ given by $(\mathbf
 a,\mathbf {a'}) \in \Cong_\psi $ iff $ \psi(\mathbf a) = \psi(\mathbf {a'})$; conversely, any
congruence~$\Cong$ gives rise to the corresponding structure
$\mathcal A/\Cong$ on the equivalence classes, together with a
natural
 homomorphism $\psi: \mathcal A \to \mathcal A/\Cong$ defined by $\mathbf
a \mapsto [\mathbf a].$

%

\section{A more detailed overview}\label{det}

We provide more details on the main concepts:
$\tT$-modules, negation maps, and surpassing relations.

\subsection{$\tT$-modules}

\begin{defn}\label{modu13}
A (left) $\tT$-\textbf{module}   over a set $\tT$ is an additive
monoid $(\mathcal A,+,\zero _\mathcal A)$ together with scalar
multiplication $\tT\times \mathcal A \to \mathcal A$ satisfying
distributivity over     $\tT$  in the sense that
$$a(b_1+b_2) = ab_1 +ab_2,\quad  \forall a \in \tT,\ b _i \in \mathcal
A,$$ also stipulating that $a\zero_{\mathcal A} = \zero_{\mathcal
A}$ for all $a$ in $\tT$ \footnote{Modules $\mathcal A$ over a
semiring $\tT$
  are called \textbf{semimodules} in the literature, where one
 stipulates that $\zero _\tT b = \zero_{\mathcal A}$, $\forall b \in \mathcal A.$}.

In this case, we also say that the set
 $\tT$   \textbf{acts} on  $(\mathcal A, +)$.
A $\tT$-module $\mathcal A$ is
\textbf{cancelative}   if   $a b  = a b'$ for~$a$ in $\tT$ implies $  b =    b'$.
\end{defn}

Our relevant   structures all are $\tT$-modules. Often $\tT$ itself
has extra structure,   passed on to~$\mathcal A.$

\begin{defn} When $ (\tT, \cdot, \one)$ is a monoid,
we say there is a \textbf{monoid action} 
 when $\one_\tT b =b$ and $(a_1a_2)b = a_1 (a_2 b)$, for all
$a_i \in \tT$ and $b\in \mathcal A$, and call $\mathcal A$ a
 \textbf{monoid module}. 
\end{defn}

Cancelation is clear for a monoid action over a group. 

%
%
%

%
%
%
%

\begin{example}\label{triv} We say that $\tT$ is  \textbf{trivial}
if $|\tT| = 1.$  Here are some instances:
\begin{itemize}
 \eroman
 \item $\mathcal A = \tT = \{ \zero \}.$
  \item (The Boolean semifield) $\mathcal A = \{ \zero ,\one
  \}$ and $ \tT = \{  \one
  \}$
  where $\one+\one = \one^2 = \one .$
   \item (The Boolean ``supertropical semifield'') $\mathcal A = \{ \zero ,\, \one,\ \one^\nu := \one+\one  \},$
  where $\tT = \{ \one\}$, $\one^2 = \one$,   and $ \one^\nu +\one = \one^\nu.$
\end{itemize}
\end{example}

%
%
%

\begin{defn}\label{presem}
 A \textbf{$\tT$-bimagma} (resp.~\textbf{$\tT$-cbimagma}) is
a bimagma (resp.~cbimagma)  $\mathcal A$ which is a unital
$\tT$-module satisfying
$$a(b_1 b_2) = (ab_1)b_2,\quad \forall a \in \tT,\ b _i \in \mathcal
A. \footnote{We do not require distributivity over all of $\mathcal
A$ a priori, which could
fail for hypergroups, one of our motivating examples.}$$ 

 A $\tT$-\textbf{\semiring0} is a \semiring0 which
is a $\tT$-bimagma.

A $\tT$-\textbf{\semifield0} is a commutative $\tT$-semiring for which
$\tT$ is a group. (This is called a \textbf{demifield}
in~\cite[Definition~4.1]{Bak} when $\tT$ is a hyperfield generating
$(\mathcal A,+)$.)

 A \textbf{semialgebra} over a commutative
(associative) \semiring0~$C$ is a $C$-bimagma (i.e., $\tT = C)$
which also distributes over $C$.
\end{defn}


\begin{Note}[Ongoing hypotheses and notation]     $(\mathcal A, +)$ is a $\tT$-module, whose zero element
$\zero $ is \textbf{absorbing} in the sense that $a \zero = \zero$
for all $a \in \tT$. To simplify the exposition we assume that $\tT
\subseteq \mathcal A$. (See \cite{AGR} for an elaboration of this
point.) We normally designate elements of $\tT$ as ``$a$'', and
elements of $\mathcal A$ as ``$b$.'' For simplicity, we assume $\tT
\subseteq \mathcal A$, and write $ \tTz$ for $ \tT \cup \{ \zero
\}$.
   Note that $ \{\zero \}\cup  \tT \cup ( \tT +  \tT)
=  \tTz +  \tTz .$  \end{Note}

\begin{defn}\label{modu-order} A \textbf{(pre-)ordered $\tT$-module} is a
$\tT$-module  $\mathcal A$ with a (pre-)order relation $\le$,
satisfying the following, for $a, a'\in \tT$ and $b,b_i,b_i' \in
\mathcal A$:
\begin{enumerate}
 \eroman
\item If $b_i \le b_i'$ for $i= 1,2$, then  $b_1 + b_2 \le b_1'
   + b_2'.$
    \item   If  $b_1 \le b_1'$ then $a b_1 \le ab_1'.$
       \item  (When $\tT$ is pre-ordered) If  $a \le a'$,  then $a b\le a' b.$

\end{enumerate}
\end{defn}
%
%
%


\subsubsection{Height}\label{height10}$ $

 \begin{defn}\label{height1} When 
 $\tT $  additively generates
$\mathcal A$, we define the \textbf{height} of an element $c \in
\mathcal A$ as the minimal number ~ $m_c$ such that $c = \sum
_{i=1}^{m_c} { a_i}$ with each $a_i \in \tT.$ (We say that $\zero$
has height~ 0.) The \textbf{height} of~$\mathcal A$ is the maximal
height of its elements (which is said to be~$\infty$ if these
heights are not bounded). \end{defn}

Thus $\mathcal A$ has height~1 iff $\mathcal A = \tT $ or $\tT
_{\zero}.$ $\mathcal A$ has height~2 iff $\mathcal A = \tT \cup (\tT
+ \tT )$ or $ \tT _{\zero} + \tT _{\zero} $, which also will play an
important role. The tropical theory falls largely into height 2.
Height 3 involves extra subtlety, such as various hyperfields and
``quasi-periodicity'' as indicated for example in
 Theorem~\ref{matrsys} and Example~\ref{Basicexamples}.
%

\subsection{Negation maps}\label{neg1}$ $

Since semigroups lack negation, we introduce a \textbf{negation map}:

\begin{defn}\label{negmap}
A \textbf{negation map} on a $\tT$-module $(\mathcal A,+)$ is an
 additive homomorphism
$(-) :\mathcal A \to \mathcal A$ of order $\le 2,$  written
$b\mapsto (-)b$, restricting to a  map $(-) :\tT\to \tT,$ compatible
 in the sense that \begin{equation}\label{neg}((-)a)b = a((-)b) =  (-)(ab), \ \forall a \in \tT,  \, b   \in \mathcal A.\end{equation}

A \textbf{negation map} on a $\tT$-bimagma $(\mathcal A,\cdot,+)$ is
 a negation map also
 satisfying~\eqref{neg}   for all $a,b \in \mathcal A$.
\end{defn}

\begin{rem}\label{makeneg} As Gaubert and Knebusch both observed, when $\tT$ contains a multiplicative
unit $\one_\tT$, the negation map on $\mathcal A$ is given simply by
$b \mapsto ((-)\one_\tT) b.$ But one has to be careful in choosing
$(-)\one $  when it is not specified a priori, cf.~\cite[Lemma~2.18]{Row16v}.
\end{rem}

Thus a negation map is a formal map $a \mapsto (-)a$, viewed as part
of our structure, that satisfies all of the properties of negation
{\it except} $a+((-)a) = \zero$, which comes automatically for
classical algebra.

Initially, negation is notably absent in tropical situations, but is
  circumvented in two main ways: the identity itself is a negation
map, paving the way to the ``supertropical theory,'' or else one can
introduce a negation map through the process of ``symmetrization''
(cf.~\S\ref{symm10} and \cite{AGG2}), which passes to $\mathcal A
\times \mathcal A.$

  To simplify notation, we write
$b(-)b' $ for $b+ ((-)b')$. Then we put $b^\circ : = b (-)b,$ called a
\textbf{quasi-zero}.

 To avoid ambiguity, we then  write the product
of $b$ and $(-)b'$ as $b((-)b')$, which occurs much more
rarely. Also we write $(\pm) b$ for ``$ b $~or $(-)b,$'' and
$b(\pm) b'$ for ``$ b+b' $ or  $b(-)b'.$''

We write $\mathcal A^\circ $ for $\{ b^\circ: b \in \mathcal A\},$ a
$\tT$-submodule of $(\mathcal A,+)$, and $\tT^\circ$ for $\{ a^\circ
: a \in \tT\}.$
 In the supertropical theory (Definition~\ref{super1} below), the
 quasi-zeros are the ``ghost'' elements. Under symmetrization (Definition~\ref{sym00}) the
 quasi-zeros have the form $(b,b).$ (In \cite[Definition~2.6]{AGG2} the quasi-zeros
are called ``balanced elements.'')

 \begin{MNote} The quasi-zero  takes the role
customarily assigned to the zero element in classical algebra, where
the only
 quasi-zero is~$\zero$ itself. $\mathcal A^\circ $ plays a fundamental role.
  \end{MNote}

Expressed in these terms, one of the challenges in  tropical
structure theories
 has been to describe~${\mathcal A}^\circ$ accurately.

\begin{lem}\label{minuszero}
$(-)\zero = \zero$ and $ \zero^\circ = \zero.$
\end{lem}\begin{proof} $(-)\zero = (-)\zero + \zero = (-)\zero + ((-)(-)\zero) = (-)(\zero
(-)\zero) = (-)( (-)\zero) = \zero.$ Hence $ \zero^\circ = \zero+
 \zero = \zero.$
\end{proof}

Note that $b^\circ = ((-)b)^\circ ,$ and  $ (mb)^\circ = m b^\circ$
for all $m \in \Net.$

\begin{defn}\label{impelt1}
We designate several important elements  of a unital $\tT$-module
$\mathcal A$,  for future reference:

 \begin{equation}\label{eeq} e = \one (-) \one, \quad e' = e + \one, \quad e^\circ = e (-) e = e+ e = 2e.\end{equation}
\end{defn}

 The most important quasi-zero  is $e,$ which acts similarly to
 $\zero$.  But $e$  need not
absorb in multiplication;
  rather
$ a e=  a(\one (-) \one) =  a  \one (-)  a  \one = ( a \one ) ^\circ $ for
any $a\in \tT.$


\begin{defn}\label{precedeq00}  A \textbf{quasi-negative} of $a \in \tT$ is an element $b\in \tT$
such that ${ a} +{ b}\in \mathcal A^\circ $. $\tT$ has
\textbf{unique quasi-negatives} if ${ a} + { b}\in \mathcal A^\circ$
for  $a,b \in \tT$ implies $b= (-)a $. In this situation we say
$\mathcal A$ and $(-)$ are \textbf{uniquely quasi-negated}.
\end{defn}

 By definition, $(-)a$ is a
quasi-negative of $a$.  

 \begin{MNote}
  The introduction of quasi-negatives to
replace  negatives  enables
 us
 to
develop
 the  analogs of some of the most basic
 structures of algebra.   The natural
condition  of unique quasi-negatives is surprising powerful,
  showing that $a' (-)a \in \mathcal A^\circ$ implies $a' = a$.
 \end{MNote}

\begin{lem}\label{negmatch} Suppose that $(-)$ is uniquely
quasi-negated.
\begin{enumerate}\eroman
\item
 For  $a_1 \ne a_2$ in
$\tT$ and $a_1 (-) a_2\in \tT,$ $a_1 + a_2 =  a_2$ implies  $a_1 (-)
a_2 = (-) a_2$ (or equivalently,  $a_2 (-) a_1 =  a_2$).

 \item If  $a_3 = a_1  + a_2 $, and $a_3(-)a_2$ is tangible then $a_3(-)a_2=a_1 $.\end{enumerate}
\end{lem}
\begin{proof} (i)  $ (a_1 (-) a_2) + a_2 = (a_1 + a_2)(-) a_2 = a_2(-)
a_2  = a_2 ^\circ,$ implying $ a_1 (-) a_2= (-)a_2$.

(ii)  $(a_3(-)a_2)(-) a_1 =  a_3(-)a_3 = a_3 ^\circ,$ so $a_1 =
a_3(-)a_2$.
\end{proof}

(As M.~Akian has pointed out, the conclusion of (i) fails   when $
\mathbf 2 = \one \ne e$ for $a_1 = a_2 = \one,$ since $a_1 + a_2  =
a_2  = \one$ whereas $a_1 (-) a_2  = e.$)

Other crucial consequences are to be given in
 Proposition~\ref{uniq67} and Proposition~\ref{uniq671}.
%
%
%


%
%

\subsubsection{Negation maps of the first and second kinds}$ $

\begin{defn}\label{kind0}
The negation map $(-)$ is of the \textbf{first kind}   if $(-)b =b$
for all $b \in \mathcal A.$ The negation map is of the
\textbf{second kind} if $(-)a \ne a$ for all $a \in \tT.$
\end{defn}

This distinguishes between ``supertropical'' and ``symmetrized''
algebra,  helping to explain why theorems holding for supertropical
semirings might fail for symmetrized semirings.

 \begin{lem}\label{hom00}
When $\mathcal A$ is a $\tT$-cancelative unital $\tT$-module with
$\one \in \tT$, the kind of the negation map is determined by
whether or not $(-)\one = \one.$
\end{lem}
\begin{proof}  If $a  \one = a = (-)a = a ((-)\one) $ for some  $a \in \tT$, then $\one = (-)\one.$

If $\one = (-)\one,$ then $b =  \one b=  ((-)\one)b   =  \one((-)b)= (-)b. $

\end{proof}

 As an
 indication of where we are headed, in the literature, a semigroup $(\mathcal A, +)$ is called
 \textbf{idempotent}
 (resp.~\textbf{bipotent}) if $a+a = a$ (resp.~$a+b \in \{ a, b\}$)
 for all $a,b \in \mathcal A$.

%
%

\subsection{Pseudo-triples and triples}

 \begin{defn}\label{sursys0}
 A \textbf{pseudo-triple}
is a collection $(\mathcal A, \tT, (-)),$  where   $\mathcal A$ is a
$\tT$-module and $(-)$ is a negation map on  $\mathcal A$.
%

 A \textbf{triple} is a pseudo-triple satisfying:
\begin{enumerate}\eroman
 \item $\tT \cap
\mathcal A^ \circ = \emptyset$,

 \item $\tT$
generates $( \mathcal A,+).$
\end{enumerate}

The triple is \textbf{unital} if $\mathcal A $ is unital with $\one
\in \tT.$
\end{defn}

\begin{MNote}  The triple is the fundamental
structure in this paper, taking the role akin to that of the base
ring for a module. $\tT \cap \mathcal  A^\circ = \emptyset$ enables
us to distinguish tangible elements from quasi-zeroes. The fact that
$\tT$ generates $( \mathcal A,+)$ enables us to reduce assertions
about $\mathcal  A$ to $\tT.$
\end{MNote}

\begin{example}\label{trivtrip} The only triple with $\tT$ trivial must be of the form $\Net a$ where $\tT = \{ a \}.$
 \end{example}

 \begin{prop}\label{nozero7} For any uniquely quasi-negated $\tT$-cancelative  pseudo-triple $(\mathcal A, \tT,
(-))$ with $\tT$ closed under multiplication (other than the  triple
with $\tT$ trivial of Example~\ref{trivtrip}), $\tT  \cap \mathcal A
^\circ = \emptyset$.
\end{prop}
\begin{proof} Suppose $a \in \tT  \cap \mathcal A ^\circ $. Then $a+a\in \mathcal A ^\circ $, so  $a =
(-)a$. Moreover, for any $a' \in \tT $,\ $aa'\in\tT$ and $aa' (-) a
\in \mathcal A ^\circ,$ so $aa' = a$. Likewise  $a^2 = a$, so $a' =
a$  by cancellation. Thus $\tT = \{a\}$.
\end{proof}

 Any unital triple has the
 subtriple
 $(\langle \one \rangle,  (\pm) \one, (-))$
generated by $\one$, which plays a useful role parallel to $\mathbf
F_1$-geometry.
 The pseudo-triple is needed in linear algebra, to describe linear combinations of vectors.

 \begin{lem}[{\cite[Remark~4.5]{AGG1}}]\label{circ0}  $\mathcal A^\circ $ is a $\tT$-submodule of $ \mathcal A$
 fixed under $(-)$, for any
 pseudo-triple $(\mathcal A,\tT,(-))$.
\end{lem}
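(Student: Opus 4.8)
The plan is to verify directly that $\mathcal A^\circ$ is closed under addition, under the negation map $(-)$, and under the $\tT$-action, using the axioms of a $\tT$-monoid module triple together with the identities already recorded above. Recall $\mathcal A^\circ = \{a^\circ : a \in \mathcal A\}$ where $a^\circ = a(-)a$.

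\emph{Additive closure.} First I would show $a^\circ + b^\circ = (a+b)^\circ$ for $a,b \in \mathcal A$. Since $(-)$ is a semigroup homomorphism, $(-)(a+b) = (-)a + (-)b$, so
$$(a+b)^\circ = (a+b)(-)(a+b) = a + b + (-)a + (-)b = (a(-)a) + (b(-)b) = a^\circ + b^\circ,$$
using commutativity of $(\mathcal A,+)$. Thus $\mathcal A^\circ$ is a sub-semigroup of $(\mathcal A,+)$; if $\zero \in \mathcal A$ then $\zero = \zero^\circ \in \mathcal A^\circ$ by Lemma~\ref{minuszero}.

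\emph{Closure under $(-)$.} Next, $(-)(a^\circ) = (-)(a(-)a) = (-)a + (-)((-)a) = (-)a + a = a^\circ$ by commutativity, so $\mathcal A^\circ$ is stable under the negation map (indeed fixed pointwise by it).

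\emph{Closure under the $\tT$-action.} Finally, for $c \in \tT$ and $a^\circ \in \mathcal A^\circ$, axiom \eqref{emul9} of a $(\tT,(-))$-module gives $c \, a^\circ = (ca)^\circ \in \mathcal A^\circ$. Combined with the previous two points, $\mathcal A^\circ$ is a $(\tT,(-))$-submodule of $\mathcal A$. I do not anticipate a genuine obstacle here: the statement is essentially bookkeeping, and the only subtlety is making sure the negation map is being used qua semigroup homomorphism (for additive closure) while \eqref{emul9} handles the scalar multiplication — the mildly delicate point being that $e = \one(-)\one$ need not lie in $\tT$, but \eqref{emul9} is precisely the axiom inserted "by fiat" to sidestep that, so invoking it is legitimate.
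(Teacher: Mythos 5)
Your proof is correct and takes essentially the same approach as the paper: verify directly that $\mathcal A^\circ$ contains $\zero$, is closed under addition and $(-)$, and is stable under the $\tT$-action via $a\,b^\circ = (ab)^\circ$. If anything, you are slightly more thorough than the paper, which leaves the additive closure $a^\circ + b^\circ = (a+b)^\circ$ implicit and instead records the side observation $e\,b^\circ = (2b)^\circ$; both routes invoke the same compatibility of $(-)$ with the $\tT$-action.
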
 \begin{proof} $\zero  = \zero ^\circ \in \mathcal A^\circ,$
 and  $a(b^\circ) = a (b (-) b) = (ab) (-) (ab) = (ab)^\circ .$
 \end{proof}
 We express extra structure of $\tT$ and $\mathcal A$ with
the following terminology.

 \begin{defn}\label{minusmod}
%
%
%

 A \textbf{bimagma (resp.~cbimagma) triple}  is a triple $(\mathcal A, \tT, (-)),$ where $\mathcal A$ is also a bimagma (resp.~cbimagma).
A \textbf{semiring triple}  is a bimagma triple  which is also a
semiring (i.e., associative and distributive).
\end{defn}


Although the max-plus algebra is bipotent, idempotence   fails in
many other examples. It is too restrictive for our needs, so
instead, we settle for a slightly weaker version.

 \begin{defn}\label{impelt} A   triple  $(\mathcal A, \tT, (-))$ is  $(-)$-\textbf{bipotent}
  if $a_1 + a_2 \in \{a_1 ,a_2\}$
whenever $a_1, a_2 \in \tT$ with $a_2 \neq (-) a_1.$ In other words, $a_1 + a_2
\in \{a_1 ,a_2, a_1^\circ \}$ for all $a_1,a_2 \in \tT$. 
\end{defn}

 $(-)$-bipotence turns out to hold
in  the main
 variants of the max-plus algebra that   arise  in the tropical literature, but
 the following slightly weaker condition suffices to develop the theory.

\begin{defn}\label{metatan} A uniquely quasi-negated triple $(\mathcal A, \tT, (-))$   is \textbf{metatangible} if  $a + b \in \tT$ whenever $a, b \in \tT$ with $b
\neq (-) a.$
 \end{defn}

All  $(-)$-bipotent triples  clearly are metatangible.
\begin{rem}Often for metatangible triples,  negation maps of second
kind are preferable to those of first kind, since then $a \ne (-)a$
for $a \in \tT$ implies $a+a \in \tT$. If $(\mathcal A, \tT, (-))$
is $(-)$-bipotent with $(-)$ of second kind, then
$$ e+\one = \one (-) \one +\one = \one (-) \one = e = e(-)\one. $$ \end{rem}

 In Theorem~\ref{twoexamples}  we shall see
that ``most'' metatangible triples are  $(-)$-bipotent.

\begin{MNote} Metatangibility is the main engine for this paper, encompassing a wide range of examples, as seen
 in Theorem~\ref{mon11}.
\end{MNote}
%

 Tensor products of triples  are triples, as described below in
\S\ref{tenpro}. Although they lose  $(-)$-bipotence, they provide a
powerful tool in the theory.

%

%
%

\subsection{Surpassing relations}\label{surp}$ $

 Many subtleties cannot
be explained properly until we bring in another notion.

\begin{defn}\label{precedeq07}
A \textbf{surpassing relation} on a triple $(\mathcal A, \tT, (-))$,
denoted
  $\preceq $, is a partial pre-order satisfying the following, for elements of $\mathcal A$:

  \begin{enumerate}
 \eroman
  \item   $b \preceq b'$
  whenever $b + c^\circ = b'$ for some $c\in \mathcal A$.
    \item If $b \preceq b'$ then $(-)b \preceq (-)b'$.
  \item If $b \preceq c$ and $b' \preceq c'$   then  $b + b' \preceq c
   + c'.$
    \item   If  $a \in \tT$ and $b \preceq b'$ then $a b \preceq ab'.$
    \item   If  $a\preceq a' $ for $a,a' \in \tT,$ then $a =  a'.$
   \end{enumerate}
\medskip
We also write $b' \succeq b$ to indicate that $b \preceq b'$.

   A \textbf{strong surpassing relation} on a triple $\mathcal A$
   is a surpassing relation also satisfying  $b^\circ \not \preceq a $ for any $a \in \tT.$

%

A \textbf{surpassing $\circ$-PO} on $\mathcal A$
 is a surpassing relation   $\preceq $ which restricts to a PO on $\mathcal A^\circ$.



\end{defn}

One other property that one often wants is that $a \preceq a^\circ,$
which holds whenever $e'=e$, since then $a^\circ = a + a^\circ.$ It
holds in all of the tropical examples except the layered ones (when
$e'\ne e$), but fails miserably in the classical case. Here is our
major tropical example.

%

  \begin{defn}\label{precedeq23} The \textbf{$\circ$-relation} $\preceq_{\circ}
  $ is the relation $b  \preceq_\circ b'$ iff $b' =
b + c^\circ$ for some $c\in \mathcal A.$
\end{defn}

 In case $(-)$ is of the first kind, this
says $b' = b + 2c$.

\begin{lem}\label{precedeq008}
The relation  $\preceq_{\circ}$ is a partial pre-order satisfying
properties (i)--(iv) of Definition~\ref{precedeq07}.
\end{lem}
\begin{proof} (i) Condition (i)  of Definition~\ref{precedeq07} is by definition. To see Condition (ii), note that $a + c^\circ = b$ implies $(-)a + c^\circ = (-)(a + c^\circ) =
(-)b.$ Condition (iii) is immediate. For Condition (iv), if  $b' = b
+ c^\circ$ then $ab'  = ab + ac^\circ =  ab + (ac)^\circ.$
\end{proof}

\begin{prop}\label{uniq67} The relation  $ \preceq_\circ$ on a  triple with unique quasi-negatives is a strong surpassing
relation.
\end{prop}
\begin{proof} By Lemma~\ref{precedeq008}.  If $a = a' + c^\circ$ for
$a,a' \in \tT$, then $a  (-) a' = (a'+c)^\circ,$ implying $a' = a,$
yielding~Condition (v).  If $a' = b + c^\circ$ for $a'  \in \tT , b
\in A^\circ$, then $a' \in \tT \cap A^\circ$, a contradiction.
\end{proof}

The surpassing relation provides an important sub-$\tT$-module
 of $\mathcal A$.

\begin{defn}\label{Anull} $\mathcal A_{\operatorname{Null}} = \{b \in \mathcal A: b \succeq \zero\}.$
 \end{defn}

 \begin{rem}\label{ci1}   $ \mathcal A^\circ \subseteq \mathcal A_{\operatorname{Null}}$, by
 Definition~\ref{precedeq07}(i).
\end{rem}

%
%
%
%
%
%

\begin{MNote} One main
idea promoted here is that  the surpassing relation lies at the crux
of the theory, replacing equality when generalizing classical
theorems, and $\{ \zero \}$ should be replaced by the ideals
$\mathcal A^\circ$ and $\mathcal A_{\operatorname{Null}}$. Let us
see why the conditions of Definition~\ref{precedeq07} are desired.

\medskip

(i) shows that $\preceq$ refines $\preceq_\circ,$  and shows how the
quasi-zeros behave like $\zero$ under~$\preceq$.

(ii), (iii), and (iv) are needed to preserve the operations.

 By (v), the surpassing relation restricted $\tT$ is equality,
and provides a way of recovering   theorems from classical algebra.

\medskip

Ironically, instead of being symmetric (and thus an equivalence),
$\preceq$ is antisymmetric.  In introducing the surpassing relation,
which is not an identity  in universal algebra,
  we are crossing  over from universal algebra (with identities) to  a more general version
  of one-sided relations
  in model theory (with subtleties which we shall not discuss here).\end{MNote}

 Here is an enlightening
example of how $ \preceq_\circ$ generalizes classical algebra. In
any semiring with a negation map $ (-) $, we write $[a,b]$ for the
\textbf{Lie commutator} $ab (-) ba.$

\begin{lem}[\textbf{Leibniz $\preceq$-identities}]\label{Pois0}  $
[a,b] c +b[a,c]=[a,bc] +(bac)^\circ .$ In particular, $$[a,bc]
\preceq_\circ [a,b]
c +b[a,c]; \qquad [ab,c] \preceq_\circ a[b,c] + [a,c]b
.$$\end{lem}\begin{proof} $ [a,b] c +b[a,c] = (ab (-)ba)c + b(ac
(-)ca)= (abc(-)bca)+(bac(-)bac)=[a,bc] +(bac)^\circ$. The second
assertion is analogous.
\end{proof}


%
%

%
%

Note that when $\tT$ is a group, it satisfies the condition of
``thin elements'' of \cite[Definition~2.7]{AGG2}, and thus
\cite[Property~4.6]{AGG2} is relevant. We return to this issue in  \S\ref{matr1},
\S\ref{matr2} and in \cite{AGR}.

%
%


%
%

\subsection{Systems}\label{surpsy}$ $

 We put everything together.

\begin{defn}\label{system0}

A \textbf{system}  $(\mathcal A, \tT, (-), \preceq)$ is a triple
$(\mathcal A, \tT, (-))$ together with a surpassing relation
$\preceq$, satisfying the following properties:
\begin{enumerate}\eroman
 \item If $a + b
\succeq \zero$ for $a,b \in \tT$ then $b= (-)a $. (This is a bit
stronger than ``unique quasi-negatives'' for triples, but is the
same when $\preceq = \preceq_{\circ}$.)

\item
 $\tT \cap \mathcal A_{\operatorname{Null}}= \emptyset$. (This is already given when $\preceq = \preceq_{\circ}$.)
\end{enumerate}

The system $(\mathcal A, \tT, (-), \preceq)$ is of the
\textbf{resp.~first, second} kind if  $(-)$ is a negation map of the
resp.~first, second kind.


A \textbf{semiring system} (resp.~\textbf{$\tT$-semifield system})
 is a system where
$(\mathcal A, \tT, (-))$ is a semiring triple (resp.~$\tT$-semifield
triple).

 A \textbf{bimagma (resp.~cbimagma) system}  is a system where $(\mathcal A, \tT, (-))$ is a bimagma triple $(\mathcal A, \tT, (-)),$ where $\mathcal A$ is also a bimagma (resp.~cbimagma).

 The system is \textbf{strong} if the surpassing relation $\preceq$
 is strong.
\end{defn}

The role of $\preceq$ is sublime. It often comes naturally with the
triple, being equality for classical algebra, $\preceq_\circ$ in
tropically-oriented situations (see Theorem~\ref{circsurp}), and
$\subseteq$ for hypergroups (Definition~\ref{hyper}).


\begin{defn}\label{hyperty} A system $(\mathcal A, \tT, (-),\preceq)$ is of \textbf{hypergroup type} if:
\begin{itemize}
 \item
 $\mathcal A \subseteq \mathcal P(\tT)$, \item $\tT$ is identified with the set of singletons of ~$\mathcal A $,
 \item $\preceq = \subseteq$, \item $\zero \in a(-)a$ for all $a \in \tT$, \item $b+ b' = \cup \{ a_i+ a_j': a_i \in b, \, a_j' \in
  b', \, \forall b,b' \in \mathcal A\}.$\end{itemize}
\end{defn}

\begin{lem} Any system   $(\mathcal A, \tT, (-),\subseteq)$ of
hypergroup type  is   strong.
\end{lem}\begin{proof} Otherwise suppose $b ^\circ \subseteq \{a\}$. Then $b^\circ
= \{a\} \in \tT\cap \mathcal A^\circ = \emptyset.$
\end{proof}

\begin{MNote} Our two main sorts of systems either have $\preceq = \preceq_{\circ}$
or are of hypergroup type, and thus are strong.
 \end{MNote}

Systems include the classical case, the ``standard'' supertropical
 semiring, and  symmetrized algebras of ~\cite{AGG2}, as well as examples given in \S\ref{exsys}:  hyperfields
 \cite{Vi}, fuzzy rings \cite{Dr},  tracts \cite{BB2}, the ``layered'' semiring of~\cite{IKR0},
 the ``exploded'' algebra \cite{Par}, and the ELT-algebra of
 Sheiner~\cite{BlS,Sh}.
Once the  system  is established, it provides a mechanism for
obtaining effective definitions of new tropical
 algebraic structures, and also provides a guide for applying classical algebraic techniques
 in situations such as in \cite{AGR,GaR,JMR}.

The next observation is the key to the relationship between
$\mathcal A$ and~$\tT$ in a system.

\begin{prop}\label{uniq671} Suppose $(\mathcal A, \tT, (-), \preceq)$ is a system. \begin{enumerate}\eroman
 \item
  If $a + c = b$ for $a,b \in \tT$ and $c \in A_{\operatorname{Null}},$ then $b =
  a.$
   \item   $b \not
\preceq a $ for any $a \in \tT $ and $b\in \mathcal
A_{\operatorname{Null}}.$
 \item Conversely,  suppose that $(\mathcal A, \tT, (-))$
has a sub-$\tT$-module $\mathcal I \supseteq \mathcal A^\circ$ with
$\tT \cap \mathcal I = \emptyset$, having the property that for each
$a \in \tT$ there is a unique $a' \in \tT$ such that $a+a' \in
\mathcal I$. Define $\preceq_{\mathcal I}$  by $b \preceq _{\mathcal
I} b'$ if $b' = b + c$ for $c \in \mathcal I$. Then
$\preceq_{\mathcal I}$ is a strong surpassing relation, and
$(\mathcal A, \tT, (-),\preceq_{\mathcal I})$ is a system with
$\mathcal I = \mathcal A_{\operatorname{Null}}$.
\end{enumerate} \end{prop}
\begin{proof}
(i)
  $  b(-)a= a^\circ +c \in A_{\operatorname{Null}},$ so $b = a.$

(ii) $a \succeq b \succeq \zero$ contradicts $a \in\tT.$

(iii) $\zero \preceq _{\mathcal I} b'$ iff $b'   \in \mathcal I$,
proving $\mathcal I = \mathcal A_{\operatorname{Null}}$. The rest
follows easily from (i) and (ii).\end{proof}

 There are two ways of approaching systems ---
one is  in terms of the basic (expanded) algebraic structure, for
instance a ring or semiring, and the other, as in representation
theory, is in terms of a secondary structure (such as a module). Our
emphasis in this paper is on the former,   covering the basic
tropical algebraic structures, hypergroups, and fuzzy rings. One can
perform standard algebraic constructions,
 such as matrices, formal traces, bilinear forms, quadratic forms (all in
 Example~\ref{varex000}).
 The theory of semiring systems can   be viewed in the context of Lorscheid's ``blueprints''~\cite{Lor},  but also
  their specific extra information  permits us to hone in on
 other applications, which are not necessarily associative. The
 second approach is taken in \cite{AGR,JMR,JMR1}.

\subsection{Organization of this paper}\label{over1}$ $

 See \S\ref{over27} for a list of  the main
results. This paper is structured as follows:

\begin{enumerate}\eroman
  \item  A brief survey of the main concepts
  (tangible, negation map, triple, and system)  has been given in \S\ref{conc0}. The main objective is to
  describe the set $\tT$ in terms of a better structured $\tT$-module $(\mathcal A,+)$
  which also inherits whatever structure comes with $\tT$.
     \item    Metatangible triples, characterized by the property
     that $a_1 + a_2 \in \tT$ for all $a_1 \ne (-)a_2 \in \tT$, are  the focus of this paper and include all of the tropical
 applications, as well as many hyperfields and fuzzy rings. Surpassing relations,
     an extension of equality on~$\tT$,    replace equality in much of the theory.
   These ingredients are combined in  \S\ref{surpsy}  to
       yield the system.
        \item The major applications (max-plus
 algebra, supertropical
semirings, symmetrized semialgebras,  layered \semirings0,
``classical'' semialgebras,
 hypergroups, and fuzzy rings) are
described in \S\ref{tropexa}. In particular, the important technique
of symmetrization is presented in \S\ref{symm10}, to provide  a
  negation map when one is lacking.  Also we we bring ordered monoids into the picture.
          \item  In \S\ref{surpasrel} we describe other major examples, and bring in other key properties of triples and
          systems.
  \item  Additional structure is considered in
  \S\ref{unal}, where we also describe matrices, including the determinant, involutions, polynomials and their roots (to pave the way for affine
 geometry in Remark~\ref{geom1}),
  localization, and tensor products (\S\ref{tenpro}).
  \item  Metatangible systems are studied in depth and largely classified in
~\S\ref{surp22}.
\item
 Seeing that
systems  have a robust algebraic theory, we proceed to view them
categorically in \S\ref{catnot}, utilizing the surpassing relation
$\preceq$ as an essential
ingredient in the definition of morphism. 

%
%

\item Linear algebra is treated in \S\ref{linalg1}, with emphasis on   varying notions of matrix rank.
\item Tropicalization, which provides the connection with classical
mathematics via valuations, is studied  in \S\ref{Exsmore} in terms
of morphisms of systems. This provides the framework of defining and
investigating tropical analogs of classical algebraic structures.
  \item The Lie point of view is given in \S\ref{Liealg}.
  \item Directions suggested for further
research are given in \S\ref{ques}.

 \item The two main non-tropical applications are hypergroups (marked by
 $*$)
   and  fuzzy rings (marked by $**$).  Since hypergroups provide a rich source of examples and
 motivation, they
 are treated throughout the main text, although
 their main examples are put into Appendix A in order
 not to interrupt the flow of this study.
 Likewise,  fuzzy
 rings are viewed as systems in Appendix~B.
 \end{enumerate}
%

%
%
%
%
%
%
%
%

 \subsection{Main
results of this paper}\label{over27}$ $

\begin{nothmg}[Proposition~\ref{uniq671}] A system is described explicitly in terms of a
triple $(\mathcal A, \tT, (-))$ and a ``null''
$\tT$-module~$\mathcal A_{\operatorname{Null}}$.
 \end{nothmg}

Although distributivity can fail (e.g., over  hyperfields), there is
a way of recovering distributivity:

\begin{nothma}[Theorem~\ref{distres}]  Any left and right $\tT$-module $\mathcal A$
generated additively by a commutative monoid  $(\tT,\cdot)$ can be
made (uniquely) into a semiring via the multiplication rule
$$ \left(\sum_i a_i\right)\left(\sum_j b_j\right) = \sum _{i,j} a_i b_j$$
for $a_i, b_j \in \tT$.
 \end{nothma}


 \begin{nothmf}[Theorem~\ref{hypersys}, Proposition~\ref{hypersys7}]  For any hypergroup $\tT$ with negation $(-)$,
 let  $\overline{\tTz}$ be the sub-semigroup of
 the power set additively spanned by $\tT$; then $(\overline{\tTz}, \tT, (-))$ is a triple.
   $\subseteq$   is a surpassing~PO, and $(\overline{\tTz}, \tT, (-),\subseteq) $ is
  a  $\tT$-strictly negated system.

  The   hypergroup $\tT$ is~metatangible (resp.~closed),
iff its hypersystem $(\overline{\tTz}, \tT, (-), \subseteq)$
is~metatangible (resp.~$(-)$-bipotent).
\end{nothmf}

  The assortment of hypergroup
examples, given in~Appendix~A, sheds considerable light on the
theory.

Our main results on metatangible systems
 are of special interest,
  encompassing all of the tropical algebraic
 theories, as well as many other examples.

 \begin{nothmaa}[Theorem~\ref{twoexamples}] Any  cancelative metatangible unital triple  $(\mathcal A,\tT,(-))$  satisfies one of the following cases:
 \begin{enumerate}\eroman  \item   $\mathcal A$
 is $(-)$-bipotent.
 \item $\one (-)\one + \one =  \one,$ with one of the following two possibilities.
 \begin{itemize}  \item  $(-)$ is of the first kind, of characteristic 2. (In other words
 $\mathbf 3 = \mathbf 1 = \one.)$  In this case, $A$ has height
 $\le 2.$
  \item   $(-)$ is of the second kind, either of finite
  characteristic or with  $\{ \mathbf m : m \in \Z \}$ all distinct.
  \end{itemize}\end{enumerate}
 \end{nothmaa}

 \begin{nothmb}[Theorem~\ref{matrsys}]
 Any element $c$   of height $m_c \in \Net$   in   a    metatangible triple has a \textbf{uniform
 presentation},
  $c = m_c c_\tT$ for some element $c_\tT\in \tT$ and $m\ne 2$, or $c =
  c_\tT^\circ$.
 \end{nothmb}

 \begin{nothmc}[Theorem~\ref{uniq71}]
The uniform presentation is unique for any element of height $> 2$
in a cancelative $(-)$-bipotent unital system.
 \end{nothmc}

%

  \begin{nothme}[Theorem~\ref{circsurp}]
For any cancelative metatangible unital triple $(\mathcal
A,\tT,(-))$, we have the metatangible system $(\mathcal A,\tT,(-),
\preceq_\circ)$. Conversely, if there are elements $b \preceq b'$
but $b \not \preceq_\circ b'$, the triple $(\mathcal A,\tT,(-))$ is
either of first kind, of height $>2$, or of height 2 satisfying $b +
b' = b .$
\end{nothme}

   Despite the large assortment of examples given in \S\ref{Exs} and \cite{Row16v},  metatangible systems are described in
   Theorem~\ref{mon11}, as belonging to one of the classes reducing to the familiar examples from tropical theory,
    or satisfying specific properties called ``exceptional.''

%


 \begin{nothmi}[Theorem~\ref{hypsys1}] There is a faithful functor $\Psi$ from the
 category of canonical hypergroups into the category
 of  $\tT$-reversible systems, whose morphisms are the $\preceq$-morphisms, sending a
 hypergroup~$\tT$ to its  hypersystem $(\overline{\tTz}, \tT, (-), \subseteq)$.
\end{nothmi}

   \cite[Example~6.46]{Row16v} discusses weakening distributivity for hypersystems.

Symmetrization (\S\ref{symm10},\, \S\ref{symm11}) is a powerful tool
that enables us to move from arbitrary $\tT$-semirings to systems,
 leading to a
  major application,  the \textbf{transfer principle} which passes identities of rings to $\tT$-semiring
systems, and obtained for matrices in \cite[Theorem~3.4]{AGG1}
following an idea originating in  \cite[p.~352, end of proof of
(a)]{RS} carried on in  \cite{Ga},  as described in \S\ref{transf}.
To state the transfer principle precisely, one needs free objects.
Jacobson \cite[\S2.7]{Jac1980} provides a unified free construction,
but we only work with the following special cases.

The parent structure in tropical algebra is the well-known
\textbf{max-plus
 algebra}, described thoroughly in~\cite{ABG}.
  We append the subscript $_{\operatorname{max}}$ to indicate the
corresponding max-plus algebra, e.g., $\Net_{\operatorname{max}}$
or~$\Q_{\operatorname{max}}$, but to emphasize the algebraic
structure theory we still use the usual algebraic notation of
$\cdot$ and $+$ throughout (rather than $\odot$ and $\oplus$).
\begin{defn}
Throughout this definition, $\mathcal M$ denotes the \textbf{free
monoid}, which is the monoid in formal indeterminates, with
multiplication given by concatenation.

 \begin{enumerate}\eroman
   \item The  \textbf{free  monoid with negation map} ${\mathcal M}_{\pm}$
is the monoid in formal indeterminates and their formal negations,
with multiplication given by concatenation, together with the
relation $(-e_i)h = e_i((-)h)$ defined inductively. (For example,
$((-)e_1)( (-)e_2) = e_1e_2.$)

 \item  The  \textbf{free  \semiring0}
also is $\Net [\mathcal M]$ with the  \semiring0 structure.

 \item The construction of the   \textbf{free $\tT$-monoid semialgebra} is
similar, where we take $\mathcal A$ to be the free module over a
monoid $\tT$. 

 \item  The \textbf{free associative $ \Net_{\operatorname{max}}$-semialgebra}, is
 $\Net_{\operatorname{max}}[\mathcal M].$  Here, $ n x$ evaluates as $x$ in
$\Net_{\operatorname{max}}[\mathcal M].$

 \item The \textbf{free associative $\mathbf N$-semialgebra}   is $\mathbf
N [\mathcal M]$.

 \item The  \textbf{free $\mathbf N$-semialgebra with negation map}
$\mathbf N [ x, (-)x]$ is $\mathbf N [{\mathcal M}_{\pm}]$, which we
denote as $\mathbf N [ x, (-)x]$, writing~$x_i$ instead of $e_i.$ A
typical element of $\mathbf N [ x, (-)x]$ has the form $\sum
_{\mathbf i} (a_{\mathbf i} (-) b_{\mathbf i}) x_{\mathbf i}$, where
$\mathbf i = (i_1, \dots, i_t).$

  \item We can do this all with magmas.
The  \textbf{free  magma}   is the magma in formal indeterminates,
with multiplication given by concatenation, written with parentheses
since we lack associativity.  The  \textbf{free  magma with negation
map} ${\mathcal M}_{\pm}$ is the magma in formal indeterminates and
their formal negations.

 The  \textbf{$\Net$-magma
semialgebra}  is $\Net [\mathcal M]$  where $\mathcal M$ is the free
magma.

The  \textbf{free  bimagma} is given the natural bimagma structure.

%
%

 The \textbf{free (nonassociative) $ \Net_{\operatorname{max}}$-semialgebra}, which we
denote
 $ \Net \{\{ X \}\}$, is
the magma semialgebra $\Net_{\operatorname{max}}[\mathcal M].$  The
\textbf{free  $\mathbf N$-semialgebra}   is $\mathbf N [\mathcal M]$
where $\mathcal M$ is the free magma. The \textbf{free $\mathbf
N$-semialgebra with negation map} $\mathbf N [ x, (-)x]$ is $\mathbf
N [{\mathcal M}_{\pm}]$, which we denote as $\mathbf N [ x, (-)x]$,
writing~$x_i$ instead of $e_i.$
 \end{enumerate}
\end{defn}

 \begin{nothmh}[Transfer principle, Theorem~\ref{trans2}]  Suppose $P= \sum
_{\mathbf i} a_{\mathbf i}  x_{\mathbf i},\,  Q=   \sum _{\mathbf i}
b_{\mathbf i} x_{\mathbf i}\in \mathbf N \{ x, (-)x   \}$, where $
a_{\mathbf i}\ge b_{\mathbf i}$ for each $\mathbf i$. If the free
semiring satisfies the identity $\bar P = \bar Q $, then $ P
\succeq_\circ Q$ in $\mathbf N \{ x, (-)x \}$.
\end{nothmh}

 \subsubsection{The free module}  $ $

%
%

\begin{defn}\label{freem}
The \textbf{free  module}  $\mathcal A^{(I)}$ is the usual direct
sum of copies of $\mathcal A$ (i.e., with almost all entries
$\zero$), where we identify the $i$-th base element with the vector
having $\one$ in the $i$ component. If $\mathcal A$ is a
$\tT$-module then so is~$\mathcal A^{(I)}$, under the diagonal
action $a(b_i) = (ab_i).$  We can define  $ \tT^{(I)}\subseteq
\mathcal A^{(I)}$. We write $ \tT_{(j)}$ for the $j$-th component of
$\tT^{(I)}$, and put $\tT_{\mathcal A^{(I)}}:= \cup _j \tT_{(j)}$.
\end{defn}

Note that for $|I|$ infinite, $\tT^{(I)}$ is not a monoid even when
$\tT$ is a monoid.

\begin{example}\label{free1}
We can formally obtain a ``generic'' negation map in the free
$\mathcal A$-module. Given a set~$I$, we formally define $(-)I$ to
be another copy of $I$, indexed formally by $(-)i: i \in I,$ and put
$\overline I $ to be the disjoint union $ I \cup (-) I.$ The
\textbf{free $\mathcal A$-module with negation map} over a semiring
$\mathcal A$ is the free $\mathcal A$-module ${\mathcal
A}^{(\overline I )}$ whose base is formally denoted as $\{ e_i,\,
(-)e_i  = e_{(-)i}: i \in I\}$, with negation
map given by $e_i \mapsto (-)e_i$ and $(-)e_i \mapsto e_i.$ 
\end{example}

\begin{rem}\label{pillar7}  Notation as in Definition~\ref{freem}. \begin{enumerate}\eroman
 \item
If $(\mathcal A,\tT,(-))$ is a pseudo-triple, then $(\mathcal
A^{(I)},\tT,(-))$ is a pseudo-triple, where the negation map is
defined diagonally.

  \item If $(\mathcal A,\tT,(-))$ is a   triple, then  $(\mathcal A^{(I)},\tT_{\mathcal A^{(I)}},(-))$
    is a  triple over $\tT_{\mathcal A^{(I)}}$, defining negation and multiplication componentwise.
 Likewise for a system,
 the surpassing relation is defined componentwise.

\item If  a  triple $(\mathcal A, \tT, (-))$ has unique
quasi-negatives, then so does the triple $(\mathcal A^{(I)},
\tT_{\mathcal A^{(I)}}, (-))$,
 which will play an important
role in Sections \ref{tenpro}, \ref{linalg1}, and~\ref{Exsmore}.
\end{enumerate}
\end{rem}

Special cases are matrix semirings over semiring triples and
polynomial semirings over semiring triples, to be treated separately in
\S\ref{matr1} and \S\ref{poly}.

 \subsubsection{Linear algebra}$ $

 Linear algebra over systems is
particularly intriguing, since some of the supertropical results go
over, but others have counterexamples, as discussed in
\S\ref{linalg1}.

The negation map $(-)$ is used to define the $(-)$-determinant and adjoint in
 Equation~\eqref{eq:tropicalDetsign}.

 \begin{nothmj}[Theorem~\ref{prodform3}, reformulation of \cite{St}]  $ \absl {A } \absl {B} \preceq _\circ \absl
 {AB},
  $ for any matrices $A,B\in M_n(\mathcal A)$.
\end{nothmj}

 The main result unifying different notions of matrix
rank in \cite[Theorem~3.4]{IzhakianRowen2009TropicalRank} are
formulated rather transparently in this more general context
(for tangible vectors) in \S\ref{linalg1}, but only one direction
holds:

 \begin{nothmk}[Theorem~\ref{A1part}]  If the rows of a tangible
$n \times n$ matrix $A$ over a cancelative $(-)$-bipotent triple
 are dependent, then $|A| \in \mathcal A ^\circ.$
\end{nothmk}

  A wide-ranging counterexample for the converse was presented in \cite{AGG1}. This flavor of the
theory seems to depend on whether negation is of the first or second
kind, since we have more positive results for systems of the first
kind in \cite{AGR}, which delves more deeply into linear algebra
over systems.


%
%

Having the basic theory in place, we turn in \S\ref{Exsmore} to the
mainstay of tropical mathematics, which is tropicalization.
Tropicalization is explained  in \S\ref{tropi} as a
$\preceq$-morphism of systems.

\subsubsection{Applications}$ $

Here is a sample illustration of how the systemic theory can be
given a
 classical flavor.

 \begin{nothmm}[Proposition~\ref{Liesy}] If $L$ is a Lie semialgebra
(over a commutative semiring $C$) with a negation map, then
  $\adL$ is a Lie sub-semialgebra of $\End _C L$, and there is a Lie
 $\preceq$-morphism
  $L \to \adL$, given by $a \mapsto \ad_a.$
  \end{nothmm}

%
%

%

The main examples of hyperfields (supertropical hyperfield, Krasner
hyperfield, hyperfield of signs, phase hyperfield, and triangle
hyperfield) are described explicitly  in Appendix A as systems, some
of which are bipotent.

The application to fuzzy rings is given in Appendix B:

 \begin{nothmn}[Theorem~\ref{drive6}] Any $\tT$-coherent fuzzy triple $\mathcal A$ gives rise to a
system $(\mathcal A', \tT, (-), \preceq)$ with unique
quasi-negatives, where $(-)a = \vep a.$
  \end{nothmn}

The converse is given in  Proposition~\ref{drive3}.

\subsection{Redefining multiplication}$ $

 This discussion  is intended for
those readers who would like to see how hypergroups fit into the
theory. The motivation grew out of a conversation with Baker. Since
the ``tropical hyperfield'' of \cite{Bak} and \cite[\S5.2]{Vi} is
isomorphic to the ``extended'' tropical arithmetic in Izhakian's
Ph.D. dissertation (Tel-Aviv University) of 2005, also
cf.~\cite{zur05TropicalAlgebra},  given more formally
 in \cite{IzhakianRowen2007SuperTropical}, one would like  to see
how other major hyperrings also can be studied in terms of the more
amenable semiring theory. The tricky aspect is to obtain
distributivity for all of $\mathcal A$, which can be written down as
follows, when we assume that $\tT$ generates $(\mathcal A,+)$:

\begin{equation}\label{dist0}
    \left(\sum _i a_i\right) \left(\sum _j b_j \right) = \sum _{i ,j}  a_ib_j.\end{equation}
 for
$a_i, b_i \in \tT$.
 For instance, in  the study of
hyperfields it might seem at first glance that we must forego
distributivity in $\mathcal P (\tT)$, since the multiplication in
the power set of certain hyperfields need not distribute over
addition, as to be seen in Examples~\ref{Basicexamples}. However,
this difficulty is bypassed by the following surprising result,
which we call a theorem because of its significance, despite its
being almost trivial.

\begin{thm}\label{distres} Any left and right $\tT$-module $\mathcal A$
generated additively by a commutative monoid  $(\tT,\cdot)$ can be
made (uniquely) into a $\tT$-semiring with absorbing $\zero $, via
the multiplication
\begin{equation}\label{semir} \left(\sum_i a_i\right)\left(\sum_j b_j\right) = \sum
_{i,j} a_i b_j,\end{equation}
$$ \left(\sum_i a_i\right)\zero= \zero \left(\sum_i a_i\right)
=\zero,$$ for $a_i, b_j \in \tT$.
 \end{thm}
\begin{proof}
It suffices to show that this is well-defined, i.e., if $\sum_i a_i
= \sum_i a'_i$ then $ \sum _{i,j} a_i  b_j = \sum _{i,j} a'_i b_j$
(and likewise for $b_j,b_j'$) for $a_i', b_j' \in \tT$. But
$$ \sum _{i,j} a_i  b_j = \sum _{j}\left(\sum _i a_i  b_j\right) = \sum _{j}\left(\sum _i a_i\right)  b_j = \sum _{j}\left(\sum _i a'_i\right)  b_j
 = \sum _{i,j} a'_i b_j.$$
 Going the other direction, distributivity in the semiring
 forces \eqref{semir} to hold.
\end{proof}

%
%

%

\section{The main   tropically oriented
triples and systems}\label{tropexa}$ $

We review some of the concepts that have played a major role in
tropical algebra.

\subsection{The max-plus
 algebra and bipotent semirings}$ $

 The
max-plus algebra
 really concerns ordered groups,
such as $(\Q,+)$ or $(\R,+)$, which are viewed at once as max-plus
  \semifields0, generalizing Remark~\ref{makesemi}.

%
%

\begin{rem}\label{maxpsys} The
identity   is the only negation map on the max-plus
 algebra, by \cite[Proposition~2.11]{AGG2}). Then $a = (-)a =
 a^\circ$, so the max-plus
 algebra cannot be a triple (and unique quasi-negatives fail).
\end{rem}

\subsubsection{Green's partial order}$ $

We recall the following elegant observation of Green:

\begin{rem}\label{makesemi} (i) Any  ordered monoid $(\tM, \cdot \; )$
  gives rise to
a bipotent \semiring0, where we define $a+b$ to be $\max\{ a, b\}.$
Indeed, associativity is clear, and distributivity  follows from
the inequalities~\eqref{ogr1}. 
%

(ii) Conversely, any set $\mathcal M$ with a partial addition and
$\zero$ has a natural partial pre-order given by $a_1 \ge a_2$ in
$\tM$ if $a_1 = a_2 + b$ for some $b \in \mathcal M.$ It is a
pre-order when $\mathcal M$ is bipotent.

 Any such relation becomes trivial on a $\tT$-module when $\tT$
contains $-\one $, the classical negative of $\one ,$ since then
$a_2 = a_1 +(a_2+(-\one)a_1)$.
\end{rem}

Remark~\ref{makesemi} is tied in with the following property:

\begin{defn}\label{prope}
 An additive semigroup $S \subseteq \mathcal A$ is  \textbf{ub} (for \textbf{upper
 bound})
  if $a+b+c=a$ always implies $a+b=a.$%
%
%
\end{defn}

For example, the max-plus algebra is a ub \semifield0. This
criterion abounds in tropical algebra, as noted in \cite{IKR4}.
  By \cite[Proposition~0.5]{IKR5}, the Green partial pre-order     is a PO iff the
semigroup~$\mathcal A$ is ub.
%
%

\subsection{Supertropical  semirings  and supertropical
domains}\label{trop}$ $

 The difficulty arising in Remark~\ref{maxpsys} is  remedied by turning to supertropical
 algebra \cite{zur05TropicalAlgebra,IKR0,IzhakianRowen2007SuperTropical}.

\begin{defn}\label{super1} A  \textbf{supertropical semiring} is a quadruple
$\mathcal A  := (\mathcal A , \tTz, \tGz, \nu)$ where  $\mathcal A =
\tTz \cup \tGz$ is a commutative semiring, $\tTz$ is a submonoid,
and $\tGz \subset \mathcal A $ is a bipotent ideal given Green's
 order of~Remark~\ref{makesemi}(ii), together with an onto
multiplicative monoid homomorphism $\nu: \mathcal A \to \tGz$
satisfying $\nu^2 = \nu$, with $\nu|_{ \tT}$ being 1:1. Addition is
given by
 $$
 b+b' = \begin{cases}\nu(b)   \quad   \text{whenever}   \quad   \nu(b)= \nu(b'),\\
  b  \quad  \text{whenever}  \quad  \nu(b)> \nu(b'),\\
   b'  \quad  \text{whenever}  \quad  \nu(b) < \nu(b').
\end{cases} $$

 A  \textbf{supertropical domain} is a  supertropical semiring,
 where $\tT = \tTz \setminus \zero$ (in case $\zero \in \tTz$) is a multiplicative monoid.
\end{defn}

%

%
%
%
%

The elements of $\tGz$ are called \textbf{ghost elements} and $\nu:
\mathcal A \to \tGz$ is called the \textbf{ghost map}.      The
monoid~$\tT$ encapsulates the tropical aspect. To get a triple, we
take $(-)a = a,$ a negation map of the first kind.

 The \textbf{standard supertropical semifield} is
$\mathcal A : = \tT \cup \tGz$ where $\tG := (\tGz\setminus \{ 0 \},
\cdot)=\nu(\tT)$ is an ordered group. (Customarily $\tGz = \Q
_{\operatorname{max}}  $ or $ \R _{\operatorname{max}} $, rewriting
$+$ as $\cdot$).

\begin{defn}\label{sutropsys}

 The  standard
supertropical semifield  $\mathcal A : = \tT \cup \tGz$ yields the
\textbf{standard supertropical system} $(\mathcal A , \tT,
1_{\mathcal A }, \preceq_\circ)$, where   $b^\circ$ is $b+b=\nu(b),$
and $b\preceq_\circ b'$ when $b' = b + c^ \circ$ for some $c \in
\mathcal A .$ (The relation $\succeq_\circ$ is ``ghost
  surpasses,'' written as:

\medskip

 $a_1 \lmodg a_2$ in $\mathcal A$
 if $a_1 = a_2 + b+b$ for some $b \in \tT $,

\medskip

\noindent and $\nu$ of Definition~\ref{super1} is a special case of
$\circ.$) Now $e = \one_{\mathcal A } +\one_{\mathcal A } =
\one_{\mathcal A }^\circ$ is the multiplicative unit of $\tG$, and
$\tGz = e \mathcal A  = {\mathcal A }^\circ$.
\end{defn}

 Classical
algebraic results  were transferred to the tropical theory by means
of $\lmodg$ in \cite{AGG1,IzhakianRowen2007SuperTropical,IR1}.

\subsection{Symmetrization}\label{symm10}$ $

Although the max-plus algebra and its modules initially lack
negation, one obtains negation maps for them through the next main
idea, the symmetrization process, extracted from \cite{AGG1,Ga,GoM},
where a $\tT$-module is embedded into a super-module.


 \begin{definition}\label{sym00}
Given any $\tT$-module $\mathcal A$, not necessarily with a negation
map, define $\widehat {\mathcal A} = \mathcal A \times \mathcal A$
with componentwise addition, and $\widehat {\tT} =(\tT \times \{
\zero \}) \cup ( \{ \zero \} \times \tT)$ with multiplication
$\widehat {\tT} \times \widehat {\mathcal A}\to \widehat {\mathcal
A}$ given by the \textbf{twist action}
$$(a_0,a_1)(b_0,b_1) = (a_0 b_0 + a_1 b_1, a_0 b_1 + a_1 b_0), \quad a_i \in \tT, b_i \in \mathcal A.$$
When $\mathcal A$ is a bimagma we extend   multiplication to
$\widehat {\mathcal A}$ by  taking also $a_i \in \mathcal A$.

We also define the negation map on $\widehat {\mathcal A}$ as the
\textbf{switch map} $(-)_{\operatorname{sw}}$ given by
$(-)_{\operatorname{sw}}(b_0,b_1) = (b_1,b_0).$ Then
$(b_0,b_1)(-)_{\operatorname{sw}} (b_0,b_1) = (b_0+b_1,b_1+b_0),$ so
the quasi-zeroes all have the form $(c,c)$, and the surpassing
relation $\preceq_{\circ} $
 is given by:

\begin{equation} (b_0,b_1) \preceq_{\circ}  (b_0',b_1')  \quad
\text{iff} \quad  b_i'   = b_i + c  \  \text{for some } c \in
{\mathcal A}, \ i = 0,1.
\end{equation}
(The same $c$ is used for both components.)

 $(\widehat {\mathcal A},\widehat
 {\tT},(-)_{\operatorname{sw}},\preceq_{\circ})$ is the  \textbf{symmetrized
 system}.
 \end{definition}

\begin{rem}\label{sym07}
This is the structure  given at the beginning of \cite[\S 3.8]{GaP}
in the case $\mathcal A = \Real_{\max}$, and is the venue for
\cite[\S 3.4]{BCOQ}, \cite{AGG1},  \cite[Example 2.21]{AGG2},
 and
\cite{BeE,JoM}),  rather than what is called the ``symmetrized
algebra'' in~\cite{GaP}. But we prefer the terminology
``symmetrized'' for this version, which is appropriate to the
general structure theory.
\end{rem}

\begin{lem}$\widehat {\tT}\cup \{ \zero, \zero\}$ is a monoid whenever $\tT$  is.
$\widehat {\tT} $ is a union of two subgroups whenever $\tT$ is a
group.
\end{lem}
\begin{proof}
It is closed under multiplication, and inverses exist when $\tT$  is a group
($(\zero,a)^{-1} = (\zero,a^{-1})$).
\end{proof}

In particular, $\widehat{\mathbf N \cup \{\zero\}} $ is itself a
semiring which we call $\mathbf Z ,$ with negation given by
$(-)(\mathbf m, \mathbf n) = (\mathbf n, \mathbf m)$. The
construction of $\mathbf Z $ from $\mathbf N $ takes the place of
the familiar construction of $\Z$ from $\Net,$ with the difference
that here we distinguish $(\mathbf m, \mathbf n)$ from $(\mathbf m +
\mathbf k, \mathbf n + \mathbf k).$

\begin{example}\label{twist2}  For any semigroup $({\mathcal A},+)$,
$\hat {\mathcal A} $ is naturally a module over $\widehat{\Net} $.
\end{example}

\begin{lem}   $\widehat{\mathbf N} [\mathcal M]$ is isomorphic to $\widehat{\mathbf N
[{\mathcal M}]}$.
\end{lem}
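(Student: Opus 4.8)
The plan is to unwind both sides of the claimed isomorphism $\widehat{\mathbf N}[\mathcal M] \cong \widehat{\mathbf N[\mathcal M]}$ to their underlying free-module descriptions and exhibit a natural bijection that respects addition, the $\tT$-action, multiplication, and the negation map. Recall that for a monoid $\mathcal M$ and a commutative associative \semiring0 $C$, the monoid semialgebra $C[\mathcal M]$ is the free $C$-module on the basis $\mathcal M$ with multiplication extended by distributivity (\S\ref{freemod}). So $\widehat{\mathbf N}[\mathcal M]$ consists of formal sums $\sum_{w \in \mathcal M} (\mathbf m_w, \mathbf n_w)\, w$ with almost all coefficients zero, whereas $\widehat{\mathbf N[\mathcal M]} = \mathbf N[\mathcal M] \times \mathbf N[\mathcal M]$ consists of pairs $\bigl(\sum_w \mathbf m_w w,\ \sum_w \mathbf n_w w\bigr)$. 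First I would write down the map $\Phi$ sending $\sum_w (\mathbf m_w, \mathbf n_w)\, w$ to the pair $\bigl(\sum_w \mathbf m_w w,\ \sum_w \mathbf n_w w\bigr)$ and its evident inverse, and check it is well-defined (both sides have the same indexing set $\mathcal M$ and coefficients lying in $\mathbf N$, so this is essentially the tautological ``shuffle'' identifying $(A \times A)^{(I)}$ with $A^{(I)} \times A^{(I)}$).

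Next I would verify that $\Phi$ is additive: addition on both sides is componentwise in the $\mathcal M$-coordinates, and the pairing of $\mathbf N$-components commutes with this, using $(\mathbf m_1,\mathbf n_1) + (\mathbf m_2,\mathbf n_2) = (\mathbf m_1+\mathbf m_2,\ \mathbf n_1+\mathbf n_2)$ in $\widehat{\mathbf N}$ (Example~\ref{twist2}) against $\sum_w \mathbf m_w w + \sum_w \mathbf m'_w w = \sum_w (\mathbf m_w+\mathbf m'_w) w$ in $\mathbf N[\mathcal M]$. Then I would check compatibility with the $\widehat{\mathbf N}$-action (diagonal on the left, and on the right the action of $\widehat{\mathbf N}$ on $\widehat{\mathbf N[\mathcal M]}$ via Example~\ref{twist2} applied to the semigroup $\mathbf N[\mathcal M]$) — here one uses that scalar multiplication by an element of $\mathbf N$ distributes across the $\mathcal M$-basis, which is \eqref{eeq77} together with distributivity in the monoid semialgebra. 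For multiplication, both sides multiply basis elements of $\mathcal M$ by concatenation in $\mathcal M$ and multiply coefficients in $\widehat{\mathbf N}$; since $\widehat{\mathbf N}$-multiplication is the componentwise product $(\mathbf m_1,\mathbf n_1)(\mathbf m_2,\mathbf n_2) = (\mathbf m_1\mathbf m_2,\ \mathbf n_1\mathbf n_2)$ — this being the specialization of the diagonal multiplication in Definition~\ref{sym00} to $\widehat{\mathbf N}$ — and $\mathbf N[\mathcal M]$-multiplication convolves with the same basis product, the two convolution formulas agree under $\Phi$ after collecting terms. Finally, compatibility with the negation map is immediate: on the left $(-)\bigl(\sum_w (\mathbf m_w,\mathbf n_w) w\bigr) = \sum_w (\mathbf n_w, \mathbf m_w) w$, and on the right $(-)\bigl(\sum_w \mathbf m_w w,\ \sum_w \mathbf n_w w\bigr) = \bigl(\sum_w \mathbf n_w w,\ \sum_w \mathbf m_w w\bigr)$, and these correspond under $\Phi$.

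The main obstacle — really the only point requiring care rather than bookkeeping — is confirming that multiplication matches, because one must check that the Cauchy-product convolution on the monoid-semialgebra side is carried by $\Phi$ exactly to the pair of convolutions on the product side, and in particular that no cross terms between the two $\mathbf N$-slots are introduced. This hinges on the fact that the symmetrized multiplication in Definition~\ref{sym00} is genuinely diagonal (rather than the ``twisted'' product $(a_1,b_1)(a_2,b_2) = (a_1a_2 + b_1b_2,\ a_1b_2 + a_2b_1)$ used elsewhere in the literature), so that the $\widehat{\mathbf N}$ factor contributes independently in each coordinate; with that in hand the two triple-sum expansions $\sum_{u,v} (\mathbf m_u \mathbf m'_v,\ \mathbf n_u \mathbf n'_v)\,(uv)$ and $\bigl(\sum_{u,v}\mathbf m_u\mathbf m'_v\, uv,\ \sum_{u,v}\mathbf n_u\mathbf n'_v\, uv\bigr)$ are visibly $\Phi$-related. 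I would present the argument by first stating $\Phi$ and its inverse, then verifying the four structure-preservation properties in the order: addition, $\mathbf N$-action, multiplication, negation, concluding that $\Phi$ is an isomorphism of $\widehat{\mathbf N}$-semiring triples.
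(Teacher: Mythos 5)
Your map $\Phi$, sending $(\mathbf m, \mathbf n)\, w \mapsto (\mathbf m\, w,\ \mathbf n\, w)$ and extending additively, is exactly the paper's, and the verifications of additivity and negation-compatibility are fine. The problem is the step you flag as ``the only point requiring care'': your justification for multiplication-preservation rests on a misreading. The ``diagonal multiplication'' in Definition~\ref{sym00} is only the scalar action $a(c_1,c_2)=(ac_1,ac_2)$ of a single element $a\in\tT$ on $\widehat{\mathcal A}$; it is not the internal product of the symmetrized \semiring0. When $\mathcal A$ is a \semiring0, the product on $\widehat{\mathcal A}$ is the twist product $(a_0,a_1)(b_0,b_1)=(a_0b_0+a_1b_1,\ a_0b_1+a_1b_0)$ of \eqref{twi1}, not the componentwise one; a componentwise product would be incompatible with the switch negation $(-)(a_0,a_1)=(a_1,a_0)$, since then \eqref{neg} would fail. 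So the ``twisted product'' you explicitly disclaim is in fact the one in force on both $\widehat{\mathbf N}$ and $\widehat{\mathbf N[\mathcal M]}$.

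Fortunately the lemma, and your $\Phi$, survive the correction: the product in $\widehat{\mathbf N}[\mathcal M]$ is $\sum_{u,v}(\mathbf m_u\mathbf m'_v+\mathbf n_u\mathbf n'_v,\ \mathbf m_u\mathbf n'_v+\mathbf n_u\mathbf m'_v)\,uv$, whose $\Phi$-image is the pair $\left(\sum_{u,v}(\mathbf m_u\mathbf m'_v+\mathbf n_u\mathbf n'_v)\,uv,\ \sum_{u,v}(\mathbf m_u\mathbf n'_v+\mathbf n_u\mathbf m'_v)\,uv\right)$, and this is precisely the twist product of $\left(\sum_u\mathbf m_u u,\ \sum_u\mathbf n_u u\right)$ with $\left(\sum_v\mathbf m'_v v,\ \sum_v\mathbf n'_v v\right)$ in $\widehat{\mathbf N[\mathcal M]}$. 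The cross terms you were trying to rule out do appear, but they appear symmetrically on both sides, so they still match under $\Phi$. You should redo the multiplication step with the twist product; as written your argument proves a true statement about the wrong pair of objects (componentwise-product semirings on which the switch is not even a negation map).
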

\begin{proof} We send $(\mathbf m, \mathbf
n)a$ to $(\mathbf m a\, , \mathbf n a)$ for $a \in \mathcal M$, and
check that addition and multiplication are  preserved.
\end{proof}

%

%
The free module with negation map  can be viewed as the
symmetrization of the free module (without negation) ${\mathcal
A}^{(I)}$, where we identify $ e_i$ with $(e_i,\zero)$ and
$(-)e_i$ with $(\zero,e_i)$. 


%

\subsubsection{The $(-)$-bipotent symmetrized system: the version according to \cite[Proposition-Definition
2.12]{AGG2}}\label{symm11}$ $

Definition~\ref{sym00} fails to be metatangible. The following
modification  was introduced in \cite[Proposition~5.1]{AGG1} and
studied further under the name of ``symmetrized max-plus semiring''
in \cite[Proposition-Definition 2.12]{AGG2}  in the context of
tropical constructions.

\begin{example}\label{surpsym} One starts with an ordered semigroup $\tG$, putting $\tG_0 =  \tG
\cup \{ \zero\}, $ and defines $\tT_{\operatorname{sym}} = (\tG
\times \{ \zero\}) \cup ( \{ \zero\} \times \tG),$
$$\tG_{\operatorname{sym}}:= \tT_{\operatorname{sym}} \cup \{ (a,a): a\in \tG_0 \} \subseteq \widehat{\mathcal
A}.$$ Thus, viewing $\tG_0$ as a bipotent semiring,  addition on
$\tG_{\operatorname{sym}}$ is defined componentwise on $\tG_0 \times
\{ \zero\}$, $\{ \zero\} \times \tG_0$, and $ \{ (a,a): a\in \tG_0
\}$, whereas ``mixed'' addition satisfies:
 $$(a_0,\zero)   + (\zero, a_1) = \begin{cases} (a_0,\zero)  \text{ if } a_0 > a_1;
 \\ (\zero, a_1) \text{ if } a_0 < a_1;  \\  (a_1, a_1)
 \text{ if }   a_0 =  a_1 ; \end{cases}$$
 $$(a_0,\zero)   + (a_1, a_1) = \begin{cases} (a_0,\zero)  \text{ if } a_0 > a_1;
 \\ (a_1, a_1) \text{ if } a_0 \le a_1;  \end{cases}$$
$$(\zero, a_0)   + (a_1, a_1) = \begin{cases} (\zero, a_0)    \text{ if } a_0 > a_1;
 \\ (a_1, a_1) \text{ if } a_0 \le a_1.  \end{cases}$$

 Multiplication in $\tG_{\operatorname{sym}}$ is the twist action as
in Definition~\ref{sym00}. The   \textbf{$(-)$-bipotent symmetrized
system} is
$$(\tG_{\operatorname{sym}},\tT_{\operatorname{sym}},(-)_{\operatorname{sw}},
\preceq_\circ).$$
\end{example}

\begin{MNote}
 The symmetrized system
 of Example~\ref{surpsym}  is a system which is $(-)$-bipotent of the second kind,
 but its construction requires that $\tG$ be ordered.

 To obtain an ordered semigroup $\tG$ we could apply the following
 modification of
Remark~\ref{makesemi}(ii) to a bipotent semiring: $$a_1 \le a_2
\quad \text{ iff }\quad a_1+a_2 = a_2.$$ However, more in the spirit
of this study is to apply this condition to the tangible elements of
a $(-)$-bipotent pseudo-triple,   also putting $a\le a$. This works
whenever $(-)$ is of the first kind, but when $(-)$ is of the second
kind we might have $a$ and $(-)a$ incomparable, leading to
Definition~\ref{positive} below.

To bypass these considerations we
 often use the symmetrization of Definition~\ref{sym00} anyway. \end{MNote}

\subsection{*Hypergroups}\label{hyperr}$ $

Our other major example of a system is over a hypergroup $\tTz$,
defined below, which has inspired much of our material in systems.
 We follow the  treatments of Baker and Bowler~\cite{Bak} and Jun~\cite{Ju},
   discussing   examples from \cite{Bak} in Example~\ref{Basicexamples}; also see  ~\cite{CC,Vi}.

One would like to formulate the structure of $\tTz$ in terms of addition (as
well as  other possible operations such as multiplication) on
$\tTz$.
 But this is not feasible since $\tTz$ itself need not be closed under addition.

Intuitively, a hyper-semigroup should be a structure
$(\tTz,\boxplus,\zero)$ with $\boxplus : \tTz \times \tTz \to
\mathcal P(\tTz)$, for which the analog of associativity holds:
$$ (a_1 \boxplus a_2) \boxplus a_3 = a_1 \boxplus (a_2 \boxplus
a_3), \quad \forall a \in \tTz. $$

 $\tTz$ as defined in \cite{Bak,Vi} is injected
naturally into $\mathcal P(\tTz)$, identifying $\tTz$ with the singletons in $\mathcal P(\tTz)$.
There is some difficulty in the details:
$a_1 \boxplus a_2$ need not be a singleton, so technically $ (a_1
\boxplus a_2) \boxplus a_3 $ is not defined. This difficulty is
exacerbated with generalized associativity; for example, what does $
(a_1 \boxplus a_2) \boxplus (a_3\boxplus a_4) $ mean?

\begin{defn}\label{Hyp00} A \textbf{hyper-semigroup} is a structure
$(\tTz,\boxplus,\zero)$, where
\begin{enumerate}\eroman
   \item
$\boxplus$ is a commutative binary operation $\tTz \times \tTz \to
\mathcal P(\tTz),$ which also is associative in the sense that if we
define $$a \boxplus S = \cup _{s \in S} \ a \boxplus s,$$ then $(a_1
\boxplus a_2) \boxplus a_3 = a_1 \boxplus (a_2\boxplus a_3)$ for all
$a_i$ in $\tTz.$
  \item
$\zero$ is the neutral element.
\end{enumerate}

We   transfer addition to $\mathcal P(\tTz) $ by defining $ S_1 +
S_2 = \left\{ \bigcup (a_1 \boxplus  a_2) : a_i \in S_i\right\}.$
\end{defn}

  Note that repeated
addition in the hyper-semigroup need not be defined until one passes
to its power set, which hampers checking basic identities
such as associativity. Associativity could hold at the level of
elements but fail at the level of sets.

\begin{defn}\label{Hyp}
A \textbf{hyperzero} of a hyper-semigroup $\tTz$ is a subset of
$\mathcal P(\tTz) $ containing $\zero$.

 A \textbf{hypernegative} of an element $a$ in a hyper-semigroup $(\tTz,\boxplus,\zero)$ is an
element  ``$-a$'' for which $\zero \in a \boxplus (-a).$

(Following \cite[Definition 2.1]{Ju}) A \textbf{hypergroup} is a
hyper-semigroup $(\tTz,\boxplus,\zero)$ for which every element~$a$
has a unique hypernegative $-a$.   \textbf{Hypernegation} is the map
$a \mapsto -a.$

A \textbf{canonical hypergroup} is
a hypergroup satisfying the extra property:

\begin{itemize}
\item  (Reversibility) $ a \in b \boxplus c$ for $a,b,c \in \tTz$  iff  $ b \in a
\boxplus (-c)$.
\end{itemize}

\end{defn}

We need to translate this into triples.

\begin{rem}\label{Hen1}
 Henry ~\cite[\S 2]{Hen}
shows that the reversibility condition holds if associativity holds
and hypernegation distributes over addition, in the sense that
$-(a\boxplus b) = (-a)\boxplus (-b)$.
\end{rem}

M.~Akian indicated the reverse direction to me,
cf.~\cite[Lemma~3.7]{AGR}.

\begin{prop}\label{uniq6} A hypergroup is canonical if and only if hypernegation distributes over addition.
\end{prop}
\begin{proof} $(\Leftarrow )$ By Henry ~\cite[\S 2]{Hen}.

$(\Rightarrow)$  If $c \in -(a\boxplus b) $ then $-c \in a\boxplus
b,$ so $a \in (-c) \boxplus (-b),$ and $$\zero \in a\boxplus (-a)
\subseteq ((-c) \boxplus (-b))\boxplus (-a) = (-c) \boxplus
((-b)\boxplus (-a)),$$ i.e., $ c \in (-b)\boxplus (-a).$ This proves
that $-(a\boxplus b) \subseteq ((-b)\boxplus (-a)).$ On the other
hand, if $ c \in (-b)\boxplus (-a),$ then  $-b \in c\boxplus a,$
implying $\zero \in (-b) \boxplus b \subseteq c\boxplus (a \boxplus
b),$ so $-c \in a\boxplus b.$
\end{proof}

%
%
%
%
%
\begin{defn}\label{hyring} 
%
%
 $(\tTz,\boxplus,\cdot,\zero, \one)$ is a  \textbf{hyperring} if $\tTz$ is a monoid and
 $\mathcal P(\tTz)$  is both a bimagma (with multiplication $S_1 S_2 = \{ a_1a_2: a_i \in S_i\}$)
 and a $\tTz$-module.

 We put $\tT = \tTz \setminus \{ \zero \}.$ A hyperring $(\tTz,\boxplus,\cdot,\zero,\one)$ is a  \textbf{hyperfield} if
 $(\tT,\cdot,\one)$ is a group.

  \end{defn}
%
%

%
%
%
%


 \begin{lem}\label{mon6}  Hypernegation on a canonical hypergroup (or~hyperring) $\tTz$ is a negation
map, and induces a negation map on  $\mathcal P(\tTz)$, viewed as a
pseudo-triple via $(-)S = \{ -a: a \in S \}.$
\end{lem}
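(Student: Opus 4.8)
The plan is to verify the defining properties of a negation map for the hypernegation $-$ on $\tT$, and then to check that the induced map $(-)S = \{-s : s \in S\}$ on $\mathcal P(\tT)$ is a negation map on the $\tT$-monoid module (equivalently, a negation map in the sense appropriate to the triple $(\mathcal P(\tT), \tT, (-))$). Recall what must be checked: first, that $a \mapsto -a$ is a $1:1$ map of order $\le 2$ on $\tT$ (Definition~\ref{negmap}, set negation map); second, that the induced map on $\mathcal P(\tT)$ is an additive semigroup homomorphism of order $\le 2$; and third, that it interacts correctly with scalar multiplication, i.e.\ satisfies the analogue of \eqref{neg}, $(-)(aS) = ((-)a)S = a((-)S)$ for $a \in \tT$.

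For the first part, I would argue as follows. Since $(\tT,\boxplus,\zero)$ is a hypergroup, every $a$ has a \emph{unique} hypernegative $-a$, defined by $\zero \in a \boxplus (-a)$. Uniqueness gives a well-defined map $a \mapsto -a$. To see it has order $\le 2$: commutativity of $\boxplus$ gives $\zero \in (-a) \boxplus a$, so $a$ is \emph{a} hypernegative of $-a$, hence by uniqueness $-(-a) = a$. This simultaneously shows the map is injective (if $-a = -b$ then $a = -(-a) = -(-b) = b$) and of order exactly $\le 2$. Note $-\zero = \zero$ by Lemma~\ref{minuszero}'s analogue, or directly since $\zero \in \zero \boxplus \zero$.

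For the induced map on $\mathcal P(\tT)$: define $(-)S = \{-s : s \in S\}$. It is a bijection of $\mathcal P(\tT)$ with inverse itself (apply the elementwise involution twice), hence of order $\le 2$. Additivity: for $S_1, S_2$, using the transferred addition $S_1 + S_2 = \{\bigcup(s_1 \boxplus s_2) : s_i \in S_i\}$, I need $(-)(S_1+S_2) = (-)S_1 + (-)S_2$, which reduces to the elementwise identity $-(s_1 \boxplus s_2) = (-s_1)\boxplus(-s_2)$ as subsets of $\tT$. This is exactly the reversibility property of the hypergroup (which holds automatically by Remark~\ref{Hen1}): indeed $a \in s_1 \boxplus s_2 \iff -a \in (-s_1)\boxplus(-s_2)$, where one direction follows by applying reversibility and commutativity twice and the other is symmetric. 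Compatibility with the $\tT$-action \eqref{prem}: for $a \in \tT$, $(-)(aS) = \{-(as) : s \in S\}$; since the hypernegation satisfies $-(as) = (-a)s = a(-s)$ on the underlying monoid-with-hypergroup (this is the ring/module axiom that the distributive action respects hypernegation, following from $a\cdot 0 \in a\cdot(s \boxplus (-s)) = as \boxplus a(-s)$ forcing $a(-s) = -(as)$ by uniqueness, and similarly $(-a)s = -(as)$), we get $(-)(aS) = a((-)S) = ((-)a)S$.

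The main obstacle is the additivity step, i.e.\ showing $-(s_1\boxplus s_2) = (-s_1)\boxplus(-s_2)$ for the hypergroup operation — everything else is bookkeeping. This identity is precisely reversibility, so I would cite Remark~\ref{Hen1} (unique hypernegatives plus associativity imply reversibility) to get it for free, rather than reproving it; with that in hand the verification is routine. A secondary point of care is that the axiom $a(-s) = -(as)$ for the module action must be invoked for the hyperring/hypermodule case, but for a bare hypergroup $\tT$ acted on trivially this is vacuous and only the additive-homomorphism part is needed.
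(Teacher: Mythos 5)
Your proposal is correct, and in fact your treatment of the additivity step is more rigorous than the paper's. The paper's proof shows only that
$$\zero \in (a_1 \boxplus a_2) \boxplus \bigl((-a_1) \boxplus (-a_2)\bigr),$$
deduced from $\zero \in a_i \boxplus (-a_i)$ and associativity/commutativity. That membership alone does not yield the set equality $\{-c : c \in a_1 \boxplus a_2\} = (-a_1)\boxplus(-a_2)$, which is what the claim $(-)(S_1 + S_2) = (-)S_1 + (-)S_2$ actually requires for singletons. It only guarantees that \emph{some} $c \in a_1 \boxplus a_2$ has $-c \in (-a_1)\boxplus(-a_2)$, not that this holds for every $c$. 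Your route via reversibility (using Remark~\ref{Hen1}, i.e.\ Henry's theorem that uniquely negated hyper-semigroups are automatically reversible) patches exactly this gap: $a \in s_1 \boxplus s_2 \iff -a \in (-s_1) \boxplus (-s_2)$ follows by applying reversibility and commutativity three times, and this gives the full set equality. The compatibility with the $\tT$-action ($a(-s) = -(as)$, deduced from $a\zero = \zero$, distributivity, and uniqueness) and the hyperring case ($(-a_1)(-a_2) = a_1 a_2$) you handle the same way the paper does. In short, you use a genuinely different key tool (reversibility) for the additive homomorphism check, and your argument is the complete one; the paper's argument is best read as a sketch that tacitly appeals to reversibility.
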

\begin{proof} We see that $-(a_1 \boxplus a_2) =  (-a_1) \boxplus (- a_2),$
by Proposition~\ref{uniq6}.

Likewise, $ \zero \in a_1 \boxplus (-a_1)$ implies $ \zero \in a(a_1
\boxplus (-a_1)) = aa_1 \boxplus (-aa_1).$

In case  $\tTz$ is a hyperring we note from the previous paragraph
that $-(a_1a_2)= (-a_1)a_2 = a_1(-a_2),$ and thus $(-a_1)(-a_2) = -
(-a_1) a_2 = a_1 a_2.$
 \end{proof}


Our first candidate for a system might be $(\mathcal P(\tTz),\tT,
(-), \subseteq)$. But
  $\tTz$ might not generate
$(\mathcal P(\tTz),\boxplus)$, as seen below for the phase
hyperfield (Example~\ref{Basicexamples}). Accordingly, we restrict
$(\mathcal P(\tTz),\boxplus)$.

  \begin{defn}\label{hyper}
  Given a hypergroup
  $\tTz$, we define $\overline{\tTz}$  to be the
sub-semigroup of $(\mathcal P(\tTz)\setminus \emptyset,\boxplus)$
generated by the singletons, which we identify with $\tTz$.
%
\end{defn}

\begin{rem}\label{hypersys0}   $(\overline{\tTz}, \tT, (-))$, with $(-)$   as in
  Lemma~\ref{mon6}, is a   triple having unique  quasi-negatives.
\end{rem}

   In \cite[Examples 2.8, 2.9,
2.12]{Bak}   the negation map is the identity map, whereas in
\cite[Examples 2.10, 2.11]{Bak} it is the usual hypernegative.

\begin{thm}\label{hypersys} $(\overline{\tTz}, \tT, (-),\subseteq) $ is
  a   system of hypergroup type, for any  hypergroup     $\tTz.$
 \end{thm}
\begin{proof}
We need to verify the conditions   of Definition~\ref{precedeq07}.
To see Condition (i), suppose $b = a \boxplus c^\circ$. Since $\zero
\in c^\circ,$ we have $a \subseteq a \boxplus \zero \subseteq a
\boxplus c^\circ = b.$

 The other conditions   are clear
 (since any nonempty subset of a singleton is that singleton).

 The  relation $\subseteq$
clearly satisfies $S \subseteq \{a\}$ iff $S = \{a \}.$  \end{proof}

We call  $(\overline{\tTz}, \tT, (-),\subseteq)$
 a \textbf{hypersystem}.

 \begin{MNote} Thus the theory of canonical hypergroups and
hyperrings embeds into the theory of hypersystems.
 We make this more formal  in Theorem~\ref{hypsys1}. The
 recent surge in research in hypergroups  provides further motivation and intuition for
 the study of systems.
\end{MNote}

%

%
%
%
%

%
%

\begin{defn}\label{bihyp1} A canonical hypergroup $\tTz$ is $(-)$-\textbf{closed}
if $a \boxplus b \in \tT  $ whenever $a \ne -b;$  $\tT$ is
$(-)$-\textbf{bipotent} if $a\boxplus b \in \{a, b \}$ whenever $a
\ne -b.$
\end{defn}

\begin{lem}\label{closed1}  $\tTz$ is $(-)$-closed,
resp.~$(-)$-bipotent, iff the   hypersystem $(\widetilde{\mathcal
P({\tTz})}, \tT, (-), \subseteq)$ is metatangible,
resp.~$(-)$-bipotent.
\end{lem}
\begin{proof} The definitions match.
\end{proof}




\begin{example}\label{manyexamples77}$ $

\begin{enumerate}\eroman  \item $(-)$-bipotent hypergroups include  Viro's ``tropical hyperfield,''
which is isomorphic to the tangible part of the supertropical
algebra,
 the Krasner hypergroup (of the first kind),
and the sign hypergroup (of the second kind), all of height 2.

 \item   A natural system that is not metatangible, the phase hyperfield, will be presented in Example~\ref{Basicexamples}, taken from  \cite{AGG2}.
It is idempotent of height 3.

\item   Viro's  ``triangle'' hyperfield of Example~\ref{Basicexamples},
 is of the first kind and is neither idempotent nor metatangible. Distributivity
 also fails in its system.

\end{enumerate}
\end{example}

In order to accommodate hypergroups such as the ``triangle''
 hyperfield,
 we relax
 the semiring assumption (i.e., distributivity) for $\mathcal A$.
Recently ties have been
 found in \cite{GJL} between hyperfields and fuzzy rings,   which
 also are described in terms of systems in Appendix~B.

%

\section{Exploring triples and systems}\label{surpasrel}$ $


\subsection{Layered \semirings0}\label{ELTalg0} $ $

``Layered semirings'' were introduced in \cite{IKR0}, and called
``extensions'' in ~\cite[Proposition-Definition~2.12]{AGG2}.  They
are of the form $  L \times \tG$, where $L$ is a ``layering
semiring'' and $(\tG,\cdot)$ is an ordered monoid.  
In fact, associativity of multiplication  in $\tG$ is irrelevant, so
we could also define ``layered
bimagmas.'' 

\begin{example}\label{AGGGexmod1}     We assume that the ``layering
semiring'' $L$ is an arbitrary semiring with $1$, but with a
negation map that we designate as $-$.
 We can define the \textbf{layered
semiring} as follows:

$\mathcal A = L \times \tG$. The \textbf{layers} are the subsets $\{
\ell \} \times \tG$ for $\ell\in L$. Multiplication is defined
componentwise. Addition is given by:
 $$(\ell_1,b_1) + (\ell_2,b_2) = \begin{cases}  (\ell_1,b_1) \text{ if } b_1 > b_2;
 \\ (\ell_2,b_2) \text{ if } b_1 < b_2;  \\  (\ell_1 + \ell_2,\, b_1)
 \text{ if }   b_1 =  b_2  .
 \end{cases}.$$%

Usually $\tT = \{\pm 1\} \times \tG$.
 The negation map
will be given by $(-)(k,a) =(-k,a).$ Thus the quasi-zeros will be of
layer $1-1$.
\end{example}

(This construction is modified and extended in \cite{AGR0}.) Some
explicit examples of layered semirings:

\begin{example}\label{nontang}$ $
\begin{enumerate}\eroman
 \item   $L=\Net$, formally with $-\ell = \ell$,   $\tT = \{ (\ell,a) \in L \times \tG : \ell =
1\},$ and $(-)$ is the identity (thus  of the first kind). $
\tT^\circ$ is the layer~2. (The higher layers, if they exist, are
neither tangible nor in $\tT^\circ$. In fact $e' = \one+\one+\one$
has layer 3.) This is useful for supertropical differentiation.

\item
 $L = \Net_0$ in (i), with $\{ \zero\}$ formally adjoined  at layer
$0$.

 \item  $L= \Z$ with the usual negation,
$ \tT  = \{ (\ell,a) \in L \times \tG: \ell = \pm 1\},$ and
$(-)(\ell,a) =(-\ell,a),$  of the second kind. This is useful for
supertropical integration, cf.~\cite{IzhakianRowen2007SuperTropical}.

\item A somewhat more esoteric example from the tropical standpoint. Fixing $n>0$, taking  $L = \Z _n $, identify   each
layer modulo $n$. (This   has height $n$ and    characteristic~$n$.)

\item (The truncated semiring) A weird example, which leads to counterexamples in linear algebra
in \cite{AGR} and must   be confronted.
  Fixing $n>1$, we say that $L = \{ 1, \dots, n\}$ is \textbf{truncated} at
  $n$ if
addition and multiplication are given by identifying every number
greater than $n$ with  $n$. In other words,

$$k_1 + k_2 = n  \text{ in $L\quad$ if }\quad k_1 + k_2 \ge n  \text{ in  }
\Net;\qquad k_1  k_2 = n  \text{ in $L\quad$ if }\quad k_1  k_2 \ge
n   \text{ in  } \Net.$$
 The negation map is the identity.

 This triple has
characteristic $0$, since $\mathbf m \ne \one$ for all  $\mathbf m
> \one$, but it has height $n$.
\item $L$ itself is a classical algebraic structure, such as a ring, or an exterior algebra, or a Lie algebra.
  \item   (exploded-ELT -- special case of (vi)) $L$ is  the residue ring of a valuation with value group $\tG$,
 where now $ \tT  = \{ (\ell,a) \in L \times \tG : \ell \ne 0\}.$
\end{enumerate}

\end{example}

Example~\ref{AGGGexmod1} can be modified,  viewing an ordered monoid
as a bipotent semiring as in Remark~\ref{makesemi}(i).

\begin{example}\label{symgen} Suppose
 $\tG$ is   a  semiring with a negation map $(-)$, whose addition
yields a PO on $\tG^\circ$, via Remark~\ref{makesemi}(ii), with
$(b^\circ)^\circ =b^\circ$ for all $b$. We write $b_1 <_\circ b_2 $
for $b_i\in \tG$ if $b_1^\circ < b_2^\circ,$
and define addition by:
 $$(\ell_1,b_1) + (\ell_2,b_2) = \begin{cases}  (\ell_1,b_1) \text{ if } b_1 >_\circ b_2;
 \\ (\ell_2,b_2) \text{ if } b_1 <_\circ b_2; \\  (\ell_1 + \ell_2,\,b_1^\circ)
 \text{ if }   b_1 ^\circ =  b_2 ^\circ.
 \end{cases}
 $$
 $$(\ell_1,b_1^\circ) + (\ell_2,b_2) = \begin{cases}
 (\ell_1,b_1^\circ) \text{ if } b_1 >_\circ b_2;  \\ (\ell_2,b_2) \text{ if } b_1 <_\circ b_2;
 \\  (\ell_1 + \ell_2,\,b_1^\circ   )
  \text{ if } b_1 ^\circ =  b_2 ^\circ.\end{cases}$$
 $$(\ell_1,b_1^\circ) + (\ell_2,b_2^\circ) =  \begin{cases}  (\ell_1,b_1^\circ) \text{ if } b_1 >_\circ b_2;
 \\ (\ell_2,b_2^\circ) \text{ if } b_1 <_\circ b_2;  \\  (\ell_1 + \ell_2,\,
 b_1^\circ)
 \text{ if }   b_1 ^\circ =  b_2 ^\circ.\end{cases}$$

 This has a negation map given by $(-)(\ell,b) = (-\ell, (-)b).$

The requirement $(b^\circ)^\circ =b^\circ$ gives this
 example a supertropical flavor, but if we try to delete this
 assumption we have some difficulty defining the connection between $(b^\circ)^\circ $ and
 $b^\circ$.
\end{example}

\subsection{Major examples of systems, by height}\label{exsys}$ $

%
Before delving further into the theory, we describe some of the main
examples in terms of height.

\begin{example}\label{manyexamples}  Semiring  and  semifield systems, where $\preceq = \preceq_\circ$.

\begin{enumerate}
 \eroman

  \item  Height 1. This makes $\tTz = \mathcal A.$
\begin{itemize}
 \eroman
 \item Classical algebra, for example an
 integral domain.  (There are many other examples in classical algebra, including graded algebras, cf.~\S\ref{grT}.)
 Here the quasi-negative  is
 the usual negative, which  is unique, and $ \mathcal A^ \circ= \{\zero\}$. $b \preceq_\circ a$ iff $a = b + \zero =b,$
so we have
   the metatangible system $(\mathcal A, \tT, -,
=)$. When $\tT$ is a group, a classical  system is just a
\textbf{partial field} in the terminology of
\cite[Definition~4.2]{Bak}, also cf.~\cite{SW}.

In some ways we want the general theory of metatangible systems to
mimic classical algebra.  The negation map is of the second kind
unless $\mathcal A$ has characteristic~2, in which case $(-)$ is of
the first kind. This helps to ``explain'' why the theory of
metatangible systems of the first kind often has the flavor of
characteristic 2.

 \item    The max-plus algebra $\tT $ yields a pseudo-triple, taking  $\mathcal A =  \tT $ and $(-)$ the identity
 map, so $a^\circ = a$ and $ \mathcal A^\circ = \mathcal A$, but quasi-negatives are far from unique,
 since whenever $b<a$ we have $a +b = a = a^\circ$.
 (This is one reason why we shy away from the max-plus algebra in our algebraic theory.)
 Here $a^\circ = b^\circ$ implies $a =b,$
 cf.~Definition~\ref{Anticlas}.
\end{itemize}

\item Height 2. These  systems provide  tropical structures designed to  refine the max-plus algebra. All of them are
$(-)$-bipotent.  The familiar examples have characteristic 0,
although some constructions can also be replicated in positive
characteristic.

\begin{itemize} \item Supertropical \semirings0,
 cf.~Definition~\ref{super1}, can be described as the   $(-)$-bipotent systems
 $(\mathcal A, \tT, (-), \preceq)$ of the first kind and height 2, where
$\tT$ is the set of tangible elements, and $a^\circ +a = a+a+a = a+a
= a^\circ  = a^\nu.$

 \item The symmetrized system
of Example~\ref{surpsym}  has height 2.
%
%

%


\item The ``exploded'' system, following \cite{Par}, cf.~Example~\ref{AGGGexmod1}, studied
axiomatically under the name of ELT-algebra by Sheiner~\cite{Sh},
where $\mathcal A = L \times \tG$ with $L$ the set of lowest
coefficients of Puiseux series, $\tT = (L\setminus 0) \times \tG$,
and $(-)(\ell,a) = (-\ell,a),$ is $(-)$-bipotent of the second kind,
provided $L$ is not of characteristic 2.
\end{itemize}

 \item Height $\ge 3.$
 \begin{itemize} \item The ``layered'' system of
Example~\ref{nontang}(i,ii,iii),  designed to handle derivatives and
integration, is $(-)$-bipotent of the first kind. Its   height is
equal to the cardinality of the submonoid of $L$ generated by $1$.
It often provides counterexamples to assertions that
  hold  in height~2.
\end{itemize}

\end{enumerate}
\end{example}

 \begin{rem}\label{fk}  Triples of the first kind behave quite differently from those of the
second kind.

Triples of the first kind  that contain $\one$ satisfy $e' = \one +
\one + \one = \mathbf 3.$

\begin{itemize}

\item If $e' = \one,$ then we are in characteristic 2.

\item If $e' = e,$ i.e.~$\mathbf 3= e = \mathbf 2, $ the system often has height 2, such as
in the first two examples of~Examples~\ref{manyexamples}(iii), and
when $(-)$-bipotent it is isomorphic to the supertropical domain:
$\tT $ is the set of tangible elements, and $\tT^\circ $ is the set
of ``ghost'' elements.

\item Otherwise we are usually in the more
esoteric region of height $\ge 3$, occurring for layered semirings,
cf.~Examples~\ref{manyexamples}(iii), as well as certain
hyperfields~(Examples~\ref{manyexamples77}(ii,iii)).
\end{itemize}

 $\tT$-Triples of the second kind often have either the flavor of classical
algebra or of the symmetrized algebra. $(-)$-bipotent triples of the
second kind  are all  idempotent since $a+a \in \max\{a, a\} = a$. (The
converse also holds for metatangible
triples, as to be seen in~Corollary~\ref{twoexamples2}.) 
\end{rem}

\subsection{Tropically related examples viewed in terms of algebraic varieties and model theory}\label{trop0}$ $

Let us see how well  notions related to $\tT$-modules  mesh with
universal algebra.

\begin{defn}[\cite{Jac1980}]\label{varclo} A
\textbf{ variety} is a class $\mathcal V$  of algebraic structures
(in universal algebra) which are  systems closed in the following
sense:
\begin{enumerate}\eroman
\item Any  substructure  of a structure of  $\mathcal V$   is  itself in $\mathcal
V$;
\item If $\varphi: \mathcal A \to \mathcal B$ is a
 homomorphism with $\mathcal A \in \mathcal V$, then $\varphi(\mathcal A) \in \mathcal
 V$;
\item  The Cartesian product of systems in $\mathcal V$ is in $\mathcal
V$.
\end{enumerate}

 \end{defn}

$\tT$-modules fit in the general framework of universal algebra once
we
 have the algebraic structures $\mathcal A $ and $\tT$.  Triples fit in, viewing  the negation map as
 a unary operator on each of $\mathcal A $ and
 $\tT$. However, the surpassing relation is not an identity, since it is not
 symmetric.

 Varieties arising naturally in tropical mathematics  include
 \semirings0, semirings, idempotent \semirings0, modules over
 \semirings0,
 bimagmas and semialgebras, and super-semialgebras. The defining
 identities are the familiar ones. Here are some subtler instances.

\begin{example}\label{varex000}$ $%
\begin{enumerate}\eroman
    \item \textbf{Matrix semirings.} The $n \times n$ matrix structure
can be obtained   in terms of matrix units $\{ e_{ij}: 1 \le i,j \le
n \}$, viewed as constants (0-are operators) satisfying the
identities
$$\sum _{i = 1}^n e_{ii} =\one; \qquad e_{ij}e_{k\ell} = \delta
_{j,k} e_{i\ell}, \quad 1 \le i,j,k,\ell \le n.$$ We get a variety
by viewing the matrix units  as part of the structure in this way.
Namely, given a matrix semiring $\mathcal A  = M_n(R)$ with matrix
units $\{ e_{ij}: 1 \le i,j \le n \}$ and a semiring homomorphism
$\varphi: \mathcal A \to \mathcal A',$ the set $\{ \varphi( e_{ij}):
1 \le i,j \le n \}$ is a set of $n \times n$ matrix units for
$\mathcal A'$. Note that the base ring $R$ for the matrices can be
recovered as $\sum e_{i,1} \mathcal A e_{1,i} ,$ so does not have to
be notated separately. One could make the base ring commutative by
means of a suitable identity.

One major point is that the standard proof given for example in
\cite[Proposition~13.9]{Row08} does not use negation.

 Matrices gives rise to the trace
operator $\tra (a) = \sum e_{ii}a e_{ii}.$ Note, for $n \ge 2,$ that
$\tra (I) = \one^\circ$ over the supertropical \semiring0. The
determinant is more problematic since the classical formula involves
negatives; we shall return to this issue in \S\ref{matr1}.

\item \textbf{Formal traces.}
Since much of linear algebra involves the trace bilinear form, let
us formalize the trace from the previous example. For a semialgebra
$ \mathcal A$ over a commutative associative ring $C$ define a trace
operator $\tra : \mathcal A\to C$ satisfying the identities
$$\tra (x_1 x_2) = \tra (x_2 x_1); \qquad \tra(\one_\mathcal A) = n {\one_C} .$$
The trace operator can be viewed as a unary  operator, so we can
define the variety of semialgebras with traces.

\item \textbf{Bilinear forms on a module $M$.} Linear algebra, which
has played a significant role in tropical mathematics, can be taken
over systems. Although one can define modules $M$ over \semirings0
(and systems) the same way as  modules over rings, \cite{IKR45,JMR},
much research focuses on the free module $M = \mathcal A^{(n)}.$

A \textbf{bilinear form} is an operator $b: M \times M \to \mathcal
A$ satisfying the classical identities defining bilinearity; an
example is the inner product.

\item \textbf{Quadratic forms.} The general definition of quadratic form
over a semiring is given in \cite{IKR45}, also cf.~\cite{ChR}.
Continuing $\operatorname{(iii)},$ we introduce a (quadratic)
operator $Q: M \to \mathcal A$ satisfying the identity $Q(x+y) =
Q(x) + Q(y) + b(x,y)$, where the binary operator  $b(x,y)$ is the
accompanying bilinear form of $Q$. We get a variety by incorporating
the  quadratic form into the structure.
%

\item \textbf{Blueprints.}
Lorscheid~\cite[Definition~1.1]{Lor} has put tropical geometry in a
rather general framework, which we review.

 \label{blue} A \textbf{blueprint} $B$ is a monoid
$A$ with zero, together with  an equivalence relation~$\Cong$ on the
monoid \semiring0 $\mathbb N [A] = \{ \sum a_i: a_i \in A\}$ (of
finite formal sums of elements of $A$) that satisfies the following
axioms (where we write $\sum  a_i \equiv  \sum  a'_j$ whenever $(
\sum a_i, \sum a'_j) \in \Cong$):

\begin{enumerate}\eroman
\item The equivalence $\Cong$ is additive and multiplicative. (Thus $\Cong$
 is a congruence.)

\item  The absorbing element $ \zero$ of $A$ is compatible with the zero
of $\mathbb N[A];$ i.e., $\zero \equiv \text{empty sum}.$

\item  If $a,b \in A$ and $a \equiv
b,$ then $a = b$ (as elements in $A$). \end{enumerate}
%

%
 When the monoid $A$ has a given negation map $(-)$,
we extend the congruence $\Cong$ to $\mathbb Z [A]$ via the identity
generated by the relation $(-m)x \equiv m((-)x),$ and then the
$A$-blueprint $B$ has the negation map given by $(-) a = ((-)\one)
a.$ Indeed, we verify   the extra relation: if $\sum a_i \equiv \sum
b_j$ and $\sum a_i' \equiv \sum b'_j$, then
$$\sum a_i - \sum a_i' =  \sum a_i (-) \sum a_i' \equiv  \sum b_i (-)
\sum b_i' = \sum b_i - \sum b_i'.$$


%


\end{enumerate}
 \end{example}

%
%
%
%
%
%

\subsubsection{Structures of   tropical mathematics which do not comprise varieties}$ $

Several important concepts   fail to correspond to varieties,
because at least one of the key ingredients of
Definition~\ref{varclo}, either homomorphic images
 or direct products,  is missing.

\begin{example}\label{badex}$ $

\begin{enumerate}\eroman
\item \textbf{Ordered \semirings0 versus bipotence.} One takes Green's order
$a+b = \max \{a,b\}$, using the sentence
$$x_1+x_2 = x_1 \quad \vee  \quad x_1+x_2 = x_2,\quad \forall x_1, x_2,$$
which passes to subalgebras and homomorphic images, but not to
direct products, since for example (componentwise) $(1,2) + (2,1) =
(2,2).$

\item \textbf{Supertropical \semirings0.} One can describe supertropical \semirings0
(Definition~\ref{super1}), by declaring the constant $e: = \one^\nu$
to be both an additive and multiplicative idempotent, i.e., $e+e =e$
and $e^2 = e.$ Then $re = r^\nu$, so the map $r \mapsto re$
corresponds to the ghost map, cf.~\cite[Remark~2.1]{IKR}.
Supertropicality passes to subalgebras  but not to direct products,
just as in (i), and is  ruined in images when $\varphi(a) =
\varphi(a')$ with $a < a',$ since then $\varphi(a+a') =
\varphi(a')\ne \varphi(a')^\nu = \varphi(a)+ \varphi(a')$.

\item \textbf{ub
semigroups.} Any ub semigroup satisfies the property
$$x_1+x_2 +x_3 = x_1 \quad \Rightarrow \quad x_1+x_2 =x_1,\quad \forall x_1, x_2, x_3.$$
This property passes to sub-semigroups and direct products, but not
to homomorphic images, for example  $\Net \rightarrow \Z _2$, taken
modulo 2. (It is an example of a ``quasi-identity''  in mathematical
logic.)

%
\item  * \textbf{Hypergroups.} The power set construction fails to satisfy the conditions of Definition~\ref{varclo}.
\end{enumerate}
 \end{example}

\subsection{Properties of $e$, and the characteristic of a system $\mathcal A$}\label{chartri}$ $

\begin{lem}\label{uniq77} In a triple, if $e' \in \tT$, then $e' = \one.$
\end{lem}
\begin{proof} If $e'\in \tT$ then $e' = e + \one$ implies $e' (-) \one = e+e \in \mathcal A^\circ ,$
implying $e' = \one $ by unique negation.
\end{proof}

When $e'=\one$ the characteristic behaves in the familiar manner.

\begin{prop}\label{char77} Suppose $e'=\one.$ If $\mathbf m' = \mathbf  m,$ for $m < m',$ then
the characteristic divides $m'-m.$ In characteristic $0$, the
natural map $\Z \to \mathbf Z$ is 1:1.
\end{prop}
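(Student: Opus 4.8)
The plan is to read off the meaning of the hypothesis $e'=\one$ and use it to make the quasi\nobreakdash-zeros coming from $\mathbf N$ behave like additive units, which substitutes for the cancellation one does not have in general. First I would record the elementary consequences. By Definition~\ref{impelt}, equation~\eqref{eeq}, we have $e' = e+\one$, so $e'=\one$ means precisely $e+\one=\one$; an immediate induction then gives $me+\one=\one$ for every $m\ge 1$, where $me$ denotes $e+\cdots +e$ ($m$ summands). Next, the Remark following Definition~\ref{impelt} (the identity $(ma)^\circ = m a^\circ$), applied with $a=\one$ together with $\one^\circ = \one(-)\one = e$, gives $\mathbf m^\circ = m e$ for all $m\ge 1$. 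Combining the two lines yields the key identity $\mathbf m^\circ + \one = \one$ for all $m\ge 1$. I also use throughout that $\mathbf{m+k} = \mathbf m + \mathbf k$.

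For the first assertion, write $d := m'-m \ge 1$, so that $\mathbf m = \mathbf{m'} = \mathbf m + \mathbf d$. Adding $(-)\mathbf m$ to both sides and using $\mathbf m + (-)\mathbf m = \mathbf m^\circ$ gives $\mathbf m^\circ + \mathbf d = \mathbf m^\circ$; adding $\one$ and invoking $\mathbf m^\circ + \one = \one$ collapses this to $\one + \mathbf d = \one$, i.e.\ $\mathbf{d+1} = \one$. In particular $\mathcal A$ has some finite characteristic; let $k\ge 1$ be least with $\mathbf{k+1}=\one$. Applying Lemma~\ref{Euc} to the relation $\mathbf{d+1}=\one$ gives $k \mid d = m'-m$, which is the claim.

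For the second assertion, I argue by contradiction in characteristic $0$, where no such $k$ exists, so the displayed computation above is forbidden. If $\mathbf m = \mathbf{m'}$ with $m\neq m'$, the first paragraph produces a finite characteristic, a contradiction; hence $n\mapsto \mathbf n$ is injective on $\Net$. If $\mathbf m = (-)\mathbf{m'}$, apply $(-)$ (which has order $\le 2$) to get $(-)\mathbf m = \mathbf{m'}$, then add $\mathbf m$ to obtain $\mathbf m^\circ = \mathbf{m+m'}$; together with $\mathbf m^\circ + \one = \one$ this gives $\mathbf{(m+m')+1} = \one$, again a finite characteristic, contradiction. Finally $\mathbf m = \zero$ forces $\mathbf{m+1} = \zero + \one = \one$, while $(-)\mathbf m = \zero$ forces $\mathbf m = (-)\zero = \zero$ by Lemma~\ref{minuszero}; both are excluded. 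Thus $\zero$, the $\mathbf n$, and the $(-)\mathbf n$ (for $n\ge 1$) are pairwise distinct, i.e.\ the natural map $\Z \to \mathbf Z$ is $1{:}1$.

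I do not expect a serious obstacle: the only idea needed is that $e'=\one$ turns each $\mathbf m^\circ$ into an additive unit for $\mathbf 1$, after which everything is a two-line computation plus the Euclidean step already packaged in Lemma~\ref{Euc}. The points requiring mild care are the edge case $m=1$, which is already subsumed in the uniform computation since $\mathbf 1^\circ + \one = e+\one = \one$, and the possibility that $\mathcal A$ has no zero element, in which case the statement about $\Z\to\mathbf Z$ concerns only the $\mathbf n$ and $(-)\mathbf n$ and the same argument applies verbatim.
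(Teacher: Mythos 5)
Your proof is correct, and it takes a genuinely different route from the paper's. The paper descends one unit at a time: it uses $e'=\one$ to rewrite $\mathbf{m-1}=\mathbf{m-2}+e'=\mathbf m\,(-)\one$, so that $\mathbf m=\mathbf{m'}$ forces $\mathbf{m-1}=\mathbf{m'-1}$, and then iterates this cancellation of $\one$ a total of $m-1$ times to arrive at $\one=\mathbf{m'-m+1}$. You instead front-load everything into a single absorption fact: since $e'=\one$ gives $e+\one=\one$, a preliminary induction yields $\mathbf m^\circ+\one=me+\one=\one$ for all $m\ge 1$, and then adding $(-)\mathbf m$ and $\one$ to $\mathbf m=\mathbf m+\mathbf d$ collapses in one step to $\mathbf{d+1}=\one$. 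Both arguments rest on the same engine---that $e'=\one$ makes the quasi-zeros coming from $\mathbf N$ neutral for $\one$---but you isolate that as a standing identity and apply it once, rather than unwinding it an $m$-dependent number of times. Your handling of the second assertion is also more complete than the paper's ``follows at once'': injectivity of $\Z\to\mathbf Z$ requires ruling out not only $\mathbf m=\mathbf{m'}$ but also the cross-terms $\mathbf m=(-)\mathbf{m'}$ and the degenerate cases involving $\zero$, all of which you dispatch correctly by the same absorption identity (obtaining $\mathbf{m+m'+1}=\one$ and $\mathbf{m+1}=\one$, respectively).
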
 \begin{proof} $\mathbf {m-1} =  \mathbf {m-2}  + \mathbf 1 =  \mathbf {m-2} + e'  =  \mathbf  m (-)\one = \mathbf {m'} (-)\one =  \mathbf {m'-2} + e'  =  \mathbf {m'-2} + \one  =  \mathbf {m'-1}
.$  We apply this argument $m-1$ times until we get $\mathbf 1 =
\mathbf {m'-m+1}.$

 The second assertion follows at once from the first.
\end{proof}

\begin{rem}\label{useful1}$ $
 When $(-)$ is of the first kind, then $\mathbf 2 = e \in \mathcal
A^\circ,$ implying inductively $\mathbf {2m} \in \mathcal A^\circ$
for each~$m$. Consequently, if $\mathbf {k} \in \mathcal A^\circ$
for some  odd $k$, then $\mathbf {m} \in \mathcal A^\circ$ for all
$m\ge k$.

\end{rem}

\begin{example}\label{chart1}$ $
\begin{enumerate}\eroman  \item $\mathcal A$ has
characteristic~$1$ iff it is idempotent.
\item For $(-)$
of the first kind, $ \mathcal A$  has characteristic~$1$ or ~$2$ iff
$e' \in \tT, $ by Lemma~\ref{uniq77}, since in this case $\mathbf  1
= e' = \mathbf 3$.

\item We say that $\mathcal A$ is \textbf{quasi-periodic} if
 $\mathbf m = \mathbf {m '}$ for some $m
< m'.$ For  minimal such $m$,  the $\mathbf j$ are distinct for all
$j\le m$, and then for $j>m$ the $\mathbf j$ comprise a cycle with
period $m'-m.$ (When $m=1,$ this is precisely the definition of
characteristic $m'-m.$ But one could have characteristic~0 with
$m>1$, as illustrated in the truncated algebra of
Example~\ref{nontang}(vi).)
%
\end{enumerate}
\end{example}

\begin{prop}\label{Euc} If $\mathcal A$ has characteristic $k$ and $\mathbf {m+1} = \mathbf {1},$ then   $k$ divides $m $.
\end{prop}
\begin{proof} A standard Euclidean algorithm argument. Write $m  = qk +r,$ where $0 \le r<k.$ By definition $m \ge
k,$ and $\mathbf {r+1} =  \mathbf{ qk+r+1} = \mathbf{m} + \mathbf 1=
 \mathbf 1.$ But $r  <  k  , $ so we must have $r=0.$
\end{proof}

%

\subsection{$\tT$-Strongly negated systems}\label{stneg}$ $

The next definition becomes  relevant when
metatangibility is absent, as for the phase hyperfield.

\begin{defn}\label{precedeq5}

A system  $(\mathcal A, \tT, (-),\preceq)$ is
 \textbf{strictly negated}   if, for any $c, d\in \mathcal A$,

\begin{equation}\label{eeq2}
d(-)c \succeq \zero \text{ implies  either }  c, d \succeq \zero
\text{ or }   c \preceq d \text{ or }   d \preceq c .
\end{equation}

A system  $(\mathcal A, \tT, (-),\preceq)$ is $\tT$-\textbf{strictly
negated}, if  for $c \in \tT$,
 $d(-)c \succeq \zero \text{ implies }  c  \preceq d.$
\medskip

\end{defn}

\begin{lem}\label{Euc7}  A strictly negated system  $(\mathcal A, \tT, (-),\preceq)$  is
$\tT$-strictly negated whenever $\mathcal A= \tT \cup \mathcal
A^\circ.$
\end{lem}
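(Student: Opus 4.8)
The plan is to reduce directly to the trichotomy built into the definition of \textbf{strongly negated} and to eliminate the two unwanted alternatives using the extra hypothesis $\mathcal A = \tT \cup \mathcal A^\circ$ together with the defining properties of a $\tT$-surpassing relation from Definition~\ref{precedeq07}. Concretely, I would fix $c \in \tT$ with $c + d \in \mathcal A^\circ$ and apply \eqref{eeq2}, which offers three possibilities: (a) $c, d \in \mathcal A^\circ$; (b) $(-)c \preceq d$; (c) $(-)d \preceq c$. The goal is to show that in every case $(-)c \preceq d$, which is exactly the $\tT$-strongly negated condition.

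First I would dispose of (a): since $c \in \tT$, writing $c = b^\circ$ would give $b^\circ = c \preceq c$ by reflexivity of the pre-order $\preceq$, contradicting the clause $b^\circ \not\preceq a$ for $a \in \tT$ in the definition of a $\tT$-surpassing relation; hence (a) cannot occur. Possibility (b) is literally the desired conclusion, so nothing is required there. The substance of the argument lies in (c): here I would use $\mathcal A = \tT \cup \mathcal A^\circ$ to split on whether $d \in \mathcal A^\circ$ or $d \in \tT$. In the first subcase, $(-)$ acts as the identity on $\mathcal A^\circ$ (since $(-)a^\circ = (-)a + a = a^\circ$), so $(-)d = d$ is a quasi-zero with $d \preceq c$ and $c \in \tT$, again contradicting $b^\circ \not\preceq a$. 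In the second subcase $(-)d \in \tT$, so $(-)d \preceq c$ with both sides tangible forces $(-)d = c$ by Definition~\ref{precedeq07}(iv); applying $(-)$ (of order $\le 2$) gives $d = (-)c$, and then $(-)c \preceq d$ holds by reflexivity.

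I expect no serious obstacle, as each step is a one-line appeal to the axioms; the only point requiring care is the subcase $d \in \mathcal A^\circ$ of (c), where one must remember that $(-)$ restricts to the identity on $\mathcal A^\circ$ in order to convert $(-)d \preceq c$ into a violation of $b^\circ \not\preceq a$. Assembling the cases yields $(-)c \preceq d$ whenever $c \in \tT$ and $c + d \in \mathcal A^\circ$, which is precisely the assertion that the system is $\tT$-strongly negated.
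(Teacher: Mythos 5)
Your proof is correct and takes essentially the same route as the paper's: both split on whether $d\in\tT$ or $d\in\mathcal A^\circ$ (using the hypothesis $\mathcal A=\tT\cup\mathcal A^\circ$), dispose of the tangible case via Definition~\ref{precedeq07}(iv), and dispose of the quasi-zero case via the clause $b^\circ\not\preceq a$ together with the observation that $(-)$ fixes $\mathcal A^\circ$. You merely unpack the trichotomy of \eqref{eeq2} in more explicit detail than the paper's terse two-line proof.
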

\begin{proof} If $d \in \tT$ then $d =  c.$ If $d \in \mathcal A^\circ$
then we cannot have $d \preceq c,$ by  Definition~\ref{precedeq07}.
\end{proof}


Although our emphasis in this paper is on metatangible systems,
``$\tT$-strictly negated'' is a reasonably broad substitute since it
 has several nice consequences  and holds for hypersystems:

\begin{prop}\label{hypersys7} Any hypersystem $(\overline{\tTz}, \tT, (-), \subseteq)$ is    $\tT$-strictly negated.
 \end{prop}
\begin{proof}  $\zero  \preceq d  (-) c$  means $\zero \in d
\boxplus (-c)$   so $\zero  \in (-c) \boxplus a$ for some tangible
$a \in d;$ thus $a  = c,$ i.e., $c \in d.$
\end{proof}

Also we will see in Proposition~\ref{msums1} that every cancelative
metatangible system  of height 2 is strictly negated.

\subsubsection{$\tT$-reversibility}\label{mat200}$ $


\begin{defn}\label{precedeq}
A surpassing relation  $\preceq$ in a system is
$\tT$-\textbf{reversible} if
 $a \preceq  b +c   $ implies  $b \preceq a (-) c$  for $a,b \in \tT.$

%

 A   $\tT$-\textbf{reversible}  system
is a system $(\mathcal A, \tT, (-), \preceq)$   where  $\preceq$ is
$\tT$-reversible.
%

%
%

\bigskip

%

\end{defn}

%
%

\begin{prop}\label{hyper15}  Any $\tT$-strictly negated system $(\mathcal A, \tT, (-), \preceq)$  is   $\tT$-reversible.\end{prop}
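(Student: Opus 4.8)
The plan is to unwind the definition of $\preceq_\circ$, transpose using the negation map, and then invoke $\tT$-strong negation. Let $a,b\in\tT$ and $c\in\mathcal A$ with $a\preceq_\circ b+c$. By Definition~\ref{precedeq23} (i.e.\ Example~\ref{surp0}(iii)) there is $d\in\mathcal A$ with $b+c=a+d^\circ$. Since $(-)$ is a semigroup homomorphism, $\mathcal A^\circ$ is closed under addition; in fact $a^\circ+d^\circ=(a+(-)a)+(d+(-)d)=(a+d)+(-)(a+d)=(a+d)^\circ$. Adding $(-)a$ to both sides of $b+c=a+d^\circ$ and rearranging in the abelian semigroup $(\mathcal A,+)$ gives
$$b+\bigl((-)a+c\bigr)=(-)a+(b+c)=(-)a+a+d^\circ=a^\circ+d^\circ=(a+d)^\circ\in\mathcal A^\circ.$$

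Now apply the hypothesis that the $\tT$-system $(\mathcal A,\tT,(-),\preceq_\circ)$ is $\tT$-strongly negated (Definition~\ref{precedeq5}), with $b\in\tT$ in the role of ``$c$'' and $(-)a+c\in\mathcal A$ in the role of ``$d$'': since $b+\bigl((-)a+c\bigr)\in\mathcal A^\circ$, we obtain $(-)b\preceq_\circ (-)a+c$. Finally apply the negation map to both sides: by Definition~\ref{precedeq07}(ii) the relation $\preceq_\circ$ is preserved, and since $(-)$ is a homomorphism of order $\le 2$, the left side becomes $(-)((-)b)=b$ while the right side becomes $(-)\bigl((-)a+c\bigr)=(-)((-)a)+(-)c=a+((-)c)=a(-)c$. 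Hence $b\preceq_\circ a(-)c$, which is exactly $\tT$-reversibility (Definition~\ref{precedeq}).

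There is no substantive obstacle here; the computation is a short manipulation in the abelian semigroup together with one application of $\tT$-strong negation. The two points that require a little care are that $\mathcal A^\circ$ is additively closed — immediate from $(-)$ being a homomorphism, and already recorded in Lemma~\ref{circ0} — and the bookkeeping of the negation map: $\tT$-strong negation delivers the conclusion with $(-)b$ on the left, so one must re-apply $(-)$ at the end to recover $b$ (and correspondingly $(-)c$ turns into $a(-)c$ via distributivity of $(-)$ over addition). Note also that the argument never uses $c\in\tT$, so it covers the reading of Definition~\ref{precedeq} in which $c$ ranges over all of $\mathcal A$ as well as the more restrictive reading.
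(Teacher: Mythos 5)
Your proof is correct and follows essentially the same route as the paper's: add $(-)a$ to the equation $b+c = a+d^\circ$ to exhibit $b+(c(-)a)$ as a quasi-zero, invoke $\tT$-strong negation with $b\in\tT$ to get $(-)b\preceq_\circ c(-)a$, and negate to finish. The only differences are cosmetic — you spell out that $a^\circ+d^\circ=(a+d)^\circ$ and that negation preserves $\preceq_\circ$, and you note that $c$ need not lie in $\tT$; the paper leaves these points implicit.
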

\begin{proof}  Suppose that
 $a \preceq  b +c   $ for $a,b \in \tT$, i.e., $ b +c (-)a \succeq \zero.$ Then $(a(-)c) (-)b   =(-) (b +c (-)a )\succeq \zero,$ so $b  \preceq  a (-) c.$
\end{proof}
%

\section{Related notions}\label{unal}
In this section we apply extra algebraic structure to  triples and
systems.

\subsection{Matrices over triples and systems}\label{matr1}$ $

 Matrices were defined in Example~\ref{varex000}(i).
$(M_n(\tTz), \cdot)$ needs no longer be a monoid even when $(\tT,
\cdot)$ is a monoid, because the matrix product involves addition.
Thus, we define $\tT_{M_n(\mathcal A)}$ to be $\cup \tT e_{i,j},$
yielding the triple $(M_n(\mathcal A), \tT_{M_n(\mathcal A)}, (-)).$
Matrices over a  $\tT $-system are a   $ \tT_{M_n(\mathcal
A)}$-module, as with classical algebra,  and we get a system over  $
\tT_{M_n(\mathcal A)}$, defining $(-)$ and $\preceq$ componentwise.
Summarizing, we have:

\begin{rem} If $( \mathcal A , \tT , (-), \preceq)$ is a
 bimagma system, then $(M_n(\mathcal A), \tT_{M_n(\mathcal A)}, (-),
\preceq)$ is a  bimagma system, in view of Remark~\ref{pillar7}.
\end{rem}

We also have a pseudo-triple $(M_n(\mathcal A), \tT, (-))$, where
$\tT$ is identified
 with the scalar matrices, which works with linear algebra in \S\ref{linalg1}.

\subsubsection{$(-)$-Determinants and singularity}\label{matr}$ $

We formulate some standard concepts from matrix algebra over semirings, largely
translated from \cite{AGG1}.

 \begin{defn}\label{signeddet} Suppose a commutative semiring $\mathcal A$ has a negation map
 $(-)$.
 For a permutation $\pi$, write
 $$(-)^\pi b = \begin{cases} b: \pi \text{ even}; \\ (-)b:   \pi \text{ odd}.\end{cases}$$
 The \textbf{$(-)$-determinant} $\absl A $ of a matrix $ A $ is
 \begin{equation}\label{eq:tropicalDetsign}
  \sum_{\pi \in  S_n}  (-)^{\pi} \left( \prod_i a_{i,\pi(i)}\right).
\end{equation}
The \textbf{even part} is $   \sum_{\text{even }\pi \in  S_n   }
\left( \prod_i a_{i,\pi(i)}\right), $ and the \textbf{odd part} is
 $
  \sum_{\text{odd } \pi \in  S_n }   \left( \prod_i a_{i,\pi(i)}\right).
$

The matrix $A$ is \textbf{nonsingular} if $\absl A \in  \tT.$ $A$ is
\textbf{singular} if $\absl A \succeq \zero.$
  $A$ is
$\circ$-\textbf{singular} if  $\absl A \in \mathcal A^\circ.$
\end{defn}


\begin{lem}  \begin{enumerate}\eroman
 \item  The $(-)$-determinant is linear in any given row or column.

 \item If $(a_{i,j}) \preceq (b_{i,j})$ then $\absl{(a_{i,j})} \preceq
\absl{(b_{i,j})}.$ This yields the $\tT_{M_n(\mathcal A)}$-system
$(M_n(\mathcal A), \tT_{M_n(\mathcal A)}, (-), \preceq ).$
\end{enumerate}\end{lem}
\begin{proof}
(i) The same as for the classical situation.

(ii) Match the sums and products in the formula.
\end{proof}

\begin{lem}\label{pair1} If two rows or columns of a matrix $A$ are the same, then $A$ is  $\circ$-singular. \end{lem}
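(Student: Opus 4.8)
The plan is to lift the classical argument --- a matrix with two equal rows has determinant $\zero$ because swapping those rows negates the determinant --- to the present setting, where ``$(-)|A| = |A|$'' gets replaced by ``$|A| = (\text{something})^\circ \in \mathcal A^\circ$''.

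First I would set up notation: suppose (the other case being handled the same way) that rows $i$ and $j$ of $A$ coincide, i.e.\ $a_{i,c} = a_{j,c}$ for all $c$, and let $\tau = (i\,j)\in S_n$ be the transposition. Writing $t_\pi = \prod_k a_{k,\pi(k)}$, recall from Definition~\ref{signeddet} that $\absl{A} = \sum_{\pi\in S_n}(-)^\pi t_\pi = E (-) O$, where $E = \sum_{\pi\text{ even}} t_\pi$ is the even part and $O = \sum_{\pi\text{ odd}} t_\pi$ is the odd part. The map $\pi \mapsto \pi\tau$ is a bijection from the even permutations onto the odd ones, and it reverses parity.

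The key step is to show $E = O$. For a fixed even $\pi$, compare $t_{\pi\tau}$ with $t_\pi$: the two products differ only in their $i$-th and $j$-th factors, $t_{\pi\tau}$ having $a_{i,\pi(j)}$ and $a_{j,\pi(i)}$ where $t_\pi$ has $a_{i,\pi(i)}$ and $a_{j,\pi(j)}$. Since row $i$ equals row $j$ we have $a_{i,\pi(j)} = a_{j,\pi(j)}$ and $a_{j,\pi(i)} = a_{i,\pi(i)}$, so (using commutativity of multiplication on $\mathcal A$ to restore the order of these two interchanged factors) $t_{\pi\tau} = t_\pi$. Summing over even $\pi$ gives $O = \sum_{\pi\text{ even}} t_{\pi\tau} = \sum_{\pi\text{ even}} t_\pi = E$. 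The column case is even cleaner and needs no commutativity: if columns $i,j$ agree, pair $\pi$ with $\tau\pi$; then $a_{k,\tau\pi(k)} = a_{k,\pi(k)}$ for every $k$ (the column index is at most swapped between $i$ and $j$, which carry equal columns), so $t_{\tau\pi} = t_\pi$ and again $E = O$.

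Finally, from $E = O$ we conclude $\absl{A} = E (-) O = E (-) E = E^\circ$, and since $E\in\mathcal A$ this shows $\absl{A}\in\mathcal A^\circ$, i.e.\ $A$ is singular. I do not expect a genuine obstacle; the only subtlety worth flagging is that in the ``equal rows'' case the two relevant factors of the product get transposed, so one invokes commutativity of $\mathcal A$ (harmless in all the intended applications), whereas the ``equal columns'' case goes through verbatim with no such assumption.
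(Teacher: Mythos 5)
Your argument is correct and is exactly the idea behind the paper's one-line proof, which simply says the terms in the $(-)$-determinant ``partition into pairs of opposite $(-)$''; you have made the pairing $\pi \leftrightarrow \pi\tau$ (resp.\ $\pi \leftrightarrow \tau\pi$) explicit and derived $\absl{A} = E (-) E = E^\circ$. Your observation that the row case silently uses commutativity of the underlying multiplication (to re-order the $i$th and $j$th factors) while the column case does not is a fair and useful refinement that the paper glosses over.
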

\begin{proof}
The formula for the $(-)$-determinant partitions into pairs of
opposite $(-)$.
\end{proof}

\begin{prop}\label{pair2} If the first row $v_1$ of $A$ $\circ$-surpasses a linear combination of the other rows $v_2, \dots, v_n$, then $A$ is  $\circ$-singular. \end{prop}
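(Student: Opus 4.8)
The statement to prove is Proposition~\ref{pair2}: if the first row $v_1$ of $A$ satisfies $v_1 \succeq_\circ$ a linear combination $\sum_{i=2}^n c_i v_i$ of the other rows, then $A$ is singular, i.e.\ $\absl A \in \mathcal A^\circ$. The plan is to reduce to the case where $v_1$ \emph{equals} a quasi-zero modification of that linear combination, and then invoke the already-established facts that the $(-)$-determinant is linear in the first row (the linearity lemma preceding the statement) and vanishes (in the sense of lying in $\mathcal A^\circ$) when two rows coincide (Lemma~\ref{pair1}), together with closure of $\mathcal A^\circ$ under the determinant operations.

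First I would unwind the hypothesis: $v_1 \succeq_\circ \sum_{i=2}^n c_i v_i$ means, by Definition~\ref{precedeq23} together with Example~\ref{surp0}(iii), that $v_1 = \sum_{i=2}^n c_i v_i + w$ for some $w \in \mathcal A^\circ$ (componentwise; recall $\mathcal A^{(I)\circ}$ is handled componentwise and $\mathcal A^\circ$ is a submodule by Lemma~\ref{circ0}). Then, using linearity of $\absl{\cdot}$ in the first row, I would expand
\[
\absl A = \sum_{i=2}^n c_i \absl{A_i} + \absl{A_w},
\]
where $A_i$ is $A$ with its first row replaced by $v_i$ and $A_w$ is $A$ with its first row replaced by $w$. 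Each $A_i$ has two equal rows (row $1$ and row $i$ are both $v_i$), so $\absl{A_i} \in \mathcal A^\circ$ by Lemma~\ref{pair1}; scaling by $c_i \in \tT$ keeps it in $\mathcal A^\circ$ since $\mathcal A^\circ$ is a $(\tT,(-))$-submodule. For the last term, $A_w$ has a row lying in $\mathcal A^\circ$, so expanding the determinant along that row (each summand in the determinant formula is then a product containing a factor from $\mathcal A^\circ$, hence lies in $\mathcal A^\circ$ by the module property $a b^\circ = (ab)^\circ$) gives $\absl{A_w} \in \mathcal A^\circ$. Summing, $\absl A \in \mathcal A^\circ$, which is exactly singularity.

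The main obstacle is bookkeeping rather than conceptual: I need the determinant's behavior under a row that is itself a quasi-zero, which is not quite one of the lemmas stated verbatim, but follows immediately from linearity in that row plus the observation that a single entry from $\mathcal A^\circ$ propagates through each monomial $\prod_i a_{i,\pi(i)}$ via \eqref{emul9}. I should also be slightly careful about whether the linear combination $\sum c_i v_i$ is a sum of \emph{tangible} scalar multiples or allows coefficients in $\mathcal A$; in the cancellative meta-tangible setting of the surrounding section the natural reading is $c_i \in \tT$ (or $\tTz$), and the argument above uses only that $\mathcal A^\circ$ absorbs multiplication by such $c_i$, which holds in any triple. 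If one wants $c_i \in \mathcal A$ the same proof works verbatim since $\mathcal A^\circ$ is closed under multiplication by all of $\mathcal A$ in a semiring triple. Thus no genuine difficulty arises beyond invoking the right earlier results in the right order.
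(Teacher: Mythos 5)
Your proof is correct and follows essentially the same route as the paper's: decompose the first row via linearity of the determinant into pieces that are either scalar multiples of other rows (handled by Lemma~\ref{pair1}) or lie componentwise in $\mathcal A^\circ$ (handled by the absorption property of $\mathcal A^\circ$), and conclude that the sum lies in $\mathcal A^\circ$. The paper states this in one compressed sentence; you have merely supplied the bookkeeping.
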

\begin{proof}
Breaking up the first row $v_1 = w^\circ +\sum_{i\ge 2} \a_i v_i$
for some vector $w$, we write $A_j$ for the matrix whose first row
is $ v_j $ and $A_w$ for the matrix whose first row is $w$, and see
that $|A| = |A|_w^\circ + \sum \a_j|A_j|$. Hence, by
Lemma~\ref{pair1}, $|A|$ is a sum of elements of~$\mathcal
A^{\circ}$.
\end{proof}

\subsubsection{The special linear monoid}\label{matmon}$ $

In order for  this system theory to be at our disposal for
$M_n(\mathcal A)$ for a general commutative  \semiring0 $\mathcal
A,$ we pass to $M_n(\widehat {\mathcal A}),$ with the switch
negation map, in which case, for a matrix $A$ , the
$(-)$-determinant matches~ \cite{AGG1}. Namely, we define $\absl A
_\circ = \absl{(A,( \zero ))}$. This is an ordered pair $(a_0,
a_1)$, where $a_0$ is the even part  of the determinant and $a_1$ is
the  odd part.
  These considerations lead us to  the \textbf{$\preceq$-special linear} monoid $\operatorname{SL}_n(\hat {\mathcal A}): = \{ A \in M_n({\mathcal A}): \absl
{A}_\circ \succeq_\circ (\one, \zero)\}$.

 This is essentially the definition used in \cite{INR}. It contains
 all the elementary matrices, but is not generated by them multiplicatively, cf.~\cite{Niv2}.
Mimicking the classical algebraic
groups,
we can define
 $\operatorname{PSL}_n(\mathcal A)$ by taking  $\operatorname{SL}_n(\mathcal A)$
 modulo the congruence $\{ (A, \a A) : \a \in  \mathcal A\}.$
Versions of the other algebraic groups will be obtained presently by
utilizing involutions.

\subsection{Polynomial triples}\label{poly}$ $

To set up affine geometry over systems, one defines the polynomial
cbimagma $\mathcal A[\la]$ in the usual way over a cbimagma
$\mathcal
A$, as a monoid cbimagma. We write $\la$ for $\one \la.$ 
 $\mathcal A[\la_1, \dots, \la_n]$ is defined inductively. $\tT[\la]$
is not closed under multiplication, since for example $(\la +
\one)(\la (-) \one) = \la^2 + e \la (-) \one.$ Instead, one takes
$\tT_{\mathcal A[\la_1, \dots, \la_n]}$ to be the set of monomials
$\{ a \la_1^{i_1}\cdots \la_n^{i_n}: a \in \tT, \, i_j\in \Net_0\}$.

When $(\mathcal A  , \tT  , (-))$ is a triple, $(\mathcal A[\la_1,
\dots, \la_n] , \tT_{\mathcal A[\la_1, \dots, \la_n]} , (-))$ is a
triple, under the negation map $$(-)( a \la_1^{i_1}\cdots
\la_n^{i_n}) = ((-) a) \la_1^{i_1}\cdots \la_n^{i_n},$$ and has
unique quasi-negatives  if $(\mathcal A  , \tT  , (-))$ has unique
quasi-negatives. A surpassing relation $\preceq$ on~$\mathcal A $ is
extended to $\mathcal A[\la_1, \dots, \la_n]$ by comparing
respective components. (One does the same for Laurent polynomials,
rational functions, etc.) The analogy for cbimagmas also works.

 \begin{rem}\label{commatch}
 $   \hat {\mathcal A}[\la_1, \dots, \la_n] \cong\widehat {\mathcal A[\la_1, \dots, \la_n]} ,$
 under the map $$ \sum _{\tilde   i = (i_1, \dots ,i_n)}(\a_{0,\tilde i}, \a_{1,\tilde i})\la_1^{i_1} \cdots \la_n^{i_n} \mapsto \left(\sum _ {\tilde i= (i_1, \dots ,i_n)} \a_{0,\tilde i} \la_1^{i_1} \cdots \la_n^{i_n} ,\sum _{\tilde i = (i_1, \dots ,i_n)}
  \a_{1,\tilde i}\la_1^{i_1} \cdots \la_n^{i_n}\right),$$ seen by matching components, and noting that the map is 1:1 and
 onto.
 \end{rem}

 Polynomial systems are important in affine
geometry. One  often identifies polynomials in terms of their values
as functions, by mapping ${\mathcal A}[\la_1, \dots, \la_n] $
to~$\Fun ( {\mathcal A}^{(\{1, \dots, n\})},{\mathcal A})$, as given
in ~Definition~\ref{varclo}(iii).
 \begin{rem}\label{geom1} Generalizing \cite{IzhakianRowen2007SuperTropical, Ju},
 $\mathbf a \in \tT^{(n)}$ is a  \textbf{systemic root} of
$f \in \mathcal A [\la_1, \dots, \la _n]$ if $f(\mathbf a) \succeq
\zero.$\end{rem}

 This yields an analog of the Zariski topology, in which the closed
 sets are the sets of systemic roots.

\subsection{Involutions}\label{inv}$ $

\begin{defn} An \textbf{involution} on a system $(\mathcal
A, \tT,(-),\preceq )$ is an anti-isomorphism of degree 2, i.e., an additive
homomorphism $(*)$ satisfying ($\forall  \ b,b_i \in  \mathcal
A$):

 \begin{enumerate}\eroman\item $(b^*)^* = b$,
\item $(b_1b_2)^* = b_2^*b_1^*,$
 \item
 $((-)b)^*= (-)b^*.$
 \item If $b_1 \preceq b_2$
in $\mathcal A,$ then $b_1^* \preceq b_2^*.$
\item $\tT^* = \tT.$

\end{enumerate}
\end{defn}

(iv) is automatic for~$\preceq_\circ,$ since   $ (b_1+ c^\circ)^* = b_1^* + (c^*)^\circ.$

\begin{lem}  $(c^\circ)^* =
(c^*)^\circ.$
\end{lem}
\begin{proof} $(c^\circ)^* = (c(-)c)^*  = c^* (-)c^* =
(c^*)^\circ.$
\end{proof}

\begin{example}\label{matrixinv}  $ $ Examples of involutions on the matrix semialgebra $M_n({\mathcal
A})$  over $\mathcal A$:
\begin{enumerate}\eroman
   \item The transpose map on $M_n({\mathcal
A})$ is an involution denoted by $A \mapsto A^t.$

\item  When $n = 2m$ and $\mathcal
A$ has a negation map, there is another involution, called the
\textbf{symplectic} involution $(s)$, given by $\(\begin{matrix}
                A_{11} & A_{12}\\
                A_{21} &  A_{22}
               \end{matrix}\)^s = \(\begin{matrix}
               A_{22}^t & (-)A_{12}^t\\
                (-)A_{21}^t &  A_{11}^t
               \end{matrix}\)$, where the $A_{ij}\in M_m( \mathcal
A).$
               \end{enumerate}
\end{example}
%

 \begin{rem} We can define $\preceq$-orthogonal
matrices via the condition $(I,(\zero))\preceq A A^t, A^tA , $ and
thereby define the $\preceq$-orthogonal monoids, special
$\preceq$-orthogonal monoids, and $\preceq$-symplectic
monoids.\end{rem}


%

Involutions are studied in terms of   \textbf{symmetric} and
\textbf{antisymmetric} elements, given respectively as
$$ \{ a \in \mathcal
A: a^* = a\}, \qquad \{ a \in \mathcal A: a^* = (-)a\}. $$

\begin{lem}\label{symantisym} Define $(\mathcal A,*)^+: =  \{  a^* + a : a \in \mathcal A\}$
and $ (\mathcal A,*)^-: =  \{  a^* (-) a : a \in \mathcal A\}$.
These sets respectively are symmetric and antisymmetric.\end{lem}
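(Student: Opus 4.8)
The plan is to verify the two claims directly from the defining properties of an involution (an additive homomorphism of degree $2$ satisfying $(a^*)^*=a$ and $((-)a)^*=(-)a^*$) together with the commutativity of $+$ on $\mathcal A$. Recall that we write $a^*(-)a$ for $a^*+((-)a)$, so both sets are built purely out of $+$, $*$, and $(-)$.

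First I would treat the symmetric part. Given $b = a^*+a \in (\mathcal A,*)^+$, apply $*$: since $*$ is an additive homomorphism, $b^* = (a^*+a)^* = (a^*)^* + a^*$; now $(a^*)^*=a$ gives $b^* = a + a^*$, and commutativity of addition yields $b^* = a^*+a = b$. Hence every element of $(\mathcal A,*)^+$ is fixed by $*$, i.e.\ the set is symmetric.

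Next the antisymmetric part. Given $c = a^*(-)a = a^* + ((-)a) \in (\mathcal A,*)^-$, apply $*$: additivity gives $c^* = (a^*)^* + ((-)a)^*$, then $(a^*)^*=a$ and the compatibility axiom $((-)a)^* = (-)(a^*)$ give $c^* = a + (-)(a^*)$. On the other hand, since $(-)$ is an additive homomorphism of order $\le 2$, $(-)c = (-)(a^* + ((-)a)) = (-)(a^*) + (-)((-)a) = (-)(a^*) + a$. Comparing the two expressions and using commutativity of $+$, we get $c^* = (-)c$, so the set is antisymmetric.

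There is essentially no obstacle here: the only point to be careful about is that the involution axioms are stated for elements of $\tT$, so one should note that they extend to all of $\mathcal A$ — either because $*$ and $(-)$ are additive and $\tT$ generates $(\mathcal A,+)$, or simply by invoking that in the present setting $*$ is already given as a unary operator on $\mathcal A$ compatible with $+$ and $(-)$ in the universal-algebra sense of \S\ref{inv}. With that remark in place the computation above is complete.
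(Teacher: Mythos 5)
Your proof is correct and follows essentially the same route as the paper: apply the involution, use that $*$ is additive with $(a^*)^*=a$ and $((-)a)^*=(-)a^*$, and invoke commutativity of $+$. Your closing remark about extending the involution axioms from $\tT$ to all of $\mathcal A$ via additivity and the fact that $\tT$ generates $(\mathcal A,+)$ is a sensible point the paper leaves implicit, but it does not change the argument.
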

\begin{proof} $( a^* + a )^* = a + a^* = a^* + a,$
 and $(a^* (-) a )^* = a + ((-)a)^* = a(-)a^* = (-)(a^* (-) a ).$
\end{proof}


\subsubsection{Involutions under symmetrization}$ $

\begin{prop}\label{symmetinv}If $\mathcal A$ has an involution $(*)$, then the
symmetrization $\hat {\mathcal A}$   has an involution  given by
$$(a_0, a_1)^* = (a_0^*, a_1^*).$$ The symmetric elements are
$\{(a_0, a_1):$ each $a_i \in \mathcal A$ is symmetric$\}$. The
antisymmetric elements are $\{ (a_0,a_0^*) : a_0 \in \mathcal A\}.$
\end{prop}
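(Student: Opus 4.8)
The plan is to verify directly that the coordinatewise map $(a_0,a_1)\mapsto (a_0^*,a_1^*)$ satisfies every clause in the definition of an involution on the $\tT$-triple $\widehat{\mathcal A}$. Additivity is immediate, since addition on $\widehat{\mathcal A}=\mathcal A\times\mathcal A$ is componentwise and $(*)$ is additive on $\mathcal A$; the map has order $2$ because $(a_i^*)^*=a_i$ in each slot; and it preserves $\widehat{\tT}=(\tT\times\{\zero\})\cup(\{\zero\}\times\tT)$ since $(*)$ maps $\tT$ to $\tT$ and fixes $\zero$. Compatibility with the switch negation map of Remark~\ref{pillar} is a one-line check:
$$\bigl((-)(a_0,a_1)\bigr)^*=(a_1,a_0)^*=(a_1^*,a_0^*)=(-)(a_0^*,a_1^*)=(-)\bigl((a_0,a_1)^*\bigr).$$

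The one computation needing care is anti-multiplicativity against the twist product of Definition~\ref{twist0}. I would expand
$$\bigl((a_0,a_1)(b_0,b_1)\bigr)^*=(a_0b_0+a_1b_1,\ a_0b_1+a_1b_0)^*=\bigl((a_0b_0+a_1b_1)^*,\ (a_0b_1+a_1b_0)^*\bigr),$$
then apply additivity of $(*)$ together with $(xy)^*=y^*x^*$ in each coordinate to obtain $\bigl(b_0^*a_0^*+b_1^*a_1^*,\ b_1^*a_0^*+b_0^*a_1^*\bigr)$, and recognize this as exactly $(b_0^*,b_1^*)(a_0^*,a_1^*)$, the twist product of the factors in reversed order. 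When $\mathcal A$ is a semiring and the twist multiplication has been extended by distributivity, this identity on the spanning set $\widehat{\tT}$ propagates to all of $\widehat{\mathcal A}$. If one also wants $(*)$ to be a morphism of systems, I would invoke Remark~\ref{drive7}: the relation $(a_0,a_1)\preceq_\circ(b_0,b_1)$ of Definition~\ref{symsur} means $b_i=a_i+c$ for a single $c\in\mathcal A$, and applying $(*)$ replaces $c$ by $c^*$, so $\preceq_\circ$ is preserved.

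It remains to identify the symmetric and antisymmetric parts. The condition $(a_0,a_1)^*=(a_0,a_1)$ forces $a_0^*=a_0$ and $a_1^*=a_1$, giving precisely the pairs whose coordinates are each symmetric in $\mathcal A$. For the antisymmetric elements, $(a_0,a_1)^*=(-)(a_0,a_1)$ reads $(a_0^*,a_1^*)=(a_1,a_0)$; the equation $a_1=a_0^*$ already implies $a_0=a_1^*$ upon applying $(*)$, so these are exactly the pairs $(a,a^*)$ with $a\in\mathcal A$. I do not anticipate any genuine obstacle: this is a transport-of-structure statement, and the only subtlety is matching the two cross terms in the twist product once the order of the factors is swapped; the argument uses nothing beyond additivity of $(*)$ and $(xy)^*=y^*x^*$, so it remains valid even if $\mathcal A$ is a nonassociative semialgebra without $\one$.
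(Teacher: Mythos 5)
Your proof is correct and follows essentially the same route as the paper's, which simply says "the first two assertions are seen by matching components" and then does the one-line component match for the antisymmetric case. You have merely spelled out the component-matching in full — additivity, order two, preservation of $\widehat{\tT}$, compatibility with the switch, anti-multiplicativity against the twist product (the one place where a small rearrangement of cross terms is needed), and preservation of $\preceq_\circ$ via Remark~\ref{drive7} — so your version is a faithful, more explicit expansion of the same argument rather than a different one.
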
\begin{proof}  The first two assertions are seen by matching components. For the last
assertion, we have $(-)(a_0, a_1)= (a_1, a_0),$ whereas $(a_0,a_1)^*
= (a_0^*, a_1^*)$,
 so matching components  in the antisymmetric case shows that $a_1=
a_0^*$.
\end{proof}

\begin{cor} The sets of symmetric and antisymmetric elements of $\hat {\mathcal A}$ are precisely
$(\hat {\mathcal A},*)^+$ and $(\hat {\mathcal A},*)^-$
respectively. 
\end{cor}


\subsection{Localization of semiring triples}$ $

 We  use the  standard technique of commutative localization, cf.~\cite{Bo}, to pass
 from the case that $\tT$ is a commutative monoid to the case that  $\tT$ is an Abelian group. (We
defer noncommutative localization for future work.)

Assume that $S$ is a submonoid of $\tT$.
 One defines the equivalence $(s_1,b_1) \equiv
(s_2,b_2)$ when $s(s_1b_2) = s(s_2 b_1)$ for some $s\in S$. In the
cancelative case, we can dispose of $s$. We write  $s^{-1}b$ for the
equivalence class of $(s,b).$ We localize a triple $( \mathcal A,
\tT, (-) )$ with respect to $S$, via

$$s_1^{-1}b_1 s_2^{-1}b_2 = (s_1s_2)^{-1}b_1 b_2 ,\qquad s_1^{-1}b_1 + s_2^{-1}b_2 = (s_1s_2)^{-1}(s_2 b_1 + s_1 b_2).$$
%

The ensuing triple is denoted  $(S^{-1}\mathcal A, S^{-1}\tT, (-)),
$ where $$(-)(s^{-1}b ): =  s^{-1}( (-) b), \quad s\in S.$$

\begin{lem} $ $

\begin{enumerate}\eroman
   \item  $s^{-1}( (-) b) = ((-)s)^{-1}b.$
    \item $(s^{-1}b )^\circ = s^{-1}b ^\circ.$
     \end{enumerate}
\end{lem}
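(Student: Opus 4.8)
The plan is to reduce both identities to the defining rules of the localization together with the basic negation-map identity \eqref{neg}. For part (i), the first thing I would do is check that $(-)s$ is invertible in $S^{-1}\mathcal A$, with inverse $(-)(s^{-1})$, so that the notation $((-)s)^{-1}a$ is even meaningful (note $(-)s$ need not lie in $S$). Indeed, applying \eqref{neg} twice, $((-)s)\bigl((-)(s^{-1})\bigr) = (-)\bigl(s\,((-)(s^{-1}))\bigr) = (-)\bigl((-)(ss^{-1})\bigr) = ss^{-1} = \one$, the last step using that $(-)$ has order $\le 2$. Hence $((-)s)^{-1} = (-)(s^{-1})$. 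Then one more application of \eqref{neg}, followed by the definition $(-)(s^{-1}b) := s^{-1}((-)b)$ of the negation map on $S^{-1}\mathcal A$, gives $((-)s)^{-1}a = (-)(s^{-1})\cdot a = (-)(s^{-1}a) = s^{-1}((-)a)$, which is exactly (i).

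For part (ii), I would simply unwind the definitions: $(s^{-1}a)^\circ = s^{-1}a\,(-)\,s^{-1}a = s^{-1}a + (-)(s^{-1}a) = s^{-1}a + s^{-1}((-)a)$, using again the definition of $(-)$ on the localization. Now apply the common-denominator addition rule $s^{-1}b_1 + s^{-1}b_2 = s^{-1}(b_1+b_2)$ recorded just before the lemma, to obtain $s^{-1}\bigl(a + (-)a\bigr) = s^{-1}(a\,(-)\,a) = s^{-1}a^\circ$.

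Since every step is an instance of an already-established rule (the multiplication and addition rules for $S^{-1}\mathcal A$, the equation \eqref{neg}, and the definition of $(-)$ on $S^{-1}\mathcal A$), I do not anticipate a genuine obstacle. The only point that needs a moment of care is the invertibility of $(-)s$ in $S^{-1}\mathcal A$, handled by the short computation above; as an alternative one could instead multiply the two candidate expressions in (i) by $(-)s$ and invoke cancellativity of the action, but the inverse-computation route is cleaner and does not require a cancellativity hypothesis.
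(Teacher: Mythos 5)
Your proof of part (ii) is essentially identical to the paper's: unwind the negation map on the localization and collect the two fractions over the common denominator $s^{-1}$. For part (i) you take a slightly different route. The paper simply says ``cross multiply,'' i.e.\ it verifies the equality of the two fractions by reducing to the single identity $(-)s\cdot(-)a = s\cdot a$ (or equivalently $(-)(sa) = ((-)s)a$), which is just \eqref{neg}. You instead first show that $(-)s$ is invertible in $S^{-1}\mathcal A$ with inverse $(-)(s^{-1})$, then compute $((-)s)^{-1}a = (-)(s^{-1})\,a = (-)(s^{-1}a) = s^{-1}((-)a)$. This is a valid alternative and has the modest advantage of making it explicit that the right-hand side of (i) is even a well-defined element of $S^{-1}\mathcal A$ when $(-)s\notin S$ — a point the paper's one-line proof leaves implicit. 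It also avoids appealing to cancellativity of the action; you note the cross-multiplication-plus-cancellation route as an alternative, which is in fact what the paper does. Either way, the key ingredient is the same: the compatibility \eqref{neg} of $(-)$ with multiplication.
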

\begin{proof} (i) Cross multiply to get $(-) sb = ((-)s)b.$

(ii) $(s^{-1}b )^\circ = s^{-1}b (-) s^{-1}b = s^{-1}(b (-)b)=
s^{-1}(b^\circ).$
\end{proof}

\begin{prop}\label{local} If  $ (\mathcal A, \tT, (-))$ is a  triple  with unique quasi-negatives, and  $S$ is a  multiplicative
submonoid
 of~$\tT$, then the triple $(S^{-1}\mathcal A,\, S^{-1}\tT
, (-))$ also has unique quasi-negatives.\end{prop}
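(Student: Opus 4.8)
The plan is to unwind the definition of ``uniquely negated'' in the localized triple and reduce it, by clearing denominators, to the unique-negation property already assumed for $(\mathcal A, \tT, (-))$. So I would start with $x, y \in S^{-1}\tT$ such that $x+y \in (S^{-1}\mathcal A)^\circ$, writing $x = s_1^{-1}a_1$ and $y = s_2^{-1}a_2$ with $a_1,a_2 \in \tT$ and $s_1,s_2 \in S$, and the goal is to show $y = (-)x = s_1^{-1}((-)a_1)$.

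First I would put $x+y$ over the common denominator $s_1 s_2$, getting $x+y = (s_1 s_2)^{-1}(s_2 a_1 + s_1 a_2)$, where $s_2 a_1$ and $s_1 a_2$ both lie in $\tT$ because $\tT$ is multiplicatively closed and $S \subseteq \tT$. By the lemma just established, $(s^{-1}a)^\circ = s^{-1}a^\circ$, so (writing elements with a common denominator) every quasi-zero of $S^{-1}\mathcal A$ has the form $t^{-1}c^\circ$ with $t \in S$ and $c \in \mathcal A$, and the hypothesis becomes the equation $(s_1 s_2)^{-1}(s_2 a_1 + s_1 a_2) = t^{-1} c^\circ$ in $S^{-1}\mathcal A$. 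Unwinding the localization equivalence, there is some $s \in S$ with
$$ (s s_1 s_2)\, c^\circ \;=\; s t\,(s_2 a_1 + s_1 a_2) \;=\; (s t s_2)\, a_1 \;+\; (s t s_1)\, a_2 $$
in $\mathcal A$. Here the left-hand side is $\big((s s_1 s_2) c\big)^\circ \in \mathcal A^\circ$ by \eqref{neg}, while the two summands on the right both lie in $\tT$.

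Now I would apply the unique-negation hypothesis for $(\mathcal A, \tT, (-))$ to $(s t s_2)a_1 + (s t s_1)a_2 \in \mathcal A^\circ$: it forces $(s t s_1)a_2 = (-)\big((s t s_2)a_1\big) = (s t s_2)((-)a_1)$, using \eqref{neg} once more. Written as $(st)(s_1 a_2) = (st)\big(s_2((-)a_1)\big)$, this is exactly the relation, with witness $st \in S$, showing $s_2^{-1}a_2 = s_1^{-1}((-)a_1)$ in $S^{-1}\mathcal A$; and since $(-)(s_1^{-1}a_1) = s_1^{-1}((-)a_1)$ by the definition of the negation map on $S^{-1}\mathcal A$, this is the desired conclusion $y = (-)x$. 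The one point requiring attention is the bookkeeping with the localization equivalence: the natural map $\mathcal A \to S^{-1}\mathcal A$ need not be injective, so the auxiliary factor $s$ cannot be cancelled and must be carried along — but since $\tT$ is closed under multiplication and $\mathcal A^\circ$ is stable under the $\tT$-action, every product that appears stays where it should, and no cancellativity of the action is used.
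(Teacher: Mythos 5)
Your proof is correct and follows essentially the same route as the paper's: clear denominators, use $(s^{-1}a)^\circ = s^{-1}a^\circ$ to unwind membership in $(S^{-1}\mathcal A)^\circ$, apply the unique-negation hypothesis for $(\mathcal A,\tT,(-))$ upstairs, and push the conclusion back down. The one place you are more careful than the paper is exactly the step you flag at the end: the paper passes from $(ss_1)^{-1}(sa_1 + s_1 a) \in (S^{-1}\mathcal A)^\circ$ directly to $sa_1 + s_1 a \in \mathcal A^\circ$, which is only immediate if the canonical map $\mathcal A \to S^{-1}\mathcal A$ is injective; the paper is implicitly using the cancellativity discussed in the preamble to this subsection (``in the cancellative case, we can dispose of $s$''), whereas you carry the auxiliary factor $s$ from the localization equivalence explicitly. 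Your version therefore works verbatim for the general (not necessarily cancellative) localization and fills a small gap the paper glosses over; otherwise the two arguments are the same.
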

\begin{proof}  Suppose $s_1^{-1}a_1
$ is a quasi-negative of $s^{-1}a,$ for $a \in \tT$. Then $$ (s s_1)^{-1}(sa_1 +
s_1a) = s_1^{-1}a_1 +
 s^{-1}a \in \mathcal (S^{-1}A)^\circ,$$ implying $sa_1 + s_1a \in \mathcal
 A^\circ,$ and thus $sa_1 = (-) s_1a = s_1((-)a),$ and then $s_1^{-1}a_1
 = s^{-1}((-)a).$ 
\end{proof}

In particular, if $( \mathcal A, \tT, (-) )$ is   cancelative over a
monoid $\tT$, then taking $S = \tT$ we can localize~$\tT$ to the
group~$\tT^{-1}\tT.$ For instance, one might localize $(\mathcal
A[\la_1, \dots, \la_n] , \tT_{\mathcal A[\la_1, \dots, \la_n]} ,
(-))$ (where $\mathcal A$ is a $\tT$-semifield) at the monomials, to
get the \textbf{Laurent triple} $(\mathcal A[[\la_1, \dots, \la_n]]
, \tT_{\mathcal A[[\la_1, \dots, \la_n]]} , (-))$.

\subsection{Tensor products with a negation map, and their
semialgebras}\label{tenpro}$ $

The tensor product is a very well-known process in general category
theory, \cite{Gr,Ka1,Ka2,Tak}, and has been studied in the context
of \textbf{monoidal categories}. Here we consider the tensor product
of module triples and semialgebra triples over commutative
\semirings0. These are described in terms of congruences, as given
for example in~\cite[Definition~3]{Ka2}.

Let us work with a module $V$ over a commutative associative
\semiring0 $C$. If $V$ has a negation map~$(-)$, then we can
incorporate the negation map into the tensor product, defining a
negation map on $V \otimes _C W$ by $(-)(v \otimes w) = ((-) v)
\otimes w.$ When $W$ also has a negation map we define a
\textbf{negated tensor product} by imposing the extra axiom
$$((-)v) \otimes w = v \otimes ((-)w).$$ (This is done by modding
out   the usual congruence defining the tensor product  by the
congruence generated by all elements $((-)v \otimes w,\ v \otimes
(-)w)$ .) From now on, the notation $V \otimes W$ includes this
negated tensor product stipulation, and $C$ is understood.
%
%
%

\begin{rem}\label{tensorprop} One can easily prove the following facts,
modifying for example \cite[Chapter~18]{Row08}:

\begin{enumerate}\eroman\item If $f_i : V_i \to W_i$ are module maps then there is
a unique map $f_1 \otimes f_2 : V_1 \otimes V_2 \to W_1 \otimes W_2$
satisfying $$(f_1 \otimes f_2)(v_1 \otimes v_2) = f_1(v_1) \otimes
f_2 (v_2).$$

\item The tensor product $(\mathcal A, \tT, (-))\otimes (\mathcal A', \tT', (-)')$ of triples is a triple
 $$(\mathcal A \otimes \mathcal A', \{ a
\otimes a ' : a  \in \tT, a ' \in \tT'\}, (-)\otimes 1_{\mathcal
A'}).$$
\begin{itemize}
\item  This definition is suited towards ``multilinear'' algebra,
where $\tT_{\mathcal A \otimes \mathcal A'}$ is a set of rank 1
tensors.

\item We call $ a
\otimes a '$ a \textbf{simple tensor}.

\item  If    $F$ is a commutative associative semialgebra over
$C$, then $\underline{\phantom{w}}\otimes _C F$ yields a functor
from semialgebras over $C$  to  semialgebras over $F$, which extends
to triples. (In particular, this holds when $F$ is the
symmetrization of $C$.)
\item  The tensor product is a semiring when $\mathcal A$ and $\mathcal A'$ are semirings.
This enables us to view matrices and polynomials in terms of tensor
products.
\end{itemize}
\end{enumerate}
\end{rem}


 Next, as usual, given a module $V$ over  $C$, one defines
$V^{\otimes (1)} = V,$ and inductively $$V^{\otimes (k)} = V
\otimes V^{\otimes (k-1)} .$$ From what we just described, if  $V$
has a negation map $(-)$ then  $V^{\otimes (k)}$
also has a natural negation map.  
%

 Now define the \textbf{negated tensor semialgebra} $T(V) =
\bigoplus _n V^{\otimes (n)}$ (adjoining a copy of $C$ if we want to
have a unit element), with the usual multiplication. If $V$ has a
negation map then so does $T(V)$, induced from the  negation maps on
$V^{\otimes (k)}$; writing $\tilde a_k = a_{k,1} \otimes \dots
\otimes a_{k,k},$ we put $$(-) (\tilde a_k) = ((-) a_{k,1}) \otimes
\dots \otimes a_{k,k}.$$ 
%


%

One can view the polynomial semialgebra of \S\ref{poly} as a negated
tensor semialgebra, where $V = \mathcal A \la.$

\section{Metatangible triples and their systems}\label{surp22}$ $

This section explores metatangible triples $(\mathcal A, \tT, (-))$
(Definition~\ref{metatan}) and metatangible systems.  Decisive
results for metatangible triples are available, which cover the
tropical applications, but only hold for certain hyperfields.
Eventually we show in Theorem~\ref{circsurp} that $\preceq_\circ$
finishes out the system (although there are other possible
surpassing relations). Many of our arguments involve the height of
an element, from~\S\ref{height10}. Here are some of the main
results.

Every $(-)$-bipotent triple clearly is metatangible. Perhaps as a
surprise, conversely, by Theorem~\ref{twoexamples}, a metatangible
triple either is $(-)$-bipotent (with the ensuing tropical flavor)
or satisfies $e' = \one$ (in which case $e^\circ = e$), which
happens in classical algebra.

The elements of metatangible triples have a surprisingly nice form
to be given in Theorem~\ref{matrsys}, which enables us to prove
various nice properties 
although there also are some annoying
counterexamples.

The heights of elements tie in with the characteristic
 of a triple,  in describing
surpassing relations in Theorem~\ref{circsurp}. Theorem~\ref{sign16}
enables us to describe the symmetrized semialgebra in terms of
classical considerations about sums of squares. This pertains to
``real'' groups of tangible elements, in Proposition~\ref{sign1}.
%
%

Theorem~\ref{mon11} largely classifies metatangible systems, in the
sense that it describes the properties of all of the major examples.
But of course a full classification requires all of the examples,
some of which are rather esoteric and not present here. Some more
examples are given in \cite{Row16v}.

\subsection{Basic properties of  metatangible
 triples and systems}$ $

The key property is:

\begin{lem}\label{nummatch0} A triple $(\mathcal A, \tT, (-))$ with unique quasi-negatives is  metatangible iff
$\tT + \tT \subseteq \tT \cup \mathcal A^\circ.$
\end{lem}
\begin{proof} $(\Rightarrow)$ Immediate from the definition.

$(\Leftarrow)$ If $a' +a \notin \tT $, then $a' +a \in \mathcal
A^\circ$, so $a'  = (-)a$ by uniqueness of quasi-negatives.
\end{proof}

\begin{rem}\label{negmatch1} By Lemma~\ref{negmatch}(i), in a metatangible triple, if  $a_1 \ne
a_2$ in $\tT$ and   $a_1 + a_2 =  a_2$, then $a_1 (-) a_2 = (-) a_2$
and $a_2 (-) a_1 =  a_2$.\end{rem}

\subsection{Metatangible versus $(-)$-bipotent}$ $

 One may
ask whether metatangible systems are necessarily $(-)$-bipotent.
Here is a surprisingly strong observation.

\begin{lem}\label{bip1}
One of the following must hold, for $a,b \in \tT$ in a metatangible
triple:
\begin{enumerate}\eroman  \item $ a = (-)b$.
\item $a+b = a$ (and thus $a^\circ + b = a^\circ$).
\item $a^\circ + b
= b.$ \end{enumerate} We cannot have both (ii) and (iii)
simultaneously.
\end{lem}
\begin{proof} Assume that $a\ne (-)b$, implying $a+b \in \tT.$ If
$a+b \ne a$, then $a^\circ + b  = (a+b) (-)a \in \tT.$ Hence $b =
a^\circ +b$ by Definition~\ref{precedeq07}(i,v).

For the last assertion, if  $a+b = a$ then  $a^\circ + b =  a (-) a
= a^\circ ,$ and if also $a^\circ + b = b$ then $ a^\circ = a^\circ
+ b = b \in  \tT \cap \tT ^\circ = \{ \emptyset \}.$
\end{proof}

%

Examples of a non-bipotent, metatangible triple are the classical
$(\mathbb Z,+),$  or $(\mathbb Z/n,+),$ taking $\tT = \mathcal A
\setminus \{ \zero\},$ but they all have a classical flavor, because
of the next observation.

\begin{lem} \label{negmatch5} Suppose in a metatangible triple $(\mathcal A, \tT, (-))$ that  $a_i \in \tT$
with $a_2 \ne (-) a_1$ and $a_3: = a_1+a_2 \notin \{ a_1, a_2\}.$
Then $a_{1} = a_{3} (-)a_2$ and $a_{2} = a_{3} (-)a_1$. Let
$\mathcal A'$ be the
 sub-semigroup generated by $\{ (\pm)a_1,(\pm) a_2,(\pm)
 a_3\}$, and $\tT'= \mathcal A' \cap \tT$. Then
 $(\mathcal A',\tT', (-))$ is a triple  in which $(a_1+ a_2+a_3)^\circ$
acts as the $\zero$ element, and $a_i = a_i e'.$ The unique
quasi-negative of $a_i$ is $a_{i+1}+a_{i+2}$, subscripts modulo 3.

%
%
%

\end{lem}
\begin{proof} $a_{1} = a_{3} (-)a_2$ and
$a_{2} = a_{3} (-)a_1$ by Lemma~\ref{negmatch}(ii). Hence
$$ a_{3} =  a_1+a_2 =  a_{3} (-)a_2 +a_2 =  a_{3} + a_2^\circ,$$ and
likewise $ a_{3} =   a_{3} + a_1^\circ,$ implying $a_{3} ^\circ =
a_{3}^\circ + a_2^\circ = a_1^\circ$ and likewise $ a_{3} ^\circ =
a_2^\circ.$ Now $a_3 + a_{3} ^\circ = a_3 + a_{1} ^\circ = a_3$, so
all the $a_i^\circ$ act as the zero element for $a_3,$ and
symmetrically also for $a_{1}$ and $a_{2}$. Finally, $$(a_1+
a_2+a_3)^\circ = a_1^\circ  + a_2^\circ + a_3 (-)a_3 =  a_3 + (-)a_3
= a_3^\circ,$$ and $a_i = a_i (-) a_i + a_i = a_i e'.$

The last assertion holds since $(a_1+ a_2+a_3)^\circ$ is the only
quasi-zero.
\end{proof}%
%

\begin{rem}  We say that $\{a_1, a_2, a_3\}$ satisfy \textbf{the trio
property} when $a_2 \ne a_1$ in Lemma~\ref{negmatch5}.
 When $(-)$ has the first kind, the trio
property is automatic and $\tT' = \{ a_1, a_2, a_3\}$ and $\mathcal
A' = \tT' \cup \{(a_1+ a_2+a_3)^\circ \}$.

So suppose for the rest of this remark that $(-)$ has the second
kind in Lemma~\ref{negmatch5}.
 We could have the degenerate condition $a_2
= a_1$  in Lemma~\ref{negmatch5}. Now  $\tT'$ consists of multiples
of $a_1$, but could conceivably be infinite.

 Bypassing the previous paragraph, assuming that $a + a= a$ for
all $a\in \tT,$ in other words
 that $\mathcal A$ is idempotent. Then $\tT' = \{
(\pm)a_1,(\pm) a_2,(\pm) a_3\}$ since $a_i = a_i e',$ and again
$\mathcal A' = \tT' \cup \{(a_1+ a_2+a_3)^\circ \}$.
\end{rem}

We do have a non-classical example for pseudo-triples:

\begin{example}\label{meta3}
In Example~\ref{nontang}(v), take $L = \{ 0, 1, \ell, \ell +1\}$ to
be the finite field of 4 elements. Although only a pseudo-triple
since $1$ does not generate $L$, the layered algebra $\mathcal A = L
\times \tG$ does satisfy the metatangibility condition and is of
first kind, but not $(-)$-bipotent
 (since $(1,a) + (\ell,a) = (\ell +1,a))$,   satisfying $e' = \mathbf 3 = \one.$ 
\end{example}

Summarizing, we have:

 \begin{thm}\label{twoexamples} Any  cancelative metatangible unital triple  $(\mathcal A,\tT,(-))$  satisfies one of the following cases: \begin{enumerate}\eroman  \item   $\mathcal A$
 is $(-)$-bipotent. 
 \item $e' =  \one,$ with one of the following two possibilities.
 \begin{enumerate}  \item  $(-)$ is of the first kind, with $(\mathcal A,+)$ of characteristic 2. (In other words
 $e' = \mathbf 3 = \mathbf 1 = \one.)$  In this case, $A$ has height
 $\le 2.$
  \item   $(-)$ is of the second either with $(\mathcal A,+)$ of finite
  characteristic or with  $\{ \mathbf m : m \in \Z \}$ all distinct.
  \end{enumerate}\end{enumerate}
 \end{thm}
\begin{proof}  If $\mathcal A$
 is not $(-)$-bipotent, we  have $a, a' \in
\tT$ with $a' \neq (-) a$ and  $a+a' \notin \{ a, a'\}.$ By
Lemma~\ref{negmatch5}, $a = a+ a(-)a;$ canceling $a$ yields $e' =
\one .$

If $(-)$ is of the first kind, then $ \mathbf 1 = e' = \mathbf 3 ,$
so $\mathcal A$ has characteristic 2.

If $(-)$ is of the second kind, then
  we conclude with  Proposition~\ref{char77}.
%
\end{proof}

 \begin{cor}\label{twoexamples2}  Suppose $(\mathcal A,\tT,(-))$ is a cancelative unital metatangible $\tT$-semifield
 triple of the second kind. Then~$\mathcal A$
 is
  $(-)$-bipotent iff $\mathcal A$
 is
idempotent.
 \end{cor}
\begin{proof}
$(\Rightarrow)$ Since $(-)\one \ne \one$, we have   $\one + \one
\in \{\one, \one\}= \{\one\},$ so $a+a =  a(\one + \one) = a\one =
a.$

 $(\Leftarrow)$ $e' = \one (-)\one  = e,$ so  $e' \ne \one,$ implying $\mathcal A$
 is
  $(-)$-bipotent by Theorem~\ref{twoexamples}.
\end{proof}

\begin{MNote}
This dichotomy of $(-)$-bipotence and  $e' =  \one$ flavors our
entire discussion.

There are examples for each of these conclusions.
\begin{itemize}\item
The classical triple satisfies $e' = \one, $ but is not
$(-)$-bipotent.
\item  The triple of the
standard supertropical algebra is $(-)$-bipotent of first kind,
  satisfying $e' = e,$ and the same holds for the symmetrized algebra.
\item The triple of the
modified symmetrized algebra  of Example~\ref{surpsym} is
$(-)$-bipotent of second kind.

\item The ELT triple satisfies both conditions and is of the second kind.

\item Example~\ref{nontang}(i) is $(-)$-bipotent of the first kind, even
failing $e' \in \tT \cup \tT^\circ.$ \end{itemize}
\end{MNote}

\medskip


\subsection{Computations in a metatangible triple $(\mathcal A, \tT,
(-))$ via $\tT$}$    $

\begin{lem}\label{nummatch2} Suppose $a_i \in  \tT$ with $\sum _{i=1}^{k-1} a_i \in  \tT$ but $\sum _{i=1}^{k} a_i \notin \tT$. Then $\sum _{i=1}^{k-1} a_i = (-)a_k.$
\end{lem}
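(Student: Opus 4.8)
The plan is to reduce immediately to the two-term characterization of meta-tangibility. Set $b := \sum_{i=1}^{k-1} a_i$, which by hypothesis lies in $\tT$, and observe that $b + a_k = \sum_{i=1}^{k} a_i$, which by hypothesis does \emph{not} lie in $\tT$. So I have two tangible elements $b, a_k \in \tT$ whose sum is not tangible.

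Now I would invoke Lemma~\ref{nummatch0}, which says that in a meta-tangible $\tT$-triple, $a + b \in \tT$ whenever $a, b \in \tT$ with $b \neq (-)a$. Applying the contrapositive to $b$ and $a_k$: since $b + a_k \notin \tT$, we cannot have $a_k \neq (-)b$, hence $a_k = (-)b$. Applying $(-)$ to both sides and using that $(-)$ has order $\le 2$ (so $(-)((-)b) = b$), this gives $b = (-)a_k$, i.e., $\sum_{i=1}^{k-1} a_i = (-)a_k$, as desired. (Alternatively, and equivalently, one can argue directly from the definition of meta-tangibility together with unique quasi-negatives: $b + a_k \in \tT + \tT \subseteq \tT \cup \mathcal A^\circ$ and $b + a_k \notin \tT$ force $b + a_k \in \mathcal A^\circ$, so $a_k$ is a quasi-negative of $b$, whence $a_k = (-)b$ by unique negation; this is in fact exactly how Lemma~\ref{nummatch0} itself is proved.)

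There is essentially no obstacle here: the statement is a one-step corollary of Lemma~\ref{nummatch0}. The only minor point to keep straight is the direction of the equality $(-)a_k = b$ versus $a_k = (-)b$, which is handled by the involutivity of the negation map; and one should note the implicit assumption $k \ge 2$ (or at least that $\sum_{i=1}^{k-1} a_i$ is a well-defined tangible element), which is guaranteed by the hypothesis "$\sum_{i=1}^{k-1} a_i \in \tT$."
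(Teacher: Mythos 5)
Your proof is correct and takes essentially the same route as the paper, which likewise reduces to Lemma~\ref{nummatch0}: since $\sum_{i=1}^{k-1} a_i \in \tT$ and the full sum $\sum_{i=1}^{k} a_i \notin \tT$, the contrapositive of Lemma~\ref{nummatch0} forces $\sum_{i=1}^{k-1} a_i = (-)a_k$. You merely spell out the involutivity step that the paper leaves implicit.
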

\begin{proof} $\sum _{i=1}^{k} a_i \in
\tT$ unless $\sum _{i=1}^{k-1} a_i= (-)a_k$.
\end{proof}

 \begin{prop}\label{nummatch3} Suppose $\sum _{i=1}^t a_i \succeq \zero.$ Then for some $k<t,$
$a_k = (-)\sum _{i=1}^{k-1} a_i$. \end{prop}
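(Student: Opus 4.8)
The plan is to follow the partial sums $s_j := \sum_{i=1}^{j} a_i$ for $1 \le j \le t$ and to pick out the first index at which such a sum leaves $\tT$. The ingredients I expect to need are only that $s_1 = a_1 \in \tT$ while $s_t \in \mathcal A^\circ$ by hypothesis, together with Lemma~\ref{nummatch0} and the uniqueness of quasi-negatives built into meta-tangibility; no deeper structure should be required. (Implicitly $t \ge 2$.)

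I would then argue by cases on whether $s_{t-1} \in \tT$. If $s_{t-1} \in \tT$, then $s_{t-1} + a_t = s_t \in \mathcal A^\circ$, so $a_t$ is a quasi-negative of $s_{t-1}$, whence $a_t = (-)s_{t-1} = (-)\sum_{i=1}^{t-1} a_i$ by uniqueness of quasi-negatives; here one takes $k = t$. If instead $s_{t-1} \notin \tT$, then the set of $j \in \{1,\dots,t-1\}$ with $s_j \notin \tT$ is nonempty; let $k$ be its least element. Since $s_1 = a_1 \in \tT$ we have $k \ge 2$, and by minimality $s_{k-1} = \sum_{i=1}^{k-1} a_i \in \tT$ whereas $s_k = \sum_{i=1}^{k} a_i \notin \tT$. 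Then Lemma~\ref{nummatch2} yields $\sum_{i=1}^{k-1} a_i = (-)a_k$, and applying the involution $(-)$ (Definition~\ref{negmap}) gives $a_k = (-)\sum_{i=1}^{k-1} a_i$, with $k \le t-1$. In either case one has exhibited an index $k$ (with $2 \le k \le t$) of the required form.

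There is no genuinely hard step: this is a ``first failure of tangibility'' argument, and the substantive work is already packaged in Lemma~\ref{nummatch2} (which is in turn just Lemma~\ref{nummatch0} together with unique quasi-negatives). The one point worth flagging is that the first case delivers the conclusion with $k = t$ rather than $k < t$, the strict bound being attained precisely in the second case; and there is the harmless degenerate possibility that $s_t$ itself lies in $\tT \cap \mathcal A^\circ$ (which, by Corollary~\ref{uniq7}, requires $\mathcal A$ to possess a $\tT$-absorbing element), but this causes no difficulty, since the first case uses only $s_{t-1} \in \tT$ and never that $s_t \notin \tT$.
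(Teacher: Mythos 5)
Your proof is correct and uses essentially the paper's own approach (track the partial sums $s_j=\sum_{i=1}^{j}a_i$, locate the first one that leaves $\tT$, and invoke Lemma~\ref{nummatch2}), merely spelled out more carefully. But the point you bracket at the end as a ``harmless degenerate possibility'' deserves sharper treatment: the proposition as printed, with strict inequality $k<t$, is false, and should read $k\le t$. The minimal instance $t=2$, $a_1\in\tT$, $a_2=(-)a_1$ has $a_1+a_2=a_1^\circ\in\mathcal A^\circ$, and the only admissible witness is $k=2=t$; the value $k=1$ would force $a_1=(-)\zero=\zero$, which is excluded since $\zero\notin\tT$. Your Case 1 is precisely the regime in which $k=t$ is forced, so the paper's one-line proof (``Take $k<t$ minimal satisfying Lemma~\ref{nummatch2}'') tacitly, and wrongly, assumes such a $k<t$ exists. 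The intended reading $k\le t$ is also what is actually used downstream, e.g.\ in the proof of Theorem~\ref{A1proved}, where the conclusion is applied at the final index of the sum. So you should record the corrected bound $2\le k\le t$ rather than treat $k=t$ as a corner case. One small simplification: under the standing hypothesis $\tT\cap\mathcal A^\circ=\emptyset$ (which holds in the monoid-module setting by Corollary~\ref{nozero7}), one already knows $s_t\notin\tT$, so your two cases merge into a single ``take $k\le t$ minimal with $s_k\notin\tT$'' and Lemma~\ref{nummatch2} applies uniformly; your split has the modest advantage of not needing that hypothesis.
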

\begin{proof} Take $k<t$ minimal satisfying Lemma~\ref{nummatch2}.
\end{proof}

\subsubsection{The characteristic of a  metatangible
 triple}$ $

We continue from \S\ref{chartri}. Define $\mathbf Z = \{ (\pm)
\mathbf a: a \in \Net\}$.


We can make this more explicit.

\begin{prop}\label{use23} Suppose that $(\mathcal A,\tT,(-))$ is a unital
metatangible
 triple. Either $\mathbf N  \subseteq \tT$  or $\mathcal A$
 has characteristic $k$ for some $k \ge 1$, in which case one of the following
possibilities holds:
\begin{enumerate}\eroman  \item  $k= 1,$ i.e,  $\mathcal
 A$ is idempotent.

\item   $k>1$, with $\mathbf k \in \tT.$

\item   $k>1$, with $\mathbf
{k-1} = (-)\one,$ and $\mathbf {k} = e.$

\item $(-)$ is of the first kind and $k$ is even.
\end{enumerate}


Moreover, if $(-)$ is of the second kind, then either $\mathcal A$
is idempotent or
 $\mathbf {k+1}= \one$, implying the characteristic of $\mathcal A$ divides $k$.
\end{prop}

\begin{proof}
First assume that $\mathbf n \ne \one$ for each $n>\one$. By
induction, we may assume that $\mathbf n \in \tT$, and then each
$(\mathbf {n+ 1}) = \mathbf n + \one \in \tT$ and we conclude that
$\mathbf Z \subseteq \tT$ by negating.

Thus $\mathbf {n+ 1} = \one$ for some $n$, implying  $\mathcal A$
 has some characteristic $k$. We are done unless $k>1.$ If $\mathbf k \in \tT$ then we have
 (ii), so assume that $\mathbf k \notin \tT$.
%
%

Assume presently that $(-)$ is of the second kind. If $\tT$ is also
$(-)$-bipotent then $\one +\one = \one,$ so we have~(i). Otherwise
$\mathbf {2}(-) \one =  e' = \one$ by Theorem~\ref{twoexamples},
implying by induction that $\mathbf {j}(-) \one = \mathbf {j-1}$ for
all $j>1$.  Take $k'  $ minimal such that $\mathbf {k'}\notin \tT$.
(Clearly $k'
>1$, and $k' \le k$.) Then
by Lemma~\ref{nummatch2}, $\mathbf {k' -1} = (-)\one$, $\mathbf
{k'}= e$,   and  $\mathbf {k'+1}= e' = \one,$ so $k' = k$
 and we have (iii). 

So we may assume that $(-)$ is of the first kind. If $k$ is odd then
$\one = \mathbf {k+1} = \frac{k+1}2 e \in \mathcal A^\circ,$ which
is a contradiction, so $k$ is even.
\end{proof}


\subsubsection{Comparing $\mathcal A$   to
 $\tT$}$ $

\begin{lem}\label{drive} If $(\mathcal A,\tT,(-))$ is a metatangible
triple, then $\tT + \mathcal A^\circ = \mathcal A$.
\end{lem}
\begin{proof}
Write $b = \sum _{i=1}^t a_i + c^{\circ}$ with $a_i \in \tT,$ and
$t$ minimal. (Clearly $t$ is at most the height of $b,$ since one
could take $c = \zero.$) Then $a_t = (-) a_{t-1},$ since otherwise
we could take $a_t + a_{t-1} \in \tT$ and lower $t.$ But then $b =
a_{t-1}^\circ + c^{\circ} + \sum _{i=1}^{t-2} a_i$ and we apply
induction on $t$, reducing to the case $t=1$.
%
%
\end{proof}
%
Actually we are aiming for the condition of whether $\tT \cup
\mathcal A^\circ = \mathcal A.$

\begin{lem}\label{drive2} Suppose the unital triple $(\mathcal A,\tT,(-))$ is   metatangible.
\begin{enumerate}\eroman \item If $(-)$  is of the
second kind, then $\tT \cup \mathcal A^\circ = \mathcal A.$
\item If $(-)$ is
of the first kind, then $ \mathcal A = \cup \{ \mathbf m \tT:  m \in
\Net\}.$
\end{enumerate}
\end{lem}
\begin{proof} Write $b = \sum _{i=1}^t a_i$ with each $a_i \in \tT,$ $t$ minimal. This
implies $a_i = (-)a_j$ for each $i,j$ since otherwise we could
combine them to an element of $\tT$ and reduce $t$. For $t
>2$, when $e' = \one$, we can replace $a_1(-)a_1+a_1$ by $a_1$, so
again reduce $t$.  Hence we are done by Theorem~\ref{twoexamples}
unless $\mathcal A$ is $(-)$-bipotent. But then for second kind we
get $b = (\pm a_1)$ or $b = a_1^\circ,$ yielding $(i).$

 Thus we may assume that  $a_1 =(-)a_1$.   we
obtain (ii) using induction on height.
\end{proof}

 Let  us consider the
possibilities for $e'$.

 \begin{prop}\label{ematch3}  One of the following must hold in a
 metatangible unital
triple $\mathcal A$:
\begin{enumerate}\eroman  \item $e' \in \tT,$ and then $e' = \one.$
  \item $e' \in \tT^\circ,$ and then $e' = e.$
\item $e' $ has  height $\ge 3$, i.e., $e'\notin \tT \cup \tT^\circ,$  and then $(-)\one = \one,$ with $e' = \mathbf 3.$
\end{enumerate}
\end{prop}
\begin{proof} By Lemma~\ref{uniq77} we may assume that $e' =
\mathbf 2 (-) \one $ is not tangible, or else we have (i) by  unique
quasi-negation. But then if $\mathbf 2$ is tangible then $\mathbf 2
= \one,$ implying $e'=e.$

Thus we may assume that $\mathbf 2$ is not tangible, so $(-)\one =
\one.$ If $e' = a^\circ = 2a$ for $a \in \tT$ with $a \ne \one,$
then writing $b = \one + a \in \tT,$ we have $ b +a  = \one + 2a =
\one + e' = e +e  \in \mathcal A^\circ ,$ and thus $b =a $. But then
$\one + b = \one + a  =  b,$ so $3 a = e'+a =\one + \one  + b= \one
+b  = b = a ,$ implying $\one = \mathbf 3 = e'.$

We are left with (iii).
\end{proof}

\subsubsection{Triples of height 2}$ $

Our earlier considerations give  decisive results for height 2,
which include both the supertropical and symmetrized constructions.

 \begin{prop}\label{ematch5} The following assertions are
 equivalent for a unital triple $(\mathcal A, \tT,
(-))$ (not presumed a priori to be metatangible):
\begin{enumerate}\eroman\item $\tT \cup  \tT^\circ = \mathcal A,$
\item $\mathcal A$ is  metatangible  of height 2,
\item  $\mathcal A$ is  metatangible with $e' \in \{ \one, e\}.$
 \end{enumerate}
  \end{prop}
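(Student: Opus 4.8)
I would assume throughout that the $\tT$-triple is uniquely negated, as holds for all the meta-tangible triples of this section; this is what makes \propref{uniq6} and \propref{ematch3} available. The plan is to prove the cycle $(i)\Rightarrow(ii)\Rightarrow(iii)\Rightarrow(i)$.

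For $(i)\Rightarrow(ii)$, observe that each $a^\circ=a\,(-)a$ is a sum of the two tangible elements $a$ and $(-)a$, so $\tT^\circ\subseteq\tT+\tT$; hence $(i)$ gives $\mathcal A=\tT\cup\tT^\circ\subseteq\tT\cup(\tT+\tT)\subseteq\mathcal A$, so $\mathcal A=\tT\cup(\tT+\tT)$ has height~$2$. Also $\tT+\tT\subseteq\mathcal A=\tT\cup\tT^\circ\subseteq\tT\cup\mathcal A^\circ$, so $\mathcal A$ is meta-tangible. For $(ii)\Rightarrow(iii)$, since $\mathcal A$ is meta-tangible and contains $\one$, \propref{ematch3} gives one of three cases: $e'\in\tT$ (then $e'=\one$), $e'\in\tT^\circ$ (then $e'=e$), or $e'$ of height $\ge 3$. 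The third is impossible because every element of $\mathcal A=\tT\cup(\tT+\tT)$ has height $\le 2$, so $e'\in\{\one,e\}$.

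The substantive implication is $(iii)\Rightarrow(i)$. Here I would first establish that $(a^\circ)^\circ=a^\circ$ for every $a\in\tT$. Since $(-)$ is an additive homomorphism of order $\le 2$, $(-)(a^\circ)=(-)a+a=a^\circ$, so $(a^\circ)^\circ=a^\circ+a^\circ$. Using $ae=a^\circ$ (the case $b=\one$ of \eqref{emul9}) together with $e'=e+\one$ and distributivity of the $\tT$-action over $+$, one gets $ae'=ae+a\one=a^\circ+a$. If $e'=e$ this reads $a^\circ+a=a^\circ$ for all $a\in\tT$; applying it to $(-)a$ and using $((-)a)^\circ=a^\circ$ gives $a^\circ+(-)a=a^\circ$, whence $a^\circ+a^\circ=(a^\circ+a)+(-)a=a^\circ$. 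If $e'=\one$, then $a^\circ+a=a$, so $a^\circ+a^\circ=(a^\circ+a)+(-)a=a+(-)a=a^\circ$. Either way $(\tT^\circ)^\circ=\tT^\circ$, so the ascending chain $\mathcal A=\tT\cup\tT^\circ\cup(\tT^\circ)^\circ\cup\cdots$ of \lemref{drive}(ii) stabilizes at $\mathcal A=\tT\cup\tT^\circ$, which is $(i)$.

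The main obstacle I expect is exactly the identity $(a^\circ)^\circ=a^\circ$ in case (iii): it requires keeping careful track of the $\tT$-action, the negation homomorphism, and the commutativity and associativity of $+$ (for instance, it uses $ae=a^\circ$ and $((-)a)^\circ=a^\circ$, and would fail without them). Everything else is short: $(i)\Rightarrow(ii)$ is a containment chase, $(ii)\Rightarrow(iii)$ invokes \propref{ematch3}, and the collapse of the tower is \lemref{drive}(ii).
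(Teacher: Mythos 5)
Your proof is correct and takes essentially the same route as the paper. The heart of (iii)$\Rightarrow$(i) in both cases is the verification that $e'\in\{\one,e\}$ forces $(a^\circ)^\circ=a^\circ$ for $a\in\tT$, so that $\tT\cup\tT^\circ$ is already closed under sums of pairs; you reach this via the chain of Lemma~\ref{drive}(ii), while the paper uses a direct case analysis through Lemma~\ref{bip1} and a compressed one-line formula, but the underlying computation is the same.
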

\begin{proof} $( (i) \Rightarrow (ii)) $ A fortiori.

$((ii) \Rightarrow  (iii))$ We exclude (iii) in
Proposition~\ref{ematch3}.

$((iii) \Rightarrow (i) )$ The assertion is that $ \mathcal A$ has
height 2.  We need to show that any sum of $\tT \cup  \tT^\circ$ and
elements of $\tT$ remains in  $\tT \cup  \tT^\circ$, and it is
enough to show that if $a \in \tT$ and $b \in \tT \cup \tT^\circ$,
then $a +b \in \tT \cup \tT^\circ$.  The assertion is clear if $b
\in \tT,$ and follows from Lemma~\ref{bip1} unless $b = a ^\circ$,
in which case
$$a+b = ae'  \in \{   a,  ae\} .$$
\end{proof}

 \begin{cor}\label{ematch55} Any metatangible triple of second kind
 has height 2.
  \end{cor}
  \begin{proof} By Proposition~\ref{ematch5} since $e' = e.$
\end{proof}

 An example of a non-metatangible hypersystem of height 2
is given in
Example~\ref{Basicexamples}. 

\begin{prop}\label{msums1} Every unital metatangible system of height 2 is strictly negated, when $\preceq =
\preceq_\circ.$
\end{prop}\begin{proof} Suppose $d(-)c \succeq \zero.$ The result is immediate when $c,d \in
\tT^\circ$,  and clear by unique quasi-negation for $c,d \in \tT$
(since then $c=d$),
 so by Proposition~\ref{ematch5} we may assume that $c \in \tT$ and $d,\ d(-)c \in \tT^\circ$. Write $d
= a^\circ$  for $a\in \tT$.

We cannot have $d(-)c=c$ since $\tT  \cap \mathcal A ^\circ =
\emptyset$. We are done if $d =d(-)c =c + a^\circ.$    Thus,
applying Lemma~\ref{bip1} for $b=c,$ we may assume that $c =
(\pm)a$. Now $e' \ne \one$ since otherwise $c = c + c^\circ = c +
a^\circ = c+d$. Hence $e'=e$ by Proposition~\ref{ematch5}(iii),
implying $c \preceq ce' = ce = a^\circ =d.$
\end{proof}

 \begin{prop}\label{twoexamples4}  Any metatangible
 triple $(\mathcal A,\tT,(-),\preceq)$ over a group $\tT$ is
 either
  $(-)$-bipotent or of height~2.
 \end{prop}
\begin{proof}
We are done by  Theorem~\ref{twoexamples} unless $e'=\one$, so
Proposition~\ref{ematch3} implies $(\mathcal A,\tT,(-),\preceq)$ has
height~2.
\end{proof}

\subsubsection{The natural pre-order on $\mathcal A^\circ$}$ $

\begin{lem}\label{bip4}
Any  triple   has the partial pre-order $<^\circ$, given by $a_1 \le
^\circ a_2$  iff  $a_1 ^\circ = a_2^\circ$ or $a_1 ^\circ +
a_2^\circ = a_2^\circ.$  $a_1 \le ^\circ a_2$ iff  $a_1 \le ^\circ
(-) a_2$, iff $(-)a_1 \le ^\circ a_2$.
\end{lem}
\begin{proof} Suppose  $a_1 ^\circ +
a_2^\circ = a_2^\circ$ and  $a_2 ^\circ + a_3^\circ = a_3^\circ.$
Then $$a_1 ^\circ + a_3^\circ = a_1 ^\circ  + (a_2 ^\circ +
a_3^\circ)  =  (a_1 ^\circ  + a_2 ^\circ) + a_3^\circ =  a_2^\circ +
a_3^\circ = a_3^\circ.$$ The other verifications are patent.
\end{proof}

\begin{prop}\label{bip3}$ $ For any metatangible triple, with $a_i
\in \tT,$
\begin{enumerate}\eroman \item $a_1 ^\circ + a_2^\circ \in
\{ a_1^\circ, a_2^\circ, \mathbf 2 a_1 ^\circ\},$ the last
possibility occurring when $ a_1 = (\pm) a_2$.

\item In particular,  $\le ^\circ$ is a surpassing $\circ$-PO.

\end{enumerate}
\end{prop}
\begin{proof} (i) by Lemma~\ref{bip1}, applied to both $a_2$ and $(-)a_2$. (The first two
possibilities arise when $ a_1 \ne (\pm) a_2,$ and the last,
$\mathbf 2(a_1
 (-)a_1 ),$ when $ a_1 = (\pm) a_2$).

(ii) Follows at once from Definition~\ref{precedeq07}.

\end{proof}
%


\begin{prop}\label{C1}   Any metatangible system $(\mathcal A,
\tT, (-), \preceq)$ has the congruence $$\Cong = \{ (b_1,b_2):
b_1^\circ = b_2^\circ\},$$ and  the corresponding system of
$\mathcal A/\Cong$ is $(-)$-bipotent of first kind.
\end{prop}
\begin{proof} If $(b_1,b_2), (b_1',b_2')\in \Cong$ and $a \in \tT$ then
$$(a b_1)^\circ = a {b_1} ^\circ =  a{b_2} ^\circ = (a b_2)^\circ,
\qquad   (b_1 +{b_1'})^\circ =  b_1 ^\circ + {b_1'} ^\circ = b_2
^\circ + {b_2'} ^\circ,$$ implying $\Cong$ is a congruence, modulo
which $(-)b$ becomes $b$ since $b^\circ = ((-)b)^\circ .$
Furthermore, Lemma~\ref{bip1} yields $(-)$-bipotence since $a^\circ
+ b = b$ implies $(a + b )^\circ = a^\circ + b^\circ = b^\circ.$
\end{proof}

\subsection{Uniform elements and height}$ $

A cancelative metatangible triple $\mathcal A$ is called
\textbf{exceptional} if it is of first kind, of height $>2$.
Case~(ii) of~Theorem~\ref{twoexamples} is ruled out, so $\mathcal A$
is $(-)$-bipotent. The main example is the layered triple
(Example~\ref{AGGGexmod1}). Exceptional triples also are a source of
weird counterexamples, as in~\cite[Examples~7.40 and
  ~7.42]{Row16v}.

%
%

\begin{defn}\label{unif1} A nonzero element $c \in \mathcal A$ of height $m_c \in \Net$    is \textbf{uniform}
if for some element $c_\tT\in \tT$,   one of the following three
possibilities occurs:
\begin{enumerate}\eroman
 \item Type 1: $m_c=1,$ i.e.,
$c = c_\tT\in \tT$,
  \item Type 2: $c = c_\tT^\circ$.
 \item Type 3: $m_c\ge 3,$ and
the triple $(\mathcal A, \tT, (-))$ is exceptional, with $\mathbf 3
\ne \mathbf 1.$
\end{enumerate}
We call this the \textbf{uniform presentation} of $c$.
\end{defn}

\begin{MNote}
The uniform presentation  enables us to reduce many proofs to the
tangible case. Type 2 is the only case for which we do not have $c =
m_c c,$ and
 requires separate treatment. If we can show that some element has
Type 3, the negation map is of first kind, and the notation becomes
simpler.

 The uniform presentation need not be unique. For
example, in a ring,   all quasi-zeros are equal (to $\zero$). But we
will see in Theorem~\ref{uniq71} that quasi-zeros are the only
elements with non-unique presentation in $(-)$-bipotent triples.
 \end{MNote}

\begin{thm}\label{matrsys} Every element of a cancelative metatangible triple $(\mathcal A,\tT,(-))$
is uniform.

%

\end{thm}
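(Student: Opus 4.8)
The plan is to argue by induction on the height $m_c$ of an element $c \in \mathcal A$, using the structural results already established for meta-tangible $\tT$-triples, especially Theorem~\ref{twoexamples}, Proposition~\ref{ematch3}, and Lemma~\ref{drive}. The case $m_c = 1$ is Type~1 by definition, so assume $m_c \geq 2$ and that every element of smaller height is uniform. Write $c = a + c'$ with $a \in \tT$ and $c'$ of height $m_c - 1$; by the inductive hypothesis $c'$ is uniform. First I would dispose of the $(-)$-bipotent case: if $\mathcal A$ is $(-)$-bipotent, then by Lemma~\ref{drive2}(i) (second kind) or the analysis of Lemma~\ref{drive2}(ii) (first kind) together with Lemma~\ref{drive}, every element lies in $\tT \cup \tT^\circ$ or, in the exceptional first-kind situation, is of the form $\mathbf m c_\tT$; these are precisely Types 1, 2, 3. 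The key point in the exceptional case is that $\langle \one\rangle$ furnishes the "quasi-periodic" ladder $\mathbf m c_\tT = c_\tT + \cdots + c_\tT$, and meta-tangibility forces all the intermediate partial sums to stay tangible until one hits a quasi-negative, so the only way to build height is by repeatedly adding a fixed $c_\tT$ (using Lemma~\ref{nummatch2} and Proposition~\ref{bip2}).

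The remaining case is $e' = \one$ but $\mathcal A$ not $(-)$-bipotent, which by Theorem~\ref{twoexamples} splits into first kind of characteristic $2$ (where $\mathcal A$ has height $\leq 2$, so there is nothing left to prove beyond $m_c \le 2$, and Proposition~\ref{ematch5} identifies $\mathcal A = \tT \cup \tT^\circ$) and second kind with $e' = \one$. In the second-kind case with $e' = \one$, I would use Lemma~\ref{drive2}(i): $\tT \cup \mathcal A^\circ = \mathcal A$, so every element is either tangible (Type 1) or a quasi-zero. For a quasi-zero $c = d^\circ$ with $d = \sum_{i=1}^t d_i$, one has $c = (\sum d_i)^\circ$, and I want to show $c = c_\tT^\circ$ for a single $c_\tT \in \tT$, i.e. that $c$ has height exactly $2$. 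This is where Proposition~\ref{bip3}(i) does the work: $d_1^\circ + d_2^\circ \in \{d_1^\circ, d_2^\circ, \mathbf 2 d_1^\circ\}$, and since $e' = \one$ forces $\mathbf 2 a^\circ = a^\circ$ (as $a^\circ + a^\circ = a(e' + \one)\cdot$—more precisely $a^\circ = a + a^\circ$ when $e' = e$; here one checks the relevant identity from $e' = \one$, giving $(a^\circ)^\circ = a^\circ$), the sum of two quasi-zeros is again a single quasi-zero, and induction on $t$ collapses $d^\circ$ to $c_\tT^\circ$. Combined with Lemma~\ref{drive}(ii) to handle the general element, this yields Type 2.

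I expect the main obstacle to be the bookkeeping in the exceptional (first kind, height $\geq 3$) case: one must show that an element of height $m_c$ is genuinely of the form $m_c\, c_\tT$ for a \emph{single} tangible $c_\tT$, rather than a mixed sum, and that $m_c$ is correctly read off as the height. The tool is Lemma~\ref{nummatch2} / Proposition~\ref{nummatch3}: in a minimal-length presentation $c = \sum_{i=1}^{m_c} a_i$, meta-tangibility forces each proper partial sum $\sum_{i=1}^{k} a_i$ to be tangible (else we could shorten), and if two consecutive $a_i$'s differed we could combine them in $\tT$ and shorten further; hence all $a_i$ are equal to a common $c_\tT$, giving $c = m_c\, c_\tT$. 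The condition $\mathbf 3 \neq \mathbf 1$ is exactly the statement that we are not in characteristic $2$, which is forced by "exceptional" via Proposition~\ref{ematch3}(iii). The one subtlety to handle carefully is non-uniqueness of the presentation (e.g. $\zero_\tT = a^\circ$ in the classical degenerate overlap, or $c_\tT$ vs $(-)c_\tT$), but uniformity only asserts \emph{existence} of a uniform presentation, so this causes no trouble here and is deferred to Theorem~\ref{uniq71}.
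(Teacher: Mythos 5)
Your plan identifies the right structure (directly analyzing a minimal presentation, splitting via Theorem~\ref{twoexamples} into the $(-)$-bipotent and $e'=\one$ branches, and separating first from second kind), and the observation that $e'=\one$ gives $a^\circ + a = a$ and hence $(a^\circ)^\circ = a^\circ$ is exactly what is needed to collapse quasi-zeros to $\tT^\circ$ in that branch. But there is a genuine error in the step you flag as the ``main obstacle.'' You claim that in a minimal-length presentation $c = \sum_{i=1}^{m_c} a_i$ ``meta-tangibility forces each proper partial sum to be tangible (else we could shorten).'' This is backward. If a proper partial sum $b := \sum_{i=1}^k a_i$ with $2 \le k < m_c$ \emph{were} tangible, you could replace $a_1,\dots,a_k$ by the single tangible $b$, giving a presentation of length $m_c - k + 1 < m_c$ and contradicting minimality. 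So in a minimal presentation of length $\ge 3$ the proper partial sums of length $\ge 2$ are necessarily \emph{not} tangible. Indeed, in the exceptional case $c = m_c c_\tT$ of the first kind, $2c_\tT = c_\tT^\circ \in \mathcal A^\circ$ and more generally $k c_\tT \notin \tT$ for $k \ge 2$, which is precisely why the height does not collapse. Your stated Lemma~\ref{nummatch2}/Proposition~\ref{nummatch3} tools in fact describe what happens \emph{when} a partial sum leaves $\tT$, not a reason that it cannot.

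The related step ``if two consecutive $a_i$'s differed we could combine them in $\tT$'' is also incomplete, since $a_i + a_{i+1}$ lands in $\tT$ only when $a_{i+1} \neq (-)a_i$; the quasi-negative case lands in $\mathcal A^\circ$ instead and must be disposed of separately. The correct replacement for both of these claims is the argument that $a_i \neq (\pm)a_j$ forces $a_i + a_j \in \tT$ (height reduction), so all summands agree up to sign; then for second kind and $m \ge 3$ some two summands have the same sign and $a_i + a_i \in \tT$ forces a reduction, while for first kind $(-)a_i = a_i$ makes ``up to sign'' into genuine equality, yielding $c = m_c c_\tT$. This is essentially the paper's argument, and after fixing the two points above your approach converges to it. The ``induction on height with $c = a + c'$'' frame you announce at the top is not actually used in what follows and can be dropped.
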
\begin{proof} 
%
%
%
%
Let $m$ be the height of $c$. We may assume that $m \ge 2.$
 If $m =2 $ and $a_1  \ne (-) a_2$, then $c
 \in \tT$, so $a_1  = (-) a_2$ and $c = {a_1}^\circ.$


For $m\ge 3$, if some $a_i \ne a_j$ then $a_i+a_j \in \tT$,
contradicting definition of height. Hence all $a_i = a_1.$ If
$\mathbf 3 = \mathbf 1$ then we replace $a_1+a_1+a_1$ by $a_1$,
again contradicting the definition of height. If $(-)$ is of second
kind, then $a_1+a_1$ is tangible, again  contradicting the
definition of height. Hence $(-)$ is of first kind, so $\mathcal A $
is exceptional.
\end{proof}


 \begin{cor}\label{sumunif} There are the following possibilities for  $c+ d$
 in a cancelative metatangible triple $(\mathcal A,\tT,(-))$, where $m_c \le m_d$:
  \begin{enumerate}\eroman
    \item $c$, with  $c_\tT +d_\tT = c_\tT$.
   \item $c$, where  $m_c = 1$ and $m_d =2$.
    \item $d$, with  $c_\tT +d_\tT = d_\tT$.
  \item $d$, where  $m_d \ge 2$ and  $c +d_\tT = d_\tT$.
\item $c_\tT +d_\tT \in \tT$, where $c= c_\tT $ and $d = d_\tT$ but $c \ne (-)
d.$
\item $(c_\tT +d_\tT)^\circ \in \tT^\circ$, where $c= c_\tT ^\circ $ and $d = d_\tT^\circ$ but $c_\tT
 \ne (-) d_\tT.$
     \item $2 c_\tT ^\circ $, where $c  = d$ and $m_c =
  2.$
\item $ c_\tT ^\circ$, where $c = c_\tT = (-) d_\tT = (-)d.$
     \item $( m_c +  m_d) c_\tT $, where $c_\tT = d_\tT $.
   \end{enumerate}
     \end{cor}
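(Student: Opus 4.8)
The plan is to reduce everything to Theorem~\ref{matrsys}, which tells us that both $c$ and $d$ have uniform presentations, and then do a case analysis on the types of $c$ and $d$, invoking Lemma~\ref{bip1} (and its refinement via the trio/bipotence considerations in Lemma~\ref{negmatch5} and Proposition~\ref{bip3}) to pin down $c+d$. First I would write $c$ in its uniform presentation of type $1$, $2$, or $3$ with tangible ``root'' $c_\tT$, and likewise $d = $ type $1$, $2$, or $3$ with root $d_\tT$; by the hypothesis $m_c \le m_d$ we only need the cases $(m_c,m_d) \in \{(1,1),(1,2),(1,\ge 3),(2,2),(2,\ge 3),(\ge 3,\ge 3)\}$. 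For the ``$c+c=c$'' and ``$2c_\tT^\circ$'' clauses I would record from Lemma~\ref{negmatch5} that whenever a sum of two tangibles escapes $\{a,b\}$ we get $a+a(-)a = a$, which is exactly the extra assertion in (vii); and from Remark (iii) after Definition~\ref{negmap} that $(ma)^\circ = m a^\circ$, which gives (viii) and (ix).

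The core of the argument is the mixed-height sum. Suppose $m_c \le m_d$. If $c$ and $d$ are both tangible (type $1$), then Lemma~\ref{bip1} applied to $a = c_\tT$, $b = d_\tT$ gives exactly cases (i), (ii), or (vi) (the last when $c_\tT = (-)d_\tT$), together with (iv) when $c_\tT \ne (-)d_\tT$ and the sum lies in $\{c_\tT,d_\tT\}$ or is a new tangible. If $c$ is tangible and $d = d_\tT^\circ$ has height $2$: when $c_\tT \ne (\pm)d_\tT$ we get $c + d_\tT^\circ = (c+d_\tT)(-)d_\tT$, and by Lemma~\ref{bip1} (applied to $c$ and $d_\tT$, and to $c$ and $(-)d_\tT$) this is $c$ (case (i)) or $d_\tT^\circ$ (case (iii)); when $c_\tT = (\pm)d_\tT$, $c + d_\tT^\circ = c_\tT + c_\tT^\circ = c_\tT e'$, which is $c_\tT$ (with $c+c=c$ forced, i.e.\ case (vii)) or $c_\tT^\circ$ again landing in (iii). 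When both have height $2$, write $c = c_\tT^\circ$, $d = d_\tT^\circ$; then $c+d = (c_\tT + d_\tT)^\circ$ by Lemma~\ref{circ0}/additivity of $(-)$, so Lemma~\ref{bip1} on $c_\tT,d_\tT$ yields $c_\tT^\circ$ (i), $d_\tT^\circ$ (iii), or, when $c_\tT = (\pm)d_\tT$, the element $2c_\tT^\circ$ of (viii) via $(2c_\tT)^\circ = 2c_\tT^\circ$. The remaining high-height cases force, by Theorem~\ref{matrsys}, that the triple is exceptional (first kind, $(-)$-bipotent, $\mathbf 3 \ne \mathbf 1$) and $c = m_c c_\tT$, $d = m_d c_\tT$ with the same root (different roots would give a tangible sum, lowering height); then $c + d = (m_c + m_d)c_\tT$, which is case (ix), after checking using $(-)$-bipotence that no collapse occurs — and the mixed case type~$1$ or $2$ with a type~$3$ element is handled similarly, the type~$3$ summand absorbing the other by bipotence into cases (i)/(ii)/(iii).

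The step I expect to be the main obstacle is the bookkeeping in the exceptional (height $\ge 3$) cases: one must be careful that adding a low-height element to $m_d c_{d,\tT}$ either reproduces the large element (by $(-)$-bipotence, since all terms share a root) or, if the roots differ, immediately collapses the height — and one must verify this does not secretly produce an element outside the listed nine forms. I would handle this by noting that in an exceptional triple $\tT \cup \tT^\circ$ is closed under the relevant sums only up to the periodicity governed by $e' = \mathbf 3$, so every sum of uniform elements sharing a root stays of the form $k c_\tT$; and a sum with distinct roots produces a tangible by Lemma~\ref{nummatch0}, reducing to the low-height cases already settled. Everything else is a routine substitution of the uniform presentations together with Lemma~\ref{bip1} and the identities $a^\circ = (-)a^\circ$, $(ma)^\circ = ma^\circ$.
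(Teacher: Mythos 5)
Your proposal follows the same route as the paper: reduce to uniform presentations via Theorem~\ref{matrsys}, then run a case analysis on the types of $c$ and $d$ using Lemma~\ref{bip1}, treating the exceptional (height $\ge 3$, first-kind, $(-)$-bipotent) situation via the shared-root observation. The paper's proof is much terser — it just groups by the type of $c$ and cites Lemma~\ref{bip1} without unfolding the subcases — but the logic is the same.

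One small inaccuracy worth flagging: you attribute the ``$c+c=c$'' clause of case (vii) to Lemma~\ref{negmatch5}. That lemma concerns a sum of two \emph{tangibles} escaping $\{a,b\}$, whereas in case (vii) the sum is $c + d_\tT^\circ$ with $d$ of height $2$. The clean justification is different: case (vii) forces $e' = e$ (otherwise the sum $c e'$ would be tangible and land in (i) or (ii)), and once $e' \ne \one$ Theorem~\ref{twoexamples} gives $(-)$-bipotence; for second kind this yields idempotence, hence $c+c=c$ directly, while for first kind the sum $3c_\tT$ is already absorbed by case (ix). So the ``$c+c=c$'' side condition comes from $(-)$-bipotence via Theorem~\ref{twoexamples}/Corollary~\ref{twoexamples2}, not from Lemma~\ref{negmatch5}. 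This doesn't affect the overall correctness of the case analysis, only the citation.
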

\begin{proof}  If $c_\tT +d_\tT \in \{ c_\tT, d_\tT\}$ we get
(i), (iii) by iterating, noting   by Remark~\ref{negmatch1}  to
handle Type 2, that for tangible elements  $a+b =b$ also implies
$(-)a + b = b.$ Thus, we assume that $c_\tT +d_\tT \notin \{ c_\tT,
d_\tT\}$.

For the next three paragraphs we suppose that $m_c = 1.$ If $m_d =2$
then $d = d_\tT^\circ$ and then by~Lemma~\ref{bip1}, we have three
cases. Either  $c + d_\tT= d_\tT,$ in which case $c+d =d$, which is
(iv), or $c + d_\tT= c,$ which is (i), or excluding these yields $c
= (-) d_\tT,$  so  $c+d = c(-)c+c$. Hence $c+d (-)c \in \mathcal
A^\circ.$ If $c+d \in \tT$ then $c+d = c$ by unique quasi-negation,
yielding (ii). That leaves us with $c = (-)d \in \tT\cap \mathcal
A^\circ = \emptyset,$ a contradiction.

 If $d \in \tT$ then $c+d
\in A^\circ$ implies $d = (-)c,$ yielding (viii); if $c_\tT = d_\tT
$ we have (v).

If $m_d \ge 3$ then we write $d = 2d + (m_d -2)$ and apply the
previous two paragraphs to get $c$ which is (i), or $d$ which is
(iv), or we have $c_\tT =d_\tT ,$ yielding (ix).

Hence we may assume $m_c \ge 2.$
 If $c = c_\tT^\circ$ and $d =
d_\tT^\circ$ then $c+d = (c_\tT+d_\tT)^\circ,$ yielding (v) and
(viii).

If $c = c_\tT^\circ$ and $ m_d \ge 3$, then we again apply
Lemma~\ref{bip1} to $c_\tT^\circ$ and $d_\tT,$ yielding (i), (iv),
or (ix).

For $m_c \ge 3,$ if we do not have (i) or (iii), then $c_\tT =
(-)c_\tT = d_\tT,$ yielding (ix).
\end{proof}
%
%

 \begin{cor}\label{sumcirc} $c+d \in \mathcal A^\circ$ in a cancelative unital metatangible triple $(\mathcal A,\tT,(-))$
  precisely in the following situations, again according to the Type of $c$:
 \begin{enumerate}\eroman
 \item   Type 1.
  \begin{enumerate}\eroman
 \item
 $d = (-)c$ and $c+d = c^\circ.$
  \item $d =  d_\tT^\circ$ has
 Type 2 and:

\indent (1) $d_\tT =(\pm) c$ so $d =   c^\circ ,$ and $c+d =   ce'$
(when $e'=e$).

\indent (2) $d_\tT +c=d_\tT$, so  $c+d = d.$

 \item $d =  m_d d_\tT$ has
 Type 3 and:

\indent (1) $d_\tT =c$ so  $c+d = (m_d+1) c$ (when this is in $
A^\circ$).

\indent (2) (for $d \in  A^\circ$) $d_\tT +c=d_\tT$, so  $c+d = d$.

   \end{enumerate}
  \item Type 2.
  \begin{enumerate}\eroman
 \item $d $ has Type 1 with  $c+d = c.$
 \item  $d = d_\tT^\circ$ has Type
  2 with $c+d = (c_\tT+d_\tT)^\circ.$

 \item $d  $ has Type 3.

\indent (1) $d_\tT = (-)c_\tT.$ If $(-)$ is of second kind, then
$c+d = c$.

If $(-)$ is of first kind, then $c = 2d_\tT$ and $c+d = (u+2) b,$
which is in $A^\circ$ if $\mathbf {u+2} \in A^\circ$.

\indent (2) (for $d \in \mathcal A^\circ$) $c_\tT^\circ +d_\tT =
d_\tT.$ Then $c+d = d.$

\indent (3)  $c_\tT+d_\tT = c_\tT.$ Then $c+d = c.$
   \end{enumerate}

    \item Type 3. (Then $(-)$ is of first
    kind.)

     \begin{enumerate}\eroman
      \item  $d $ has Type
  1, i.e., $m_d = 1.$

    \indent (1) $d_\tT = c_\tT$, with $(m_c +1)d \in \mathcal A^\circ$.

\indent (2) $d_\tT +  c = c,$ with $ m_c c \in \mathcal A^\circ$.

      \item  $d $ has Type
  2.

  \indent (1) $d_\tT = c_\tT$. Then $d = 2c_\tT$ and $c+d = (m+2) c_\tT.$

\indent (2)  $c_\tT+d_\tT = d_\tT$. Then $c+d = d.$

\indent (3) (for $c \in  A^\circ$) $c_\tT+d = c_\tT$. Then $c+d =
c.$

 \item $d $ has Type 3.

 \indent (1) $d_\tT = c_\tT$. Then $c+d = (m_c+m_d)  c_\tT,$ which could
  be in $A^\circ.$

\indent (2)  (for $d \in  A^\circ$) $c_\tT+d_\tT = d_\tT$. Then $c+d
= d.$

\indent (3)  (for $c \in  A^\circ$) $c_\tT+d = c_\tT$. Then $c+d =
c.$

    \end{enumerate}
      \end{enumerate}
  \end{cor}
\begin{proof}
We go through the cases of Corollary~\ref{sumunif}. (v) is
impossible.

 If $c$ has Type 1 then  (i), (ii), (vi),  and (vii) are impossible;
 (iii), (iv) yield (b)(2) or (c)(2); (viii) yields (a);  (ix) yields (b)(1)
 or
 (c)(1).

  If $c$ has Type 2 then (ii) and (viii) are  impossible; (i) yields (a)(2) or (c)(3);  (iii) yields (b) or
  (c)(2);  (iv) yields (b) or (c)(2);
     (vi), (vii) yield  (b) ;   (ix)
   yields (c)(1).

  If $c$ has Type 3 then $(-)$   must be of first kind
  and (ii), (vi), (vii), and (viii) are  impossible;  (i) yields
  (a)(2); (iii), (iv) yield  (b)(2) or   (c)(2); (ix) yields
  (c)(1).
\end{proof}

The cases (i)(c)(1), (ii)(c)(1), (iii)(b)(1), and (iii)(c)(1) are
particularly intriguing. A sufficient condition clearly is for
$m_c+m_d$ to be even, and we will consider  necessity in
Corollary~\ref{symunif} below.

 Distributivity  for cancelative $\mathcal A$ can be obtained from metatangibility, cf.~
 \cite[Theorem~7.34]{Row16v}.
%
%
We turn to uniqueness of the uniform presentation.

\begin{thm}\label{uniq71} Suppose  $c = m_c c_\tT$ or $c  = c_\tT ^\circ$ has another
presentation $m_{c'} {c'_\tT}$ or  $ {c'_\tT} ^\circ$  in a
cancelative unital metatangible triple $(\mathcal A,\tT,(-))$.  Then
one of the following occurs taking $m_{c'}$ minimal with respect to
$c'$:
\begin{enumerate}\eroman
    \item  $c_\tT=c_\tT',$ and thus $m_c = m_{c'}.$
  \item   $ m_c = m_{c'} =  2, $ and $c_\tT^\circ=c_\tT'^\circ.$
\item    $m_c = 1$,  $(-)$ is of the second kind, and the triple is not $(-)$-bipotent.
\end{enumerate}
Consequently, the uniform presentation of any element of height $>
2$ is unique.
\end{thm}
\begin{proof} Write $m = m_c$ and $m' =m_{c'}.$ If $m=m' =1$ then $c_\tT = c = c' = c'_\tT,$ yielding
(i). If $m=m' =2$ then we have (ii), so we may assume $m\ne 2.$  If
$m'=1$ then $m \ne 2$  since otherwise $c \in \tT \cap \mathcal
A^\circ = \emptyset.$ If $m=1 $ then we have (iii) unless $(-)$ is
of the first kind or , in which case either $\mathbf {3} = \one$ or
$(\mathcal A,\tT,(-))$ is $(-)$-bipotent, yielding (i) or (ii). Thus
we may assume that $m
 \ge 3,$ so $(-)$ is of the first kind, and height~2 ($m=2 $) is no longer relevent.

If $\mathbf {3} = \one$, we can reduce $m,m'$ mod 2 and get (i),
with $m=m'=1$.
 Hence,
 by Theorem~\ref{twoexamples}, we may assume that  $(\mathcal A,\tT,(-))$ is $(-)$-bipotent.

 Assume that $c_\tT\ne c_\tT'$. If
$c_\tT+c_\tT' = c_\tT$ then $c_\tT+ m'c_\tT' = c_\tT$ by iteration,
so for $k \ge m'+1,$
\begin{equation}\label{trythis1} (m+1)c_\tT = c_\tT+ mc_\tT = c_\tT+ m'c_\tT' = c_\tT;\end{equation}
\begin{equation}\label{trythis2}(k+1){c'_\tT} = (k+1-m'){c'_\tT}+ m'{c'_\tT}  = (k+1-m'){c'_\tT}+ mc_\tT =
mc_\tT = m'c_\tT',\end{equation} which likewise is  $(k
+2){c'_\tT}$. By cancelation we get $\mathbf {m+1} = \one$ and
$\mathbf {k+2} = \mathbf {k+1}$ for all $k \ge m'+1$. Putting these
two equalities together shows that
 $$\one = \mathbf {m+1} = \mathbf {2m+1} = \dots = \mathbf {m'm+1}= \mathbf {m'm+2}=  \mathbf {m+2} = \one + \one ,$$ so $c_\tT' = m'c_\tT' =
mc_\tT = c_\tT$. Hence minimality yields $ m = m'$, yielding (i).

The analogous argument holds if $c_\tT+c_\tT' = c_\tT'$. Hence we
may assume  that $c_\tT=  c_\tT'$, and again we have (i).

%
%
%
%

The second assertion follows from the first.
\end{proof}

The case (ii) is truly exceptional, since the classical triple has
$a(-)a = a-a = 0$ for every $a$. Likewise the ambiguity in (iii) can
arise there since $\mathbf 3 = 3\cdot \one.$ However, when the
triple is $(-)$-bipotent, one has $c_\tT = (\pm) c_\tT,$
cf.~\cite[Proposition 2.14]{AGR0}, and (iii) does  not arise.

 \begin{cor}\label{symunif} Suppose $\mathcal A$ is as in the theorem, and $c \in \mathcal A^\circ$ has height $m$.
  Then  $\mathbf m = 2\mathbf k$ for some $k$.
   \end{cor}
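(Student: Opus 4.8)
The plan is to apply the uniform‑presentation theorem (Theorem~\ref{matrsys}) to $c$ and run through its three possible types. Type~1 is impossible: a meta‑tangible $\tT$‑group module triple is in particular a $\tT$‑monoid module triple, so $\tT\cap\mathcal A^\circ=\emptyset$ by Corollary~\ref{nozero7}, whereas a Type~1 element lies in $\tT$; hence $m\geq 2$. If $c$ is of Type~2 then $m=2$ and $\mathbf 2=\one+\one=2\mathbf 1$, so $k=1$ works. Thus the whole content is in Type~3: $c=\mathbf m\,c_\tT$ with $c_\tT\in\tT$, $m\geq 3$, and --- since Type~3 forces the triple to be exceptional --- $(-)$ is of the first kind and $(\mathcal A,\tT,(-))$ is $(-)$‑bipotent (Theorem~\ref{twoexamples}).

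In this case I would first transport everything onto the element $\mathbf m$. Since $\mathcal A^\circ$ is a $(\tT,(-))$‑submodule (Lemma~\ref{circ0}) and $c_\tT$ is invertible, $\mathbf m=c_\tT^{-1}c\in\mathcal A^\circ$, and $\mathbf m$ has height $m$ (multiplication by an invertible tangible preserves height). Write $\mathbf m=x^\circ$; as $(-)$ is of the first kind it is the identity on all of $\mathcal A$ (a semigroup homomorphism fixing the generating set $\tT$), so $\mathbf m=x+x$. Now $x$ is uniform (Theorem~\ref{matrsys}), hence $x$ is a sum of $p$ copies of a single tangible $x_\tT$, where $p$ is the height of $x$; if $p=1$ then $\mathbf m=x_\tT^\circ$ has height $\leq 2$, contradicting $m\geq 3$, so $p\geq 2$. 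Consequently $\mathbf m$ is a sum of $2p$ copies of $x_\tT$, i.e.\ $\mathbf m=x_\tT\,\mathbf{2p}$ by distributivity of the action; multiplying by $x_\tT^{-1}$ gives $\mathbf{2p}=x_\tT^{-1}\mathbf m$, so $\mathbf{2p}$ too has height $m$.

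It remains to see that this forces $m$ to be even, for then $\mathbf m=\mathbf{m/2}+\mathbf{m/2}=2\mathbf{m/2}$ and $k=m/2$ works. The key point is the rigidity of the sequence $\mathbf 1,\mathbf 2,\mathbf 3,\dots$ in a $(-)$‑bipotent first‑kind triple: by Corollary~\ref{char} its characteristic is $0$ or even, and --- by an induction on $j$ using the case list of Corollary~\ref{sumunif} in the situation $c_\tT=d_\tT=\one$, together with the uniqueness of uniform presentations from Theorem~\ref{uniq71} --- the height of $\mathbf j$ equals the position of $\mathbf j$ within one period, which for $\mathbf{2p}$ is even. Hence $m$, being the height of $\mathbf{2p}$, is even, and the proof is complete. (In particular the a priori worrisome sub‑case $x$ of Type~2 with $m=3$, which would give $\mathbf 3=\mathbf 4\,x_\tT$, is excluded: $\mathbf 3\ne\mathbf 1$ makes the characteristic even and $>2$, so $\mathbf 4$ has height exactly $4$, incompatible with the height‑$3$ element $\mathbf 3$.)

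The main obstacle is exactly this last step. One must know that in positive characteristic the repeated sums $\mathbf j$ cannot collapse in a way that lets $\mathbf{2p}$ acquire odd height, and that is precisely where the uniqueness of the uniform presentation --- hence the cancellativity carried through this section --- does the real work; everything else is formal manipulation with the submodule $\mathcal A^\circ$ and the first‑kind negation map.
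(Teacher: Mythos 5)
Your preparation (Steps 1--7, in your numbering) is essentially sound, but the final step --- showing that $m$ itself is even --- pursues a claim that is both stronger than needed and false. The corollary asserts only that the \emph{element} $\mathbf m$ of $\mathcal A$ equals $2\mathbf k$ for some $k$, not that the integer $m$ is even. The truncated layered system of Example~\ref{nontang}(vii) with $n=5$ already defeats the stronger claim: the element $\mathbf 5$ lies in $\mathcal A^\circ$ (it equals $\mathbf 3^\circ = \mathbf 6$) and has height exactly $5$, which is odd. Your inductive assertion that ``the height of $\mathbf j$ equals the position of $\mathbf j$ within one period'' breaks down precisely in this quasi-periodic regime, where $\mathbf n = \mathbf{n+1}=\cdots$ all have height $n$ independent of parity; Corollary~\ref{char} constrains the characteristic, not the height of the terminal element.

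The fix is already in your hands: you have obtained $\mathbf m = \mathbf{2p}\,x_\tT$ with $x_\tT\in\tT$, and $\mathbf m$ also has the obvious uniform presentation $\mathbf m\cdot\one$. Since you are in the exceptional (first-kind, height $\geq 3$) case, only case~(i) of Theorem~\ref{uniq71} can apply, forcing $x_\tT = \one$, so $\mathbf m = \mathbf{2p} = 2\mathbf p$ and $k=p$ works --- no parity argument is needed. The paper's own proof is shorter still: write $c = b^\circ$, expand $b$ uniformly as $m_b b_\tT$ to get $c = 2m_b b_\tT$, and invoke Theorem~\ref{uniq71} to identify $b_\tT$ with $c_\tT$, giving $\mathbf m = 2\mathbf{m_b}$ after cancelling $c_\tT$; the transport to $\mathbf m = c_\tT^{-1}c$ and re-extraction of a generator of that quasi-zero is an unnecessary detour.
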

\begin{proof} We write $c = b^\circ$. There is nothing to prove unless
$m_c \ge 3,$ in which case $c = 2 m_b b_\tT = 2 m_b c_\tT $,
implying $\mathbf m = 2\mathbf m_b$.
\end{proof}

\subsection{Surpassing relations  on a metatangible triple}$ $

Let us see  how surpassing relations can arise on metatangible
triples to yield systems.

\begin{thm}\label{circsurp} Suppose  $(\mathcal A,\tT,(-))$ is a metatangible triple.
 \begin{enumerate}\eroman
\item  $ \preceq_\circ$
is a surpassing relation, so $(\mathcal A,\tT,(-), \preceq_\circ)$
is a metatangible system.
\item  We say that a pair $(b,b')$ with $b \preceq b'$ is \textbf{usual}
if  $b \preceq_\circ b'$. A pair which is not usual  is
\textbf{unusual}. One of the following holds for any cancelative
unital metatangible triple, with surpassing $\circ$-PO $ \preceq$:

\begin{enumerate}\eroman \item   $ \preceq\  =  \ \preceq_\circ$, i.e., all pairs are usual;
 \item  The triple $(\mathcal A,\tT,(-))$ is exceptional.
\item The triple $(\mathcal A,\tT,(-))$ has height 2, and there is an unusual pair satisfying $b + b' = b.$ In this case,
 if $ \preceq$ is a surpassing $\circ$-PO, then $(-)$ is of the first
 kind, with $\mathcal A$ of
 of characteristic 2.
\end{enumerate}
\end{enumerate}\end{thm}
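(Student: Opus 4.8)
My plan is to treat the two parts separately, using heavily the uniform-presentation machinery (Theorem~\ref{matrsys}, Theorem~\ref{uniq71}, Corollary~\ref{sumunif}, Corollary~\ref{sumcirc}) together with the basic dichotomy of Theorem~\ref{twoexamples}. For part (i), I first note that by Proposition~\ref{uniq66} (or directly Lemma~\ref{precedeq000}(i)) $\preceq_\circ$ is a partial pre-order satisfying conditions (i)--(iii) of Definition~\ref{precedeq07}; condition (iv), that $a \preceq_\circ b$ with $a,b\in\tT$ forces $a=b$, is exactly Proposition~\ref{uniq6}(iii), available since the triple is uniquely negated; and the extra $\tT$-surpassing condition $b^\circ \not\preceq_\circ a$ for $a\in\tT$ follows from Corollary~\ref{nozero7}, which gives $\tT\cap\mathcal A^\circ=\emptyset$ for a meta-tangible $\tT$-monoid module triple (a group module triple is in particular a monoid module triple), so $b^\circ + c^\circ = b^\circ + c^\circ \in \mathcal A^\circ$ can never be tangible. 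Hence $(\mathcal A,\tT,-,\preceq_\circ)$ is a meta-tangible $\tT$-system.

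For part (ii), suppose $\preceq$ is a strong $\tT$-surpassing relation, and assume we are not in case (b), i.e., the $\tT$-triple is not exceptional; by Proposition~\ref{twoexamples4} it is then either $(-)$-bipotent or of height $\le 2$ of the first kind — in all remaining cases the height is $\le 2$ or the triple is $(-)$-bipotent of the second kind (height $\le 2$ as well, by Corollary~\ref{twoexamples2} and the fact that $(-)$-bipotent of second kind is idempotent). The key step is: I want to show that if every pair $(a,b)$ with $a\preceq b$ and $b\notin\mathcal A^\circ$-"shifted-form" is actually usual, then $\preceq\ =\ \preceq_\circ$. So take an arbitrary pair $a\preceq b$; I need $b = a + c^\circ$. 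Since $\preceq$ restricts to equality on $\tT$ (condition (iv)) and $b^\circ\not\preceq a$ for $a$ tangible, the only pairs to worry about are those where $a$ is tangible or of type 2 and $b$ has height $\le 2$. Using the uniform presentations $a = a_\tT$ or $a_\tT^\circ$ and $b = b_\tT$ or $b_\tT^\circ$, and Definition~\ref{precedeq07}(iii) applied to $a\preceq b$ together with $(-)a\preceq (-)b$ (condition (ii)), I get $a^\circ \preceq b^\circ$ (adding the two); since $\preceq$ restricts to a PO on $\mathcal A^\circ$ (strong surpassing), $a^\circ\preceq b^\circ$ already pins down the relationship of $a^\circ,b^\circ$ via Proposition~\ref{bip3}(ii) and Lemma~\ref{bip4}. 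The remaining work is to show: either $b^\circ = a^\circ$ (so $b = a$ on $\tT$, or $b = a + (\text{something})^\circ$ when heights differ), which is the usual case, or one is forced into $a+b=a$ with height $2$, which is case (c); the only way to have an unusual pair is precisely when the surpassing relation "adds a tangible element that collapses a sum," i.e. $a\preceq b=a+ b'$ with $b'\in\tT$ and $a + b' = a$ as elements — but then $a\preceq a+b'$, and $(-)a\preceq (-)a+(-)b' $ gives $a^\circ \preceq a^\circ + b'^\circ$, forcing (by Lemma~\ref{bip1} and unique negation) the height-$2$ scenario. In that final situation I then invoke the "extra condition" of a strong surpassing relation together with $a\preceq a^\circ$ (which holds when $e'=e$) or the height-$2$ structure from Proposition~\ref{ematch5}/Theorem~\ref{twoexamples} to conclude $(-)$ is of the first kind and characteristic 2: indeed $a+b=a$ with $b\ne(\pm)a$ would by Lemma~\ref{negmatch5} force $e'=\one$, and then by Theorem~\ref{twoexamples}(ii)(a) combined with height $\le 2$ we land in the first-kind characteristic-2 case.

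The main obstacle I anticipate is the bookkeeping in showing that an unusual pair can *only* arise in the height-$2$, first-kind, characteristic-$2$ configuration — this requires carefully ruling out, case by case through the uniform presentations of $a$ and $b$ (types 1, 2, 3 for each), every way in which $a\preceq b$ could fail to be witnessed by adding a quasi-zero, and showing each such failure either contradicts $b^\circ\not\preceq a$, or the PO on $\mathcal A^\circ$, or else collapses (via Lemma~\ref{negmatch}, Lemma~\ref{negmatch5}, and Proposition~\ref{bip3}) into $a+b=a$ with the stated consequences. I would organize this as a short lemma extracting, from $a\preceq b$, the relation $a^\circ \preceq b^\circ$ and then $a^\circ \le^\circ b^\circ$ in the sense of Lemma~\ref{bip4}, which reduces the number of cases substantially. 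Exceptional triples are precisely the ones where this reduction can genuinely fail (Type 3 elements allow $a\preceq b$ with $b$ of strictly larger height and no quasi-zero in between), which is why case (b) is unavoidable and gets quarantined at the outset.
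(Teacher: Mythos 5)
Part (i) of your proposal is correct and essentially identical to the paper's proof: it combines Proposition~\ref{uniq66}, Proposition~\ref{uniq6}(iii), and Corollary~\ref{nozero7}, exactly as the paper does.

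Part (ii) has the right skeleton — reduce to height $\le 2$ via Theorem~\ref{matrsys} (so that case (b), exceptionality, absorbs all elements of height $\ge 3$), then case-analyze on the uniform presentations — but the middle of your argument has several genuine gaps.

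First, the step ``$a^\circ\preceq b^\circ$ already pins down the relationship of $a^\circ,b^\circ$ via Proposition~\ref{bip3}(ii) and Lemma~\ref{bip4}'' conflates two different relations. Lemma~\ref{bip4} and Proposition~\ref{bip3}(ii) concern the \emph{natural} pre-order $\le^\circ$ on $\mathcal A^\circ$, whereas the ``extra condition'' on a strong surpassing relation only says that $\preceq$ restricts to \emph{some} PO on $\mathcal A^\circ$. You do have $a_1^\circ\le^\circ a_2^\circ \Rightarrow a_1^\circ\preceq a_2^\circ$ (Definition~\ref{precedeq07}(i)), but the converse is exactly what is at issue and cannot be assumed; without it the reduction you describe does not follow.

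Second, your characterization of the unusual pair — ``$a\preceq b=a+b'$ with $b'\in\tT$ and $a+b'=a$ as elements'' — is inconsistent: if $a+b'=a$ then $b=a$, so $(a,b)=(a,a)$ is usual. The correct configuration (as in the statement of (ii)(c)) is $a\in\tT$, $b=d^\circ$ of height $2$, and $a+d^\circ=a$, i.e. $a+b=a$; the paper reaches this by applying Lemma~\ref{bip1} to $a$ and $d$ and ruling out the alternative $a+d=d$ (which would make the pair usual since then $a+d^\circ=d^\circ=b$). Your subsequent manipulation $a^\circ\preceq a^\circ+b'^\circ$ also collapses: with $a+b'=a$ you have $a^\circ+b'^\circ=(a+b')^\circ=a^\circ$, so the displayed inequality is vacuous and ``forces'' nothing.

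Third, you invoke Lemma~\ref{negmatch5} to conclude $e'=\one$ from $a+b=a$ with $b\ne(\pm)a$. But Lemma~\ref{negmatch5} has the hypothesis $a+b\notin\{a,b\}$; here $a+b=a$, so it is inapplicable. Finally, the conclusion that $(-)$ is of first kind of characteristic $2$ is not simply a consequence of $e'=\one$: the paper needs a genuine three-way split (second kind; $e'=\one$; $e'\ne e'$-bipotent), and in the second-kind case it derives a contradiction by showing $a\in\tT$ is impossible, while in the remaining cases it pushes through to $\mathbf 3=\one$. Your sketch does not rule out the second kind at all, which is the step that actually forces the ``first kind'' in (ii)(c). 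So the overall approach is recoverable, but as written it would not compile into a proof.
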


\begin{proof} (i) By Proposition~\ref{uniq671} and  Proposition~\ref{nozero7}.

(ii) If $b$ or $b' $ has height $\ge 3$, Theorem~\ref{matrsys}  says
that the triple is exceptional. Thus we may assume  that both have
height $\le 2.$ We shall show that an unusual pair $(b,b')$ yields
(c), relying heavily on Lemma~\ref{bip1} and Remark~\ref{negmatch1}.
 If
$b, b' \in \tT$ then $b=b'$ and the assertion is trivial. If $b' \in
\tT$ we cannot have $b$ of height~2, by Proposition~\ref{nozero7}.

Thus we may assume that $b' \notin \tT,$ so
 $b' = d^\circ$
for $d\in \tT$. We cannot have $b+d =d$, for then the pair would be
usual (since $b + d^\circ = d^\circ = b'$), so we may assume by
Lemma~\ref{bip1} and Definition~\ref{precedeq07}(i),(v)
  that
\begin{equation}\label{eq:tropical007} b = b + d^\circ =  b + b',\end{equation}  and we have $(c).$


Furthermore, in (c) assume that $ \preceq$ is a surpassing
$\circ$-PO. We have $b^\circ = b^\circ + b',$ so $ b'^\circ  \preceq
b^\circ $ and  $ b^\circ  \preceq b'^\circ $, implying $ b^\circ =
b'^\circ $.  If
 $(-)$ is of the second kind then $\mathcal A$ is idempotent so
 $b \notin \tT$ (since otherwise $b = b+b'^\circ  = b
 +b^\circ = b^\circ,$ a contradiction), and then $b \in \mathcal
 A^\circ$ implying
$b = b^2 =  b ^\circ
 = {b'} ^\circ = ( b')^2 =b',$ contradicting unusuality.

 Hence   $(-)$ is of the first
 kind. If $e' = \one$ then  $\mathbf 3 = \one,$ and we are done.

 We are left with $e' \ne \one,$
implying that the system is $(-)$-bipotent. Write $b = m b_\tT$. If
$b_\tT = d$ then $mb_\tT=(m+2)b_\tT$, implying $\mathbf 3 = \one$;
if $b_1+d =d$ then $b+ b' = b'$ implying $(b,b')$ is usual. Thus we
may assume that $b_\tT+d = b_\tT.$
 But now $b + d^\circ = b,$ implying $b^\circ + b'^\circ  = b^\circ,$
 i.e.~ $b'^\circ \preceq b ^\circ$, as well as
$ b ^\circ \preceq  {b'} ^\circ,$  and thus $ b ^\circ =  {b'}
^\circ$.  Hence
 $b = b + {b'} ^\circ = b + b ^\circ
 = 3b,$  so again $\mathbf 3 = \one$ after all.\end{proof}

\begin{example}\label{weird7}$ $

Here are some unexpected surpassing relations. Take the
$\Net$-layered system of Example~\ref{nontang}(i) (of characteristic
0), with $L = \Net$.
\begin{enumerate}\eroman
\item Write $(\ell_1,a_1) \preceq (\ell_2,a_2) $ if $  a_1
< a_2$, or if
  $a_1 = a_2$ with $\ell_1 \le \ell_2.$ Here $$\one = (1,1) \preceq
  (2,1) = (1,1)+(1,1) =
  \one + \one = \mathbf
  2,$$ but $\one \not \preceq _\circ
 \mathbf
  2$.
\item Write $(\ell_1,a_1) \preceq (\ell_2,a_2) $ if $  a_1
< a_2$ with $\ell_1>1,$ or if
  $a_1 = a_2$ with $\ell_1 \le \ell_2.$ Here   $\one \not \preceq
   \mathbf
  2 = (2,1) $, whereas $\mathbf
  2 = (2,1) \preceq
(3,1) =  \mathbf
  3$ but $\mathbf
  2 \not \preceq _\circ
   \mathbf
  3$.
\end{enumerate}
  One also has unusual pairs arising from Example \ref{chart1}.

\end{example}

%
%
%

\subsection{$\tT$-classical metatangible triples}$ $
%

%

 The usual classical triple  satisfies $a^\circ = b^\circ = 0 $ for
all $a,b$, leading us to the following weaker definition.
\begin{defn}\label{Anticlas} A triple $(\mathcal A,\tT,(-))$ is \textbf{$\tT$-classical} if  $a^\circ =
b^\circ$ for some $a\ne (\pm) b$ in  $\tT$.

$(\mathcal A,\tT,(-))$   is \textbf{$\tT$-nonclassical} if $a^\circ \ne
b^\circ$ for all $a\ne (\pm) b$ in  $\tT$.
\end{defn}

\begin{cor}\label{uniq167} If   a cancelative unital metatangible triple
$(\mathcal A,\tT,(-),\preceq)$  is $\tT$-classical, then $e' =
\one$.\end{cor}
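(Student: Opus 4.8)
The plan is to invoke the dichotomy of \thmref{twoexamples}: a meta-tangible $\tT$-group module triple is either $(-)$-bipotent or already satisfies $e' = \one$. In the second case there is nothing to prove, so the whole argument reduces to the $(-)$-bipotent case; in fact I expect to show there that $e' = \one$ after all, so that the "$(-)$-bipotent" alternative is vacuous for a $\tT$-classical triple. By \lemref{clas2}, being $\tT$-classical supplies an element $c \in \tT$ with $c \ne (\pm)\one$ and $c^\circ = e$, and this single element will drive everything.

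First I would record two consequences of $c^\circ = e$. Using the identity $ae = a^\circ$ noted after Definition~\ref{minusmod0} (the case $b = \one$ of \eqref{emul9}), we get $ce = c^\circ = e$; then, since $\tT$ is a group acting by a monoid action, $c^{-1}e = c^{-1}(ce) = (c^{-1}c)e = \one\, e = e$, and hence also $(c^{-1})^\circ = c^{-1}e = e$. Next, since the triple is $(-)$-bipotent and $\one \ne (-)c$ (as $c \ne (\pm)\one$), we have $c + \one \in \{c, \one\}$, and I would split on which holds. If $c + \one = \one$, then among the three alternatives of \lemref{bip1} (applied with $a = c$, $b = \one$) the options $c = (-)\one$ and $c + \one = c$ are both impossible — the first by hypothesis, the second because it would force $c = c + \one = \one$ — so we land in the option $c^\circ + \one = \one$; as $c^\circ = e$ and $e' = e + \one$ by Definition~\ref{impelt}, this is exactly $e' = \one$. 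If instead $c + \one = c$, I would multiply by $c^{-1}$ and use distributivity of the action to obtain $\one + c^{-1} = c^{-1}(c + \one) = c^{-1}c = \one$; since $c^{-1} \ne (\pm)\one$ (otherwise $c = (\pm)\one$), the very same reasoning applied to $c^{-1}$, using $(c^{-1})^\circ = e$, gives $e + \one = \one$, i.e. $e' = \one$. Either way the conclusion follows.

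I expect the only genuinely delicate points to be bookkeeping rather than conceptual. One must be sure the group structure on $\tT$ is actually used: passing from $c$ to $c^{-1}$ is essential, and $\tT$ being merely a monoid would not suffice. One must also correctly eliminate the "wrong" alternatives of \lemref{bip1} in each case — in particular ruling out $c^{-1} = (-)\one$ requires translating it back through multiplication by $c$ to $c = (-)\one$, contradicting $c \ne (\pm)\one$. Beyond that, the identity $ae = a^\circ$ together with associativity of the action does all the remaining work, so no case analysis on the kind of $(-)$ or on the height of elements is needed.
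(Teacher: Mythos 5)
Your proof is correct, and it takes a genuinely different route from the paper's. The paper works directly with a pair $a \ne (\pm)b$ in $\tT$ witnessing $a^\circ = b^\circ$: assuming $e' \ne \one$, Theorem~\ref{twoexamples} forces $(-)$-bipotence, so (say) $a + b = b$; Lemma~\ref{negmatch} then allows the rewrite $b = a + b = a^\circ + b = b^\circ + b = e'b$, and canceling $b$ in the group module gives $e' = \one$, a contradiction. You instead normalize first via Lemma~\ref{clas2} to a single witness $c \ne (\pm)\one$ with $c^\circ = e$, and then run the trichotomy of Lemma~\ref{bip1} on the pair $(c,\one)$, passing to $c^{-1}$ (whose quasi-zero is again $e$, by the monoid action) when the unfavorable alternative $c+\one = c$ occurs. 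Both arguments lean essentially on the group structure of $\tT$ --- cancellation in the paper's last step, inverses in yours. One small streamlining available to you: the preliminary reduction to the $(-)$-bipotent case via Theorem~\ref{twoexamples} is not actually needed, since Lemma~\ref{bip1} already holds on any meta-tangible $\tT$-triple and its alternative (iii), applied to $(c,\one)$, is literally $e + \one = \one$, i.e.\ $e' = \one$; so you could dispense with bipotence entirely and argue directly from that trichotomy, with only the case $c+\one = c$ requiring the passage to $c^{-1}$.
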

\begin{proof}  Suppose  $a ^\circ =b^\circ $ with $a
\ne (\pm) b$. If $e' \ne \one$ then by  $(-)$-bipotence
(Theorem~\ref{twoexamples}) we may assume that $a+b = b$. But then
by Remark~\ref{negmatch1}, $$b =  a+b = a+(b(-)a) =
  a ^\circ + b = b ^\circ + b = e'b,$$ implying $e' = \one$ after
  all.
\end{proof}

  There is a nice partial converse.

\begin{lem}\label{uniq16}   Any non-$(-)$-bipotent  metatangible triple
$(\mathcal A,\tT,(-),\preceq)$ is $\tT$-classical.\end{lem}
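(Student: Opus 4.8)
\textbf{Proof plan for Lemma~\ref{uniq16}.}
The plan is to invoke the structure dichotomy of Theorem~\ref{twoexamples} for meta-tangible $\tT$-group module triples: a meta-tangible $\tT$-triple that is not $(-)$-bipotent must satisfy $e' = \one$, and then is either of the first kind of characteristic $2$, or of the second kind (of finite characteristic or with the $\mathbf m$, $m\in\Z$, all distinct). In every one of these cases the key relation is $e' = \one$, equivalently $\mathbf 2 (-) \one = \one$, i.e.~$\mathbf 3 = \one$ when $(-)$ is of the first kind. The goal of the proof will be to produce two tangible elements $a \ne (\pm) b$ with $a^\circ = b^\circ$, which by Definition~\ref{Anticlas} is exactly the assertion that $(\mathcal A,\tT,(-),\preceq)$ is $\tT$-classical; by Lemma~\ref{clas2} it suffices to exhibit a single tangible $a \ne (\pm)\one$ with $a^\circ = e$.

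First I would handle the first kind, characteristic $2$ case: here $e' = \mathbf 3 = \one$, so $\mathbf 2 = e$ (since $\mathbf 2 = e'(-)\one = \one(-)\one = e$); but also $(-)\one=\one$ in characteristic $2$, so $\mathbf 2 = \one + \one = \one(-)\one = e$. Since the triple is not $(-)$-bipotent, by Lemma~\ref{negmatch5} there exist $a,b\in\tT$ with $b\ne(-)a$ and $c := a+b\notin\{a,b\}$, and then $a^\circ = b^\circ$. If this forced $a = (\pm) b$ we would get $a^\circ = 2a \in\{a,b\}$ only in degenerate cases; I would check that the meta-tangibility together with $c\notin\{a,b\}$ indeed gives $a\ne(\pm)b$, so we are $\tT$-classical. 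For the second kind case I would argue similarly: non-$(-)$-bipotence again yields via Lemma~\ref{negmatch5} a pair $a,b$ with $a^\circ=b^\circ$ and $c=a+b\notin\{a,b\}$; since $(-)$ is of the second kind, $a\ne(-)a$ and $b\ne(-)b$, and the condition $c\notin\{a,b\}$ combined with Lemma~\ref{bip1} (which would otherwise force $a+b\in\{a,b\}$ once $a = (\pm)b$) shows $a\ne(\pm)b$, giving the conclusion.

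The cleanest uniform route, which I would actually write up, is: since $(\mathcal A,\tT,(-))$ is not $(-)$-bipotent, pick $a,b\in\tT$ with $b\ne(-)a$ and $c = a+b\notin\{a,b\}$ (such a pair exists by the definition of $(-)$-bipotence together with Lemma~\ref{nummatch0}). By Lemma~\ref{negmatch5}, $a^\circ = b^\circ$ and moreover $a + a(-)a = a$, i.e.~$ae' = a$, giving $e'=\one$ by cancellation — though we do not even need this last point. It remains to see $a\ne(\pm)b$. If $a = b$ then $c = a+a = \mathbf 2 a$; meta-tangibility forces $\mathbf 2 a\in\tT$ or $\mathbf 2a\in\mathcal A^\circ$, and in the tangible subcase $\mathbf 2 a = a^\circ$ is impossible unless $a\in\tT\cap\mathcal A^\circ = \emptyset$ (Corollary~\ref{nozero7}), while in the other subcase $c\in\mathcal A^\circ$, and by unique negation $c\in\{a,b\}$ would force $a^\circ\in\tT$, again the empty intersection — so $c\notin\{a,b\}$ is contradicted unless $a\ne b$. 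If $a = (-)b$ then $c = a^\circ\in\mathcal A^\circ$ and again unique negation makes $c\notin\{a,b\}$ impossible. Hence $a\ne(\pm)b$ with $a^\circ = b^\circ$, so the triple is $\tT$-classical.

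The main obstacle is bookkeeping around the degenerate subcases where $c = a+b$ could coincide with $a$, $b$, or $a^\circ$: one must verify carefully, using unique negation (Definition~\ref{metatanb}/Proposition~\ref{uniq6}) and the fact $\tT\cap\mathcal A^\circ = \emptyset$ for a $\tT$-group module (Corollary~\ref{nozero7} or Corollary~\ref{uniq7}), that the chosen pair genuinely has $a\ne(\pm)b$. Everything else is a direct appeal to Lemma~\ref{negmatch5} and the classification in Theorem~\ref{twoexamples}, so no serious new idea is needed.
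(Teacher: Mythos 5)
Your ``cleanest'' route has the same spirit as the paper's: pick a non-$(-)$-bipotence witness pair $a,b\in\tT$ with $b\ne(-)a$ and $c=a+b\notin\{a,b\}$, apply Lemma~\ref{negmatch5} to get $a^\circ=b^\circ$, and then verify $a\ne(\pm)b$. The part $a\ne(-)b$ needs no argument: it is precisely $b\ne(-)a$, which you already have by the choice of pair. But your argument that $a\ne b$ does not work, and this is a genuine gap. If $a=b$ (so $a\ne(-)a$ and $\mathbf 2 a\ne a$, a perfectly legitimate witness of non-$(-)$-bipotence), meta-tangibility together with unique negation give only that $\mathbf 2 a\in\tT$; there is no reason $\mathbf 2 a$ should equal $a^\circ$, so your ``impossible unless $a\in\tT\cap\mathcal A^\circ$'' step never arises, and your ``other subcase $c\in\mathcal A^\circ$'' cannot occur at all. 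Nothing in your case analysis contradicts $a=b$. In that situation Lemma~\ref{negmatch5} only yields the trivial $a^\circ=a^\circ$ together with $a+a^\circ=a$, and no second tangible $b$ with $b\ne(\pm)a$ and $b^\circ=a^\circ$ is produced. A concrete illustration of the difficulty: $\mathcal A=\Z_3$ with $\tT=\{1,2\}$ and $(-)a=-a$ is meta-tangible and uniquely negated, and is not $(-)$-bipotent since $1+1=2\notin\{1\}$, yet $\tT$ has only the two elements $1$ and $2=(-)1$, so there is no pair $a\ne(\pm)b$ in $\tT$ at all.

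The paper's own proof sidesteps this by writing ``Suppose $a^\circ=b^\circ$ with $a\ne(\pm)b$ and $a+b\ne a,b$,'' i.e., it assumes from the outset that the witness pair is distinct up to sign, and then uses Proposition~\ref{uniq6}(ii) to show $a^\circ+b=b$ and hence $(a+b)^\circ=b^\circ$ with $a+b\ne b$. So your diagnosis that the bookkeeping around degenerate pairs is the crux is correct, but the resolution you propose (the $\mathbf 2a=a^\circ$ step) is not, and the $a=b$ degeneracy needs either to be excluded by hypothesis or addressed by a different argument. Separately, note that your first route via Theorem~\ref{twoexamples} requires a $\tT$-group module triple, a hypothesis that the lemma as stated does not impose, so it cannot serve as a self-contained proof here.
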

\begin{proof}  Suppose $a \ne (\pm) b$ and $a +b \notin \{ a,b\}.$
Then $a ^\circ +b = (a+b)(-)a \in \tT,$ implying  $a ^\circ +b = b$
by unique quasi-negation. 
Hence $(a+b)^\circ =  a ^\circ +b (-) b = b(-)b = b ^\circ,$ with
$a+b \ne b.$
\end{proof}

\cite[Example 3.12]{Row16v} provides a way to merge classical and
nonclassical in a metatangible triple.
\subsubsection{Anti-negated triples}$ $

Next we view the opposite situation, coming from tropical considerations, in which sums are rarely~$\zero.$

\begin{defn}\label{antineg}
A triple is \textbf{anti-negated} if $a^\circ \ne \zero$ for all $a \in
\tT$.
\end{defn}

It follows that if $a+b = \zero$ for $a,b \in \tTz$ then $a =
\zero$. This  property  has different names in the literature:
``antiring'' in \cite{DO,Tan}, ``zero-sum free'' in \cite{golan92},
and ``lacking zero sums'' in \cite{IKR5}.


\begin{lem}\label{char1} In a cancelative metatangible
triple $(\mathcal A, \tT, (-))$, if  some sum of tangible elements
$\sum_{i=1}^t a_i $ is $ \zero,$ with   $t\ge 2$ minimal, then one
of the following holds:

\begin{enumerate}\eroman \item
 $t=2$ with $a_2 = -a_1$ (the classical negative).  \item $t\ge 3$, $(-)$ is of the first kind,
 with all $a_i$ equal, and $\mathbf t = \zero$.
\end{enumerate}
  \end{lem}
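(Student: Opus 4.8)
The plan is to exploit the uniform presentations from Theorem~\ref{matrsys} together with the careful case analysis of sums already carried out in Corollary~\ref{sumunif} and Corollary~\ref{sumcirc}. First I would observe that $\zero$ is an element of height $0$, hence of height strictly less than any tangible element, and in particular $\zero \in \mathcal A^\circ$ only via $\zero = \zero^\circ$. Since $\sum_{i=1}^t a_i = \zero$ with $t \ge 2$ minimal, every proper partial sum $\sum_{i=1}^{k} a_i$ for $k < t$ is nonzero, and by Proposition~\ref{nummatch3} there is some $k < t$ with $a_k = (-)\sum_{i=1}^{k-1} a_i$; minimality of $t$ forces $k = t$, i.e. $\sum_{i=1}^{t-1} a_i = (-)a_t$, so that $\sum_{i=1}^{t-1}a_i$ is tangible (it lies in $\tT$, being $(-)a_t$). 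Write $b := \sum_{i=1}^{t-1} a_i \in \tT$; then $b + a_t = b (-) b = b^\circ = \zero$, so $b^\circ = \zero$, which is exactly the anti-negation-type equation.

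Next I would split on the kind of $(-)$. If $(-)$ is of the second kind, then by Corollary~\ref{char} / Theorem~\ref{twoexamples} either $\mathcal A$ is $(-)$-bipotent or $e' = \one$; in the $(-)$-bipotent case $b^\circ$ would have height $2$ unless $b = \zero$ (as $\tT \cap \mathcal A^\circ = \emptyset$ by Corollary~\ref{nozero7} in a monoid module, and more directly $b^\circ = \zero$ with $b \in \tT$ forces $b = \zero$ since nonclassical triples are anti-negated; but here we are precisely characterizing when $b^\circ = \zero$ is possible). The cleaner route: $b^\circ = \zero$ means $b$ has a classical negative, so $t = 2$ is already forced once $(-)$ is of the second kind, \emph{provided} $t$ cannot be larger. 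To rule out $t \ge 3$ with $(-)$ of the second kind, note that if $t \ge 3$ then among $a_1, \dots, a_{t-1}$ not all need be equal, but repeatedly applying Lemma~\ref{bip1}/meta-tangibility we can combine any two $a_i \ne (-)a_j$ into a tangible element and shrink $t$, contradicting minimality, unless all the $a_i$ coincide; and if all $a_i = a_1$ with $(-)$ of the second kind, then $a_1 + a_1 \in \tT$ (sum of tangibles that are not quasi-negatives, since $a_1 \ne (-)a_1$), again contradicting minimality. This pins down case (i): $t = 2$, $a_2 = (-)a_1$, and since $a_1 (-) a_1 = a_1^\circ = \zero$ this is the classical negative.

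For $(-)$ of the first kind, the same combining argument forces all $a_i$ equal (any two distinct tangibles sum to a tangible by meta-tangibility, shrinking $t$), so $\sum a_i = \mathbf t\, a_1 = \zero$; cancelling $a_1$ (here cancellativity is essential) gives $\mathbf t = \zero$, which is case (ii), and $t \ge 3$ in this branch since $t = 2$ would give $a_1 + a_1 = a_1^\circ = \zero$, i.e. $\mathbf 2 = \zero$, which one could also list — but the statement reserves $t = 2$ for case (i), so I should check whether $\mathbf 2 = \zero$ can happen with $(-)$ of the first kind: if so it falls under (ii) with $t = 2$, contradicting ``$t \ge 3$''; I expect the resolution is that when $(-)$ is of the first kind and $t = 2$ with $a_1 = a_2$, the element $a_2 = a_1$ \emph{is} the classical negative $-a_1$ (since $a_1 + a_1 = \zero$), so this is subsumed in (i). The main obstacle will be exactly this bookkeeping at $t = 2$ — making sure the first-kind/second-kind dichotomy lines up cleanly with cases (i) and (ii) and that no degenerate sum falls through the cracks — rather than any hard structural input, since Theorems~\ref{matrsys} and~\ref{twoexamples} and Lemma~\ref{bip1} do all the heavy lifting.
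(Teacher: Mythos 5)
Your proof is essentially the same as the paper's: the load-bearing step in both is to observe that whenever $a_i \ne (-)a_j$, meta-tangibility lets you replace $a_i + a_j$ by a single tangible element and thereby shrink $t$, so minimality forces all the $a_i$ to be mutual quasi-negatives; for $t \ge 3$ this pins them all equal with $(-)a_i = a_i$ (first kind), after which cancellativity gives $\mathbf t = \zero$, while $t = 2$ gives the classical negative. Your opening appeal to Proposition~\ref{nummatch3} (``minimality of $t$ forces $k=t$'') is not actually sound as stated -- that proposition only produces some $k<t$ with the sum in $\mathcal A^\circ$, which does not conflict with minimality of a sum equal to $\zero$ -- but this detour is not load-bearing: the ``cleaner route'' you pivot to is exactly the paper's argument, and your bookkeeping at $t=2$ for the first-kind case correctly shows it is subsumed in case (i).
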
\begin{proof} Otherwise,  if $a_i \ne (-)a_j,$
  then we could replace $a_i + a_j$ by their   sum in $\tT$ and reduce
~$t$.

  Thus, we may assume that all of the $a_i$ are quasi-negatives of
  each other. If $t\ge 3,$ then  all of the $a_i$ are equal with
  $(-)a_i = a_i.$ (If say $a_1 \ne (-) a_1$ then $a_2 = (-)a_1$ and
  $(-)a_1 = a_3 = (-)a_2 = a_1,$ contradiction.)

   Hence
  $a_t\mathbf t = \sum _{i=1}^t a_t \one = a_t\zero.$ Canceling  $a_t$ yields
   $\mathbf t =\zero$.

   We are left with the case $t=2$, in which case $a_1 + a_2 = \zero,$ so  $a_2 = -a_1$.
  \end{proof}

\begin{lem}\label{twoexamples3}  Any ub metatangible unital
 triple  is anti-negated, and satisfies $e' \ne
 \one$.
 \end{lem}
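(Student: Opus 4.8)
Any $\tT$-ub meta-tangible $\tT$-triple containing $\one$ is anti-negated, and satisfies $e' \ne \one$.

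\medskip

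The plan is to argue both assertions by contradiction, exploiting the $\tT$-ub hypothesis of Definition~\ref{prope} to collapse a would-be zero-sum or a relation $e' = \one$ into a contradiction with meta-tangibility (in particular, with unique negation and the fact that $\tT \cap \mathcal A^\circ$ behaves as in Corollary~\ref{nozero7}). First I would handle anti-negation: suppose $a \ne \zero$ in $\tT$ but $a^\circ = a (-)a = \zero$. The key is to feed this into the ub condition. Since $a + ((-)a) = \zero$, I want to produce a relation of the form $x + y + z = x$ with $y, z \in \tT$ from which ub forces $x + y = x$, and then show this is impossible. Concretely, take any $b \in \tT$: then $b + a + ((-)a) = b + \zero = b$, so by $\tT$-ub (with $y = a$, $z = (-)a$) we get $b + a = b$ for \emph{every} $b \in \tT$. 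Applying this with $b = a$ gives $a + a = a$, i.e.\ $\mathbf 2 a = a$; but also, taking $b = (-)a$ in $b + a = b$ gives $(-)a + a = (-)a$, i.e.\ $a^\circ = (-)a$, so $(-)a = \zero$ and hence $a = \zero$, the desired contradiction. (One should double-check the edge case where $\tT$ might be trivial, but then $\tT = \{\one\}$ forces $\one = \zero$ and there is nothing to prove; and if $a = (-)a$ the same computation $b + a = b$ for all $b$ still yields $a = (-)a + a = (-)a = \zero$ via $b = (-)a = a$.)

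\medskip

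Next I would treat $e' \ne \one$. Suppose $e' = \one$, i.e.\ $\one (-)\one + \one = \one$, which rewrites as $\one + \one + ((-)\one) = \one$, a relation of the shape $x + y + z = x$ with $x = \one$, $y = \one \in \tT$, $z = (-)\one \in \tT$. By $\tT$-ub this forces $\one + \one = \one$, i.e.\ $\mathbf 2 = \one$. But then $e' = \mathbf 2 (-)\one = \one (-)\one = e$, so the hypothesis $e' = \one$ gives $e = \one$, i.e.\ $\one (-)\one = \one$, whence $\one \in \tT \cap \mathcal A^\circ$ — which forces $\one$ to be $\tT$-absorbing by Corollary~\ref{nozero7} (or directly: $\one = \one^\circ$ being tangible and a quasi-zero, with $\mathbf 2 = \one$, means every element of $\tT$ equals its own quasi-negative pattern collapses). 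More simply: from $\mathbf 2 = \one$ the triple is idempotent, and $e = \mathbf 2 = \one$ combined with $e$ being a quasi-zero gives $\one (-)\one = \one$, so adding $\one$ yields $\mathbf 2 (-)\one = \mathbf 2$, i.e.\ $e' = \mathbf 2 = \one$ — consistent, so I need the sharper contradiction. The cleanest route is: $\one = e \in \tT^\circ \cap \tT$, and by Corollary~\ref{nozero7} $\mathcal A$ is then $\one$-neutral, meaning $b + \one^\circ = b$ for all $b \ne (\pm)\one$; but $\one^\circ = e^\circ = \mathbf 2 e = \mathbf 2$ (using $e^\circ = 2e$ from Remark after Definition~\ref{impelt}) and $\mathbf 2 = \one$, so $b + \one = b$ for all tangible $b \ne (\pm)\one$; taking $b = \mathbf 2 = \one$ is excluded, but any other tangible $b$ gives $b + \one = b$, and since also $\one + \one = \one$ this makes $\one$ $\tT$-absorbing, contradicting that a cancellative-flavored meta-tangible $\tT$-triple has $\tT \cap \mathcal A^\circ = \emptyset$ when $\mathcal A$ is a nontrivial $\tT$-monoid module — but we are not told $\mathcal A$ is a monoid module here, so instead I fall back on: $\one$ $\tT$-absorbing means $\tT$ is trivial by Remark~\ref{abs0}(i) applied appropriately, i.e.\ $\mathcal A = \tTz$ has $\tT = \{\one\}$, and then $e' = \one$ holds vacuously...

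\medskip

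Because of that last wrinkle, the main obstacle I anticipate is pinning down exactly which degeneracy the $\tT$-ub hypothesis is meant to exclude: the statement presumably implicitly excludes the trivial triple (where $\tT = \{\one\}$ and every identity holds), so the honest argument is that $\tT$-ub forces $\mathbf 2 = \one$ hence the triple is idempotent with $e = e' = \one$, forcing $\one \in \tT \cap \mathcal A^\circ$, which by Corollary~\ref{nozero7} and Remark~\ref{abs0} makes $\one$ $\tT$-absorbing and hence (for a nontrivial triple, which is the standing assumption) yields the contradiction. I would structure the write-up so the anti-negation part is fully self-contained via the three-term ub relation $b + a + ((-)a) = b$, and the $e' \ne \one$ part reduces to the anti-negation machinery plus Corollary~\ref{nozero7}: indeed $e' = \one$ gives $\mathbf 2 + ((-)\one) = \one$, ub gives $\mathbf 2 = \one$, so $e = \mathbf 2 \in \tT$, contradicting Corollary~\ref{uniq77} ($e' \in \tT \Rightarrow e' = \one$ is consistent, but $e \in \tT$ with $e = e^\circ$... ) — actually the cleanest finish is: $\mathbf 2 = \one$ means $e = \one (-)\one = \mathbf 2 (-) \one \cdot$, hmm; I will present it as: $\mathbf 2 = \one$ forces the triple idempotent, but then $\langle \one\rangle$ has height $1$ with $\one = \mathbf 2 = e \in \tT \cap \tT^\circ$, so by Corollary~\ref{nozero7} $\mathcal A$ is $\one$-neutral, forcing $\one$ $\tT$-absorbing, contradicting nontriviality. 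That is the step where I expect to spend the most care matching the precise hypotheses available in the paper.
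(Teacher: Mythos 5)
Your approach matches the paper's: both parts exploit the $\tT$-ub hypothesis on the two relevant height-$3$ relations, and the paper's proof is a compressed version of what you propose. For the anti-negated claim (the paper just says ``immediate''), your computation via $b + a + ((-)a) = b$ and $\tT$-ub is a correct unwinding; it can be shortened by taking $b = \zero$, giving $\zero + a = \zero$, i.e.\ $a = \zero$ at once. For $e' \ne \one$, both you and the paper feed $\one + \one + ((-)\one) = \one$ into $\tT$-ub; since $+$ is commutative the conclusion can be read as $\one + \one = \one$ (your route, so $\mathbf 2 = \one$) or as $\one (-)\one = \one$ (the paper's route, so $e = \one$), and these give the same thing — you then recover $e = \one$ from $\mathbf 2 = \one$, landing in both cases at $\one \in \tTz \cap \tTz^\circ$.

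Where you hesitate is the genuine soft spot. The paper simply asserts $\tTz \cap \tTz^\circ = \{\zero\}$ with no justification, and you are right to worry: Corollary~\ref{nozero7} gives $\tT \cap \mathcal A^\circ = \emptyset$ only under a monoid-module hypothesis, and Corollary~\ref{uniq7}(ii) only under cancellativity, neither of which is stated in the Lemma. The clean way to close with only the stated hypotheses is to use the \emph{first} conclusion of Corollary~\ref{nozero7}: $\one \in \tT \cap \mathcal A^\circ$ makes $\mathcal A$ $\one$-neutral, so $a + \one = a$ for every tangible $a \ne (\pm)\one$; combined with $\one + \one = \mathbf 2 = \one$ (which you already have) this gives $a + \one = a$ for all $a \in \tT$, hence for all $a \in \mathcal A$ since $\tT$ generates $(\mathcal A, +)$. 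Thus $\one$ is a second additive identity, forcing $\one = \zero$, contradicting $\zero \notin \tT$. One small correction to your wording: this is \emph{additive} neutrality, not ``$\one$ is $\tT$-absorbing'' — $\tT$-absorbing is the multiplicative condition $ac = c$, which is not what is in play here. With that fix and the explicit appeal to $c$-neutrality, your argument does close, and in fact it fills in a step the paper glosses over.
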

\begin{proof} The first assertion is immediate. If $e' = \one,
$   the ub property implies $\one (-)  \one  = \one\in \tTz \cap
\tTz^\circ = \zero$, a contradiction.
\end{proof}
%



 Dolzan and Oblak \cite{DO} develop the tie between  anti-negated
semirings with matrix theory, by showing that the only invertible
matrices over multiplicatively cancelative anti-negated semirings
are generalized permutation matrices. Let us formulate this  key
feature from tropical algebra, in terms of triples.

\begin{prop}  Over an anti-negated metatangible
triple with   $\mathbf n \ne \zero$ for each $ n \in \Net,$ the only
invertible matrices  are the generalized permutation
matrices.\end{prop}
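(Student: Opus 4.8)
As in the Dolzan--Oblak setting I take $\mathcal A$ (multiplicatively) cancellative. Let $A=(a_{ij})$ be invertible, $B=(b_{ij})$ with $AB=BA=I$. The plan begins by extracting the key consequence of anti-negation: \emph{a finite sum $\sum_k c_k=\zero$ in $\mathcal A$ forces every $c_k=\zero$.} Since $\tTz$ generates $(\mathcal A,+)$, a nontrivial such relation would give $\sum_{i=1}^t a_i=\zero$ with $a_i\in\tT$ and $t\ge 1$ minimal; $t=1$ is impossible as $\zero\notin\tT$, and Lemma~\ref{char1} then leaves only the possibilities $t=2$ with $a_1+a_2=\zero$ --- whence $a_2=(-)a_1$ by unique quasi-negatives, so $a_1^\circ=\zero$, contradicting anti-negation --- or $\mathbf t=\zero$, contradicting $\mathbf n\ne\zero$. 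So $\mathcal A$ is zero-sum free, and being cancellative it has no zero divisors.

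\textbf{The monomial structure.} Next I would read off the generalized-permutation shape from $AB=I$. For $i\ne k$, $(AB)_{ik}=\sum_j a_{ij}b_{jk}=\zero$, so by zero-sum freeness $a_{ij}b_{jk}=\zero$ for every $j$, and by the absence of zero divisors $a_{ij}=\zero$ or $b_{jk}=\zero$. Hence for each fixed $j$ the set of rows with $a_{ij}\ne\zero$ and the set of columns with $b_{jk}\ne\zero$ are ``aligned'': any index in the first equals any index in the second, so each set has at most one element and they coincide when both are nonempty. On the other hand $(AB)_{ii}=\sum_j a_{ij}b_{ji}=\one\ne\zero$ produces, for each $i$, an index $\sigma(i)$ with $a_{i\sigma(i)}\ne\zero$ and $b_{\sigma(i)i}\ne\zero$; the alignment then forces the $\sigma(i)$-th column of $A$ to have its sole nonzero entry in row $i$, and $\sigma$ to be injective, hence a permutation of $\{1,\dots,n\}$. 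Thus $A$ (and symmetrically $B$) is a generalized permutation matrix with support $\sigma$ (resp.\ $\sigma^{-1}$), and $(AB)_{ii}=a_{i\sigma(i)}b_{\sigma(i)i}=\one=b_{\sigma(i)i}a_{i\sigma(i)}$ (using $BA=I$ as well), so every nonzero entry of $A$ is an invertible element of $\mathcal A$.

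\textbf{Invertible entries are tangible.} It remains to prove that an invertible $u\in\mathcal A$ lies in $\tT$. Note first that $u\notin\mathcal A^\circ$: otherwise $\one=uu^{-1}$ would lie in the ideal $\mathcal A^\circ$ (Lemma~\ref{circ0}), contradicting $\tT\cap\mathcal A^\circ=\emptyset$ (Corollary~\ref{nozero7}). If $(-)$ is of the second kind, Lemma~\ref{drive2} gives $\mathcal A=\tT\cup\mathcal A^\circ$, so $u\in\tT$ and we are done. If $(-)$ is of the first kind, Lemma~\ref{drive2} gives $\mathcal A=\tT\cup\mathcal A^\circ\cup\{\mathbf m t:\ m\ \text{odd},\ t\in\tT\}$, so $u=\mathbf m t$ with $m\ge 3$ odd (the case $m=1$ being $u\in\tT$). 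Then $\mathbf m$, being a central factor of a unit, is itself invertible, and since $u$ has height $\ge 3$ so does $\mathbf m$; hence the $\tT$-triple is exceptional, so by Theorem~\ref{twoexamples}, Proposition~\ref{ematch3}, Proposition~\ref{ematch5} and Theorem~\ref{uniq71} it has characteristic $0$ and $\mathbf j$ has height $j$ for every $j\ge 1$. Writing $\mathbf m=\one+\mathbf{(m-1)/2}^{\,\circ}$ (since $m$ is odd and $x^\circ=x+x$ in the first kind) and multiplying $\mathbf m\,\mathbf m^{-1}=\one$ out gives $\mathbf m^{-1}+d^\circ=\one$ for a suitable $d$. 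Applying Lemma~\ref{drive2} to $\mathbf m^{-1}$ (again not in $\mathcal A^\circ$): if $\mathbf m^{-1}\in\tT$, then Proposition~\ref{uniq6}(ii) forces $\mathbf m^{-1}=\one$, so $\mathbf m=\one$, contradicting that $\mathbf m$ has height $m\ge 3$; if $\mathbf m^{-1}=\mathbf{m'}s$ with $m'\ge 3$ odd, $s\in\tT$, then $\mathbf{mm'}s=\one$ yields, by the same trick, $\mathbf{mm'}=\one$, giving finite characteristic and contradicting characteristic $0$. Either way $u\in\tT$, so all nonzero entries of $A$ are tangible and $A$ is a tangible matrix.

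\textbf{Main obstacle.} The monomial-structure step is routine once zero-sum freeness is in hand; the real work is the last step --- that invertibility in $\mathcal A$ entails tangibility. The second-kind case is immediate, but the first-kind exceptional case is where the hypothesis $\mathbf n\ne\zero$ is genuinely used (to forbid the characteristic collapsing to a finite value and the natural numbers cycling back into $\tT\cup\tT^\circ$), and it requires assembling the classification of meta-tangible triples together with uniqueness of uniform presentations. One should also pin down the ambient hypotheses implicit in the Dolzan--Oblak formulation --- cancellativity of $\mathcal A$, and that tangible elements are invertible (i.e.\ $\tT$ a group, or at least that the entries that occur are invertible in $\tT$) --- and confirm that ``tangible matrix'' is exactly ``generalized permutation matrix with tangible nonzero entries.''
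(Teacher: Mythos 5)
Your high-level approach matches the paper's: establish zero-sum freeness and then invoke the Dolzan--Oblak/Tan analysis. The paper's own proof is a single sentence, delegating to \cite{Tan} after observing that Lemma~\ref{char1}, combined with anti-negation and $\mathbf n \ne \zero$, kills off all zero-sums of tangibles; your first two steps unpack exactly that argument and are fine.

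Your third step --- that every unit of $\mathcal A$ lies in $\tT$ --- is not addressed in the paper at all, and is in fact needed to pass from Tan's conclusion (``generalized permutation matrices with unit entries'') to the stated conclusion (``tangible matrices''). Flagging this as the real content of the reformulation is a useful observation, but the argument you give for it breaks in the first-kind exceptional case. You assert that Theorem~\ref{twoexamples}, Proposition~\ref{ematch3}, Proposition~\ref{ematch5} and Theorem~\ref{uniq71} together force characteristic $0$; none of them does. Theorem~\ref{mon11}(1a) explicitly allows exceptional triples layered by $\Z/k$ in characteristic $k>0$, and the hypothesis $\mathbf n \ne \zero$ only excludes $\mathbf n = \zero$, not $\mathbf{k+1} = \one$ --- these are different conditions. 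In a $\Z_k$-layered triple with $k>2$ even, any odd $m$ with $3\le m\le k-1$ and $\gcd(m,k)=1$ yields a unit $\mathbf m$ (since $\mathbf m\cdot\mathbf{m'}=\mathbf{mm'}=\one$ whenever $mm'\equiv 1\pmod k$) of height $\ge 3$, lying in neither $\tT$ nor $\mathcal A^\circ$. So your closing contradiction evaporates once characteristic $0$ is dropped, and the exceptional case remains open. This also suggests the proposition as written wants either an extra hypothesis (e.g.\ $\mathcal A^\times\subseteq\tT$, which does hold in the standard tropical examples where $\tT$ is the group of units) or the weaker conclusion ``generalized permutation matrices.''
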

\begin{proof} In view of Lemma~\ref{char1}, the proof in \cite{Tan}
goes through.
\end{proof}

%

\subsection{Squares and sums of squares}$ $

One example is  analogous to the classical theory of real closed fields,
in which   squares are always positive.

\begin{lem}\label{sign15}   Suppose $(\tT,\cdot,(-))$ is an Abelian group with a negation map, and
$N$ is a subgroup of $\tT,$ containing the set of elements of $\tT$
which are squares, with $N$  maximal with respect to the property
that $(-)1 \notin N .$ (Such $N$ exists by Zorn's lemma.) Then  for
any $a \in \tT\setminus N$ we have $t\ge 0$ for which $a^{2^t} \in
(-)N$.
\end{lem}
\begin{proof} $N \cap (-)N = \emptyset,$ since if $a, (-)a  \in N$ then
$(-) 1 = ((-)) a a^{-1} \in N. $ For $a \in \tT\setminus N,$ we
could adjoin $a$ to $N$ (i.e. expand $N$ with all elements $a^ib$
for $b\in N$) contrary to maximality of $N$, unless $a^i b = (-) a^j
b'$ for some $i,j \in \Z$ and $b,b' \in N$. Now
$$a^{i-j} = (-)b'b^{-1} \in (-)N.$$ Take $m >0$ minimal such that
$a^{m}   \in (-)N,$ and write $m = 2^tq$ for $q$ odd.

Let $c = a^{2^t}\in \tT;$  then $c^q \in (-)N.$ But $c^2 \in N$
since $N$ includes all squares of $\tT$, so reducing the power $q$
modulo 2 must yield 1,
i.e.,  $a^{2^t} = c \in (-)N.$ 
\end{proof}

\begin{defn}\label{positive} An element    $a $ of a  monoid $\tT$ with negation
map $(-)$  is \textbf{positive} if $(-)a$ is not  a square in~$\tT.$
A submonoid $\tT'$ of $\tT$   is \textbf{positive} if each element
of $\tT'$ is positive. (In particular $(-)\one$ cannot be a square.)
\end{defn}

Note that even when $(\tT,\cdot)$ is a  group, a subgroup of $\tT$
could be positive, for example the subgroup~$\Q^+$ of $
(\Q^\times,\cdot)$, where $(-)$ is classical negation.


\begin{thm}\label{sign16}  Suppose $(N,\cdot)$ is a positive  subgroup of an
Abelian  group $\tT$  with a negation map, containing all squares in
$\tT$, which is maximal with respect to the property that $N \cap
(-)N = \emptyset,$ and suppose that $(-)\one$ is not  a square in
$\tT$. Then $\tT = N \cup (-) N.$ Furthermore, suppose that
$(\mathcal A, \tT, (-), \preceq)$ is a $(-)$-bipotent triple, and
let $\widehat{N}$ be the symmetrization of $N$ by
Definition~\ref{sym00}. Then there is an additive homomorphism
$\varphi: \widehat{N} \to \mathcal A$ given by $(a,a') \mapsto a
(-)a'.$
\end{thm}
\begin{proof}  Suppose $a \in  \tT\setminus N$. Then by
Lemma~\ref{sign15}, $(-)a^{2^t}$ is in $N$, but
$a^{2^t}=(a^{2^{t-1}})^2$ for $t\ge 1$ so the element $(-)a^{2^t}\in
N$ would not be positive, a contradiction unless $t=0,$ i.e., $(-)a
\in N$. Hence $\tT = N \cup (-) N.$ We write $a>b$ in $\tT$ if $a+b
= a.$ The map $\varphi$ is a homomorphism, since for $a > a'\in N$
we have $ \varphi (a,a') = a$ and $ \varphi (a,a) = a(-)a,$
so the greater term dominates in each verification.
\end{proof}

\subsubsection{Sign maps}\label{signf}$ $

The following system ties in with  triples over  a multiplicative
group.

\begin{example}\label{signsys} The \textbf{sign system}  is
$ (\mathcal A_{\operatorname{sgn}},
 \tT_{\operatorname{sgn}}, (-), \preceq_\circ)$, where
 $ \mathcal A_{\operatorname{sgn}} = \{ -\one, \zero, \one, \infty
 \}$ endowed with the obvious multiplication, and with idempotent addition
 also satisfying
$a + \zero = \zero  + a = a,$ $a + \infty = \infty  + a = \infty,$ $
-\one + \one = \infty. $ Thus $\mathcal A_{\operatorname{sgn}}^\circ
= \{ \zero, \infty\}.$

This can be identified with the  hyperfield of signs in \cite{Bak},
described in~Example\ref{Basicexamples}, where $\infty$ is
identified with $\{ 0, 1 , -1\}$.
\end{example}

\begin{defn} A \textbf{sign map} on a monoid $\tT$ is
a multiplicative homomorphism $$\sgn :\tT \to (\{-1, 0, 1, \infty\},
\cdot)$$

When $\tT$ has a negation map $(-),$ we require furthermore that
$\sgn ((-)a) = - \sgn(a).$ 



%
%
%
%
%

\end{defn}
(This is very close to the minus sign used in \cite[\S 3.1]{GaP}.)
$\tT ^+ : = \sgn\invr(1)$ is  a submonoid of $\tT.$  

%
%
%
%

\begin{example}\label{surpsym1} (i) $\Real$ has the classical sign map.

(ii) The semiring $\mathcal A$ of Example~\ref{surpsym} has a sign
map, given by $$\sgn(\zero,\zero) = 0, \quad \sgn(a,\zero) = 1,
\quad \sgn( \zero, a) = -1, \quad  \sgn(a,a) = \infty, \quad \forall
a \in \tG.$$  The monoid $\mathcal A^+ = \tG \times \zero$  is
positive.  Indeed, suppose that
$$(-)\one = (\zero, \one)=(a_0,a_1)^2 = (a_0^2 + a_1^2, a_0 a_1 +
a_1 a_0).$$ If $a_0 = \zero$ or $a_1 = \zero$ the second component
is $\zero$, and if $a_0 = a_1$ then both components are equal, both
of which are contradictions.
\end{example}

Conversely, we have:

\begin{lem}\label{sign10}   Suppose $N$ is a subgroup of a  group $\tT,$ containing all squares, which is maximal with
respect to the property that $N \cap (-)N = \emptyset,$ and  suppose
that $(-)\one$ is not  a square in $\tT$.  Then there is a sign map
$\sgn$ on $\tT$ such that $N = \tT^+.$
\end{lem}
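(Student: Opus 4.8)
The plan is to recognize $N$ as an index-two subgroup of $\tT$ and take $\sgn$ to be the associated quotient map onto $\{1,-1\}$.

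First I would note that, although $N$ is only assumed to be a submonoid, it is in fact a subgroup of $\tT$: for $a\in N$ one has $a^{-1}=(a^{-1})^2 a$, and $(a^{-1})^2\in N$ because $N$ contains all squares, so $a^{-1}\in N$ since $N$ is closed under multiplication. Hence the hypotheses of Lemma~\ref{sign17} are met, and it gives $\tT=N\cup(-)N$. Since $\one\in N$ and $N\cap(-)N=\emptyset$, we have $(-)\one\notin N$; moreover $(-)N=\{((-)\one)a:a\in N\}=((-)\one)N$ is precisely the coset of $(-)\one$, using $(-)a=((-)\one)a$ on $\tT$. Thus $\tT=N\sqcup((-)\one)N$, so $N$ has index $2$ in $\tT$ and in particular is normal.

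Next I would define $\sgn:\tT\to\{-1,0,1,\infty\}$ by $\sgn(a)=1$ for $a\in N$ and $\sgn(a)=-1$ for $a\in(-)N$; equivalently $\sgn$ is the composite of the quotient map $\tT\to\tT/N$ with the unique isomorphism $\tT/N\cong\{1,-1\}$. Being (up to this identification) a group homomorphism, $\sgn$ is automatically multiplicative and sends $\one$ to $1$, and its image lies in the unit group $\{1,-1\}$ of the multiplicative monoid $\{-1,0,1,\infty\}$, so it is a (strict) sign map in the sense of the definition. The compatibility $\sgn((-)a)=-\sgn(a)$ holds because $(-)a=((-)\one)a$ and $((-)\one)a\cdot a^{-1}=(-)\one\notin N$, so $(-)a$ and $a$ always lie in opposite cosets. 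Finally $\tT^{+}:=\sgn^{-1}(1)=N$ by construction, which is the assertion.

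I do not expect a serious obstacle here: the only step needing a moment's care is the observation that a squares-containing submonoid of a group is already a subgroup, which is exactly what makes Lemma~\ref{sign17} applicable; everything afterward is coset bookkeeping. If one prefers to avoid quotient-group language, the same argument runs as a four-case verification of $\sgn(ab)=\sgn(a)\sgn(b)$ according to which of $a,b$ lie in $N$, using the negation-map identities $a((-)b)=(-)(ab)$ and $((-)a)((-)b)=ab$ from \eqref{neg} together with $((-)\one)^2=\one$.
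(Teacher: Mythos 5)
Your proof is correct and follows the same route as the paper, which simply invokes Lemma~\ref{sign15} and declares $\sgn(a)=1$ iff $a\in N$. You do usefully make explicit a step the paper leaves implicit: Lemma~\ref{sign10} only assumes $N$ is a submonoid, whereas Lemmas~\ref{sign15} and \ref{sign17} assume a subgroup, and your observation that $a^{-1}=(a^{-1})^2a\in N$ (since $N$ contains all squares) closes that gap before the coset bookkeeping.
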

\begin{proof} Using Theorem~\ref{sign16},  we define $\sgn(a) = 1$ iff $a \in N.$
\end{proof}

\begin{prop}\label{sign1} Suppose $(\mathcal A,
\tT, (-), \preceq)$ is a metatangible system, with $\tT$ a group,
and suppose that $(-)\one$ is not  a square in $\tT$. Then there is
a sign map $\sgn$ on $\mathcal A$ given by Lemma~\ref{sign10} on
$\tT,$ and $\sgn(\zero) = 0,$ and $\sgn(a^\circ) = \infty$ for each
$a \ne \zero.$
\end{prop}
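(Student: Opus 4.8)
The plan is to take the sign map $\sgn$ on $\tT$ furnished by Lemma~\ref{sign10} (for a maximal multiplicative subgroup $N$ of $\tT$ containing all squares with $N\cap(-)N=\emptyset$, so that $\tT^+=N$), extend it to all of $\mathcal A$ in the only way compatible with $\mathcal S_{\operatorname{sys}}$, and then verify the sign--map axioms by a case analysis driven by meta-tangibility. First I would pin down the shape of $\mathcal A$. Since $\tT$ is real, $\one$ (being a square) forces $(-)\one\ne\one$, so $(-)$ is of the second kind; moreover the classical-like situation is excluded, because if $\mathcal A$ were not $(-)$-bipotent then $e'=\one$ by Theorem~\ref{twoexamples}, the system would be $\tT$-classical by Lemma~\ref{uniq16}, and then the elements $\mathbf n\in\tT$, being sums of squares, would violate realness. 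Hence by Corollary~\ref{twoexamples2} the system is $(-)$-bipotent (and idempotent); by Lemma~\ref{drive2}(i) and Corollary~\ref{nozero7}, $\mathcal A=\tT\ \sqcup\ \mathcal A^\circ$, and $\mathcal A$ is anti-negated (if $a^\circ=\zero$ for some $a\ne\zero$ then $e=\zero$, putting us back in the excluded classical case), so $\mathcal A^\circ\setminus\{\zero\}=\{a^\circ:a\in\tT\}$.

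Then I would define $\sgn$ on $\mathcal A$ by keeping $\sgn|_\tT$ as above, setting $\sgn(\zero)=0$, and setting $\sgn(b)=\infty$ for every $b\in\mathcal A^\circ\setminus\{\zero\}$; this gives $\sgn(\tT)\subseteq\{-1,1\}=\tT_{\operatorname{sys}}$ and yields the asserted values $\sgn(\zero)=0$ and $\sgn(a^\circ)=\infty$ immediately. The remaining work is to check that $\sgn$ is a $\preceq_\circ$-morphism into $\mathcal S_{\operatorname{sys}}$. Multiplicativity: on $\tT$ it amounts to $N$ being a subgroup together with $((-)\one)^2=\one$ (so $(-)a\,(-)b=ab\in N$); if one factor is $\zero$ the product is $\zero$; if one factor is a nonzero quasi-zero one uses $c^\circ d=(cd)^\circ$ (equation \eqref{emul9}) and for two quasi-zeros $c^\circ d^\circ=2(cd)^\circ=(cd)^\circ$ by idempotence, all landing in $\mathcal A^\circ\setminus\{\zero\}$ by anti-negatedness. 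Compatibility with $(-)$: on $\tT$ this is built into the sign map of Lemma~\ref{sign10}, on $\zero$ it is trivial, and on quasi-zeros it reads $(-)(c^\circ)=c^\circ$ against $-\infty=\infty$ in $\mathcal S_{\operatorname{sys}}$.

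For the surpassing relation one first records that in $\mathcal S_{\operatorname{sys}}$ one has $x\preceq_\circ y$ iff $y=x$ or $y=\infty$ (since $\mathcal S_{\operatorname{sys}}^\circ=\{\zero,\infty\}$ and $x+\infty=\infty$). Thus $\sgn(a+b)\preceq_\circ\sgn(a)+\sgn(b)$ is automatic whenever $\sgn(a)+\sgn(b)=\infty$, which covers the case of opposite signs and the case where a summand is a quasi-zero. When a summand is $\zero$ both sides equal $\sgn$ of the other summand. The one case with genuine content is $\sgn(a)=\sgn(b)=\pm1$, i.e.\ $a,b\in N$ (or $a,b\in(-)N$): then $b\ne(-)a$, so by meta-tangibility $a+b\in\tT$, and $(-)$-bipotence (Lemma~\ref{bip1}) gives $a+b\in\{a,b\}\subseteq N$ (resp.\ $(-)N$), so $\sgn(a+b)=\sgn(a)=\sgn(a)+\sgn(b)$.

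The main obstacle is the first paragraph: squeezing out of the hypothesis ``real'' precisely the structural consequence that there are no ``genuinely new'' tangible sums --- that is, that $\mathcal A$ is $(-)$-bipotent (equivalently idempotent) and anti-negated --- so that sums of tangibles never leave a single sign class. Once that reduction is in place the morphism verification is a routine, if lengthy, finite case check against the four-element system $\mathcal S_{\operatorname{sys}}$.
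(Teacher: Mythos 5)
Your proposal and the paper take the same basic route — take the sign map supplied by Lemma~\ref{sign10} on $\tT$, send $\zero\mapsto 0$ and nonzero quasi-zeros to $\infty$ — but the paper's proof stops there (one line, no verification that the result is a morphism). You attempt the verification in full, which is the genuinely substantive part, and your case analysis against $\mathcal S_{\operatorname{sys}}$ (using $\mathcal A=\tT\sqcup\mathcal A^\circ$ from Lemma~\ref{drive2}(i) and Corollary~\ref{nozero7}, the description of $\preceq_\circ$ in $\mathcal S_{\operatorname{sys}}$, and $(-)$-bipotence to see that $N$ and $(-)N$ are each closed under sums of non-quasi-negatives) is correct once the structural hypotheses are in place. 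One small citation note: to get $a+b\in\{a,b\}$ you should appeal directly to the definition of $(-)$-bipotence (Definition~\ref{metatan}), not Lemma~\ref{bip1}, whose case (iii) reads $a^\circ+b=b$ rather than $a+b=b$.

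The genuine gap is the reduction in your first paragraph. You argue that realness forces $(-)$-bipotence via ``the elements $\mathbf n\in\tT$, being sums of squares, would violate realness.'' This does not follow from the paper's definition of \emph{real}, which concerns $(-)a$ being a \emph{square}, not a sum of squares; a sum of squares need not be a square. Even on a charitable reading (``$(-)\one$ is not a sum of squares''), the argument only rules out finite characteristic (where $\mathbf{k-1}=(-)\one$ is literally $\one^2+\cdots+\one^2$), and leaves the characteristic-zero classical branch of Theorem~\ref{mon11}(2b) untouched, which is precisely where $a^\circ=\zero$ for all $a$ and the asserted values $\sgn(\zero)=0$, $\sgn(a^\circ)=\infty$ collide. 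You flagged this yourself as ``the main obstacle,'' and rightly so. In fairness, the difficulty is partly inherited from the paper: as literally written, ``$\tT$ real'' is vacuous for any group with a negation map, since for $a=(-)\one\in\tT$ one has $(-)a=\one=\one^2$ a square, so $(-)\one$ is never real. Your proof would be complete once this reduction is either proved from a repaired notion of realness or added as an explicit hypothesis ($(-)$-bipotent, anti-negated), which is in fact the hypothesis Theorem~\ref{sign16} states explicitly.
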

\begin{proof} Take the sign map of  Lemma~\ref{sign10}, formally defining $\sgn(a^\circ) = \infty$.
\end{proof}

%
%

\subsubsection{Other basic properties of metatangible
triples}$ $

Reversibility holds for
tangible elements.

\begin{lem}\label{rev1} (Compare with Lemma~\ref{negmatch}(ii).)  In a metatangible system, if $a   \preceq_\circ b +c$ for $a,b,c \in \tT,$
then $b  \preceq_\circ a (-) c.$
 \end{lem}
\begin{proof}  Write $a +d^\circ =  b +c.   $ Then $(a+d)^\circ = b+c(-)a.$
If $c \ne a$ then $c(-)a$ is tangible, and thus equals $(-)b$ (since
otherwise $ b+c(-)a \in \tT \cap \tT^\circ = \emptyset).$

Thus we may assume $c = a.$ If $b+c$ is tangible then $a=b+c,$
implying $b \preceq _\circ b+ c^\circ = (b+c)(-)c = a(-)c.$ So we
are done unless $c=(-)b,$ so $b = (-)a \preceq _\circ (-)b(-)c =
a(-)c.$
\end{proof}

In general, we have  a weird situation.
\begin{example}\label{weird1}
In the truncated layered system of Example~\ref{nontang}(v), for $n
= 5,$ take $a = (1,1),$ $b =  (1,2),$ and $c   = (4,2).$ Then $a +
(5,1) ^\circ = (5, 2) = b +c,$ but we cannot write $b + d^\circ = a
(-)c = a+ c = c$ because the parities in $L$ do not
match.\end{example}

This is the only sort of counterexample:

\begin{thm}\label{rev2} In  a cancelative   metatangible unital
system, if $a \preceq b +c$ for $a,b \in \tT$,
 then $b  \preceq a (-) c,$ except in the following situation
 (as in  Example~\ref{weird1}):
there are $1 < m' \le m$ such that $c = \mathbf m b,$ with $\mathbf
{m'} = \mathbf {m}$ but $\mathbf {m'-2} \ne \mathbf {m-2}$, and $a
+c = c$.
 \end{thm}

 The proof, omitted here but given in \cite[Theorem~7.44]{Row16v}, is by uniform presentation to reduce the assertion
 to tangible elements; this
 reduction fails to pass through the above exception.

\subsection{Classifying metatangible systems}\label{ch0}$ $

In a cancelative unital semiring $\mathcal A$,
 $\mathbf 2 = \one$ precisely when $\mathcal A$ is idempotent. 
In other words, characteristic~1 of the first kind gives the
max-plus algebra, but  quasi-negatives are far from unique since
$\tT = \tT^\circ$. The following result shows how metatangible
systems naturally lead us to the other main tropical structures. The
theorem is quite comprehensive, except for glossing over the
exceptional systems.

\begin{thm}\label{mon11} Any  cancelative metatangible unital system $(\mathcal A,
\tT, (-), \preceq)$  must satisfy one of the following:
\begin{enumerate} \item $(-)$ is of the first kind.
$\mathcal A = \cup _{m\in \Net} \, m \tT,$ and $e' = \mathbf 3.$

\begin{enumerate}\eroman
  \item $\mathbf 3 \ne \one.$ Then $\tT$ is $(-)$-bipotent, and $(\mathcal A, \tT, (-), \preceq)$ is isomorphic to a layered system (either layered by
$\mathbf N$ or quasi-periodic in characteristic 0
(Example~\ref{chart1}), and layered by $\Z /k$ in characteristic
$k>0$).

In particular, when $\mathbf 3 =  \mathbf 2, $ we have $\mathbf m =
\mathbf 2$ for all $ m \ge 2,$ and $\mathcal A$ has height 2.

 \item $\mathbf 3 = \one.$ Then  $(\mathcal A,\tT,(-),\preceq)$
 has characteristic $2$ and height 2. The \semiring0 $\mathcal A^\circ$ is bipotent, and   the conditions of Proposition~\ref{ematch5} hold.

\end{enumerate}

 \item $(-)$ is of the second kind.  There are two possibilities:
 \begin{enumerate}\eroman
  \item
 $\tT$ is $(-)$-bipotent,
and $\tT$ (and thus $\mathcal A$) is idempotent, 
 and
$\mathcal A$ has height~2.

 \item  $\tT$ is not $(-)$-bipotent.  Then the system is $\tT$-classical, and the \semiring0 $\mathcal A^\circ$ is bipotent. Furthermore $e' = \one$.
 Hence $\mathcal A $ has height 2. Either $\mathbf N  \subseteq \tT$,  or $(\mathcal A,\tT,(-),\preceq)$
 has characteristic~$k$ for some $k \ge 1$. In the latter case, $(\mathcal A,\tT,(-),\preceq)$
 is layered by $\Z /k.$

\end{enumerate}\end{enumerate}
\end{thm}
\begin{proof} We start with
Theorem~\ref{twoexamples}, which says that  $\tT$
 is $(-)$-bipotent or $e' = \one.$ This enables us to subdivide parts (1) and (2) (although in the reverse order).
  Also, by Theorem~\ref{matrsys}, every element of $\mathcal
A$ is uniform.

(1) If  $(-)$ is of the first kind, this means that $(-)a = a$ and
all elements have the form $  m a$ for $a \in \tT.$ If $\tT$ is
$(-)$-bipotent, and $a+ b = b,$ we get $$  m a +  m' b =  m' a ,$$
$$  m a +  m' a =  ({m+m'}) b ,$$   $$ (
m a )( m' b) =  (m m') ab ,$$ which are precisely the rules for
layered addition and multiplication, so $\mathcal A$ is layered by $
\mathbf N$. Eventually the numbers~$\mathbf m$ may cycle modulo $k$,
in which case one can identify subsequent layers modulo $k$.

When $\mathbf 3 =  \mathbf 2, $ we clearly have $\mathbf m =
\one^\circ = \mathbf 2$ for all $\mathbf m \ge 2.$  The last
assertion is by Theorem~\ref{matrsys}.

When $\mathbf 3 =  \mathbf 1, $ every element has height $\le 2$ by
Theorem~\ref{matrsys}, and we conclude with
Proposition~\ref{ematch5}, noting that $a^\circ + a^\circ = e' a + a
=  a + a = a^\circ.$

(2) First assume that  $\tT$ is $(-)$-bipotent, so $\tT$ (and thus
~$\mathcal A$) is idempotent. In particular, $\mathcal A  $ has
height~2, cf.~Corollary~\ref{ematch55}.%


Now assume that  $\tT$ is  not $(-)$-bipotent, so $e'=\one$  by
Theorem~\ref{twoexamples}. The system is $\tT$-classical by
Lemma~\ref{uniq16}. Again Proposition~\ref{bip3} shows that
$\mathcal A^\circ$ is bipotent, noting that  $a^\circ + a^\circ = e'
a (-)  a =  a (-) a = a^\circ.$  By Proposition~\ref{use23}, either
$\mathbf N \subseteq \tT$  or $(\mathcal A,\tT,(-),\preceq)$
 has characteristic $k$ for some $k \ge 1$. In the latter case,
 $\mathbf 1, \dots, \mathbf {k-1},$ are distinct, since if $\mathbf {m} = \mathbf {m'}$ for $1 \le m < m' \le k,$
 adding $(-)\mathbf {m'-1}$ to both sides lowers the characteristic, which is a contradiction.
\end{proof}

\subsubsection{Examples in terms of Theorem~\ref{mon11}}\label{Exs}$ $
%

\begin{rem}\label{AGGGexmod18}$ $

\begin{itemize}\item  When
$\mathbf 3 = \mathbf 2,$ Case (1a) boils down to the supertropical
domain $\mathcal A = \tT ^+$, of height~$2,$ where $\mathcal A
^\circ = \tT^\circ.$ We get the  system $(\mathcal A,\tT, (-),
\preceq)$ of the first kind, where
  $\tT  $ is the monoid of ``tangible elements,''  $(-)$ is the
identity map, and $\succeq$ is ``ghost
  surpasses.''
Proof: For $a,b$ tangible, $a + b$ is a ghost only when $b = a =
(-)a.$
\item In general, case (1a) becomes the layered structure, as stated
in the theorem. Note that although the $\Net$-layered system is
$(-)$-bipotent of first kind, $\mathcal A^\circ$ is not bipotent
since $e+ e = \mathbf 4 \ne \mathbf 2 = e.$

\item
The classical algebra of characteristic 2 fits into (1b), with each
$a^\circ = \zero$.

There also is
 the layered algebra of Example~\ref{nontang}(v), whose system is
 metatangible of first kind but not $(-)$-bipotent.

%
%

\item Case (2a) includes another example \cite[Definition~3.6]{Row16v}.

\item
``Layered semirings'' (which come up in Cases (1a) and (2a)) were
reviewed in Example~\ref{AGGGexmod1}, also see  \cite[Example~ 7.58]{Row16v}.
\item Case (2a) leads to the following approach. If $\tT$ is positive, then $(\mathcal A, \tT, (-), \preceq)$
comes from the symmetrized system $(\mathcal
A')_{\operatorname{sym}}$ of Theorem~\ref{sign16}. (This is layered
over $\mathcal A$.)
%
%
%

\item For Case (2b), in characteristic $\ne 2,$ the classical system $(\mathcal
A,\mathcal A, -, =)$ is of second kind, satisfying $e' = \one$.

\end{itemize}

\end{rem}

%

\subsection{Specific applications of metatangible systems}$ $

We turn now to specific applications, i.e., hypergroups and fuzzy
rings, to be elaborated in the appendices.

\subsubsection{* Metatangible systems versus metatangible
hypergroups}$ $

We have embedded the theory of hypergroups into that of systems. We
can go the other direction for metatangible systems.

\begin{prop}\label{closed}
 Any metatangible (resp.~$(-)$-bipotent) system  $(\overline{\tTz}, \tT, (-), \subseteq)$
gives rise to a $(-)$-closed (resp.~$(-)$-bipotent) hypergroup
structure  on the set $\tT$, as follows:

Define $[a] = \{ a' \in \tT : a' + a = a \}.$

 Then
define addition on $\tT$
 by $$a \boxplus b =  \begin{cases} a + b : \quad a \ne (-) b,\\
 [a]: \quad a = (-) b .\end{cases}$$
  \end{prop}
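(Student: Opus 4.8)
The plan is to verify directly that $(\tT, \boxplus, \zero)$ satisfies the axioms of a hyper-semigroup (Definition~\ref{Hyp00}) together with unique hypernegatives (Definition~\ref{Hyp}), and that it is $(-)$-closed (resp.~$(-)$-bipotent) in the sense of Definition~\ref{bihyp1}. First I would record the elementary facts: $a \boxplus \zero = \{a\}$ since $\zero \ne (-)a$ for $a \in \tT$ and $a + \zero = a$; commutativity of $\boxplus$ follows from commutativity of $+$ on $\mathcal A$ together with the symmetry of the condition $a = (-)b$; and the hypernegative of $a$ is $(-)a$, since $a \boxplus b \ni \zero$ forces $b = (-)a$ (otherwise $a + b \in \tT$ avoids $\zero$, and in the case $b = (-)a$ one needs $\zero \in [a]$, which holds because $\zero + a = a$ — wait, $\zero \notin \tT$, so actually $[a] \subseteq \tT$; here one should instead view $[a]$ as living in $\widetilde{\tT}$ or note that $\zero \in a \boxplus (-)a$ is to be interpreted via $a^\circ$). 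So the cleanest route is to identify $a \boxplus b$ for $a = (-)b$ with the image of $a^\circ \in \mathcal A^\circ$ under the power-set correspondence of Proposition~\ref{hypg1}, i.e.\ $[a] = \{a' \in \tT : a' + a = a\}$ is exactly the $\tT$-fiber-type set attached to $a^\circ$, and $\zero$ formally belongs to it as the neutral element; uniqueness of the hypernegative is then immediate from unique negation of the $\tT$-triple.

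The substantive step is associativity at the level of sets: $(a_1 \boxplus a_2) \boxplus a_3 = a_1 \boxplus (a_2 \boxplus a_3)$, where $a \boxplus S := \bigcup_{s\in S} a \boxplus s$. Here I would exploit the structural results already proved for meta-tangible $\tT$-triples. By Theorem~\ref{matrsys} every element of $\mathcal A$ is uniform, and by Lemma~\ref{bip1} and Corollary~\ref{sumunif} the sum of two elements of $\tT$ (and of $\tT$ with a quasi-zero) is tightly constrained: it is one of the two summands, or tangible, or a quasi-zero $c^\circ$ whose fiber is determined. The key observation is that the power-set addition $+$ on $\widetilde{\tT}$ is already associative by the analogue of Theorem~\ref{mon1}(i) (applied to $\mathcal A$, or rather to $\widetilde{\tT}$, which is an honest semigroup), and that $\boxplus$ agrees with $+$ except that it replaces the singleton-or-quasi-zero ambiguity by the set $[a]$. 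So the plan is: show that $a \boxplus S$ computed via $\boxplus$ coincides with the power-set sum $\{a\} + S$ in $\widetilde{\tT}$ after the identification $a^\circ \leftrightarrow [a]$, and then associativity of $\boxplus$ is inherited from associativity of $+$ on $\widetilde{\tT}$ (Theorem~\ref{distres}/Theorem~\ref{mon1}). Concretely, I would check that for $a,b,c \in \tT$, both $(a\boxplus b)\boxplus c$ and $a\boxplus(b\boxplus c)$ equal: $\{a+b+c\}$ when no partial sum is a quasi-negative pair; and otherwise, tracing through Corollary~\ref{sumunif}, they each reduce to the set $[x]$ for the appropriate $x$, using the trio property Proposition~\ref{bip3}(iii) in the first-kind case to handle the symmetric situation $a+b+c \in \mathcal A^\circ$. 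This is where the $(-)$-bipotence versus mere meta-tangibility distinction requires care, since in the bipotent case $[a] = \{a' : a' \le a\}$ is an order-ideal and the calculation is transparent, whereas in the general meta-tangible case one must invoke Lemma~\ref{negmatch} to control $a(-)b$.

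Having associativity, the remaining verifications are routine: $\zero$ is neutral by construction; the hypernegative is $(-)a$ and is unique by unique negation; $(-)$-closedness is the statement that $a \boxplus b \subseteq \tT$ whenever $a \ne (-)b$, which is exactly $\tT + \tT \cap \tT^\circ$ being handled correctly, i.e.\ it follows from Lemma~\ref{nummatch0} (meta-tangibility is equivalent to $a+b \in \tT$ for $a \ne (-)b$); and $(-)$-bipotence of the hypergroup, namely $a \boxplus b \in \{\{a\},\{b\}\}$ for $a \ne (-)b$, is by definition the $(-)$-bipotence of the $\tT$-system via Lemma~\ref{bip1}. Finally, when $\mathcal A$ carries multiplication, the pointwise multiplication makes $\widetilde{\tT}$ a hyperring by Definition~\ref{hyring} and Theorem~\ref{distres1} (distributivity), and one checks $(-)(ab) = ((-)a)b$ transfers from the $\tT$-triple. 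The main obstacle, as flagged, is the set-level associativity in the non-bipotent meta-tangible case, where the fibers $[a]$ need not be linearly ordered and one must carefully marshal Corollary~\ref{sumunif}, Lemma~\ref{negmatch}, and the trio property to see that the two nested unions produce literally the same subset of $\tT$.
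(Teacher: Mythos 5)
Your commutativity and hypernegative checks line up with the paper's, and you correctly isolate set-level associativity as the crux. But the route you propose for it is genuinely different from the paper's, and as written it has a gap. You want to inherit associativity of $\boxplus$ from associativity of $+$ on $\mathcal A$ via the identification $a \leftrightarrow \{a\}$, $a^\circ \leftrightarrow [a]$. For that to work you would need a well-defined map $\phi : \mathcal A \to \mathcal P(\tT)$ with $\phi(c+d) = \phi(c) \boxplus \phi(d)$ for \emph{all} $c,d\in\mathcal A$, where $\boxplus$ on subsets is the union of elementwise hypersums. This is not bootstrapped from Theorem~\ref{mon1}: that theorem lifts an operation from $A$ to $\mathcal P(A)$, whereas here $\boxplus$ is \emph{not} the pointwise lift of $+$ --- the two differ exactly on quasi-negative pairs, which is the entire point --- so the multiplicativity of $\phi$ is itself an assertion demanding verification, and proving it amounts to the same case analysis you were trying to avoid. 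You also leave $\phi$ undefined on elements of height $\ge 3$ (the exceptional layered case from Theorem~\ref{matrsys}), where $c = m c_\tT$ has no obvious image in $\mathcal P(\tT)$.

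The paper proceeds head-on: after deducing $[a] = [(-)a]$ from Lemma~\ref{negmatch} (hence commutativity), it records the explicit formula $[a] \boxplus b = \{b\}$ if $b > a$ and $[a]$ if $b \le a$, then checks associativity only in the four cases where a quasi-negative coincidence actually occurs --- $a_1 = (-)a_2$, $a_2 = (-)a_3$, $a_1 + a_2 = (-)a_3$, $(-)a_1 = a_2+a_3$ --- observing that in each one both nested sums collapse to the same bracket $[\,\cdot\,]$. Outside those cases associativity is inherited directly from $\mathcal A$. So your conceptual ``semigroup-homomorphism'' framing repackages, rather than replaces, that case check. If you want to pursue your route, the honest statement is: prove $\phi(c+d)=\phi(c)\boxplus\phi(d)$ by cases using Corollary~\ref{sumunif}, which you gesture at but do not carry out --- and be aware that the paper's own handling of the case $a_1 + a_2 = (-)a_3$ explicitly invokes bipotence, so the non-bipotent meta-tangible case (where $e' = \one$, via Theorem~\ref{twoexamples} and Lemma~\ref{negmatch5}) needs its own argument there; your mention of the trio property Proposition~\ref{bip3}(iii) is only available in the first kind.
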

\begin{proof} We verify the conditions of Definitions~\ref{Hyp00} and~\ref{Hyp}.

Recall from Remark~\ref{negmatch1}  that $a \in [a']$ iff $a' (-)a =
(-)a ,$ so $[a] =  [(-)a].$ Hence $a \boxplus (-a)  = [a] = (-a)
\boxplus a ,$ implying addition is commutative.

Next write $b<a$ if $b+a =a$ (so that $[a] = \{ b: b<a \}$, and
$(b,a]$ for $\{ c:\ b<c \le a\}$, and note that
$$[a] \boxplus b = \begin{cases} b \quad \text{if} \quad  b >a;\\ [a] \quad \text{if} \quad  b =a ;\\  [b] \cup (b,a] = [a] \quad \text{if} \quad  b
<a.
\end{cases} $$

We need to check associativity. $(a_1 \boxplus a_2) \boxplus a_3 =
a_1 \boxplus (a_2 \boxplus a_3) = \max \{ a_1, a_2, a_3\}$ unless we
get equality at some intermediate stage, i.e., one of the following
holds:
\begin{enumerate}\label{assoc15}\eroman
\item
$a_1 = (-) a_2$, \item $a_2 = (-)  a_3$, \item $a_1 + a_2 = (-)
a_3$,\item $(-) a_1= a_2 + a_3$,
\end{enumerate}
 which we check
in course.

\begin{enumerate}\eroman
\item If
$a_1 + a_3 = a_3 $ then also $a_2 + a_3 = a_3 $ and $(a_1 \boxplus
a_2) \boxplus a_3 = [ a_1 ] +a_3 = a_3 = a_1 \boxplus (a_2 \boxplus
a_3) $.

If $a_1 + a_3 = a_1 $ then also $a_2 + a_3 = a_2 $ and $(a_1
\boxplus a_2) \boxplus a_3 = [a_1]+a_3 = [a_1] =  a_1 \boxplus a_2 =
a_1 \boxplus (a_2 \boxplus a_3) $.

If $a_1 = (-) a_3 $ then $(a_1 \boxplus a_2) \boxplus a_3 = [a_1]
\boxplus a_3 = [a_3] =   a_1 \boxplus (a_2 \boxplus a_3) $.

\item  Symmetric argument to (i).

\item Suppose $a_1 \ne (-) a_2$ and $a_1 + a_2 = (-) a_3$.  Then, by
$ (-)$-bipotence, $a_1 =  (-)a_3$ or $a_2 = (-)a_3 $, so either way
$(a_1 \boxplus a_2) \boxplus a_3 = [a_3]$; $a_1 \boxplus (a_2
\boxplus a_3)$ is respectively $(-)a_3 + (a_2+a_3) = (-)a_3 + a_3 =
[a_3]$ or $a _1 + [a_3] = [a_3].$

\item  Symmetric argument to (iii).
\end{enumerate}

 In each verification we applied
  brackets to each side whenever there is equality.

Define $-a = (-)a$. Then the quasi-zeroes are exactly the sets
$[a]$, which are the hyperzeros, and $a_1+a_2$ is a hyperzero
precisely when $a_2 = -a_1.$
\end{proof}

A more encompassing result, for hypersystems that are not
necessarily metatangible, is given in \cite[Theorem~3.27]{AGR0}.


%
%
%
%
%

%

\subsubsection{**Metatangible triples and the fuzzy property}$ $

We can also recover the key property of fuzzy ring,
{\cite[Definitions~2.1,2.8]{Dr}.

\begin{defn}\label{precedeq59}
The \textbf{fuzzy property} for a triple $(\mathcal A, \tT, (-))$
is:
\medskip

 $b_1 (-) b_1' \in \mathcal
A^\circ$ and $b_2 (-) b_2' \in \mathcal A^\circ$ imply $\ b_1b_2 (-)
b_1'b_2'\in \mathcal A^\circ, \quad \forall b_i, b_i' \in  \mathcal
A $.

\end{defn}

\begin{thm}\label{fuzz1} Metatangible triples satisfy the  fuzzy
property.
 \end{thm}
\begin{proof}
Suppose that $b_1 (-) b_1' \in \mathcal A^\circ$ and $b_2 (-) b_2'
\in \mathcal A^\circ$. We appeal to the uniform presentation
(Definition~\ref{unif1}), Theorem~\ref{matrsys}, and
Corollary~\ref{sumcirc}.

\textbf{Case I} $b_1 + b_1' = b_1$. By Remark~\ref{negmatch1}, $ b_1
= b_1 (-) b_1' \in \mathcal A^\circ,$ so
$$  b_1b_2 (-) b_1'b_2' =  (b_1+b_1')b_2 (-) b_1'b_2' = b_1b_2  +(b_1'(-) b_1')b_2' \in \mathcal A^\circ.$$
Likewise if $b_2 + b_2' = b_2'$.

\textbf{Case II} $b_1 \in \tT$ and   $b_1' \in \tT$. By unique
quasi-negatives $b_1' = b_1$, in which case $\ b_1b_2 (-) b_1'b_2' =
\ b_1(b_2 (-) b_2')\in \mathcal A^\circ$ and we are done.

\textbf{Case III} $b_1 \in \tT$ and   $b_1'$ has Type 2. Writing
$b_1'  = a^\circ$ we have by Case (i)(b) of Corollary~\ref{sumcirc}
that either $a +b = a$ so we conclude by Case I, or $a^\circ =
(\pm)b$ with $e' = e$, in which case $$\ b_1b_2 (-) b_1'b_2' = \
b_1b_2 (-) b_1b_2' (-) b_1'b_2' =  b_1(b_2 (-) b_2') (-)b_1'b_2' \in
\mathcal A^\circ$$ and we are done.

Thus we may assume that if $b_1$ has Type 1 then $b_1'$ has Type 3,
and the analogous assertions hold for $b_1', b_2, b_2'.$

 We are done if $b_1$ and $b_1'$ are both of Type 2, so we may
assume $b_1$ has Type $\ne 2.$ But then $b_1$ or $b_1'$ has Type 3,
so $(-)$ is of the first kind.

\textbf{Case IV} $(-)$ is of the first kind, $b_i =  m_i a_i$ and
$b_i' =  m_i ' a_i$ for $i=1,2$ and $a,a' \in \tT$. If $m_1 -m_1'$
is even, then $m_1 m_2 - m_1' m_2' \equiv m_1 (m_2 - m_2') \pmod 2,$
implying $b_1 b_2 (-) b_1' b_2' = (m_1m_2-m_1'm_2')a_1 a_2 \in
\mathcal A^\circ$ since $(m_2 - m_2')a_2 = b_2 (- )b_2' \in \mathcal
A^\circ,$ and we are done. A similar proof applies if $m_2 -m_2'$ is
even.

 We write
$b_i =  m_i {b_i}_\tT$ and $b_i' =  m_i' {b_i'}_\tT$ for ${b_i}_\tT,
{b_i'}_\tT \in\tT$. In view of Case I, we may assume that ${b_i}_\tT
= {b'_i}_\tT.$

  Hence, by Case IV we may assume that $m_1 -m_1'$ and $m_2 -m_2'$ are
odd, and may assume that $m_1 \le m_2.$
 But then $\mathbf m a_i\in
\mathcal A^\circ$ for all $m \ge |m_1 - m_1'|$, by
Remark~\ref{useful1},
 and in particular $$b_1
b_2 (-) b_1' b_2' = m_1 m_2 a_1 a_2 (-) m_1' m_2' a_1 a_2   \in
\mathcal A^\circ.$$

%
%
\end{proof}%

We elaborate in Appendix B.

\section{Categorical  properties of systems}\label{catnot}

Having established the ubiquity of systems, let us  view systems in
categorical terms in order to relate different systems.
%
%
%
%
%
%
We  make  systems of a given algebraic structure into a category,
whose objects are systems.
 The question is how to define morphisms. The
customary way to do so in universal algebra would be via a
homomorphism, as defined in \S\ref{univalg}.
 However, in the context of systems, it
  is often preferable  to bring $\preceq$ into the picture.

\subsection{$\preceq$-Morphisms of systems}\label{surp3}$ $

%


\begin{defn}\label{morph}  A
  \textbf{$\preceq$-morphism}  of   systems $$\varphi:
(\mathcal A, \tT, (-), \preceq)\to (\mathcal A', \tT', (-)',
\preceq')$$ is a map $\varphi: \mathcal A \to \mathcal A'$
satisfying the following
properties  for $a_i \in \tT$ and   $b\preceq b'$  in $\mathcal A$:
\begin{enumerate}\eroman    \item $\varphi(a_1) \in \tT',$ and $ \varphi((-)a_1)=
(-)'
\varphi(a_1);$
\item $ \varphi(a_1 + a_2) \preceq ' \varphi(a_1) + \varphi( a_2) ;$
\item  $ \varphi(a_1 b)=  \varphi(a_1)  \varphi( b) $.
\item $ \varphi(b) \preceq ' \varphi(b').$
\item
$\varphi (\mathcal A_\Null) \subseteq \varphi (\mathcal A)_\Null.$
\item When $\zero_\mathcal A \in \mathcal A,$ we also require
$\varphi (\zero_\mathcal A ) = \zero_\mathcal {A'} .$
\end{enumerate}

%
\end{defn}

\begin{MNote}
The key difference from homomorphism is (ii), which plays a major
role in the structure (as in \cite{JMR,JMR1}) and often holds when
equality may fail, for example with tropicalization. It is in line
with hypergroup morphisms, which stipulate that $ \varphi(a_1
\boxplus a_2) \subseteq \varphi(a_1)\boxplus \varphi( a_2).$
 \end{MNote}

 These conditions arise naturally in the cases of hypergroups and also
 for Lie semialgebras. For example, although  for any $R$-module
 $M,$
  the
left multiplication map  $\ell_r : M \to M$ is a homomorphism   iff
left multiplication by $r$ distributes over $M$, the   map $\ell_r
$~is a $\preceq$-morphism iff $r(a_1+a_2) \preceq ra_1 + ra_2$ for
each $a_i \in M$; this is  described in \cite[\S 4.1]{Vi}, and
treated in \cite[\S6.9]{Row16v}.

\begin{lem} Any homomorphism $\varphi$ of systems  is a $\preceq_\circ$-morphism.
\end{lem}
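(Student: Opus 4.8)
The plan is to unwind the two definitions and verify, one clause at a time, that $\varphi$ satisfies the requirements of Definition~\ref{morph} with $\preceq$ taken to be $\preceq_\circ$ on both sides. The point to keep firmly in mind is what a ``homomorphism of systems in the usual universal algebra sense'' actually preserves: since $\tT$ (via the inclusion operator $\mu\colon\tT\to\mathcal A$ of Example~\ref{varex13}(ii)), the negation map $(-)$, and — when present — the multiplication are all part of the signature, a homomorphism $\varphi\colon\mathcal A\to\mathcal A'$ restricts to a map $\tT\to\tT'$ and satisfies $\varphi(\omega(a_1,\dots,a_m))=\omega(\varphi(a_1),\dots,\varphi(a_m))$ for every operator $\omega$ in the signature, in particular for $+$ and for $(-)$.

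First I would dispatch Definition~\ref{morph}(i): $\varphi(\tT)\subseteq\tT'$ holds precisely because $\mu$ is in the signature and $\varphi$ commutes with it. Next, Definition~\ref{morph}(ii): for any operator $\omega$, homomorphy gives $\varphi(\omega(a_1,\dots,a_m))=\omega(\varphi(a_1),\dots,\varphi(a_m))$, and equality implies $\preceq_\circ$ because $\preceq_\circ$ is a partial pre-order, hence reflexive, by Lemma~\ref{precedeq000}(i). Specializing $\omega$ to the negation map in the same way yields $\varphi((-)a)=(-)'\varphi(a)$, which is the strong form of \eqref{weakneg}.

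The only clause with any content is Definition~\ref{morph}(iii): if $a\preceq_\circ a'$ in $\mathcal A$, I must produce $\varphi(a)\preceq_\circ\varphi(a')$ in $\mathcal A'$. By Definition~\ref{precedeq23} (via Example~\ref{surp0}(iii)) we have $a'=a+c^\circ$ for some $c\in\mathcal A$. Applying $\varphi$ and using preservation of addition together with $\varphi((-)c)=(-)'\varphi(c)$,
\[
\varphi(a')=\varphi(a)+\varphi(c^\circ)=\varphi(a)+\bigl(\varphi(c)(-)'\varphi(c)\bigr)=\varphi(a)+\varphi(c)^\circ ,
\]
and $\varphi(c)^\circ\in(\mathcal A')^\circ$, so $\varphi(a)\preceq_\circ\varphi(a')$ by definition.

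I expect the main (and rather mild) obstacle to be purely a matter of bookkeeping: making explicit that the signature of a system genuinely contains the negation map and the inclusion $\mu\colon\tT\hookrightarrow\mathcal A$, so that the identity $\varphi(c^\circ)=\varphi(c)^\circ$ and the containment $\varphi(\tT)\subseteq\tT'$ are legitimate consequences of being an $\Omega$-homomorphism. Once that is granted, every clause of Definition~\ref{morph} is immediate, and it is worth remarking that nothing in the argument requires $\mathcal A$ to contain $\zero$, nor meta-tangibility, nor unique negation.
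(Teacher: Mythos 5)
Your proposal is correct and takes essentially the same approach as the paper: the crux in both is that $a'\preceq_\circ a$ means $a = a' + c^\circ$, and applying $\varphi$ with preservation of $+$ and $(-)$ gives $\varphi(a) = \varphi(a') + \varphi(c)^\circ$, hence $\varphi(a')\preceq_\circ\varphi(a)$. The paper compresses the remaining clauses to the parenthetical ``the operators behave analogously''; your explicit treatment of clauses (i) and (ii) via the signature containing $\mu$ and $(-)$, and via reflexivity of $\preceq_\circ$, is a more careful spelling-out of the same argument.
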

\begin{proof} If $a_2 = a_1 + c^\circ$, then $\varphi(a_2) = \varphi(a_1) +
\varphi(c)^\circ$.
\end{proof}

\subsubsection{*Embedding hypergroups into the
 category of systems}$ $

In Definition~\ref{hyper} we presented the    system  of a
hypergroup. This can be made more explicit using the formalism of
categories.

 \begin{thm}\label{hypsys1} There is a faithful functor $\Psi$ from the
 category of canonical hypergroups  into the category
 of  $\tT$-reversible systems, whose morphisms are the $\preceq$-morphisms, sending a
 hypergroup
$\tT$ to its  hypersystem $(\overline{\tTz}, \tT, (-), \subseteq)$.
Furthermore, the   hypergroup $\tT$ is~metatangible (resp.~closed),
iff its hypersystem $(\overline{\tTz}, \tT, (-), \subseteq)$
is~metatangible (resp.~$(-)$-bipotent).\end{thm}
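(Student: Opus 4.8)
The plan is to construct $\Psi$ directly and verify the functor axioms, leaning on \thmref{hypersys} for everything at the level of objects. On objects $\Psi$ sends a hypergroup $\tT$ to the system $(\widetilde{\tT},\tT,(-),\subseteq)$ of Definition~\ref{hypersy}; by \thmref{hypersys} this is a uniquely negated $\tT$-triple, $\subseteq$ is a strong surpassing relation which is moreover a PO, and the resulting system is $\tT$-strongly negated and $\tT$-reversible, so it indeed lands in the stated target category. Nothing more is needed here.

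On morphisms, given a hypergroup morphism $f\colon\tT_1\to\tT_2$ in Jun's sense (so $f(\zero)=\zero$ and $f(a\boxplus b)\subseteq f(a)\boxplus f(b)$, from which $f(-a)=-f(a)$ follows by uniqueness of hypernegatives), the plan is to extend $f$ to $\Psi(f)\colon\widetilde{\tT_1}\to\widetilde{\tT_2}$ by sending a hypersum to the hypersum of images, $b_1\boxplus\cdots\boxplus b_t\mapsto f(b_1)\boxplus\cdots\boxplus f(b_t)$; in particular $\Psi(f)(\{a\})=\{f(a)\}$, which is what will yield faithfulness. I would then check the three conditions of Definition~\ref{morph} for the surpassing relation $\subseteq$: singletons map to singletons, so $\Psi(f)(\tT_1)\subseteq\tT_2$; the operators are respected, with addition carried over exactly, the negation map entrywise (using $f(-a)=-f(a)$), and multiplication similarly when $\tT$ is a hyperring, invoking \thmref{distres} to make $\widetilde{\tT}$ a semiring first; and monotonicity $S_1\subseteq S_2\Rightarrow\Psi(f)(S_1)\subseteq\Psi(f)(S_2)$, which I would trace back through the hyperaddition law to statements obtained by applying $f$ to inclusions such as $a\in a\boxplus b$. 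Functoriality $\Psi(\id)=\id$ and $\Psi(g\circ f)=\Psi(g)\circ\Psi(f)$ is then read off the defining formula, and faithfulness holds because $\Psi(f)$ restricted to the singletons recovers $f$.

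The hard part will be the well-definedness of $\Psi$ on morphisms. An element $S$ of $\widetilde{\tT_1}$ has many presentations $S=b_1\boxplus\cdots\boxplus b_t$ as a hypersum, and one must show that $f(b_1)\boxplus\cdots\boxplus f(b_t)$ depends only on $S$: viewing $\widetilde{\tT_1}$ as the quotient of the free commutative semigroup on the set $\tT_1$ by the congruence that identifies two formal sums when they evaluate under $\boxplus$ to the same subset of $\tT_1$, this is exactly the assertion that $f$ carries that congruence into the analogous one for $\tT_2$, and the only tools available are $f(a\boxplus b)\subseteq f(a)\boxplus f(b)$ and the set-level associativity of $\boxplus$. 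Should a direct argument not be available, the fallback I would adopt is to define $\Psi(f)(S)$ to be the smallest element of $\widetilde{\tT_2}$ containing the image subset $f(S)$ (this family is nonempty since $f(S)\subseteq f(b_1)\boxplus\cdots\boxplus f(b_t)$), a definition that is manifestly presentation-free, still gives $\{a\}\mapsto\{f(a)\}$, is monotone, and satisfies $\Psi(f)(S_1+S_2)\subseteq\Psi(f)(S_1)+\Psi(f)(S_2)$; the residual work then becomes showing such a least element always exists and that this assignment is functorial.

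Finally, the closing assertion of \thmref{hypsys1} is precisely \lemref{closed1}, once the terminology of Definition~\ref{bihyp1} is matched up: a hypergroup $\tT$ is $(-)$-closed exactly when $(\widetilde{\tT},\tT,(-),\subseteq)$ is meta-tangible, and $(-)$-bipotent exactly when the system is $(-)$-bipotent, because in each direction the defining conditions translate term by term --- $a\boxplus b\in\tT$ (resp.\ $a\boxplus b\in\{a,b\}$) for $a\ne -b$ corresponds to $\{a\}+\{b\}\in\tT$ (resp.\ $\{a\}+\{b\}\in\{\{a\},\{b\}\}$) in $\widetilde{\tT}$. So no new work is needed beyond quoting that lemma.
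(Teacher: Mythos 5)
Your citations match the paper's proof exactly on the parts the paper actually argues: the object-level fact that each $(\widetilde{\tT},\tT,(-),\subseteq)$ is a uniquely negated $\tT$-reversible system is Theorem~\ref{hypersys} (the paper also cites Proposition~\ref{hyper15}), and the closing equivalences are Lemma~\ref{closed1}. What you add --- and what the paper omits --- is any account of how $\Psi$ acts on morphisms and why it is faithful. The paper's proof says nothing about this, resting on the unargued sentence in \S\ref{hyp77} that hypergroup morphisms ``turn out to be'' $\preceq$-morphisms.

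You correctly isolate the well-definedness of $\Psi(f)(b_1\boxplus\cdots\boxplus b_t):=f(b_1)\boxplus\cdots\boxplus f(b_t)$ as the real difficulty: the morphism hypothesis gives only the one-sided inclusion $f(a\boxplus b)\subseteq f(a)\boxplus f(b)$, which does not force two presentations of the same subset of $\tT_1$ to yield the same subset of $\tT_2$. Your fallback --- the least element of $\widetilde{\tT_2}$ containing $f(S)$ --- is also not obviously sound, since $\widetilde{\tT_2}$ is closed under hypersum but not under intersection, so such a least element may fail to exist; you concede as much. The third natural candidate, the direct image map $S\mapsto\{f(s):s\in S\}$, is presentation-free and makes Definition~\ref{morph}(i)--(iii) immediate for $\preceq\,=\,\subseteq$, but the hypothesis yields only $f(S)\subseteq f(b_1)\boxplus\cdots\boxplus f(b_t)$, not $f(S)\in\widetilde{\tT_2}$. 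So the morphism-level and faithfulness assertions of the theorem are left unestablished both in your write-up and in the paper's stated proof; you have accurately located the gap, and it is inherited from the source rather than introduced by you.
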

\begin{proof} The first assertion is by Proposition~\ref{hyper15} and Theorem~\ref{hypersys}, and
the second by Lemma~\ref{closed1}.
\end{proof}

In particular, closed hypergroups can be studied in terms of
\S\ref{surp22}.

\subsection{Valuations on systems}$ $

 Usually  valuations have been studied in terms of multiplication in
monoids, cf.~\cite{HV1,IKR1},  but addition plays the main role
here. Suppose $\tG$ is an ordered semigroup. We view it as a
max-plus algebra where $(-)$ is the identity map $1_\tG$, and build
its supertropical semiring system of Definition~\ref{sutropsys},
written here as $(\mathcal A _\tG, \tT_{\mathcal A _\tG},
1_{\mathcal A _\tG}, \preceq_\circ)$, where $\tT_{\mathcal A _\tG}$,
a disjoint copy of $\tG$, is the target of the valuation, and
$\mathcal A _\tG: = \tT_{\mathcal A _\tG} \cup \tGz $.

\begin{defn}\label{val1} A \textbf{valuation} of a cbimagma system  $(\mathcal A, \tT, (-), \preceq)$
is a $\preceq$-morphism  $$v:(\mathcal A, \tT, (-), \preceq)\to
(\mathcal A _\tG, \tT_{\mathcal A _\tG}, 1_{\mathcal A _\tG },
\preceq_\circ)$$ satisfying
\begin{equation}\label{V2} v(a_1a_2) = v(a_1) v(a_2) \ \forall a_i
\in \tT.\end{equation} (Recall that multiplication in $\tT_{\mathcal
A _\tG}$ is really addition.)
\end{defn}

 This resembles the ``modulus'' in
\cite{AGR}, except that $v$ normally sends elements to
$\tT_{\mathcal A _\tG}$ instead of $\tG$. Note that $v((-)a) =
v((-)\one)v(a) =  1_{\mathcal A _\tG} v(a) = v(a)$.

 \begin{MNote} We use the supertropical algebra rather than the max-plus
algebra, in order to take into account the uncertainty of
$v(a_1+a_2) $ in regard to $v(a_1)+  v(a_2)$.
 \end{MNote}

\begin{rem}\label{val5} $ $\begin{enumerate}\eroman
\item By definition, $v(a_1+a_2) \preceq_\circ v(a_1)+  v(a_2)$ for $a_i \in \tT.$ This is the opposite direction
from the custom for valuations, but we could just reverse our
definition of inequality.
    \item  Zelinsky~\cite{Ze}  also defines valuations over (classical) nonassociative algebras.

 \item Condition \eqref{V2} is not needed in our next result, modeled after
a well-known proof!
  \end{enumerate}
\end{rem}

\begin{prop}\label{val3} If $v:(\mathcal A, \tT, (-), \preceq)\to
(\mathcal A _\tG, \tT_{\mathcal A _\tG}, 1_{\mathcal A _\tG },
\preceq_\circ)$ is a $\preceq$-morphism and $v(a_1)\succeq_\circ
v(a_2)$ for $a_i \in \tT,$ then $v(a_1+a_2) = v(a_1).$
\end{prop}\begin{proof} We are given $v(a_1+a_2)\preceq_\circ v(a_1)+v(a_2) = v(a_1).$
 But $a_1  \preceq  a_1+a_2 (-)a_2$ and $v((-)a_2)= v(a_2),$ so
$v(a_1) \preceq_\circ v(a_1+a_2 (-)a_2) \preceq_\circ v(a_1+a_2 )  +
v((-)a_2)= v(a_1+a_2 )  + v(a_2),$ a contradiction unless $
v(a_1+a_2 )   = v(a_1).$
\end{proof}

\subsection{The transfer principle}\label{transf}$ $

\begin{defn}\label{precide} A \textbf{$\preceq_\circ$-identity} of a system is a
universal elementary sentence involving the surpassing relation
$\preceq$.
\end{defn}

Although not strictly identities in the sense of universal algebra
since they do not pass to homomorphic images,
$\preceq_\circ$-identities take the place of identities in the
systemic theory.

The transfer principle, whose roots are in \cite{RS}, is a method of
obtaining $\preceq_\circ$-identities,  introduced formally in
\cite{Ga} and made explicit in~\cite{AGG1}. This treatment
essentially is a reformulation of \cite[Corollary~4.18]{AGG1},
expressed through morphisms  in order to increase its applicability.
 It is based on a way of
passing from \semirings0 to rings, by means of the symmetrization
$\widehat{\Net  \{ x \}}$  of the  free \semiring0 $\Net  \{ x \}$,
with the switch negation map.   We start with an easy but
enlightening special case.


Take $\mathbf N=\mathbf N(\mathcal A)$ as in Definition~\ref{barn}.
Given $P = \sum _{\mathbf i} (a_{\mathbf i} (-) b_{\mathbf i})
x_{\mathbf i}\in  \mathbf N\{ x, (-)x  \}$, we define the
corresponding classical polynomial $\bar P = \sum _{\mathbf i}
(a_{\mathbf i} - b_{\mathbf i}) x_{\mathbf i}\in  \Net\{ x, (-)x
\}$.

 \begin{prop} \label{trans} Suppose $P = \sum
_{\mathbf i} (a_{\mathbf i} (-) b_{\mathbf i}) x_{\mathbf i}\in
\mathbf N\{ x, (-)x   \}$. If the free $\tT$-semiring $ \Net \{ x ;
 \Omega\}$ (under the usual operations of $\Net$) satisfies the identity $\bar P =
 0$,  then  $ P \succeq_\circ
\zero$ in $\mathbf N \{ x, (-)x  \}$.\end{prop}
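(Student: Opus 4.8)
\emph{Proof proposal.}
The plan is to realize $P$ as the image of a quasi-zero under an $\Omega$-homomorphism into $\mathbf N\{x,(-)x;\Omega\}$ --- this being the ``morphism'' incarnation of the transfer principle sketched at the start of this subsection --- and then invoke that homomorphisms carry quasi-zeros to quasi-zeros. The maps and identifications needed are these. By Lemma~\ref{hom00} the assignment $n\mapsto\mathbf n$ is a \semiring0 homomorphism $\varphi\colon\Net\to\mathbf N$, which extends by the universal property of free algebras (Construction~\ref{unifree}) to an $\Omega$-homomorphism $\varphi\colon\Net\{x;\Omega\}\to\mathbf N\{x;\Omega\}$ fixing each $x_i$. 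Symmetrizing, with the switch as negation map, $\varphi$ induces an $\Omega$-homomorphism $\widehat\varphi\colon\widehat{\Net\{x;\Omega\}}\to\widehat{\mathbf N\{x;\Omega\}}$, $(f_0,f_1)\mapsto(\varphi f_0,\varphi f_1)$ (Lemma~\ref{makehat1}); in particular $\widehat\varphi(g(-)g)=\widehat\varphi(g)(-)\widehat\varphi(g)$ for every $g$. Finally, as in Example~\ref{free1} together with the twist multiplication of Definition~\ref{twist0}, the free $\mathbf N$-\semiring0 with negation map $\mathbf N\{x,(-)x;\Omega\}$ is identified with $\widehat{\mathbf N\{x;\Omega\}}$ via $x_i\mapsto(x_i,\zero)$ and $(-)x_i\mapsto(\zero,x_i)$, under which $(f_0,f_1)$ corresponds to $f_0(-)f_1$.

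Now set $P_0=\sum_{\bold i}a_{\bold i}x_{\bold i}$ and $P_1=\sum_{\bold i}b_{\bold i}x_{\bold i}$ in $\Net\{x;\Omega\}$, and $\widetilde P=(P_0,P_1)\in\widehat{\Net\{x;\Omega\}}$. Using that $(-)$ is additive, that $\mathbf m\mu=m\mu$ by \eqref{eeq77}, and \eqref{neg}, the image of $\widetilde P$ under $\widehat\varphi$ (followed by the identification above) is $\sum_{\bold i}\mathbf a_{\bold i}x_{\bold i}\,(-)\sum_{\bold i}\mathbf b_{\bold i}x_{\bold i}=\sum_{\bold i}(a_{\bold i}(-)b_{\bold i})x_{\bold i}=P$. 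The hypothesis that $\Net\{x;\Omega\}$ satisfies the identical relation $\bar P=0$ says exactly that the ``positive part'' $P_0$ and the ``negative part'' $P_1$ coincide as elements of $\Net\{x;\Omega\}$; hence $\widetilde P=(P_0,P_0)=(P_0,\zero)(-)(P_0,\zero)$, which is a quasi-zero of $\widehat{\Net\{x;\Omega\}}$ (recall that there $\widehat{\mathcal A}^\circ=\{(f,f):f\in\mathcal A\}$). Applying $\widehat\varphi$ gives $P=c(-)c=c^\circ$ with $c=\widehat\varphi\bigl((P_0,\zero)\bigr)\in\mathbf N\{x,(-)x;\Omega\}$; thus $P=\zero+c^\circ$, i.e.\ $\zero\preceq_\circ P$, which is the asserted $P\succeq_\circ\zero$.

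The argument is essentially formal; the one place calling for care is the identification $\mathbf N\{x,(-)x;\Omega\}\cong\widehat{\mathbf N\{x;\Omega\}}$ and the compatibility of the operators of $\Omega$ with symmetrization --- this is the content of Lemma~\ref{makehat1}, valid since the operators in $\Omega$ are (multi)linearizable --- together with reading ``$\Net\{x;\Omega\}$ satisfies $\bar P=0$'' as the equality $P_0=P_1$ of the positive and negative parts of $P$ in $\Net\{x;\Omega\}$. Granting these, one can in fact bypass symmetrization: regrouping $P$ by monomials gives $P=\sum_\mu(\mathbf A_\mu(-)\mathbf B_\mu)\mu$, where $A_\mu,B_\mu\in\Net$ are the sums of the ``$+$'' and the ``$(-)$'' coefficients attached to the monomial $\mu$; the hypothesis forces $A_\mu=B_\mu$ for each $\mu$, so $P=\sum_\mu(\mathbf A_\mu\mu)^\circ=\bigl(\sum_\mu\mathbf A_\mu\mu\bigr)^\circ$ since $(-)$ is additive, whence $P\succeq_\circ\zero$.
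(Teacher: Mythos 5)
Your proof is correct, and you give two routes. The ``bypass'' at the end --- extracting $a_{\bold i}=b_{\bold i}$ from $\bar P=0$ and exhibiting $P=\bigl(\sum_{\bold i}a_{\bold i}x_{\bold i}\bigr)^\circ$ directly, using additivity of $(-)$ --- is, modulo notation, the paper's own two-line proof. Your primary ``main route,'' via the symmetrized homomorphism $\widehat\varphi$ and the identification $\mathbf N\{x,(-)x;\Omega\}\cong\widehat{\mathbf N\{x;\Omega\}}$, is more machinery than this proposition needs, but it is precisely the apparatus the paper develops separately in Lemmas~\ref{conf1} and~\ref{conf2} and then invokes for the stronger Theorem~\ref{trans2}: the morphism $\mathbb Z\{x;\Omega\}\to\mathbf Z\{x,(-)x;\Omega\}$, $x_i\mapsto(x_i,\zero)$, $(-)x_i\mapsto(\zero,x_i)$, is exactly your $\widehat\varphi$ after the identification. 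So your longer route anticipates the general transfer principle, where comparing coefficients alone no longer suffices, while your bypass matches the paper's minimal proof of this special case. Both rest, as you correctly flag, on reading ``$\Net\{x;\Omega\}$ satisfies $\bar P=0$'' as the coefficientwise equality $a_{\bold i}=b_{\bold i}$ for all $\bold i$, which holds because the monomials $x_{\bold i}$ form a free basis of the free \semiring0.
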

 \begin{proof} $\bar P = \sum _{\mathbf i}(a_{\mathbf i} - b_{\mathbf i}) x_{\mathbf
 i}.$ For this to be  0, we must have each $a_{\mathbf i} = b_{\mathbf
 i},$ so $P = \sum
_{\mathbf i} (a_{\mathbf i} (-) a_{\mathbf i}) x_{\mathbf i}\in
\mathbf N
\{ x, (-)x  \}^\circ$. 
 \end{proof}

  The same ideas
give the full transfer principle (strong form) of \cite{AGG1}. Let
$\mathbf Z$ be as in Example~\ref{twist2}. Extending
Remark~\ref{hom1} we have:

\begin{lem}\label{conf1}There is a $\preceq$-morphism $\varphi: \mathbb Z \to
\mathbf Z$ given by
 \begin{enumerate}\eroman
 \item $ n \mapsto (\mathbf n,\zero);$
 \item $-n \mapsto (\zero,\mathbf n);$
  \item $ 0 \mapsto (\zero,\zero).$
 \end{enumerate}
\end{lem}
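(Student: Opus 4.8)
The plan is to verify directly that $\varphi$ satisfies the three conditions of Definition~\ref{morph} for a $\preceq_\circ$-morphism of semiring systems, the operators being $+$, $\cdot$, the negation map, and the constants $\zero,\one$. First I would recall that $\mathbf Z$ denotes the semiring $\widehat{\mathbf N}$ of Example~\ref{twist2}, carrying the switch negation $(-)(\mathbf m,\mathbf n)=(\mathbf n,\mathbf m)$ and the twist product \eqref{twi1}, while $\Z$ has surpassing relation $=$. Then I would observe that $\varphi$ is well-defined, since every integer is exactly one of $0$, $n$ (with $n>0$), or $-n$ (with $n>0$); that $\varphi$ sends tangibles to tangibles, landing in $(\mathbf N\times\{\zero\})\cup(\{\zero\}\times\mathbf N)$, which gives Definition~\ref{morph}(i); and that Definition~\ref{morph}(iii) is vacuous because $a\preceq a'$ in $\Z$ forces $a=a'$.

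Next I would check that every operator except addition is preserved on the nose, so that those instances of Definition~\ref{morph}(ii) hold with equality. For negation, $\varphi((-)n)=(\zero,\mathbf n)=(-)(\mathbf n,\zero)=(-)\varphi(n)$, and symmetrically $\varphi((-)(-n))=(-)\varphi(-n)$. For the constants, $\varphi(\one)=(\one,\zero)=\one_{\mathbf Z}$ and $\varphi(\zero)=(\zero,\zero)=\zero_{\mathbf Z}$. For multiplication, \eqref{twi1} together with the identity $\mathbf{kl}=\mathbf k\mathbf l$ in $\mathbf N$ gives $(\mathbf m,\zero)(\mathbf n,\zero)=(\mathbf{mn},\zero)$, $(\mathbf m,\zero)(\zero,\mathbf n)=(\zero,\mathbf{mn})$, and $(\zero,\mathbf m)(\zero,\mathbf n)=(\mathbf{mn},\zero)$, and matching these against the signs shows $\varphi(ab)=\varphi(a)\varphi(b)$ in all four sign combinations. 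The conditions listed in Example~\ref{circmorph} then follow as special cases of what has been checked.

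The remaining, and only substantive, point is the additive relation $\varphi(a+b)\preceq_\circ\varphi(a)+\varphi(b)$ — substantive precisely because $\varphi$ is not a homomorphism in the universal-algebra sense, e.g.\ $\varphi(1)+\varphi(-1)=(\one,\one)\ne(\zero,\zero)=\varphi(0)$, which is exactly why the broader notion of morphism is needed here. By commutativity it suffices to treat $a=m,\,b=n$; $a=-m,\,b=-n$; and $a=m,\,b=-n$ with $m,n>0$. Using $\mathbf k+\mathbf l=\mathbf{k+l}$, the first two cases are equalities, $(\mathbf m,\zero)+(\mathbf n,\zero)=(\mathbf{m+n},\zero)$ and $(\zero,\mathbf m)+(\zero,\mathbf n)=(\zero,\mathbf{m+n})$. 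For the mixed case $\varphi(m)+\varphi(-n)=(\mathbf m,\mathbf n)$, and I would exhibit an explicit quasi-zero witnessing the surpassing: take $c=(\mathbf n,\zero)$ when $m\ge n$ and $c=(\mathbf m,\zero)$ when $m\le n$, so that $c^\circ=(\mathbf j,\mathbf j)\in\mathbf Z^\circ$ with $j=\min(m,n)$, and then in each of the subcases $m>n$, $m=n$, $m<n$ a one-line computation with $\mathbf k+\mathbf l=\mathbf{k+l}$ gives $\varphi(m-n)+c^\circ=(\mathbf m,\mathbf n)=\varphi(m)+\varphi(-n)$, hence $\varphi(m+(-n))\preceq_\circ\varphi(m)+\varphi(-n)$. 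The main obstacle is just keeping these three subcases straight so that the chosen $c^\circ$ genuinely transports $\varphi(m-n)$ to $(\mathbf m,\mathbf n)$; once that is in hand, all of Definition~\ref{morph} is satisfied and $\varphi$ is the asserted morphism.
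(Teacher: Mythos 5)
Your proof is correct and follows essentially the same route as the paper: equality on the pure-sign additive cases, and for the mixed case exhibiting $c^\circ=(\mathbf j,\mathbf j)$ with $j=\min(m,n)$ so that $\varphi(m-n)+c^\circ=(\mathbf m,\mathbf n)$. You are more explicit than the paper (which only writes out the $m\ge n$ subcase and omits the routine checks of negation, constants, and multiplication), but the underlying argument is identical.
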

 \begin{proof}  $\varphi (m) + \varphi (- n) = (\mathbf m,\mathbf
 n),$ which for $m\ge n$ is $(\mathbf n,\mathbf
 n)+ (\mathbf {m-n},\zero)\succeq \varphi (m - n),$ and the other
 verifications are analogous.
 \end{proof}

\begin{lem}\label{conf2}   The $\preceq$-morphism $\varphi: \mathbb Z \to \mathbf
Z$ of Lemma~\ref{conf1} extends to a $\preceq$-morphism  $$\varphi:
\mathbb Z \{ x   \} \to \mathbf Z \{ x, (-)x  \}$$ by $x_i \mapsto
(x_i,\zero)$ and $(-)x_i \mapsto
 (\zero,x_i)$.
\end{lem}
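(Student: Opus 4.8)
The plan is to define $\varphi$ on $\mathbb Z\{x;\Omega\}$ monomial by monomial and then verify the three conditions of Definition~\ref{morph}, reducing each to a computation already carried out in Lemma~\ref{conf1}. The universal property of the free algebra does not apply directly, since a $\preceq$-morphism need not be a homomorphism, so I would argue by hand. First I would fix a canonical form for elements of $\mathbb Z\{x;\Omega\}$: since the operators of $\Omega$ are multilinear (or reduce to multilinear ones by the linearization of \S\ref{multop}), every element is uniquely a $\mathbb Z$-linear combination $\sum_{\bold i}a_{\bold i}x_{\bold i}$ with $a_{\bold i}\in\mathbb Z$ and the $x_{\bold i}$ distinct ``pure'' monomials, i.e.\ $\Omega$-words in the indeterminates carrying no scalars; similarly, by the remark after Definition~\ref{freemod1}, a typical element of $\mathbf Z\{x,(-)x;\Omega\}$ has the form $\sum_{\bold i}(a_{\bold i}(-)b_{\bold i})x_{\bold i}$ with $a_{\bold i},b_{\bold i}\in\mathbf N$. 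I then set $\varphi(\sum_{\bold i}a_{\bold i}x_{\bold i})=\sum_{\bold i}\varphi(a_{\bold i})x_{\bold i}$, where $x_{\bold i}$ on the right denotes the monomial $x_{\bold i}$ evaluated at $x_j\mapsto(x_j,\zero)$, $\varphi(a_{\bold i})$ is as in Lemma~\ref{conf1}, and the product is taken in $\mathbf Z\{x,(-)x;\Omega\}$; concretely $\varphi(a_{\bold i})x_{\bold i}$ is $(\mathbf n x_{\bold i},\zero)$ when $a_{\bold i}=n\ge 0$ and $(\zero,\mathbf n x_{\bold i})$ when $a_{\bold i}=-n<0$. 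Since a pure monomial involves only the ``even'' substitutions $(x_j,\zero)$, the twist action of Definition~\ref{twist0} keeps its value in the even component, so $\varphi$ is well defined, and it sends $x_i\mapsto(x_i,\zero)$ and $(-)x_i\mapsto(\zero,x_i)$ as demanded.

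It then remains to verify the conditions of Definition~\ref{morph}. Condition (i) is immediate: the tangible elements of $\mathbb Z\{x;\Omega\}$ are the single monomials $a_{\bold i}x_{\bold i}$, which map to single monomials in $\mathbf Z\{x,(-)x;\Omega\}$, lying in $\widehat{\tT}=(\tT\times\{\zero\})\cup(\{\zero\}\times\tT)$. Condition (iii) is vacuous, since $\mathbb Z\{x;\Omega\}$ is classical, whence $(\mathbb Z\{x;\Omega\})^\circ=\{\zero\}$ and $\preceq_\circ$ on it is equality. For condition (ii), when $\omega$ is a multilinear operator of $\Omega$, the identity $\varphi(\omega(f_1,\dots,f_m))=\omega(\varphi(f_1),\dots,\varphi(f_m))$ holds in each coordinate of each output monomial, by multilinearity together with the multiplicativity $\varphi(mn)=\varphi(m)\varphi(n)$ of Lemma~\ref{conf1}; collecting even and odd parts then yields equality, hence a fortiori $\preceq'$. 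The one place where $\preceq'$ is genuinely needed is $\omega={+}$: for $f=\sum a_{\bold i}x_{\bold i}$ and $g=\sum b_{\bold i}x_{\bold i}$ we have $\varphi(f+g)=\sum\varphi(a_{\bold i}+b_{\bold i})x_{\bold i}$ and $\varphi(f)+\varphi(g)=\sum(\varphi(a_{\bold i})+\varphi(b_{\bold i}))x_{\bold i}$, and working monomial by monomial the claim reduces to $\varphi(a_{\bold i}+b_{\bold i})\preceq_\circ\varphi(a_{\bold i})+\varphi(b_{\bold i})$ in $\mathbf Z$ — which is exactly the content of Lemma~\ref{conf1}, equality holding when $a_{\bold i},b_{\bold i}$ have equal sign and a genuine surpassing $(\mathbf{m-n},\zero)\preceq_\circ(\mathbf m,\mathbf n)$ arising in the mixed-sign case.

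The main obstacle I anticipate is organizational rather than conceptual: making ``the free algebra in this signature'' precise enough to license the canonical form $\sum a_{\bold i}x_{\bold i}$ and its uniqueness, and checking that the possibly merely multilinearizable operators of $\Omega$ are compatible with $\varphi$. Both are handled by restricting attention to the multilinear operators of the signature (to which the remaining ones reduce by linearization, \S\ref{multop}) and applying Lemma~\ref{conf1} coordinate by coordinate, exactly as in the proof of that lemma; no idea beyond Lemma~\ref{conf1} and the monomial decomposition of a free algebra is required.
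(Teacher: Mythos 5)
Your proposal is correct and follows essentially the same route as the paper: define $\varphi$ monomial-by-monomial via the canonical form $\sum_{\bold i}a_{\bold i}x_{\bold i}$, and observe that the only nontrivial check is the additive $\preceq'$-condition, which reduces coordinate-by-coordinate to Lemma~\ref{conf1}. The paper compresses this into a single sentence pointing to Proposition~\ref{trans} (any presentation of $0$ lands in $\mathbf N\{x,(-)x;\Omega\}^\circ$, which is what makes $\varphi$ well defined modulo $\preceq_\circ$); your write-up makes explicit the same verification that the paper leaves implicit.
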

 \begin{proof} The same proof as in Proposition~\ref{trans}, since
 any presentation of $\zero$ must be sent to $\mathbf N \{ x, (-)x  \}^\circ.$
 \end{proof}

 \begin{thm} \label{trans2} Suppose $P= \sum
_{\mathbf i} a_{\mathbf i}  x_{\mathbf i}, \  Q=   \sum _{\mathbf i}
b_{\mathbf i} x_{\mathbf i}\in  \mathbf N \{ x, (-)x   \}$, where $
a_{\mathbf i}\ge b_{\mathbf i}$ for each $\mathbf i$. If the free
(classical) semiring $ \Net \{ x \}$  satisfies the identity $\bar P
= \bar Q $, then  $ P \succeq_\circ Q$ in $\mathbf N \{ x, (-)x \}$.
\end{thm}
 \begin{proof} $\bar P -\bar Q= \sum _{\mathbf i}(a_{\mathbf i} - b_{\mathbf i}) x_{\mathbf
 i}.$  Now applying Lemma~\ref{conf2} yields the assertion.
 \end{proof}

%
%
%

\begin{rem}\label{conf} Because of the ambiguity involved with $\mathbf  n$, it is misleading to deal
with identities over~$\Net$ whose coefficients  are not~$(\pm)
\one$.\end{rem}

The same ideas apply to arbitrary varieties, even nonassociative,
and can provide powerful intuition.

\section{Linear algebra over a  triple}\label{linalg1}$ $

Here we tackle the various notions of linear algebra over a system.
Only the foundation is presented here; deeper theorems and their
subtleties involved are given in \cite{AGR}.

\subsection{$\preceq_\circ$-identities for matrices}\label{matr2}$
$

Identities of $n \times n$ matrices can be translated (matching the
matrix entries) into $n^2$ identities in commuting indeterminates.
Using the transfer principle, we see that many identities of
matrices over rings translate to $\preceq_\circ$-identities  of
$\mathbb N_{\operatorname{max}}[\Lambda]$, and thus of semirings.
(These results   hold more generally over $\tT$-\semirings0.)

 \begin{lem}\label{prodform0} Suppose $A$ is
 a square matrix whose entries are all $\zero$ and $\pm \one$. If the
 determinant of $A$ (taken in~$\Z$) is $\zero$, then $A$ is $\circ$-singular in
 the sense of Definition~\ref{signeddet}. \end{lem}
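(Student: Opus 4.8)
The plan is to deduce this lemma directly from the transfer principle (Theorem~\ref{trans2}, or its companion Proposition~\ref{trans}) applied to the classical expansion of the determinant, together with Lemma~\ref{pair1}/Proposition~\ref{pair2} to handle the degenerate situations where the transferred identity does not immediately apply. Recall that the $(-)$-determinant $\absl{A}$ over $\mathbf N\{x,(-)x;\Omega\}$ decomposes into an even part $P$ and an odd part $Q$, namely $\absl{A} = P(-)Q$ where $P = \sum_{\pi \text{ even}}\prod_i a_{i,\pi(i)}$ and $Q = \sum_{\pi \text{ odd}}\prod_i a_{i,\pi(i)}$. The entries of $A$ are each $\zero$ or $(\pm)\one$, so after substituting them into $P$ and $Q$ we obtain elements of $\mathbf N$, and the classical determinant in $\Z$ is precisely $\bar P - \bar Q$ under the morphism $\varphi\colon \Z \to \mathbf Z$ of Lemma~\ref{conf1}. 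The hypothesis is that $\bar P = \bar Q$ in $\Z$ (both being the same nonnegative integer, since the classical determinant vanishes).

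First I would set up the substitution carefully: let $A = (\a_{i,j})$ with $\a_{i,j}\in\{\zero,\one,(-)\one\}$, and write $A = \Lambda|_{x_{i,j}\mapsto \a_{i,j}}$ where $\Lambda = (x_{i,j})$ is the generic $n\times n$ matrix over commuting indeterminates. Over $\Z$ the generic determinant is a polynomial $D(x) = \bar P(x) - \bar Q(x)$, and our hypothesis says $D(\a) = 0$, i.e.\ $\bar P(\a) = \bar Q(\a)$ as integers. Then I would invoke Theorem~\ref{trans2} (with the role of the indeterminates played by the $x_{i,j}$, and noting the coefficients of $P$ and $Q$ here are all $\one$, so the hypothesis $a_{\bold i}\ge b_{\bold i}$ is automatic, or rather the relevant comparison is on the evaluated monomials) to conclude that $P(\a)\succeq_\circ Q(\a)$, hence $\absl{A} = P(\a)(-)Q(\a) \in \mathcal A^\circ$, which is exactly the assertion that $A$ is singular in the sense of Definition~\ref{signeddet}.

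The main obstacle I anticipate is that the transfer principle as stated in Theorem~\ref{trans2} is a statement about \emph{formal} polynomial identities in $\mathbf N\{x;\Omega\}$, whereas here we have a single matrix $A$ with specific entries, not a formal identity valid for all values. So the cleanest route is not to apply Theorem~\ref{trans2} verbatim but to run its underlying argument (the morphism $\varphi$ of Lemmas~\ref{conf1} and~\ref{conf2}): since $\bar P(\a) = \bar Q(\a)$ in $\Z$, lifting back along $\varphi$ shows that $P(\a)$ and $Q(\a)$, while not necessarily equal in $\mathbf N$, differ by a presentation that forces $P(\a) (-) Q(\a)$ into $\mathbf N^\circ$ — more precisely, pairing the even and odd permutations contributing equal monomials (as in the proof of Lemma~\ref{pair1}), each cancelling pair contributes a quasi-zero. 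I would therefore structure the proof as: (1) reduce to comparing $P(\a)$ and $Q(\a)$; (2) observe that a bijection between the even and odd permutations matching equal products exists precisely because $\det A = 0$ over $\Z$ with $\pm\one$ entries (here one may need to note that over $\Z$ the vanishing of the determinant of a $\pm1,0$ matrix, combined with the fact that each $\prod_i \a_{i,\pi(i)} \in \{0,\pm 1\}$, means the positive and negative terms cancel in matched pairs — this is the one genuinely combinatorial point, and it may actually require care if cancellation is not term-by-term); (3) invoke the morphism argument of Lemma~\ref{conf2} to transfer this cancellation into $\mathcal A^\circ$. If step~(2) is problematic because integer cancellation need not be term-by-term, I would fall back on the slicker observation that $\absl{A}$ is itself obtained by applying $\varphi$-type reasoning to $\Lambda$ and using that $\bar{\absl{\Lambda}}$ evaluates to $0$ at $\a$, so $\absl{A} \succeq_\circ \zero$ directly by Proposition~\ref{trans} applied to the polynomial $P_{\absl{\Lambda}} = \absl{\Lambda}$ evaluated at $\a$; this avoids the combinatorial matching entirely and is likely the intended one-line proof.
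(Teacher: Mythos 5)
You have landed on the right idea — the paper's own proof is literally the one line ``Immediate from Proposition~\ref{trans}'' — but your proposal spends most of its energy worrying about an obstacle that isn't there, and it never cleanly closes the gap you (correctly) identify between a formal identity and a particular evaluation.

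The worry about ``term-by-term cancellation'' and a ``bijection between even and odd permutations matching equal products'' is misplaced. After substituting entries from $\{\zero,\one,(-)\one\}$, \emph{every} nonzero term $\prod_i a_{i,\pi(i)}$ of the expansion equals $\one$ or $(-)\one$. So the $(-)$-determinant collapses to a single expression of the form
\begin{equation*}
\absl{A} \;=\; m\one \,(-)\, n\one
\end{equation*}
where $m$ counts the terms contributing $\one$ and $n$ the terms contributing $(-)\one$ (this is just the specialization $P = (m(-)n)\one$ of the shape of $P$ in Proposition~\ref{trans} with a single monomial). The classical determinant in $\Z$ is exactly $m-n$, so the hypothesis gives $m=n$, whence $\absl{A} = m\one\,(-)\,m\one = (m\one)^\circ \in \mathcal A^\circ$. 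No permutation-matching combinatorics, no appeal to Lemma~\ref{pair1}, and no invocation of Theorem~\ref{trans2} (which would need an identical relation, not a pointwise one) is required. Your closing sentence correctly guesses this is the intended route, but presents it as a fallback contingent on uncertainty that the explicit computation above dispels. Strip out the middle of your argument, state the $m\one(-)n\one$ form explicitly, and the proof becomes the one-liner the paper has in mind.
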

\begin{proof} Immediate from Proposition~\ref{trans}. \end{proof}

  Although other results also are
consequences of the transfer principle, we indicate their easy
direct proofs.

\begin{defn}\label{adjo} Write $M_{i,j}$ for the $i,j$ minor, which
is the $(-)$-determinant of $(a_{k,\ell})_{k\ne i, \ell\ne j}$ of a
given matrix $A$. The \textbf{$(-)$-adjoint} matrix $\adj A$ is
$(M_{j,i})$. When $A$ is ambiguous we will write $a_{i,j}'$ for
$M_{i,j}$.
\end{defn}

We inductively write $(-)^k$ for $(-)((-)^{k-1})$, where $(-)^1$ is
$(-)$.

\begin{rem} $|A| = \sum _{j=1}^n (-)^{i+j} M_{i,j}a_{i,j},$
for any given $i$.
\end{rem}

\begin{prop}  $ \adj{B}\adj{A} \preceq_\circ \adj{AB}$ for $n \times n$ matrices $A$ and $B$.
 \end{prop}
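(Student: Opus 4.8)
The plan is to mimic the classical proof that $\operatorname{adj}(B)\operatorname{adj}(A) = \operatorname{adj}(AB)$ over a commutative ring, and then observe that the places where the classical argument uses cancellation or subtraction can be handled by $\preceq_\circ$. First I would recall the classical identity $A\,\operatorname{adj}(A) = |A| I = \operatorname{adj}(A)\,A$; in our setting, by the transfer principle (Proposition~\ref{trans} and Theorem~\ref{trans2}), each scalar entry of the matrix identity $A\,\operatorname{adj}(A) = |A|I$ over $\Z$ translates to a $\preceq_\circ$-relation over $\mathbf N$, hence over any $\tT$-semiring triple, so we obtain $A\,\adj{A} \preceq_\circ |A|I$ and likewise on the other side, and similarly for $B$. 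The key classical computation is
$$
\operatorname{adj}(B)\operatorname{adj}(A)\,(AB) = \operatorname{adj}(B)\,\big(\operatorname{adj}(A)\,A\big)\,B = \operatorname{adj}(B)\,|A|\,B = |A|\,\operatorname{adj}(B)\,B = |A|\,|B|\,I = |AB|\,I,
$$
using $|AB| = |A||B|$. I would run this chain with $=$ replaced by $\preceq_\circ$ at each step where a matrix identity over $\Z$ is invoked, legitimate because $\preceq_\circ$ is compatible with addition and with multiplication by fixed matrices (the surpassing-relation axioms of Definition~\ref{precedeq07}, extended componentwise to matrices as in \S\ref{matr1}), giving
$$
\adj{B}\adj{A}\,(AB) \preceq_\circ |AB|\, I.
$$

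The remaining step is to pass from $\adj{B}\adj{A}\,(AB)\preceq_\circ |AB|\,I$ to $\adj{B}\adj{A}\preceq_\circ \adj{AB}$. Classically one multiplies on the right by $\operatorname{adj}(AB)$ and divides by $|AB|$; here division is unavailable, so instead I would right-multiply by $\adj{AB}$ and use $(AB)\adj{AB}\preceq_\circ |AB|I$ together with $\adj{AB}\cdot(\text{something}) $ relations, to obtain
$$
\adj{B}\adj{A}\,|AB| \preceq_\circ \big(\adj{B}\adj{A}\,(AB)\big)\adj{AB} \preceq_\circ |AB|\,\adj{AB},
$$
and then I would want to "cancel" the scalar $|AB|$. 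This cancellation is exactly the main obstacle: $|AB|$ need not be invertible or cancellative in $\mathcal A$. The cleaner route, which I would actually take, is to avoid cancellation entirely by working with the cofactor expansion directly: expand the $(i,j)$ entry of $\adj{B}\adj{A}$ as a sum of products of $(n-1)\times(n-1)$ minors, expand the $(i,j)$ entry of $\adj{AB}$ similarly via the Cauchy–Binet-type expansion of the minors of $AB$, and check that the monomials appearing on the left side all occur among those on the right side — i.e., the left entry is obtained from the right by adding further terms, which is precisely the meaning of $\preceq_\circ$ (since the "extra" terms, which cancel in the ring computation, live in $\mathcal A^\circ$ once we symmetrize). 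Concretely: over $\mathbf N\{x,(-)x\}$ the classical ring identity $\operatorname{adj}(B)\operatorname{adj}(A) = \operatorname{adj}(AB)$ holds, so by Theorem~\ref{trans2} applied entrywise (after checking the coefficientwise domination $a_{\bold i}\ge b_{\bold i}$, which holds because both sides are sums of products of $\pm$ monomials and the left side's support contains the right side's), we get $\adj{B}\adj{A}\succeq_\circ \adj{AB}$ — and this is exactly the claimed inequality with the roles of $\preceq$ and $\succeq$ as in the statement.

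The hard part will be verifying the hypothesis of Theorem~\ref{trans2} entrywise, namely that after writing each entry of $\adj{B}\adj{A}$ and of $\adj{AB}$ as polynomials in the entries of $A$ and $B$ with integer coefficients, the coefficients of the former dominate those of the latter monomial-by-monomial; equivalently, that passing from $\adj{AB}$ to $\adj{B}\adj{A}$ only introduces cancelling pairs (which become quasi-zeros) and never destroys an existing monomial. Granting the classical polynomial identity, this domination is a statement purely about the combinatorics of the Cauchy–Binet expansion of minors of a product, and I would verify it by expanding both sides in the free commutative semiring and comparing supports; once that is in hand, Theorem~\ref{trans2} delivers $\adj{B}\adj{A}\succeq_\circ\adj{AB}$ in $\mathbf N\{x,(-)x;\Omega\}$, and specializing the indeterminates to the entries of $A,B$ in $\mathcal A$ gives the result for $n\times n$ matrices over any $\tT$-semiring triple with a negation map.
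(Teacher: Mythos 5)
Your final route — expanding the entries of $\adj{B}\adj{A}$ and $\adj{AB}$ as polynomials in the matrix entries, observing that over $\Z$ the two coincide, and then invoking the transfer principle (Theorem~\ref{trans2}) entrywise — is a legitimate way to organize this proof, and it is in substance the same combinatorial fact the paper proves by hand (the ``extra'' monomials in one side occur in cancelling pairs and hence contribute quasi-zeros). The paper simply checks the pair-cancellation directly instead of packaging it through the transfer principle; you were also right to abandon the multiplicative chain, since cancelling the scalar $\absl{AB}$ is not available.

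However, you have the direction of the coefficient domination reversed, and this propagates into a wrong conclusion. When you expand the $(i,j)$-entry of $\adj{AB}$ by the Leibniz formula and then expand each entry of $AB$ as $\sum_\ell a_{p\ell}b_{\ell q}$, you get \emph{more} monomials than appear in the $(i,j)$-entry of $\adj{B}\adj{A}$: the surviving ``Cauchy--Binet'' terms with distinct intermediate indices are exactly the monomials of $\adj{B}\adj{A}$, and the monomials with a repeated intermediate index come in $\pm$ pairs. So the support of $\adj{AB}$ contains that of $\adj{B}\adj{A}$, not the other way around as you wrote. Setting $P$ to the entry of $\adj{AB}$ and $Q$ to the entry of $\adj{B}\adj{A}$ in Theorem~\ref{trans2} gives $P \succeq_\circ Q$, i.e.\ $\adj{B}\adj{A} \preceq_\circ \adj{AB}$, which is the statement. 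As written, your argument concludes $\adj{B}\adj{A} \succeq_\circ \adj{AB}$, which is the reverse of what is claimed and of what is true; the closing remark that ``this is exactly the claimed inequality with the roles of $\preceq$ and $\succeq$ as in the statement'' does not fix this, since $\succeq_\circ$ is not $\preceq_\circ$. Swap the roles of $P$ and $Q$ (equivalently, note it is $\adj{AB}$ whose raw expansion picks up the cancelling pairs) and the argument goes through.
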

\begin{proof}
 Write $AB = (c_{i,j}),$ we see that $\adj{AB} =
(c'_{j,i})$ whereas the $(i,j)$-entry of $\adj{B}\adj{A}$ is $\sum
_{k=1}^n b'_{k,i}a'_{j,k}$. Since $a'_{j,k}b'_{k,i}$ appears in
$c'_{j,i}$, we need only check that the other terms in $c'_{j,i}$
occur in matching pairs with opposite signs. This kind of
computation goes back to \cite{St}. These are sums of products of
the form
$$d_{k_1,\pi(k_1)}d_{ k_2,\pi(k_2)}\cdots
d_{k_{n-1},\pi(k_{n-1})},$$ where $k_t \ne j,$ $\pi(k_{t}) \ne i$
for all $1 \le t\le n-1,$ and
$$d_{k_t,\pi(k_t)} =  a_{k_t,\ell}b_{\ell,\pi(k_t)}, \qquad\text{for suitable }\ell.$$
If the $\ell$ do not repeat, we have a term from $\adj{B}\adj{A}$.
But if some $\ell$ repeats, i.e., if we have $$d_{k_t,\pi(k_t)} =
a_{k_t,\ell}b_{\ell,\pi(k_t)}, \qquad d_{k_u,\pi(k_u)} =
a_{k_u,\ell}b_{\ell, \pi(k_u)},$$ then in computing $c'_{j,i}$ we
also have a contribution from another permutation $\sigma$ where
$\sigma(k_t) = \pi(k_u)$ and $\sigma(k_u) = \pi(k_t)$ (and   $\sig =
\pi$ on all other indices), whereby we get
$$a_{k_t,\ell}b_{\ell,\sigma(k_t)}a_{k_u,\ell}b_{\ell, \sigma(k_u)} =
a_{k_t,\ell}b_{\ell,\pi(k_u)}a_{k_u,\ell}b_{\ell, \pi(k_t)} =
a_{k_t,\ell}b_{\ell,\pi(k_t)}a_{k_u,\ell}b_{\ell, \pi(k_u)},$$ as
desired. \end{proof}

 \begin{lem}\label{prodform2} $\absl A I \preceq_\circ A\adj A
 $ over any  $\tT$-semiring triple $\mathcal A$.
 \end{lem}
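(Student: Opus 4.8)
The plan is to compute the $(i,l)$-entry of $A\adj{A}$ directly, recognize it via a Laplace (cofactor) expansion as the $(-)$-determinant of an auxiliary matrix, and then invoke Lemma~\ref{pair1} on the off-diagonal entries. Fix indices $i,l$ and write $(A\adj{A})_{i,l}=\sum_{k=1}^n a_{i,k}(\adj{A})_{k,l}$. By Definition~\ref{adjo}, $(\adj{A})_{k,l}$ is, up to the sign $(-)^{k+l}$, the $(-)$-determinant of the minor $M_{l,k}$ of $A$ obtained by deleting row $l$ and column $k$. Expanding each such $(-)$-determinant as its signed sum over permutations and pulling the scalar $a_{i,k}$ through, one recognizes $\sum_k (-)^{k+l} a_{i,k}\absl{M_{l,k}}$ as exactly the cofactor expansion along row $l$ of the $(-)$-determinant of the matrix $A^{(i\to l)}$ obtained from $A$ by overwriting row $l$ with a copy of row $i$. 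This is an exact identity, being merely a regrouping of the permutation sum defining $\absl{A^{(i\to l)}}$; the linearity of the $(-)$-determinant in a fixed row (the lemma following Definition~\ref{signeddet}) licenses the regrouping. Hence $(A\adj{A})_{i,l}=\absl{A^{(i\to l)}}$.

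Next I would split on whether $i=l$. When $i=l$ we have $A^{(i\to i)}=A$, so $(A\adj{A})_{i,i}=\absl{A}$, which trivially satisfies $\absl{A}\preceq_\circ \absl{A}$. When $i\neq l$, the matrix $A^{(i\to l)}$ has rows $i$ and $l$ equal, so by Lemma~\ref{pair1} it is singular, i.e.\ $(A\adj{A})_{i,l}=\absl{A^{(i\to l)}}\in\mathcal A^\circ$; since $(\absl{A}\,I)_{i,l}=\zero$ and $\zero+c=c$ for any $c\in\mathcal A^\circ$, this gives $(\absl{A}\,I)_{i,l}\preceq_\circ (A\adj{A})_{i,l}$. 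Assembling these entrywise comparisons and using that $\preceq_\circ$ on matrices is defined componentwise yields $\absl{A}\,I\preceq_\circ A\adj{A}$.

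The main obstacle is that a $\tT$-presemiring need not be distributive, whereas the Laplace-regrouping step tacitly pulls the scalar $a_{i,k}$ through a sum. I would handle this by first writing each entry $a_{i,k}$ as a sum of tangibles (using that $\tT_\zero$ generates $(\mathcal A,+)$) and expanding multilinearly, thereby reducing to the case of tangible factors, where distributivity is available by Definition~\ref{presem1} (multiplication by $\tT$ distributes over addition); alternatively, one passes to the associated $\tT$-semiring of Theorem~\ref{distres}, in which $M_n(\mathcal A)$ and the $(-)$-determinant are formed with the genuinely distributive (re)defined multiplication. A clean shortcut to the entire argument is the transfer principle: the classical identity $A\operatorname{adj}(A)=\det(A)I$ over $\Z$ is, entrywise, $n^2$ polynomial identities in the matrix entries with coefficients $\pm\one$ — the off-diagonal ones of the shape ``$\bar P=0$'' and the diagonal ones of the shape ``$\bar P=\bar Q$'' — so Proposition~\ref{trans} and Theorem~\ref{trans2} deliver all $n^2$ relations $(\absl{A}\,I)_{i,l}\preceq_\circ(A\adj{A})_{i,l}$ at once.
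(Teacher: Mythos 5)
Your proof is correct, and at its core it takes the same route as the paper's: analyze the entries of $A\adj A$, observe that the diagonal entries equal $\absl A$ by the cofactor expansion, and show the off-diagonal entries land in $\mathcal A^\circ$ because they are $(-)$-determinants whose permutation sum splits into pairs with opposite signs. The paper simply cites \cite[Lemma~2]{RS} for that pairing, whereas you make it self-contained by recognizing the $(i,l)$-entry as $\absl{A^{(i\to l)}}$ and invoking Lemma~\ref{pair1} on the repeated row — a clean unpacking of the same combinatorics. Your precaution about distributivity in a bare $\tT$-presemiring is well placed (and resolved the same way the paper resolves it earlier in \S\ref{matr1}, via Theorem~\ref{distres}); and your transfer-principle shortcut is a legitimate alternative, in fact the very method the paper uses for the adjacent Lemma~\ref{prodform0}. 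One small caution: be careful with the sign conventions of Definition~\ref{adjo} and its following remark when you write $(\adj A)_{k,l}$ in terms of minors — the paper's indexing there is easy to misread, and your ``up to the sign $(-)^{k+l}$'' phrasing glosses over exactly where that sign lives; the identity $(A\adj A)_{i,l}=\absl{A^{(i\to l)}}$ is only exact with the cofactor signs accounted for consistently.
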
\begin{proof} The diagonal terms are equal, by definition,
 and the extra terms off the diagonal are  known to match, by rewording
 \cite[Lemma~2]{RS}.
\end{proof}
 \begin{thm}\label{prodform3} $ \absl {A } \absl {B} \preceq _\circ \absl
 {AB},
  $ for any matrices $A,B\in M_n(\mathcal A)$. 
 \end{thm}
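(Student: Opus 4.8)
The plan is to run the classical expansion proof of $\det(A)\det(B)=\det(AB)$ over $\mathcal A$, and to notice that the single place where the classical argument invokes additive cancellation now instead yields a quasi-zero. Since the entire computation is just distributing products over sums (and commuting scalars inside monomials, as is standard for determinants), I may assume $\mathcal A$ is a commutative semiring triple; if $\mathcal A$ is merely a $\tT$-presemiring triple I first replace its multiplication by the distributive one of Theorem~\ref{distres}, which leaves $\mathcal A^\circ$, and hence the relation $\preceq_\circ$, unchanged.

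First I would expand. Writing $AB=(c_{i,j})$ with $c_{i,j}=\sum_k a_{i,k}b_{k,j}$, distributivity gives
\[
\absl{AB}=\sum_{\pi\in S_n}(-)^{\pi}\prod_{i=1}^n\Bigl(\sum_{k=1}^n a_{i,k}b_{k,\pi(i)}\Bigr)=\sum_{\pi\in S_n}\ \sum_{(k_1,\dots,k_n)}(-)^{\pi}\prod_{i=1}^n a_{i,k_i}b_{k_i,\pi(i)},
\]
the inner sum ranging over all maps $i\mapsto k_i$ of $\{1,\dots,n\}$ to itself. I then split this into the terms whose index vector $(k_1,\dots,k_n)$ is a permutation and the rest. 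For the permutation terms, putting $k_i=\sigma(i)$ and $\tau=\pi\sigma^{-1}$, commutativity together with \eqref{neg} (to pull negations through products) gives $(-)^{\pi}\prod_i a_{i,\sigma(i)}b_{\sigma(i),\pi(i)}=(-)^{\sigma}(-)^{\tau}\bigl(\prod_i a_{i,\sigma(i)}\bigr)\bigl(\prod_j b_{j,\tau(j)}\bigr)$, and summing over $\sigma,\tau$ and comparing with the distributive expansion of $\absl A\,\absl B$ shows that these terms sum to exactly $\absl A\,\absl B$.

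The crux is then to show that the remaining terms of $\absl{AB}$ — those whose index vector $(k_1,\dots,k_n)$ is not injective — lie in $\mathcal A^\circ$. For each such index vector let $(s,t)$, $s<t$, be the lexicographically least pair with $k_s=k_t$; this depends only on the index vector. The map $\pi\mapsto\pi\circ(s\,t)$ is a fixed-point-free involution of $S_n$ that fixes the index vector; since $k_s=k_t$ it leaves the monomial $m=\prod_i a_{i,k_i}b_{k_i,\pi(i)}$ unchanged (here commutativity of the scalars enters), while $(-)^{\pi\circ(s\,t)}=(-)\circ(-)^{\pi}$ because $(s\,t)$ reverses parity and $(-)$ has order $\le2$. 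Hence each orbit $\{\pi,\pi\circ(s\,t)\}$ contributes $y(-)y=y^{\circ}$ for $y=(-)^{\pi}m$, which equals $m^{\circ}$. Summing over all non-permutation terms, and using that $\mathcal A^{\circ}$ is an additive submodule (Lemma~\ref{circ0}) with $\sum_\alpha m_\alpha^{\circ}=(\sum_\alpha m_\alpha)^{\circ}$ since $(-)$ is an additive homomorphism, the whole remainder equals $c^{\circ}$ with $c=\sum_\alpha m_\alpha$. Therefore $\absl{AB}=\absl A\,\absl B+c^{\circ}$, which is precisely $\absl A\,\absl B\preceq_\circ\absl{AB}$.

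I expect the bookkeeping of the last step to be the only genuine obstacle: one must fix a canonical transposition so that the pairing is a bona fide fixed-point-free involution, and check that applying it genuinely leaves both the index vector and the monomial unchanged. As an alternative route, the statement is an instance of the transfer principle: apply Theorem~\ref{trans2} to the classical polynomial identity $\det(AB)=\det(A)\det(B)$ over $\mathbb Z$ in the entries of $A$ and $B$, which automatically yields $\absl{AB}\succeq_\circ\absl A\,\absl B$; the direct argument above has the merit of exhibiting the witnessing quasi-zero $c^{\circ}$ explicitly.
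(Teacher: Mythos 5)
Your argument is correct, and it is essentially the same as the paper's: the paper simply cites Reutenauer--Straubing for the observation that each monomial of $\absl{AB}$ coming from a non-injective index vector occurs in pairs with opposite signs, and you have reconstructed exactly that pairing. The canonical transposition $(s\,t)$ chosen from the least repeated pair of indices gives a fixed-point-free involution on $S_n$ that preserves the index vector and the monomial (commutativity is used here to commute the two swapped $b$-factors), while flipping the sign $(-)^\pi$; hence each orbit contributes a quasi-zero, and by Lemma~\ref{circ0} and additivity of $(-)$ the whole non-permutation part is a single quasi-zero $c^\circ$, giving $\absl{AB}=\absl A\,\absl B + c^\circ$. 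You also correctly observe the transfer-principle shortcut via Theorem~\ref{trans2}, which the paper explicitly mentions as an alternative in the paragraph preceding the statement.

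One small caveat: your opening hedge about replacing a presemiring multiplication by the one from Theorem~\ref{distres} is not quite the right framing here, since changing the multiplication changes the values of the products $\prod_i a_{i,\pi(i)}$ and hence the determinant itself; what you really need is just the standing assumption (implicit in the paper's appeal to \cite{RS} and stated in the preceding discussion that ``more complicated results involving products of matrices require $\mathcal A$ to be a semiring'') that $\mathcal A$ is a commutative $\tT$-semiring triple. With that hypothesis in place your combinatorial argument is exactly the one the paper intends.
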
\begin{proof} By the semiring
argument in \cite{RS}, matching terms in the products, since any
term in $\det (AB)$ not in $\det (A )\det( B)$ occurs twice, with
opposing signs. This follows from
 \cite[p.352, end of proof of (a)]{RS}.
\end{proof}

Note however that the determinant is not a morphism since it
reverses $\preceq$.  In this sense the determinant could be
considered an ``$\preceq$-antimorphism.'' We do get equality when $
\absl
 {AB}\in \tT.$

\subsection{Ranks of matrices}$ $


Our next task is to compare different notions of rank of matrices,
in terms of its row vectors and its column vectors. A vector $v\in
 \mathcal A^{(n)}$ is called \textbf{tangible} if each of its entries
is in $\tTz.$ A matrix is \textbf{tangible}  if each of its rows is
tangible. We only consider tangible matrices $A$, mostly for
metatangible systems. This is a small step back from
\cite{IzhakianRowen2009TropicalRank}, but the tangible case is a
compelling one, since one can recover the full supertropical result
from it.

  Modules over triples are studied in subsequent papers, such as
\cite{JMR,JMR1,GaR,AGR}. Here, to stay on track, we just consider
the ``free module'' $M := (\mathcal A^{(n)}, \tT_M, (-))$ where
$\tT_M$ is the set of vectors with a single nonzero component, which
is in $\tT$, and $(-)$ is defined componentwise. In this case, $M$
is also a $\tT$-module, with multiplication defined componentwise.

\begin{defn}\label{dep} Suppose that $(M:=\mathcal A^{(n)}, \tT_\mathcal M,(-), \preceq)$ is a  systemic module over $\mathcal A$.

 A  set  $S
\subseteq \mathcal M$ is $\circ$-\textbf{dependent} if there are
$v_1, \dots, v_m \in S$ and   $ \a _j \in \tT$ such that
$$ \sum _{j=1}^{m} \a_j v_j \in  \mathcal M^\circ.$$ Otherwise $S$
  is $\circ$-\textbf{independent}.

 An element $v\in \mathcal M$ is $\circ$-\textbf{dependent} on a
$\circ$-independent set $S \subseteq \mathcal M$, written $v
\in_{\operatorname{dep}} S,$ if  $S \cup \{v \}$ is
$\circ$-dependent.

 \end{defn}

\begin{definition}\label{rdef0} The \textbf{(surpassing) row rank} of a matrix $A$ is the maximal
number of  $\circ$-independent rows of $A$. The \textbf{column rank}
of the matrix $A$ is the maximal number of  $\circ$-independent
columns of $A$.

The \textbf{submatrix rank} of the matrix $A$ is the maximal $k$
such that $A$ has a nonsingular $k\times k$ submatrix.
\end{definition}

%
%
%

Let us consider   the following assertions:

\begin{enumerate}\eroman  \item \textbf{Condition A1}:
The submatrix rank is less than or equal to the row rank and the
column rank.

  \item \textbf{Condition A2}:  The three definitions of rank are equal for any
tangible matrix, when $\tT $ is a multiplicative group.
\end{enumerate}

%
%
%

\begin{MNote} We are about to see that Condition A1  holds for
cancelative $(-)$-bipotent triples. Condition A2 is considerably
more delicate, usually
 also  requiring
   height 2 even in the $n \times n$ case; a
thorough treatment is given in \cite{AGR}, linked to
\cite{AGG2}.
 \end{MNote}

An easy induction argument enables one to reduce Condition A1 to
proving that a square matrix $A$ is $\circ$-singular  if its rows
are
 $\circ$-dependent, which is our next result.

\begin{lem}\label{neucla17} Suppose $(\mathcal A, \tT, (-))$ is a
semiring triple, with $a_i\in \tT$ and $ b_i\in \mathcal A$.
 If $a_1 + a_2 = a_1$ and $b_1 + b_2 =
b_1$, then $a_1 b_1+ a_2 b_2 = a_1 b_1.$
 \end{lem}

\begin{proof}
 $a_1 b_1+ a_2 b_2 = (a_1+ a_2) b_1 +  a_2 b_2  = a_1 b_1+ a_2
(b_1 +b_2) = a_1 b_1+ a_2 b_1= (a_1+ a_2) b_1 = a_1 b_1.$
\end{proof}

\begin{thm}\label{A1part} Suppose that  $( \mathcal A, \tT, (-) )$ is  a cancelative
$(-)$-bipotent triple over a monoid $\tT$.
 If the rows $v_1,\dots, v_n$ of a tangible $n \times n$ matrix $A$ over a
cancelative $(-)$-bipotent triple
 are $\circ$-dependent, then $|A| \in \mathcal A ^\circ.$
\end{thm}\begin{proof} Localizing, we may assume that $(\tT,\cdot)$ is a group.
We start  by mimicking the proof in \cite{IR1}. Suppose $ \sum
_{j=1}^{m} \a_j v_j \in  \mathcal M^\circ$ for $\a_j \in \tT$.
Replacing $v_j$ by $\a_j v_j$, we may assume that the sum of the
rows is a vector in $({\mathcal A ^{(n)}})^\circ. $ Write  $A
=(a_{i,j}).$

Recall the uniform presentation of Definition~\ref{unif1}. We say
that an element $c$ \textbf{dominates $c'$} if either $c_\tT =
(-)c'_\tT $ or $c_\tT +c'_\tT  = c_\tT $. (Then if $c = \sum b_i$
for $b_i \in \mathcal A$, some $b_i$ must dominate $c$.)

 From
Equation~\eqref{eq:tropicalDetsign}, there are  $k_1, \dots, k_n $
such that $|A|$ is dominated by $ a_{k_1,1} \cdots a_{k_n,n}\in
\tT.$ Interchanging rows we may assume that $k_i = i$ for each $i,$
i.e., $|A| $ is dominated by $a_{1,1} \cdots a_{n,n}.$

We say that  $a_{i,j}$ is (column) \textbf{critical} if $a_{i,j}$
dominates all other entries in the $j$ column of $A$. Any critical
diagonal entry of $A$  must be matched by another critical entry in
the same column; i.e., $a_{i,j} = (-)a_{i',j}$. Now starting with
some critical nondiagonal entry, say $a_{i_2,a_1}$ with $i_1 = 1,$
we take $a_{i_3,i_2}$ with $i_3 \ne i_2,$ and continue in this way
until we return to $i_1$. This gives us
$$a_{i_1,
i_k} a_{i_{k},i_k-1} \cdots a_{i_2,i_1}$$ where each entry is
critical. Defining the permutation $\pi$ by $\pi (i_1) = i_k, \dots,
\pi(i_2) = i_1$ and the identity elsewhere, it is clear that $a_{i_1,
i_1}\cdots a_{i_{k-1},i_{k-1}} a_{i_k,i_k}$ is dominated by $a_{i_1,
i_k} a_{i_{k},i_k-1} \cdots a_{i_2,i_1}$, and thus $\Det{A}$ is also
attained by $\pi.$ This means that for each of these $i$,
  $a_{i,j} = (-) a_{j,j},$ and dividing the $j$-th column through by
  $a_{j,j}$ enables us to assume that every dominant entry is $(\pm)
  \one$. But now the non-dominant entries do not play a role either
  in the hypothesis  (that the rows are $\circ$-dependent)  or the conclusion   (that $|A| \in \mathcal A ^\circ)$, so we replace them by
  $\zero,$
  and assume that every entry of $A$ is in $\{ \zero, -\one, \one
  \}.$ In order to cancel $\one$ on the diagonal, we may assume that
  each row has some non-diagonal entry $-\one,$ so picking these
  entries when building $\pi$ of the previous paragraph, we have all
  $a_{i_\ell, i_{\ell +1}} = (-)\one$. Renumbering the rows and columns
  (since interchanging both the $i$ and $j$ rows and columns does not
  affect the hypothesis or conclusion) we may assume that $i_\ell =
  \ell$ for all $\ell \le k.$  Since $\left|\begin{matrix} \one & (-)\one
  & \zero
  &  \dots & \zero \\ \zero
  &  \one & (-)\one
  &  \dots & \zero \\ \vdots & \vdots &\ddots &\vdots \\  (-)\one
  & \zero &  \zero
  &  \dots &\one
   \end{matrix}\right|= \one (-) \one = e,$
   we see that $e$ is a summand of $|A|$.

   When $(-)$ is of the second kind, idempotence implies that $|A| =
   e,$ and we are done. Thus we may assume that $(-)$ is of the first
   kind,
   and every entry of $A$ is in $\{ \zero,  \one
  \}.$

First assume that in each column the number of entries that are
$\one$ is even. Then reading $A$ as a classical matrix, the sum of
each column is 0 $\pmod 2,$ so its classical determinant is 0 $\pmod
2,$ i.e., $|A| \in \mathcal A^\circ.$ Thus one may assume that in
some column the number of entries that are $\one$ is odd, and take
the column with the smallest such number of entries $k_0$. Then
$\mathbf {k_0} \in \mathcal A^\circ;$ since $\mathbf 2 \in \mathcal
A^\circ,$ we have $\mathbf {k} \in \mathcal A^\circ$ for all  $k\ge
k_0.$ But the argument of the fourth paragraph gives us at least
$k_0$ summands (each equal to~$\one$) of~$|A|$, so $|A|$ must be
some $\mathbf {k} \in \mathcal A^\circ$.
\end{proof}

 \begin{cor}\label{Lieinv}  Condition A1 holds over a
cancelative $(-)$-bipotent triple.
 \end{cor}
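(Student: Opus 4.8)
The plan is to derive Corollary~\ref{Lieinv} directly from Theorem~\ref{A1proved} by the standard reduction that turns a statement about square matrices into the inequality \emph{submatrix rank $\le$ row rank} (and, dually, $\le$ column rank). First I would let $A$ be a tangible $m\times n$ matrix over the cancellative meta-tangible $\tT$-triple, and let $k$ be the submatrix rank, so there is a $k\times k$ submatrix $B$ of $A$, say on rows $i_1<\dots<i_k$ and columns $j_1<\dots<j_k$, with $|B|\notin\mathcal A^\circ$. The claim to prove is that these $k$ rows $v_{i_1},\dots,v_{i_k}$ of $A$ are $\tT$-independent; an identical argument applied to $A^t$ then handles the column rank, using the transpose involution of Example~\ref{matrixinv}(i), under which $|A^t|=|A|$ so the submatrix rank is unchanged.

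Next I would argue by contradiction: suppose the rows $v_{i_1},\dots,v_{i_k}$ are $\tT$-dependent, i.e. there exist $\a_1,\dots,\a_k\in\tT$ with $\sum_s \a_s v_{i_s}\in M^\circ$. Restricting each row vector to the columns $j_1,\dots,j_k$ gives the rows $w_1,\dots,w_k$ of $B$, and since restriction to a subset of coordinates is an $\mathcal A$-module map (it is a projection of $\mathcal A^{(n)}$ onto $\mathcal A^{(k)}$), it carries $M^\circ$ into $(\mathcal A^{(k)})^\circ$ by Lemma~\ref{circ0}. Hence $\sum_s \a_s w_s\in(\mathcal A^{(k)})^\circ$, so the rows of the tangible $k\times k$ matrix $B$ are $\tT$-dependent. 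Theorem~\ref{A1proved} then gives $|B|\in\mathcal A^\circ$, contradicting the choice of $B$. Therefore $k$ is at most the row rank, and by the transpose argument at most the column rank, which is exactly Condition A1.

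The steps here are all routine once Theorem~\ref{A1proved} is in hand; the only point needing a little care is the reduction itself, namely checking that coordinate restriction really is a module homomorphism so that $M^\circ$ is preserved — but this is immediate from the componentwise definitions of the module structure and of $(-)$ on $\mathcal A^{(n)}$ (Definition~\ref{freem}, Lemma~\ref{circ0}), and from the fact that tangibility of a vector (all entries in $\tTz$) is inherited by any sub-vector. So in effect there is no serious obstacle left: the entire weight of Condition A1 has already been absorbed into Theorem~\ref{A1proved}, and Corollary~\ref{Lieinv} is just the packaging of that theorem together with the dual (transpose) case.
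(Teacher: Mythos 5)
Your proof is correct and is essentially the paper's argument, just phrased as the contrapositive: the paper's one-line proof says that if $m$ is the (row) rank then any $m+1$ rows are dependent, hence every $(m{+}1)\times(m{+}1)$ minor formed from them is singular by Theorem~\ref{A1proved} (restriction of the dependence to the chosen columns), so submatrix rank $\le m$. You spelled out the coordinate-restriction step and handled the column rank explicitly via the transpose, both of which are left implicit in the paper, but there is no difference in substance.
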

\begin{proof} If $m$ is the row rank of a matrix $A$, then any $m+1$ rows are
$\circ$-dependent, implying that every $(m +1) \times (m+1)$ minor
is $\circ$-singular, so the submatrix rank is at most $m$. The same
argument holds for the column rank.\end{proof}

\cite[Theorem~4.18]{AGG2} provides  a stronger conclusion, called
``Cramer's rule,'' under  extra hypotheses.
  Cramer's rule   is obtained rather
generally in \cite[Theorem~4.8]{AGR} when assuming a Noetherian-type
property.

\section{More applications}\label{Exsmore}$ $

We focus on three major examples --- tropicalization, exterior
semialgebras and Lie
semialgebras. 

\subsection{Tropicalization of Puiseux series}\label{tropi}$ $

Tropicalization, perhaps the main tool in tropical mathematics, has
been studied in various contexts. Originally ``standard''
tropicalization was a map  to the max-plus algebra, obtained by
applying logarithms to real or complex varieties, as exposed in
\cite{IMS}.

Most of the recent research on tropicalization
   has focused on  the  Puiseux series valuation.
For any cbimagma $K$, one can define the set $ K\{\{t\}\}$ of
Puiseux series on the variable $t$, which is the set of formal
series of the form $f = \sum_{j = \ell}^{\infty} c_j t^{j/N}$ where
$N \in \mathbb{N}$, $\ell \in \mathbb{Z}$, and $c_j \in K$. (One
could use any subgroup of $(\R,+)$ for the exponents in the series,
but the definition becomes more complicated without enhancing the
theory, since $(\Q, +)$ is model complete in the elementary theory
of ordered groups.) Then we have the \textbf{Puiseux valuation}
$\operatorname{val} : K\{\{t\}\} \setminus \{0\} \rightarrow
\Q_{\operatorname{max}} \subset \R_{\operatorname{max}}$ defined by
\begin{equation}
\operatorname{val}(f) = -\min_{c_j \neq 0}\{j/N\},
\end{equation}
and formally $\operatorname{val}(0) = \zero\ (=-\infty).$ (We put in
the negative to pass from minimum to maximum.) We also call
$\operatorname{val}$ \textbf{tropicalization}, now viewed as a
$\preceq$-morphism, cf. Definition~\ref{val1}.

 Customarily one takes $K$ to
be the field of complex numbers, so that $K\{\{t\}\}$ is an
algebraically closed field, even though we find it convenient to consider
tropicalization over other semirings, especially $\Net_0$. 
We would want
$\operatorname{val}$ to be a $\preceq$-morphism. But this does not
quite work since $\Q_{\operatorname{max}}$ does not have negatives,
so we consider several related versions of tropicalization which are
more amenable to algebraic methods.

\begin{example}\label{tropex1} $ $
The Puiseux series valuation comes in
   various forms:
 \begin{enumerate}\eroman

   \item  The usual Puiseux series valuation $\operatorname{val}$ to the max-plus algebra $\Q_{\max}$ from
the Puiseux series algebra $K\{\{t\}\}$  on the variable $t$, again
as exposed in \cite{IMS}, and to be reviewed presently.

 \item  The Puiseux series valuation from
the Puiseux series algebra $K\{\{t\}\}$
  to the supertropical semiring,~ \cite{IzhakianRowen2007SuperTropical}.

    \item  The Puiseux series valuation  from
 $K\{\{t\}\}$ to the ``exploded algebra,''  \cite{Par} or, more generally, to
Example~\ref{nontang}(viii).

  \item  The Puiseux series valuation from
 $K\{\{t\}\}$ to the  semiring layered by $\Net$,
  \cite{IKR0}.
%

\end{enumerate}
\end{example}

Each version has its specific motivation. Supertropical algebra is
compatible with the value group of the Puiseux series valuation. If
one wants to take the residue field into account one would pass to
the exploded algebra. Even so, this only utilizes the lowest term of
the Puiseux series. When this is lost through cancelation, one would
need to dig deeper into the Puiseux series, taking an infinite
direct sum $\oplus _{i \in \Net} \mathcal G_i$ of target systems.
This would be the tropicalization of the associated valuation ring,
but has not yet  been utilized in the
literature. 
   These various approaches are unified  in terms of $\preceq$-morphisms of systems of the relevant categories.



\begin{prop} \label{circmorph3}   In each of the cases
taken from Example~\ref{tropex1} (in the same order),
$\operatorname{val}(f)$ provides a $\preceq$-morphism $v$ from the
Puiseux series  $ K\{\{t\}\}$ (viewed as a classical system) to
$\tT$ in one of the metatangible systems that we have described
earlier:

 \begin{enumerate}\eroman
   \item $v(f) = -\operatorname{val}(f)$, taking values in the max-plus algebra,
   cf.~Remark~\ref{maxpsys}.

  \item $v(f) = -\operatorname{val}(f)$, taking  values in the supertropical algebra.

\item $v(f) = (1,-\operatorname{val}(f))$, taking values in the layered algebra.

 \item For a Puiseux series
$f = \sum_{k = \ell}^{\infty} c_k t^{k/N}$ with $c_{\ell}\ne 0,$
take $v(f) = ( c_{\ell} ,-\ell/N)$ in  the ELT~algebra.

This can be viewed more generally, in analogy to viewing
tropicalization as passing to the target of a valuation $v: R \to
\mathbb Q$,  where $R$ is a ring. Suppose that the valuation $v$ has
a \textbf{uniformizer} $\pi$ such that $v(\pi) = 1.$ Thus, for any
element $r$, taking $v(r) = m/n$ we have $v(\pi ^{-m/n}r) = 0.$ Now
one can also take into account the residue ring $R/P$ where $P$ is
the valuation ideal and, letting $L = R/P,$ consider the map $R \to
L\times \tG$,  the ELT-algebra, given by $a
\mapsto (\pi^{-m/n}r, w(r))$.

%
%
%
%

\end{enumerate}
 \end{prop}
 \begin{proof} In each case, we   verify that $v(-f) = (-)v(f),$
 and $v$ preserves addition (with respect to  $\preceq$).
 \end{proof}

%

This process  indicates a way of  tropicalizing standard algebraic
definitions in this setting, where one expects that some case of
Proposition~\ref{circmorph3} to be used, according to the
context.
%

%
%

\subsection{Exterior (Grassmann) semialgebras }\label{grT}$ $

 Paralleling the classical case, for free modules, the
tensor semialgebra yields a construction of the Grassmann
semialgebra whose base is the union of even elements and odd
elements. The definition given in \cite{GG} (which goes on to treat
the Pl\"{u}cker equations) is a semialgebra $\mathfrak G $ generated
by a free module $V$ with a base $\{ e_i : i \in I \}$, together
with a product $\mathfrak G \times \mathfrak G \to \mathfrak G$
satisfying $e_i^2 = \zero$  for each $i \in I.$ These could be
constructed by means of the tensor semialgebra, modulo the relations
$x_i^2 = \zero$. As noted in \cite{GG}, such a definition relies
  on the presentation in terms of the base, since in general
$v^2 \ne \zero$ for $v \in V.$ This would mean that a
sub-semialgebra of a Grassmann algebra need not be Grassmann, such
as the semialgebra generated  by $e_1$ and $e_1+e_2$.

\begin{defn}\label{Grass1} A (faithful) \textbf{Grassmann}, or \textbf{exterior}, semialgebra, over
a $C$-module $V$ with a negation map,
 is a semialgebra $\mathfrak G$ generated by $V$,
together with  a negation map extending $(-)$ and a product
$\mathfrak G \times \mathfrak G \to \mathfrak G$ satisfying
\begin{enumerate}\eroman
  \item
    \begin{equation}\label{G2} v_1v_2 = (-)
 v_2v_1 \qquad  \text{for} \qquad v_i \in V,
 \end{equation}
    \item     \begin{equation} (-)(v_1\cdots v_t) =
    ((-)v_1)v_2\cdots v_t.
     \end{equation}
 \end{enumerate}

Thus $v_{\pi(1)}\cdots   v_{\pi(t)} = (-)^{\sgn(\pi)} v_1\cdots
v_t.$

\end{defn}  When $V$ is the free module, this definition, which is independent of the base,
covers the one in
\cite{GG}, in which $(-)$ is the identity map.   Definition~\ref{Grass1} maps onto \cite{GG},
where $e_i (-) e_i$ is
sent to 0.

The appropriate triple is $(\mathfrak G, \tT, (-))$, where $\tT = \{
v_1\cdots v_t: \, v_i \in \tT, \,  t \in \Net \},$ the submonoid
generated by $V$.

\begin{lem} $ v^2 \in \{ b \in \mathfrak G: b = (-)b \}.$
\end{lem}
\begin{proof} $(\sum \a_i e_i)^2 = \sum \a _i^2 e_i^2 + \sum _{i<j} \a_i \a_j (e_i e_j + e_j
e_i) $, and note that $e_i^2 = (-) e_i^2.$
\end{proof}

(This set $\mathcal\{ b \in \mathfrak G: b = (-)b\}$ is just
$\mathfrak G$ when $\frac 12 \in \tT.$)

\begin{lem} If $\mathcal G = \mathcal
A^{(\overline I )},$ then it is enough to check that $$  e_i e_j =
(-) e_j e_i, \qquad \forall i,j \in I,$$ extended via
distributivity.
\end{lem}
\begin{proof}
 $(\sum \a_i e_i)(\sum \beta_j e_j) = \sum \a _i \beta_j e_i e_j = (-)\sum \a _i \beta_j e_j e_i = (\sum \beta_j e_j)(\sum \a_i e_i),$
 yielding~(i).   (ii)  is also by
linearity.\end{proof}

\begin{lem}\label{prod1}  $v_1 v_2 = (-) v_2 v_1 $ is central in $ \mathfrak G$, for all $v_1,v_2 \in V.$
\end{lem}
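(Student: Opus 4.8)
The plan is to show that $v_1v_2$ commutes with every generator $w \in V$, since $V$ generates $\mathcal A$ as a semialgebra and centrality is preserved under products and (via the identical relations defining a semialgebra, Equation~\eqref{semial}) under scalar multiplication from $C$; distributivity then extends centrality from generators to all of $\mathcal A$. So the crux is the identity $(v_1v_2)w = w(v_1v_2)$ for $v_1,v_2,w \in V$.

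First I would record the consequence of Definition~\ref{Grass1} that was already noted in the excerpt, namely $v_{\pi(1)}\cdots v_{\pi(t)} = (-)^{\sgn(\pi)}\,v_1\cdots v_t$ for $v_i \in V$; this follows by iterating the anticommutation relation \eqref{G2} and the compatibility of the negation map with products, part~(iii) of Definition~\ref{Grass1}. Applying this with $t=3$ to the triple $(v_1,v_2,w)$ and the cyclic permutation $\pi = (1\,2\,3)$, which is even, gives $w v_1 v_2 = v_1 v_2 w$. More explicitly one can just compute directly: $w(v_1v_2) = (wv_1)v_2 = (-)(v_1w)v_2 = (-)v_1(wv_2) = (-)v_1((-)v_2w) = (-)(-)(v_1v_2)w = (v_1v_2)w$, using \eqref{G2} twice, associativity, and $(-)(-)x = x$ together with part~(iii). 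Here I am using associativity of multiplication in $\mathcal A$; if the semialgebra is only assumed to be a semialgebra in the sense of Definition~\ref{freeal} (which does include the multiplicative associativity built into being a semiring), this is fine, but I would flag that for the nonassociative variants one would need to restrict attention to the associative Grassmann semialgebra, consistent with the surrounding discussion.

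Next I would upgrade "commutes with each $w \in V$" to "central in $\mathcal A$". Since $\mathcal A$ is generated by $V$ as a semialgebra, every element of $\mathcal A$ is a $C$-linear combination of products $w_1\cdots w_m$ with $w_i \in V$. If $v_1v_2$ commutes with each $w_i$, then it commutes with the product $w_1\cdots w_m$ by associativity and an easy induction; and it commutes with $c(w_1\cdots w_m)$ for $c \in C$ by \eqref{semial}. Finally, since multiplication distributes over addition on $\mathcal A$ (the distributive law is part of being a semiring), $v_1v_2$ commutes with any finite sum of such terms, hence with every element of $\mathcal A$. The equality $v_1v_2 = (-)v_2v_1$ is just a restatement of \eqref{G2}, so nothing further is needed there.

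The main obstacle I anticipate is bookkeeping with the negation map rather than anything deep: one must be careful that the identities \eqref{G2} and part~(iii) of Definition~\ref{Grass1} are only asserted for elements of $V$ (or for products of such), so each rewriting step must keep the "inner" factors inside $V$ before invoking anticommutation, and one must invoke part~(iii) precisely to move the $(-)$ past the remaining factors. A secondary point to be careful about is whether $\mathcal A$ is assumed unital and associative; I would state the lemma under the standing assumption in this subsection (associative semialgebra generated by $V$), and the proof above goes through verbatim. No step should require more than the three defining relations plus the semiring/semialgebra axioms already in force.
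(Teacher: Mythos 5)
Your proof is correct and takes essentially the same approach as the paper's: the paper also establishes centrality by the two-step anticommutation $v_1v_2v_3 = (-)v_1v_3v_2 = v_3v_1v_2$ and then asserts centrality, whereas you spell out the same computation from the other side and explicitly fill in the (routine) extension from generators in $V$ to all of $\mathcal A$ via products, scalars, and sums.
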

\begin{proof}
  $ v_1 v_2v_3= (-) v_1 v_3 v_2 =
v_3v_1 v_2,$ implying that $v_1 v_2$ is central.
\end{proof}

\begin{defn}\label{biG} Given a Grassmann semialgebra $\mathfrak G$ over a module
$V$  with a negation map $(-)$, we define $\tT^+$ to be the set of
all even products of elements of $V$, $\mathfrak G_0$ to be the
submodule of $\mathfrak G$ generated by $\tT^+$, $\tT^-$ to be the
set of all odd products of elements of $V$, and $\mathfrak G_1$ to
be the submodule of $\mathfrak G$ generated by $\tT^-$.
\end{defn}

\begin{lem}\label{biG1} $\mathfrak G = \mathfrak G_0 + \mathfrak G_1.$
$\mathfrak G_0$ is in the center  of $\mathfrak G,$ and $\mathfrak
G_1 = \mathfrak G_0V .$ When $V$ is a free module with
 negation,
 then $\mathfrak G = \mathfrak G_0 \oplus \mathfrak G_1$ is a super-semialgebra.
\end{lem}
\begin{proof} The first assertion is an immediate induction based on
Lemma~\ref{prod1}.  For the free module with negation, we match
components.
\end{proof}

\begin{example}\label{varex1}   When $V$ is a free  $\mathcal A$-module with negation, with base $\{ e_i, (-)e_i : i
\in I\}$,  the tensor semialgebra $T(V)$ becomes a Grassmann
semialgebra $\mathfrak G$ when we impose the extra relations that
$e_je_i =((-)e_i)e_j = e_i((-)e_j)$ for all $i,j\in I$. $\tT$ is the
set of simple tensors in which one does not have  both $e_i$ and $
(-)e_i$. Every
 term of even degree in the $e_i$ is central, so $\mathfrak G$ satisfies the $\succeq_\circ$-surpassing identity
$[x_1,[x_2,x_3]] \succeq \zero$.\end{example}

\begin{lem}\label{prod2}   In Example~\ref{varex1}, any nonzero element of $\mathfrak G$ is  of the form
$\sum (\pm) a_{\mathbf i} e_{i_1}\cdots e_{i_k},$ summed over $i_1 <
\dots < i_k$ with $a_{\mathbf i} \in \mathcal A,$ plus a quasi-zero.
\end{lem}
\begin{proof} Take an element $(\pm) \a e_{i_1}\cdots e_{i_k}. $ Rearrange the $e_i$ appearing in each summand, since any
time an~$e_i$ repeats, the product is in $\mathfrak G ^\circ.$
\end{proof}

  \begin{MNote}    Ironically, even when $V= \mathcal A ^{(n)}$ does not have a negation map, we can still
define a negation map on the ideal $T(V)_{\ge 2}$ of   $T(V)$
comprised of tensors of length $\ge 2$, given by $(-)v_i v_j = v_j
v_i.$ Letting $\tT_{T(V)_{\ge 2}}$ be the simple tensors of length
$\ge 2,$ we have the triple $(T(V)_{\ge 2}, \tT_{T(V)_{\ge
2}},(-)),$ and then the theory of triples is applicable! Various
 Grassmann semialgebras are studied in detail in~\cite{GaR}, which
reformulates identities for classical
  Grassmann semialgebras.\end{MNote}

\section{Nonassociative semialgebras with a negation
map}\label{Liealg}$ $

 In this section we bring in nonassociative semialgebras, especially Lie semialgebras,
since Lie algebras are so important in classical representation
theory. Now we might want $\tT$ to have Lie multiplication instead of being a monoid.
Whereas the Jacobi identity on a Lie algebra
$L$ is equivalent to the adjoint representation $\ad: L \to \adL$
being a Lie homomorphism, the correspondence in tropical algebra is
more delicate.

\subsection{Super-semialgebras}\label{superal}$ $

As in the classical case, one can ``superize,''  to  make a theory super.

%

\begin{defn}\label{supergr1}
  The \textbf{Grassmann envelope} of a super-semialgebra $\mathcal A = \mathcal A_0 \oplus \mathcal
  A_1$ is the sub-semialgebra $ (\mathcal A_0 \otimes \mathfrak G_0) \oplus ( \mathcal A_1 \otimes
  \mathfrak G_1)$ of $ \mathcal A \otimes \mathfrak G,$ with $\mathfrak G$ as in
Lemma~\ref{biG1}. 

Suppose $\mathcal V$ is a variety. A \textbf{super-$\mathcal V$}
semialgebra  is a   super-semialgebra $\mathcal A $ whose Grassmann
envelope is in $\mathcal V$.

For example,
 $\mathcal A$   is \textbf{super-commutative} if
$a_i a_j = (-)^{ij} a_j a_i$ whenever $a_i \in \mathcal A_i,$ $a_j
\in \mathcal A_j $ for $i,j \in \{ 0, 1 \}.$ \end{defn}
 \medskip

 The Grassmann envelope of a Grassmann super-semialgebra $\mathfrak G$ itself is $ (\mathfrak G_0 \otimes
\mathfrak G_0) \oplus ( \mathfrak G_1 \otimes
  \mathfrak G_1)$ which is commutative,
so $\mathfrak G$ is
  super-commutative. Conceptually, Definition~\ref{supergr1} is just an
  elegant form of book-keeping, where in evaluating multilinear operations on a
  superalgebra we put in $(-)^k$, where $k$ is the number of odd occurrences of the entries.

\subsection{Lie $\preceq$-semialgebras and Lie $\preceq$-super-semialgebras, and their triples}\label{Lie1}$ $

\begin{defn}\label{anticom} A bimagma $\mathcal A$ with negation map  is \textbf{$(-)$-anticommutative} if
it satisfies the following conditions for all $b,b' \in \mathcal A$:
\begin{enumerate}\eroman
\item   $b^2\in  \mathcal A^\circ;$
 \item
 $ b' b  = (-)( b  b' ) =  b( (-) b')  =  ((-)b)  b' .$
\end{enumerate}
\end{defn}

(In classical mathematics, (ii) is derived from (i) by
multilinearization, but this argument requires a genuine negative,
and so is inapplicable here.)

%

%

\begin{defn} For a  bimagma $\mathcal A$ with negation map,
 given $b \in  \mathcal A$, we  define $\ad_b \in \Hom (\mathcal A,\mathcal A)$ by $\ad_b (b') =
 bb',$ and
$\ad \mathcal A = \{ \ad_b: b \in \mathcal A\}.$
\end{defn}

  $\ad \mathcal A$ is a semialgebra and $\tT$-submodule of
 $\Hom (\mathcal A,\mathcal A)$,     with
 a natural negation map $(-)\ad_b = \ad_{(-)b} $.

%

\begin{defn} \label{Lies} A \textbf{Lie $\preceq$-semialgebra  with a negation map (over a semifield $F$)} is an $F$-module $L$ with a negation map $(-)$, endowed with
$(-)$-anticommutative multiplication $ L \times L \to  L,$ written
$(b,b') \mapsto [b b']$,
 called a  \textbf{Lie bracket} (in view of the standard notation $[ab]$ for Lie multiplication),
satisfying \begin{equation} \label{precedeq1}  \ad_{[b b']} \preceq[
\ad _b, \ad _b'] \quad \forall b,b' \in L,
\end{equation}
where the right bracket is the Lie commutator. (Note that we do not
require a negation map on $F$.)
\end{defn}

\begin{lem}\label{ideal12}  $ [[ b b']v] \preceq [ b[bv]](-)[ b'[bv]]$ for all
$b,b',v\in L$.
\end{lem}
\begin{proof} $ [[ b b']v]= \ad _{[ b b']}(v)\preceq \ad _b(\ad _{b'}(v))(-) \ad _{b'}(\ad _{b}(v)) =
[ b[bv]](-)[ b'[bv]].$
\end{proof}

 Lemma~\ref{ideal12} can be viewed as the $\preceq$-surpassing version of
  Jacobi's identity.

  \begin{prop}\label{Liesy} If $L$ is a Lie semialgebra  with a negation map, then there is a Lie
 $\preceq$-morphism
  $\ad: L \to \adL$, given by $b \mapsto \ad_b.$ (In fact   $\ad$ preserves addition.)
\end{prop}
\begin{proof} By Lemma~\ref{ideal12}.
\end{proof}

%
Definition~\ref{Lies} is a bit stronger than the analog of Blachar's
  definition~\cite{Bl}.

  \begin{prop}\label{Pois1} Any associative
  semialgebra $R$ with negation map becomes a Lie $\preceq_\circ$-semialgebra
  under the Lie product $[bb'] = [b,b']$.
\end{prop}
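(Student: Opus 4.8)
Any associative \semiring0 $R$ with negation map becomes a Lie semialgebra under the Lie product $[ab] = [a,b] := ab(-)ba$.

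The plan is to verify the three ingredients of Definition~\ref{Lies}: that $(R,[\,\cdot\,,\cdot\,])$ carries a negation map, that the bracket is anticommutative in the sense of Definition~\ref{anticom}, and that the surpassing Jacobi condition $\ad_{[a,b]} \preceq_\circ [\ad_a,\ad_b]$ holds for all $a,b\in R$. The negation map on $R$ is the one already present on the \semiring0; one checks $(-)[a,b] = (-)(ab(-)ba) = ((-)a)b(-)ba = ((-)a)b + ((-)(-)(ba))$, and since $(-)$ is a semiring negation map (Equation~\eqref{neg} holds for all elements in a semiring triple) this equals $((-)a)b + a((-)b)$ on one side and $[(-)a,b] = [a,(-)b]$ on the other, so $(-)$ is compatible with the Lie multiplication as required by the definition of a negation map on a semialgebra.

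For anticommutativity: condition (i), $[a,a]\in R^\circ$, is immediate since $[a,a] = aa(-)aa = (aa)^\circ \in R^\circ$. Condition (ii), $[b,a] = (-)[a,b] = [((-)a),b] = [a,(-)b]$, follows by the same computation as in the preceding paragraph, using only associativity and Equation~\eqref{neg}. So far nothing beyond formal manipulation in a semiring triple is needed.

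The substantive step is \eqref{precedeq1}. First I would record that $\ad$ preserves addition and that each $\ad_a$ is an element of $\End_F R$ — this is routine from distributivity. The heart of the matter is the inequality $\ad_{[a,b]}(v) \preceq_\circ [\ad_a,\ad_b](v)$ for all $v$, i.e.
$$[[a,b],v] \preceq_\circ [a,[b,v]] (-) [b,[a,v]].$$
This is exactly the content of Lemma~\ref{ideal12} (the $\preceq_\circ$-surpassing Jacobi identity), but to keep the proof self-contained one expands both sides using associativity: the right-hand side is a sum of the six classical Jacobi terms together with extra quasi-zero contributions of the form $(cab(-)cab)$-type expressions, exactly as in the Leibniz $\preceq$-identities of Lemma~\ref{Pois0}. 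Since the classical Jacobi identity for an associative ring is an identical relation over $\Z$ with coefficients $\pm\one$, one can alternatively invoke the transfer principle (Theorem~\ref{trans2}, or its nonassociative form Theorem~\ref{trans1} applied to the associative signature) to conclude directly that $[[a,b],v] \succeq_\circ$ nothing larger than the bracketed double-commutator, the difference lying in $R^\circ$ by Example~\ref{surp0}(iii). Either route gives \eqref{precedeq1}.

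The main obstacle is purely bookkeeping: tracking which of the products $abv, avb, bav, \dots$ cancel exactly and which survive only up to a quasi-zero, since without a genuine negative one cannot simply ``subtract'' and must instead exhibit the leftover term explicitly as an element of $R^\circ$ in order to apply Definition~\ref{precedeq07}(i). Once that expansion is carried out once (it is the same computation underlying Lemma~\ref{Pois0}), the verification of \eqref{precedeq1} — and hence the proposition — follows.
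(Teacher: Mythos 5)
Your proof is correct and follows the same two routes the paper's one-line proof indicates: the Leibniz-type expansion underlying Lemma~\ref{Pois0}, and the transfer principle applied to the classical Jacobi identity. One small caution: Lemma~\ref{ideal12} is stated \emph{for} a Lie semialgebra and so already presupposes the surpassing Jacobi condition~\eqref{precedeq1} you are trying to verify, so citing it as ``exactly the content'' of what you need is circular; you recover from this by noting that the expansion can be carried out directly (or by the transfer principle), but it would be cleaner to drop the reference to Lemma~\ref{ideal12} entirely and lean only on Lemma~\ref{Pois0}.
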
 \begin{proof} Using Lemma~\ref{Pois0}, one
verifies for any $v\in L$ that
 \begin{equation}\begin{aligned}\ad_{[b b']}(v) = [[ b b']v] & =   [[ b
,b'],v] = [bb'(-)b'b,v] = [bb',v] + [v,b'b]\\ & \preceq_\circ
b[b',v] + [b,v]b' + [v,b']b + b'[v,b] =(\ad _b \ad _{b'}(-) \ad
_{b'} \ad _b)v = [ \ad _b, \ad _b']v, \end{aligned}\end{equation}
for all $b,b' \in L.$
\end{proof}

We call this Lie $\preceq_\circ$-semialgebra $R^{(-)}.$


 \begin{cor}\label{Lieinv7}  For any associative
  semialgebra $(R,*)$ with involution and negation map,  $(R,*)^-$  is a Lie
 $\preceq_\circ$-sub-semialgebra of $R^{(-)}.$
 \end{cor}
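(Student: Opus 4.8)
Corollary~\ref{Lieinv7} asserts that for an associative $\semiring0$ $(R,*)$ with involution and negation map, the antisymmetric elements $(R,*)^-$ form a Lie sub-semialgebra of $R^-$.

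The plan is to verify that the antisymmetric set $(R,*)^-=\{a^*(-)a:a\in R\}$ of Lemma~\ref{symantisym} is closed under every operation of the Lie semialgebra $R^-$ of Proposition~\ref{Pois1} --- addition, the negation map, scalar multiplication, and the Lie bracket $[a,b]=ab(-)ba$ --- since a subset of a Lie semialgebra closed under all these operations inherits anticommutativity (Definition~\ref{anticom}) and the Jacobi-type surpassing relation $[[xy]v]\preceq[x[yv]](-)[y[xv]]$ of Lemma~\ref{ideal12} by restriction, hence is itself a Lie semialgebra. Closure under addition, negation and scalar multiplication is immediate and would be recorded first: $(a^*(-)a)+(b^*(-)b)=(a+b)^*(-)(a+b)$, $(-)(a^*(-)a)=(a^*)^*(-)a^*$, and $\alpha(a^*(-)a)=(\alpha a)^*(-)(\alpha a)$, the last using the standing hypothesis $(\alpha a)^*=\alpha a^*$; also $(c(-)d)^*=c^*(-)d^*$ from additivity of $(*)$ together with $((-)d)^*=(-)d^*$. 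Recall too from Lemma~\ref{symantisym} that every $x\in(R,*)^-$ is genuinely antisymmetric, i.e. $x^*=(-)x$.

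The substantive point is closure under the bracket. I would first record two auxiliary facts inside $R$: the element $(-)\one$ is central, since \eqref{neg} gives $((-)\one)a=(-)a=a((-)\one)$; and $((-)\one)^2=\one$. Combining these, for antisymmetric $x,y$ one gets $x^*y^*=((-)x)((-)y)=((-)\one)^2xy=xy$. Then, using that $(*)$ reverses products,
\[
(yx)^*(-)yx \;=\; x^*y^*(-)yx \;=\; xy(-)yx \;=\; [x,y],
\]
so $[x,y]=(yx)^*(-)yx\in(R,*)^-$. Consequently $(R,*)^-$ is closed under the Lie bracket, $\ad_x$ for $x\in(R,*)^-$ restricts to an $F$-endomorphism of $(R,*)^-$, and the anticommutativity and surpassing relations valid on all of $R^-$ hold a fortiori on the sub-semialgebra $(R,*)^-$; this yields the corollary.

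I do not expect a deep obstacle here; the one genuinely non-classical subtlety --- and hence the step to be careful about --- is that the usual argument (antisymmetric $\Rightarrow$ in the range of $c\mapsto c^*-c$, by dividing by $2$) is unavailable, so one must produce the explicit presentation $[x,y]=(yx)^*(-)yx$ exhibiting the bracket as an element of $(R,*)^-$ on the nose. The identities ``$(-)\one$ central'' and ``$((-)\one)^2=\one$'' are precisely what make that presentation valid, so the main work is this bit of bookkeeping with the interaction of the involution and the negation map rather than any new idea.
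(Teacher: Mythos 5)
Your proof is correct and follows the same route as the paper, whose entire argument is the one-line assertion ``It is closed under the Lie product''; you have simply supplied the verification. In particular, the explicit presentation $[x,y]=(yx)^*(-)yx$ (using $x^*y^*=((-)x)((-)y)=xy$ and the product-reversal of $(*)$) is exactly the computation that justifies the paper's claim, and your preliminary checks of closure under addition, negation, and scalar action are the routine bookkeeping the paper leaves tacit.
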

  \begin{proof}
 It is closed under the Lie product.
 \end{proof}
 \begin{rem} We are now in a position to
 define the symmetrized analogs of the classical Lie algebras,
 over a \semiring0 $\mathcal A$.
Namely, we   take $\widehat{\operatorname{sl}_n}(\mathcal A) = \{
((a_{i,j}),(b_{i,j}))
 \in M_n(\hat {\mathcal A}):
  \sum _i a_{i,i}=
\sum _i  b_{i,i} \},$  the symmetrized analog of the classical Lie
algebra $A_{n-1}$. To obtain the analogs of $B_n$, $C_n$, and $D_n$,
one  applies Corollary \ref{Lieinv7} to the transpose and symplectic
involutions, taking the subset $\{ (A, A^*): A \in \mathcal M_n(\hat
{\mathcal A})\}$:
\begin{itemize}\item We get the symmetrized version of the classical Lie algebra
$B_n$ when $(*)$ is the transpose and $n$ is odd.
\item We get the symmetrized version of the classical Lie algebra
$C_n$ when $(*)$ is the symplectic involution and $n$ is even.
\item We get the symmetrized version of the classical Lie algebra
$D_n$ when $(*)$ is the transpose and $n$ is even.
\end{itemize}
 \end{rem}

\subsubsection{Lie
$\preceq$-super-semialgebras}$ $

Let us superize the Lie theory by means of
Definition~\ref{supergr1}.

\begin{defn} \label{Liess} A \textbf{Lie $\preceq$-super-semialgebra  with a negation map} is a module $L$ with a negation map $(-)$,
 endowed with super-$(-)$-anticommutative multiplication $ L \times L \to  L,$ written
as $(b,b') \mapsto [bb']_{\operatorname{s}}$,
 called a  $\preceq$-\textbf{super Lie bracket},
satisfying  $ [[   b  b'] _{\operatorname{s}}\, v]_{\operatorname{s}}
\preceq [ b[b'v]_{\operatorname{s}}\, ]_{\operatorname{s}}\, (-)[
b'[bv]_{\operatorname{s}}\, ]_{\operatorname{s}}$ for all homogeneous
$b,b',v \in L$.
\end{defn}

Thus, for all $b,b',v\in L$ we have  $ [[   b b']
_{\operatorname{s}}\, v]_{\operatorname{s}} \preceq [
b[b'v]_{\operatorname{s}}]_{\operatorname{s}}(-)[
b'[bv]_{\operatorname{s}}]_{\operatorname{s}}$. (The negations all
appear in the same degree in the super-version, so cancel out.)

%

  \begin{prop}\label{Pois1s} Any associative
  semialgebra $R$ with negation map becomes a Lie $\preceq$-super-semialgebra
  under the $\preceq$-super-Lie bracket \begin{equation}\label{supercom} [b_i b_j]_{\operatorname{s}} =  b_i b_j
  (-)^{ij} b_j b_i,\quad b_i \in R_i, \ b_j \in R_j.\end{equation}
\end{prop}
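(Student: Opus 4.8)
The plan is to run the super-analogue of the proof of Proposition~\ref{Pois1}. Here $R$ is understood to be an associative super-\semiring0 (so that the grading $R = R_0 \oplus R_1$ appearing in \eqref{supercom} makes sense; if $R$ carries no grading one simply puts $R = R_0$, and then $[\,\cdot\,]_{\operatorname{s}}$ collapses to the bracket of Proposition~\ref{Pois1}). Since the multiplication of $R$ respects the grading, the bracket defined by \eqref{supercom} indeed lands in $R_{i+j}$, and its $F$-bilinearity is inherited from that of the product of $R$. It then remains to verify the two requirements of Definition~\ref{Liess}: that $[\,\cdot\,]_{\operatorname{s}}$ is super-anticommutative (as an honest equality) and that it satisfies the graded Jacobi surpassing relation.

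First I would check super-anticommutativity, $[a_ia_j]_{\operatorname{s}} = (-)^{ij+1}[a_ja_i]_{\operatorname{s}}$ for homogeneous $a_i \in R_i$, $a_j \in R_j$. This is a routine computation inside $R$: expanding the right-hand side by \eqref{supercom} and using only that $(-)$ is additive of order $\le 2$ and that addition is commutative, the summand $a_ja_i$ acquires the sign $(-)^{ij+1}$, the second summand (already carrying $(-)^{ij}$) acquires a further $(-)^{ij+1}$, and $(-)^{2ij+1} = (-)$; collecting terms reproduces $[a_ia_j]_{\operatorname{s}}$. No cancellation in $R$ is invoked, so the identity holds in every associative semiring$^\dagger$ with negation map.

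Next I would establish the graded Jacobi relation $[[ab]_{\operatorname{s}}v]_{\operatorname{s}} \preceq_\circ [a[bv]_{\operatorname{s}}]_{\operatorname{s}} (-) [b[av]_{\operatorname{s}}]_{\operatorname{s}}$ for homogeneous $a,b,v$, by either of the two routes used for Proposition~\ref{Pois1}. The direct route expands both sides using the associativity of $R$ exactly as in the proof of Lemma~\ref{Pois0}: in each of the six orders of the letters $a,b,v$ the monomials not common to the two sides occur paired with their $(-)$-translates carrying the same string of $\Z_2$-signs, so the discrepancy lies in $R^\circ$ and the surpassing relation follows. The structural route appeals to the nonassociative transfer principle, Theorem~\ref{trans1} (equivalently Proposition~\ref{trans}): the classical graded Jacobi identity for the special Lie superalgebra of an associative superalgebra, once its minus signs are absorbed into the formal negation, becomes an identical relation in $\mathbf N\{x,(-)x;\Omega\}$ between two formal sums whose coefficients all lie in $\{0,\pm\one\}$, whence transfer delivers the $\preceq_\circ$-inequality. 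With these two facts in hand $(R,[\,\cdot\,]_{\operatorname{s}},(-))$ satisfies Definition~\ref{Liess}; as in Lemma~\ref{ideal12} and Example~\ref{circmorph}, the map $a \mapsto \ad_a$ is then a $\preceq$-morphism $R \to \End_F R$, the graded shadow of the classical fact that $\ad$ is a Lie homomorphism.

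The step I expect to be the crux is the $\Z_2$-sign bookkeeping. One must confirm that the classical graded Jacobi identity (and, if one prefers an intermediate graded Leibniz $\preceq$-identity extending Lemma~\ref{Pois0}, that identity too) genuinely reduces to a relation with coefficients $0,\pm\one$, so that the transfer principle is legitimately applicable; and one must keep straight which clauses of Definition~\ref{Liess} are equalities (super-anticommutativity) versus mere surpassing relations (graded Jacobi), since $(-)$ is only a formal negation and $R$ is not assumed cancellative.
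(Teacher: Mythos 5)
Your proposal takes essentially the same route as the paper's one-line proof, which simply instructs the reader to reread the Leibniz $\preceq$-identities (Lemma~\ref{Pois0}) with the super-signs of \eqref{supercom} inserted — the direct expansion you outline — and you usefully add the transfer-principle alternative, which the paper invokes for Proposition~\ref{Pois1} but not for the super case. One correction in your super-anticommutativity bookkeeping: the infix $(-)^{ij}$ in \eqref{supercom} corresponds to the unary operator $(-)^{ij+1}$ (so that $i=j=0$ gives the commutator $a_0a_0' (-) a_0'a_0$), hence the combined exponent after applying $(-)^{ij+1}$ is $2ij+2$, which is the identity, not $2ij+1$ giving $(-)$; with your $(-)^{2ij+1}=(-)$ the collected terms would produce $(-)[a_ia_j]_{\operatorname{s}}$ rather than $[a_ia_j]_{\operatorname{s}}$, and these agree only when $(-)$ is of the first kind.
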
 \begin{proof} Reread the Leibniz identities (Lemma~\ref{Pois0}) in terms of \eqref{supercom}.
\end{proof}

\subsection{Poisson semialgebras and their module congruences}\label{Pois}$ $

The Leibniz  $\preceq$-identities} of Lemma~\ref{Pois0} motivate the
next notion.

 \begin{defn} A \textbf{Poisson $\preceq$-semialgebra}
 is an associative semialgebra $\mathcal A$ with a negation map, together with a
bilinear operation $\{ \phantom{a}, \phantom{a}\}: \mathcal A \times \mathcal A \to
\mathcal A$, called a {\bf Poisson bracket}, satisfying 
$$\qquad\quad \{ab,c\} \preceq  a\{b,c\}+\{a,c\}b   , \quad  \{a, bc\} \preceq \{a,b\}c
+ b\{a,c\} ,\quad \forall a,b,c \in \mathcal
A.$$\end{defn}

(This takes into account Definition~\ref{Lies}, as well as
Proposition~\ref{Pois1}.) Then $\{ \phantom{a}, \phantom{a}\}$
yields a Lie $\preceq$-structure as in Proposition~\ref{Pois1}.

\begin{example}  The following  are commutative   Poisson $\preceq$-semialgebras.
In each case we get a triple, where $\tT$ is taken to be the set of
monomials.
\begin{enumerate}\eroman  \item  If $L$ is a
f.d.~Lie semialgebra with negation map, having base $a_1, \dots,
a_n$, then, viewing the
 $a_i$ as commuting indeterminates in
the commutative polynomial semialgebra $R= F[a_1, \dots, a_n],$
introduce a Poisson bracket on $R$ by defining $\{a _i, a_j \}$ to
be the Lie product in $L$ and extending the Poisson bracket via the
 Leibniz identities, i.e.,
$$\qquad\quad  \{ab,c\} = a\{b,c\}+\{a,c\}b , \quad \{a, bc\} =\{a,b\}c
+ b\{a,c\}   ,\quad \forall a,b,c \in \mathcal A.$$

\item Suppose $V$ is a f.d.~vector space with an alternating
bilinear form (in the sense that $\langle v, v \rangle \succeq
\zero)$. Take a base $\{x _1, \dots, x_n\}$ of~$V$. The polynomial
semialgebra $F[x_1, \dots, x_n]$ becomes a Poisson
$\preceq$-semialgebra, where one defines $\{ x_i, x_j\}$ to
be~$\langle x_i, x_j\rangle.$



%
\end{enumerate}
\end{example}

The super-version is obtained by taking instead
Definition~\ref{Liess} and Proposition~\ref{Pois1s}.

\section{Areas for further research}\label{ques}

We have concentrated on the system as an algebraic structure. This
leads to the following questions:

\begin{enumerate}\eroman
  \item What is the complete classification of strictly negated systems for which $e'
  = \one?$ (See Proposition~\ref{char77} and Theorem~\ref{mon11}   for
  motivation.)

\item What systems are hypersystems with respect to some hyperfield? This is answered in
\cite {AGR0}.

 \item What systems other than hypersystems are of hypergroup type? Which satisfy the properties
 given in \S\ref{stneg}, namely  $\tT$-strictly negated,
$\tT$-reversible systems?  One might want to throw in idempotence,
which implies that $(-)$ is of the second kind and $e' = e.$

\item What can be proved in   linear algebra? (See \cite {AGR} for some results.)

\item What is the theory of affine and projective geometry over
systems, starting with Remark~\ref{geom1})?

\item What is the theory of matroids and valuated  matroids over
systems? This has been started in \cite{AGR0} and \cite{GaR}.

 \item How does the representation theory of systems fit in with
  \cite{CC2}? This is begun in \cite{JMR1}.

\item How far can one develop Lie structure theory along systemic lines,
starting with Lie's theorem and Engel's theorem? (The negation map
$(-)$ could be either the identity or the switch map on the
symmetrized algebra.)

\item How does one develop Hopf systems? (Hopf semialgebras are in the literature.)

\item What is the geometry of systems?
\end{enumerate}

\section{*Appendix A: Major examples of hypergroups and hyperfields
 and their power sets}\label{hype}$ $

We bring   the major examples of hyperfields, cf.~\cite{Bak} into
the systemic setting.

%
%
%
%
%
%

\begin{example}\label{Basicexamples}$ $
The first four of these examples correspond to $(-)$-bipotent systems (each
of height 2), but
the last two do not. 
\begin{itemize}

\item \textbf{The supertropical hyperfield.}
Define  $\R _\infty = \R \cup \{ - \infty\}$ and define the  product
$a \bigodot b := a + b$ and
$$a \boxplus b=
\begin{cases} max(a, b)\text{ if } a \ne b,\\   \{ c : c \le a\}
\text{ if } a = b.
\end{cases}$$
 Thus 0 is the multiplicative identity, $-
\infty$ is the additive identity, and we have a hyperfield
 called the \textbf{tropical hyperfield}, a special case of Proposition~\ref{closed}. This is
easily seen to be isomorphic (as hyperfields) to the supertropical
  semiring of Definition~\ref{super1}, which goes back to
Izhakian's \textbf{extended tropical arithmetic},  identifying
$(-\infty,a] : =\{ c : c \le a\}$ with $a^\nu$. We have a natural
  semiring system isomorphism with the sub-semiring $\widetilde{\R
_\infty}   $ of $\mathcal P(\R _\infty)$, sending the tangible
elements to $\R _\infty$, because

$$(-\infty,a] + b = \begin{cases} b \quad \text{if} \quad  b >a;\\ (-\infty,a] \quad \text{if} \quad  b =a; \\  (-\infty,b] \cup (b,a] = (-\infty,a] \quad \text{if} \quad  b
<a.
\end{cases} $$

\item  \textbf{The Krasner hyperfield.} Let $K = \{ 0; 1 \}$  with the usual
operations of Boolean algebra, except that now $ 1 \boxplus  1 = \{
0; 1 \} .$  Again, this generates a sub-semiring of $\mathcal P (K)$,
having three elements, which is isomorphic to the supertropical semiring system of
the monoid~$K$ with tangible elements $0$ and~$1$, where we identify $\{ 0; 1 \} $ with $ 1^\nu.$

\item \textbf{The hyperfield of
signs.} Let $ \tT := \{ 0, 1 , -1\}$  with the usual multiplication
law and hyperaddition defined by $1 \boxplus  1 = \{ 1\} ,$ $-1
\boxplus  -1 = \{ -1\} ,$ $ x \boxplus  0 = 0 \boxplus  x = \{ x\}
,$ and $1 \boxplus  -1 = -1 \boxplus  1 = \{ 0, 1,-1\} = \tT.$ Then
$\tT$ is a hyperfield called the  hyperfield of signs.

\item Valuative hyperfields (\cite[Example 2.12]{Bak}) also are
systemically isomorphic to the extended semiring in the sense of
\cite{IzhakianRowen2007SuperTropical}, in the same way.

%
%
%
%
%
%
%
%
%
%
%
%
%
%
%
%
%

  As noted in \cite[Example~6.9]{GJL}, the
four elements $ \{\{0\}, \{-1\}, \{1\}, \tT\}$ constitute the
sub-\semiring0 $\widetilde \tT$ of $\mathcal P(\tT)$,  comprising a
metatangible system, as seen in
Example~\ref{signsys}.

\item The phase hyperfield. Let $S^1$ denote the complex
unit circle, together with the center $\{ 0 \}$, and take $\tT = S^1
.$ Points $a$ and $b$ are \textbf{antipodes} if $a = -b.$
Multiplication is defined as usual (so corresponds on $S^1$ to
addition of angles). We call an arc from $a$ to $b$ of less than 180
degrees \textbf{short}, and denote it as $\overline{(a,b)}$. The
hypersum is given by
$$a \boxplus b=
\begin{cases} \overline{(a,b)} \text{ if } a \ne b;\\   \{ -a,0,a \} \text{
if } a = -b \ne 0 ;\\   \{  a \} \text{ if }   b = 0 .
\end{cases}$$  Then $\tTz$ is a hyperfield  called the \textbf{phase
hyperfield}.

 At
the power set level, given $T_1,T_2 \subseteq S^1$, one of which
containing at least two points, we define $T_1 \boxplus T_2$ to be
the union of all (short) arcs from a point of $T_1$ to a
non-antipodal point in~$T_2$ (which together make a connected arc),
together with $ \{ \zero \}$ if $T_2$ contains an antipode of $T_1$.
Thus the system  $\mathcal A$ spanned by $\tT$ is not metatangible;
the sum of two distinct points of~$\tT$ is never in~$\tT$, so this
is as far from metatangible as one can get. Its negation map is of
the second kind.

Its elements can be described as follows:

\begin{enumerate}  \eroman
\item $\{ \zero\}$, which   has height 0,
    \item  $\tT$, the points on $S^1$, each of which  has height 1,
    \item  Short arcs (the sum of non-antipodal distinct points), which  have height
    2,
\item The sets $\{ a, \zero, -a\} = a -a,$ which we write as
$a^\circ$,  which  have height
    2,
\item Semicircles with $\zero$ adjoined, having the form $a^\circ +
b$ where $b \ne \pm a$,  which  have height
    3 (which go clockwise or counter-clockwise depending on the relation from $b$ to $a$),
\item $S^1 \cup \{ \zero \} = a^\circ +  b^\circ$
where $b \ne \pm a$. This also can be written as the sum of three
points of an equilateral triangle on $S^1,$ i.e., at  angles of
$\frac{2\pi}3,$ so has height 3.
\end{enumerate}

The additive structure is described as follows in terms of the
hyperfield $\tT$:
\begin{enumerate}\eroman  \item A short arc $T$ plus a point $c$ with $-c \notin T$ is
 a short arc.

\item A short arc $T= \overline{(a,b)}$ plus a point $c$ with $-c\in  \{a, b\}$ is
 a semicircle with $  \{ \zero \}$ adjoined.

\item A short arc $T = \overline{(a,b)}$ plus a point $c $ with $-c \in T \setminus \{a, b\}$ is
$S^1 \cup \{ \zero \}.$

\item  $a^\circ +  b + c$ is either $a^\circ $
(for $b,c \in \{ \pm a \}$), a semicircle with $  \{ \zero \}$
adjoined (if $b,c$ are on the same side of $\pm a$), or $S^1 \cup \{
\zero \}$ if $b,c$ are on different
  sides of $\pm a$).
\end{enumerate}

It follows that any finite sum of elements of $\tTz$ is one of the
sets given above, so these comprise a system~$(\mathcal A,\tT,(-),
\subseteq)$. On the other hand, any proper arc of $S^1 $ can be
obtained as the product of two short arcs and a point. Hence,
$\mathcal A$ is not closed under
  multiplication!

  $S^1 \cup \{ \zero \}$ itself is obtained as the sum of
three  points (say with each 120 degrees apart). Thus the system
$\mathcal A$ has height $ 3$; the elements of  height 3 are
precisely the semicircles with the origin, and $S^1 \cup \{ \zero
\}$.


Distributivity fails  since certain arcs cannot be
obtained as unions of arcs. For example, take $a_1$ and $a_2$ almost
to be antipodes, $b_1 = a_2,$ and the arc connecting $b_1$ and $b_2$
just passes the antipode of  $a_1$; then  $(a_1 \boxplus a_2)(b_1
\boxplus b_2)$ is the arc from $a_1$ to~$b_2,$ a little more than a
semicircle, whereas $a_1 b_1  \boxplus a_1 b_2  \boxplus a_2 b_2$ is
already all of $S^1.$ But this can be remedied by defining
multiplication instead to be the convex union of hulls of the points
on the arcs. (This can be viewed as a special case of
Theorem~\ref{distres}.) It is easy to check that
Lemma~\ref{precedeq008} and the proof of Proposition~\ref{uniq671} are
applicable, since set inclusion is antisymmetric, so $(\mathcal A,
\tT, (-),\subseteq)$ remains a hypersystem for either choice of
multiplication.
 Viro~\cite{Vi} also presents another version.

\item The ``triangle'' hyperfield  $\tT$  defined over $\R ^+$ by the
 formula $$a \boxplus b = \{c \in \R^+ :
|a - b| \le c \le a + b\}.$$ 
 Again distributivity fails for $\mathcal A$.  Here $$\tT +
\tT = \{
 [a_1,a_2]: a_1 \le a_2 \},$$ so $\mathcal A$ obviously is not metatangible.
The negation map is of the first kind, satisfying $$a^\circ= a
\boxplus a = \{c \in \R^+ : 0 \le c \le 2a  \}.$$ It $\mathcal A$
has height 2 over $\tT$ . Indeed, $[a_1,a_2] = \frac {a_1+a_2}2 +
\frac {a_2-a_1}2 \in \hat A $, whereas $[a_1,a_2] + [a_1',a_2']$ is
some interval going up to $a_2+a_2'$.


\end{itemize}
\end{example}

\section{**Appendix B: Fuzzy rings as systems}\label{fuzz}$ $

Another concept which turns out to provide systems was introduced in
1986 and refined in 2011 by Dress \cite{Dr}, and Dress and
Wenzel~\cite{DW}. This treatment also is inspired by \cite{GJL}. Let
$\mathcal A := (\mathcal A, +, \cdot, \zero, \one )$ be a cbimagma,
for which  $(\mathcal A,   \cdot,   \one )$ is a commutative monoid.
$\mathcal A^\times$ denote the set of invertible elements of
$\mathcal A.$

\begin{defn}[{\cite[Definitions~2.1, 2.8]{Dr},
\cite[Definition~2.14]{GJL}}]\label{fuzzy1} $\mathcal A$ is a
\textbf{fuzzy ring} if it is an $\mathcal A^\times$-module and has a
distinguished element $\vep$ and a proper  ideal $\mathcal A_0$
satisfying the following axioms for $a,a_i\in \mathcal A$:
\begin{enumerate}\label{distr157}\eroman
\item $\vep^2 = \one;$
\item  $a = \vep $, iff $a\in \mathcal A^\times$  with  $\one + a \in \mathcal A_0$;
\item If   $a_1 + a_2,\ a_3 + a_4 \in \mathcal A_0,$ then
$a_1 a_3 + \vep a_2 a_4 \in \mathcal A_0;$
\item If   $a_1 +
a_2( a_3 + a_4 ) \in \mathcal A_0,$ then $a_1 +  a_2  a_3  +  a_2
a_4 \in \mathcal A_0.$

\end{enumerate}

The fuzzy ring is \textbf{coherent} if $\mathcal A^\times$
spans $(\mathcal A,+)$.

\end{defn}

Note that (iii) is the fuzzy property of
Definition~\ref{precedeq59}.

In line with the systemic approach, it is natural to generalize the
definition slightly, replacing $\mathcal A^\times$ by a monoid
$\tT$. On the other hand, conditions (iii) and (iv) do not initially
enter into our proofs (and also did not enter into the proof of
\cite[Theorem~3.3]{GJL}). This motivates us
 to suppress them at the outset, to get a more straightforward structure theory
 using triples.

\begin{defn} \label{fuzzy11}  A \textbf{pre-fuzzy
triple} is a cancelative cbimagma triple $(\mathcal A, \tT, (-))$
where $\tT$ is a multiplicative submonoid of $\mathcal A$, together
with a distinguished element $\vep \in \tT$ and a proper
 ideal $\mathcal A_0$ satisfying the following
axioms:
\begin{enumerate}\label{distr158}\eroman
\item $\vep^2 = \one;$
\item For any  $a_i\in \tT$, $a_1 + a_2 \in \mathcal A_0$ iff $a_1 = \vep a_2;$
\item  $(-)b := \vep b$.\end{enumerate}

 The pre-fuzzy triple is
$\tT$-\textbf{coherent} if $ (\mathcal A,+) = \{
\sum_{\text{finite}} a_i : a_i \in \tT \}$.
\end{defn}

\begin{rem} Condition (ii) implies $\mathcal A^\circ \subseteq \mathcal A_0$.
 The fuzzy
 ring definition takes $\tT = A^\times$.\end{rem}

%

\begin{rem}\label{drive4} $ $ \begin{enumerate}\eroman  \item
In a pre-fuzzy triple $\mathcal A$, the sub-$\tT$-cbimagma generated
by $\tT$ and $\mathcal A_0$ is also pre-fuzzy, so we  assume from
now on that it equals ~$\mathcal A$.
 \item Condition (iii) of Definition~\ref{fuzzy11} matches Definition~\ref{fuzzy1}(ii) for $a_1
 \in \mathcal A^\times.$
 \item Adjusting multiplication according to in Theorem~\ref{distres} always enables
 us to dispose of Condition (iv) of Definition~\ref{fuzzy1}.
\end{enumerate}
\end{rem}

\begin{defn}\label{fuzzy121} A pre-fuzzy
triple  is \textbf{$\tT$-strictly negated} if

\begin{equation}\label{eeq22}
a+b \in \mathcal A_0, \ a \in \tT, \quad\text{ implies }\quad b =
\vep a +c \text{ for some }c\in  \mathcal A_0 .
\end{equation}
\end{defn}

The next result reconciles fuzzy rings with
Definition~\ref{fuzzy11}.

\begin{lem}\label{nummatch00} Assume that $(\mathcal A   ,  \tT , (-))$ is a pre-fuzzy
triple with $\mathcal A = \tT + \mathcal A_0 $, and put $\vep =
(-)\one$.
 \begin{enumerate}\eroman
 \item Condition (iii) of Definition~\ref{fuzzy1} holds whenever $a_1 , a_2 \in \tT $.
 \item Condition (iii) of Definition~\ref{fuzzy1} holds whenever $\mathcal A $ is $\tT$-strictly negated.
\end{enumerate}
 \end{lem}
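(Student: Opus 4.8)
The statement to prove is Lemma~\ref{nummatch00}: under the standing hypothesis that $\mathcal A$ is a fuzzy $\tT$-ring with $\mathcal A = \tT + \mathcal A_0$, condition (iii) of Definition~\ref{fuzzy1} --- namely, $a_1 + a_2 \in \mathcal A_0$ and $a_3 + a_4 \in \mathcal A_0$ imply $a_1 a_3 + \vep a_2 a_4 \in \mathcal A_0$ --- holds (i) whenever $a_1, a_2 \in \tT$, and (ii) whenever $\mathcal A$ is $\tT$-strongly negated. The natural approach is to exploit Definition~\ref{fuzzy11}(ii), which pins down exactly when a sum of two tangible elements lands in $\mathcal A_0$: for $a_1, a_2 \in \tT$, one has $a_1 + a_2 \in \mathcal A_0$ iff $a_1 = \vep a_2$. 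So the engine of the proof is to reduce, as far as possible, to the tangible case and then substitute $a_1 = \vep a_2$ directly.

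For part (i): assume $a_1, a_2 \in \tT$. By Definition~\ref{fuzzy11}(ii), the hypothesis $a_1 + a_2 \in \mathcal A_0$ forces $a_1 = \vep a_2$. Now I would not yet assume anything about $a_3, a_4$; instead, write $a_3 + a_4 \in \mathcal A_0$ and compute
$$a_1 a_3 + \vep a_2 a_4 = \vep a_2 a_3 + \vep a_2 a_4 = \vep a_2 (a_3 + a_4).$$
Since $\mathcal A_0$ is a $\tT$-presemiring ideal and $\vep a_2 \in \tT$ (as $\vep, a_2 \in \tT$ and $\tT$ is a multiplicative submonoid), multiplying the element $a_3 + a_4 \in \mathcal A_0$ by $\vep a_2$ keeps us in $\mathcal A_0$. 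That closes part (i). The one wrinkle is whether distributivity $\vep a_2(a_3+a_4) = \vep a_2 a_3 + \vep a_2 a_4$ is available: since $\mathcal A$ is a $\tT$-presemiring, multiplication by the tangible element $\vep a_2$ does distribute over addition, so this is fine. (One should double-check that $a_3, a_4$ need not be tangible here --- they don't, because we only used the ideal property of $\mathcal A_0$, not Definition~\ref{fuzzy11}(ii), for the pair $(a_3,a_4)$.)

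For part (ii): now $a_1, a_2$ are arbitrary elements of $\mathcal A$, and we are given $a_1 + a_2 \in \mathcal A_0$ together with $\tT$-strong negation. The plan is to use $\mathcal A = \tT + \mathcal A_0$ to write $a_1 = t_1 + c_1$, $a_2 = t_2 + c_2$ with $t_i \in \tT$ and $c_i \in \mathcal A_0$; then $a_1 + a_2 = t_1 + t_2 + (c_1 + c_2)$, and since $c_1 + c_2 \in \mathcal A_0$ and $a_1 + a_2 \in \mathcal A_0$, the ideal/subgroup structure of $\mathcal A_0$ should let me deduce $t_1 + t_2 \in \mathcal A_0$ --- here I would want a cancellation-type argument using that $\mathcal A$ is cancellative and $\mathcal A_0$ behaves like the ``zero set,'' or, more carefully, invoke Equation~\eqref{eeq22} applied to $a = t_1 \in \tT$ and $b = a_2 + c_1 = t_2 + c_2 + c_1$ to get $t_2 + c_1 + c_2 = \vep t_1 + c$ for some $c \in \mathcal A_0$, hence $t_1 = \vep t_2$ modulo $\mathcal A_0$ after absorbing the $\mathcal A_0$-terms. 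Doing the same for the pair $(a_3, a_4)$ --- write $a_3 = t_3 + c_3$, $a_4 = t_4 + c_4$, get $t_3 = \vep t_4$ modulo $\mathcal A_0$ --- I then expand $a_1 a_3 + \vep a_2 a_4$, using weak/genuine distributivity and the fact that $\mathcal A_0$ is an ideal so every cross-term involving some $c_i$ lies in $\mathcal A_0$, reducing to $t_1 t_3 + \vep t_2 t_4 \bmod \mathcal A_0$, which by part (i) (applied to the tangible pair $(t_1,t_2)$ with $t_1 = \vep t_2$) lies in $\mathcal A_0$. The main obstacle I anticipate is precisely this bookkeeping: making rigorous the passage ``$a_1 + a_2 \in \mathcal A_0$ with $a_i = t_i + c_i$ implies $t_1 + t_2 \in \mathcal A_0$,'' which requires either a clean cancellation lemma for $\mathcal A_0$ or a careful application of~\eqref{eeq22} together with the observation that $\mathcal A_0$ is closed under addition and under the relevant multiplications; once that reduction to the tangible case is secured, the rest is the routine expand-and-collect computation that part (i) already handles.
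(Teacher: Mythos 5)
Your part~(i) is correct and essentially the paper's argument: both extract $a_1=\vep a_2$ from Definition~\ref{fuzzy11}(ii), factor a common tangible element out of $a_1a_3+\vep a_2a_4$, and invoke the ideal property of $\mathcal A_0$ applied to $a_3+a_4$; your route factors $\vep a_2$ where the paper factors $a_1$, an immaterial difference.

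Part~(ii), however, has a genuine gap, and you have in fact put your finger on it yourself. The ``cancellation'' you reach for --- from $t_1+t_2+(c_1+c_2)\in\mathcal A_0$ and $c_1+c_2\in\mathcal A_0$ deduce $t_1+t_2\in\mathcal A_0$ --- is simply false in a semiring without negation: adding a member of $\mathcal A_0$ can push a non-$\mathcal A_0$ element into $\mathcal A_0$ (think of the supertropical case, where a tangible plus a ghost can be a ghost). Your fallback via \eqref{eeq22} produces a relation of the form $t_2+(c_1+c_2)=\vep t_1+c$, which again you cannot strip of its $\mathcal A_0$ garbage without the same forbidden cancellation, so the ``$t_1=\vep t_2$ modulo $\mathcal A_0$'' conclusion is not actually reached. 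A secondary problem is that you go on to decompose $a_3=t_3+c_3,\ a_4=t_4+c_4$ and expect $t_3=\vep t_4$ modulo $\mathcal A_0$; but the hypothesis on $a_3,a_4$ is only $a_3+a_4\in\mathcal A_0$ with $a_3,a_4$ arbitrary, so neither $\tT$-strong negation nor Definition~\ref{fuzzy11}(ii) is applicable to that pair, and in any case the full expansion $(t_1+c_1)(t_3+c_3)$ quietly assumes distributivity beyond what ``$\tT$-presemiring'' guarantees.

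The fix avoids all of this by \emph{substituting instead of cancelling}, and by not touching $a_3,a_4$ at all. Dispose of the case $a_1,a_2\in\mathcal A_0$ trivially (then $a_1a_3,\ \vep a_2a_4\in\mathcal A_0$ by the ideal property), and otherwise take, say, $a_1\in\tT$. Apply $\tT$-strong negation \eqref{eeq22} to $a_1+a_2\in\mathcal A_0$ with $a=a_1$ to get $a_2=\vep a_1+c$ for some $c\in\mathcal A_0$, hence $\vep a_2=a_1+\vep c$ with $\vep c\in\mathcal A_0$. Now substitute directly:
$$a_1a_3+\vep a_2a_4 \;=\; a_1a_3+(a_1+\vep c)a_4 \;=\; a_1(a_3+a_4)+\vep c\,a_4,$$
and both summands lie in $\mathcal A_0$ --- the first because $a_1\in\tT$ and $a_3+a_4\in\mathcal A_0$, the second because $\mathcal A_0$ is an ideal. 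No decomposition of $a_1,a_2,a_3,a_4$ and no modular arithmetic in $\mathcal A_0$ is needed; the one nontrivial ingredient is the strong-negation identity rewritten so that everything stays on the same side of the equals sign.
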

\begin{proof}
(i)  $a_1  +a_2 \in \mathcal A_0$, so $a_1  = \vep a_2$ by
Definition~\ref{fuzzy11}(ii). Hence  $a_1 a_3 + \vep a_2 a_4  = a_1(
a_3 + a_4) \in \mathcal A_0.$

(ii)  The assertion is obvious if $a_1, a_2 \in \mathcal A_0,$ so we
may assume that $a_1 \notin \mathcal A_0,$ i.e., $a_1 \in \tT.$ But
we are given $a_1  +a_2 \in \mathcal A_0$, so,  by
Definition~\ref{fuzzy121}, $a_1  + c = \vep a_2$ for some $c \in
\mathcal A_0.$  Hence $a_1 a_3 + \vep a_2 a_4  = a_1( a_3 + a_4) + c
a_4 \in \mathcal A_0.$
\end{proof}

%
%
%
%

\begin{prop}\label{fuzpro} Any  strictly negated system
with respect to $\preceq_\circ$ satisfies the fuzzy property of
Definition~\ref{precedeq59}. \end{prop}
\begin{proof}
This is clear if $b_i,b_i' \in A^\circ$ for either $i$, so by
\eqref{eeq2} we have either $b_i \preceq b_i'$ or $b_i' \preceq
b_i$, so we may assume that $b_i \preceq b_i'$ for $i = 1,2$. (We
can replace $b_i,b_i'$ by $(-)b_i,(-)b_i'$ if necessary.) But now
writing $b_i' = b_i + c_i^\circ$ we have
$$b_1b_2 (-) b_1'b_2' = b_1b_2 (-) b_1b_2 (-)c_1^\circ b_2 (-)c_2^\circ b_1 \in \mathcal A^\circ.$$
\end{proof}

%
%

 In a metatangible pre-fuzzy
triple $\mathcal A$, we can also replace $\mathcal A$ by
$\left\{\sum_{\text{finite}} a_i : a_i \in \tT \right\},$ in which
case $\mathcal A$ becomes $\tT$-coherent.  Condition (iv) of
Definition~\ref{fuzzy1} then  becomes superfluous, in view of
\cite[Theorem~7.34]{Row16v}. In short, pre-fuzzy triples often are
fuzzy rings.

\subsection{Fuzzy rings versus pre-fuzzy triples  and  systems}$ $


\begin{thm}\label{drive6} A $\tT$-coherent pre-fuzzy triple $\mathcal A$ gives rise to a
system $(\mathcal A, \tT, (-),\preceq_{\mathcal A_0}),$  cf.~ Proposition~\ref{uniq671}(iii), where
$(-)a = \vep a.$
\end{thm}
\begin{proof}  The map $a
\mapsto \vep a$ obviously is a negation map. Furthermore, if $a (-)
b \in \mathcal A^\circ$ for $a,b \in \tT,$ then $\one + \vep b
a^{-1} \in \mathcal A^\circ,$ implying $ \vep b a^{-1} = \vep,$ and
thus $b = a.$

 $a (-)a = a + \vep a \in \mathcal A_0$. For $\preceq$  to be a surpassing relation, we need to verify the
conditions of Definition~\ref{precedeq07}. Conditions (i)--(iv) are
clear; for (v), suppose $ a_0 \preceq a_1$ for $a_i\in \tT$. Then
$a_1 = a_0 +c$ for $c\in \mathcal A_0$ implies $a_1(-) a_0 =
a_0^\circ +c\in \mathcal A_0$, and thus  $a_1= a_0.$

Since $\tT \cap A^\circ = \emptyset$, this is a system, by
Lemma~\ref{precedeq008} and Proposition~\ref{uniq671}(iii).
\end{proof}

%
%

 \begin{MNote} We are back to the
 definition of system, where   $\mathcal A_{\Null}
 = \mathcal A_0.$ Thus systems provide a straightforward way of
 viewing fuzzy rings, and their theory includes that of fuzzy rings.
 \end{MNote}

 Conversely to Theorem~\ref{drive6},
 the notion of pre-fuzzy triple also encompasses cancelative triples.

\begin{prop}\label{drive3} Suppose that $\mathcal A := (\mathcal A, \tT, (-))$ is a cancelative
cbimagma triple with  unique quasi-negatives. Then $\mathcal A$
gives rise to a pre-fuzzy triple $\mathcal A'$ with the same
operations, where $\mathcal A_0 = \mathcal A^\circ$ and $(\mathcal
A',+)$ is generated by $\tT$ and $\mathcal A_0 $, and $\vep =
(-)\one$.

\end{prop}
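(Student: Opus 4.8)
The plan is to verify the four axioms of Definition~\ref{fuzzy11} for $\mathcal A'$, using the hypotheses that $\mathcal S$ is a cancellative uniquely negated $\tT$-triple with $\mathcal A$ a $\tT$-presemiring. First I would check that $\mathcal A_0 = \mathcal A^\circ$ is indeed a proper $\tT$-presemiring ideal: it is a $(\tT,(-))$-submodule closed under multiplication by elements of $\mathcal A$, which is essentially Lemma~\ref{circ0} (for $\tT$-monoid module triples) together with the observation that in a $\tT$-presemiring the product $a(b^\circ) = (ab)^\circ$ still lies in $\mathcal A^\circ$; properness follows because $\one \notin \mathcal A^\circ$ (otherwise, by Corollary~\ref{uniq7}, $\one$ would be $\tT$-absorbing, contradicting nontrivial $\tT$-cancellation — or one simply invokes $\tT\cap\mathcal A^\circ = \emptyset$ from Corollary~\ref{uniq7}). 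The generation hypothesis on $(\mathcal A',+)$ is part of the statement, so nothing is needed there beyond noting $\mathcal A^\circ$ is additively generated by $\tT^\circ$.

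Next I would dispatch the axioms one at a time. Axiom (i), $\vep^2 = \one$: since $\vep = (-)\one$, we have $((-)\one)^2 = (-)((-)\one\cdot\one) = (-)((-)\one) = \one$, which is exactly the Remark following Definition~\ref{minusmod} (``$((-)\one)^2 = \one$''). Axiom (ii), that for $a_1,a_2\in\tT$ one has $a_1+a_2\in\mathcal A_0$ iff $a_1 = \vep a_2$: the forward direction is unique negation — $a_1+a_2 = (a_1+a_2) \in \mathcal A^\circ$ forces $a_1 = (-)a_2 = ((-)\one)a_2 = \vep a_2$ by Definition~\ref{precedeq00} and Proposition~\ref{uniq6}; the reverse direction is immediate since $a_1 + \vep a_2 = a_2(-)a_2 = a_2^\circ \in \mathcal A^\circ$ when $a_1 = \vep a_2 = (-)a_2$. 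Axiom (iii), $\tT\cap\mathcal A_0 = \emptyset$: this is precisely Corollary~\ref{uniq7}(ii), using cancellativity. For coherence, I would note that $(\mathcal A',+)$ is generated by $\tT$ and $\mathcal A_0$ by hypothesis, and $\mathcal A_0 = \mathcal A^\circ$ is itself spanned by $\tT^\circ \subseteq \tT+\tT$, so in fact $(\mathcal A',+) = \{\sum_{\text{finite}} a_i : a_i\in\tT\}$, giving $\tT$-coherence.

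The one point requiring a little care — and the step I expect to be the mild obstacle — is whether Definition~\ref{fuzzy11}'s conditions (iii) and (iv) of the \emph{original} fuzzy ring Definition~\ref{fuzzy1} need to be reinstated. The excerpt has already observed (Remark~\ref{drive4}(iii)) that adjusting multiplication via Theorem~\ref{distres} lets one dispose of Definition~\ref{fuzzy1}(iv), and Lemma~\ref{nummatch00} shows Definition~\ref{fuzzy1}(iii) follows for tangible arguments or in the $\tT$-strongly negated case; but since Definition~\ref{fuzzy11} by design has dropped (iii) and (iv) as standing axioms, the proposition as stated only asks for a fuzzy $\tT$-ring in the sense of Definition~\ref{fuzzy11}, so these are not needed. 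I would end by remarking (as a coda, paralleling Proposition~\ref{drive6}) that $\preceq_\circ$ is recovered from $\mathcal A_0 = \mathcal A^\circ$ via $a\preceq_\circ b \iff b = a+c$, $c\in\mathcal A_0$, so the system structure of $\mathcal S$ and the fuzzy $\tT$-ring structure of $\mathcal A'$ determine each other. The whole argument is a sequence of one-line verifications citing results already established; no genuinely hard step is involved, only careful matching of definitions.
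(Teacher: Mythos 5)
Your proposal is correct and matches the paper's approach: verify the three axioms of Definition~\ref{fuzzy11} directly, with (i) and (ii) being essentially immediate computations from the negation-map identity $((-)\one)^2 = \one$ and unique negation, and (iii) being exactly Corollary~\ref{uniq7}(ii). The paper's own proof is terser (it simply records ``Properties (i) and (ii) are clear, and (iii) is by Corollary~\ref{uniq7}'' after noting $(\mathcal A', \tT, (-), \preceq_\circ)$ is a system), but the content is the same, and your extra checks that $\mathcal A^\circ$ is a proper ideal and your coda on $\preceq_\circ$ are accurate, harmless elaborations.
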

\begin{proof} Note that $(\mathcal A', \tT, (-), \preceq)$  is a triple, so we may assume that $\mathcal A' = \mathcal A.$
The conditions   of Definition~\ref{fuzzy11} are clear.
%
%
%
%
%

\end{proof}

\end{document}